%
\documentclass{book}
\usepackage[mems, lang=british]{ems-book} 
\usepackage{mathtools}
\usepackage{ulem}
\usepackage{comment}

\usepackage{pgfplots}
\pgfplotsset{compat=1.17}


\newcommand{\A}{\ensuremath{\mathcal{A}}}
\newcommand{\B}{\mathrm{B}}
\newcommand{\C}{\mathbb{C}}

\newcommand{\E}{\ensuremath{\mathbb{E}}}

\let\H\relax 
\newcommand{\H}{\mathrm{H}}

\newcommand{\I}{\mathrm{I}}
\newcommand{\g}{\mathfrak{g}}
\newcommand{\W}{\mathrm{W}}
\let\L\relax
\newcommand{\L}{\mathrm{L}}

\newcommand{\M}{\mathrm{M}}

\newcommand{\N}{\ensuremath{\mathbb{N}}}

\newcommand{\R}{\ensuremath{\mathbb{R}}}
\newcommand{\T}{\ensuremath{\mathbb{T}}}
\newcommand{\Z}{\ensuremath{\mathbb{Z}}}



\newcommand{\QH}{\mathbb{H}}

\renewcommand{\leq}{\ensuremath{\leqslant}}
\renewcommand{\geq}{\ensuremath{\geqslant}}
\newcommand{\bnorm}[1]{ \big\| #1  \big\|}

\newcommand{\Bgnorm}[1]{ \Bigg\| #1  \Bigg\|}
\newcommand{\norm}[1]{\left\Vert#1\right\Vert}
\newcommand{\frak}{\mathfrak}
\newcommand{\scr}{\mathscr}
\newcommand{\xra}{\xrightarrow}

\newcommand{\ot}{\otimes}
\newcommand{\epsi}{\varepsilon}
\newcommand{\ovl}{\overline}
\newcommand{\otvn}{\ovl\ot}

\newcommand{\la}{\langle}
\newcommand{\ra}{\rangle}
\newcommand{\co}{\colon}
\renewcommand{\d}{\mathop{}\mathopen{}\mathrm{d}} 
\let\i\relax 
\newcommand{\i}{\mathrm{i}}

\newcommand{\ov}{\overset}
\newcommand{\Mult}{\mathrm{Mult}}

\newcommand{\reg}{\mathrm{reg}}

\newcommand{\Ad}{\mathrm{Ad}}

\newcommand{\Closed}{\mathrm{Closed}}

\newcommand{\QWEP}{\mathrm{QWEP}}

\newcommand{\Str}{\mathrm{Str}}

\newcommand{\Id}{\mathrm{Id}}
\newcommand{\HP}{\mathrm{HP}}
\newcommand{\VN}{\mathrm{VN}}

\newcommand{\UMD}{\mathrm{UMD}}

\newcommand{\e}{e} 
\let\ker\relax 
\DeclareMathOperator{\ker}{Ker} 
\DeclareMathOperator{\Ran}{Ran} 
\DeclareMathOperator{\Ker}{Ker}

\DeclareMathOperator{\dom}{dom} 

\DeclareMathOperator{\tr}{Tr}

\DeclareMathOperator{\supp}{supp} 
\DeclareMathOperator{\type}{type}
\DeclareMathOperator{\cotype}{cotype}
\DeclareMathOperator{\Span}{span} 
\newcommand{\cb}{\mathrm{cb}} 
\newcommand{\HS}{\mathrm{HS}} 



\newcommand{\univ}{\mathrm{univ}}
\let\cal\relax
\newcommand{\cal}{\mathcal}
\newcommand{\Hor}{\textnormal{Hör}}
\newcommand{\HI}{\mathrm{H}^\infty}

\newcommand{\TC}{\mathcal{C}}
\newcommand{\h}{h}
\selectlanguage{english}
\newtheorem{thm}{Theorem}[section]
\newtheorem{defi}[thm]{Definition}

\newtheorem{prop}[thm]{Proposition}

\newtheorem{cor}[thm]{Corollary}
\newtheorem{lemma}[thm]{Lemma}

\newtheorem{example}[thm]{Example}
\newtheorem{remark}[thm]{Remark}

\begin{document}

\cleardoublepage
\chapter*{}
\thispagestyle{empty}

\begin{flushleft}
\textbf{The harmonic oscillator on the Moyal-Groenewold plane:}\\
\textit{An approach via Lie groups and twisted Weyl tuples}

\vspace{1em}
\textbf{Cédric Arhancet, Lukas Hagedorn, Christoph Kriegler, and Pierre Portal}
\end{flushleft}

\vspace{2em}

\title{The harmonic oscillator on the Moyal-Groenewold plane: an approach via Lie groups and twisted Weyl tuples} 
\author{C\'edric Arhancet, Lukas Hagedorn, Christoph Kriegler and Pierre Portal}
\frontmatter

\begin{abstract}
This paper investigates the functional calculus of the harmonic oscillator on each Moyal-Groenewold plane, the noncommutative phase space which is a fundamental object in quantum mechanics. Specifically, we show that the harmonic oscillator admits a bounded $\mathrm{H}^\infty(\Sigma_\omega)$ functional calculus for any angle $0 < \omega < \frac{\pi}{2}$ and even a bounded H\"ormander functional calculus on the associated noncommutative $\mathrm{L}^p$-spaces, where $\Sigma_\omega=\{ z \in \mathbb{C}^*: |\arg z| <\omega \}$. To achieve these results, we develop a connection with the theory of 2-step nilpotent Lie groups by introducing a notion of twisted Weyl tuple and connecting it to some semigroups of operators previously investigated by Robinson via group representations. Along the way, we demonstrate that $\mathrm{L}^p$-square-max decompositions lead to new insights between noncommutative ergodic theory and $R$-boundedness, and we prove a twisted transference principle, which is of independent interest. Our approach accommodates the presence of a constant magnetic field and they are indeed new even in the framework of magnetic Weyl calculus on classical $\mathrm{L}^p$-spaces. Our results contribute to the understanding of functional calculi on noncommutative spaces and have implications for the maximal regularity of the most basic evolution equations associated to the harmonic oscillator.
\end{abstract}

\dedication{In memory of Alan McIntosh (1942–2016) and Jos\'e Enrique Moyal (1910–1998).}

\keywords{Spectral multiplier theorems, functional calculus, noncommutative $\L^p$-spaces, Lie group representations, 2-step nilpotent Lie groups, Moyal-Groenewold plane, harmonic oscillator, $R$-boundedness, noncommutative ergodic theory, maximal regularity.}

\classification[46L51, 47D03, 22E25]{47A60}

\begin{ack}
We are grateful to Brian Hall, Boris Khesin, Orr Shalit, and Victor Gayral for brief discussions, and to Serge Nicaise for pointing us to the reference \cite{KMR97}.
\end{ack}

\begin{funding}
The authors gratefully acknowledge the support from the French National Research Agency grant ANR-18-CE40-0021 (project HASCON). 
\end{funding}

\tableofcontents
\mainmatter


\chapter{Introduction}

\section{Introduction}
\label{sec-introduction}

A fundamental consequence of the spectral theory developed by Hilbert and von Neumann is the ability to provide spectral decompositions and functional calculus of differential operators (or variants) acting on $\L^2$-spaces. Such an unbounded operator $A$ is typically a geometric Laplacian or a Hamiltonian, and one needs to analyse their behaviour on $\L^p$-spaces. When $A$ is a positive self-adjoint operator, one can define a bounded operator $f(A)$ for an essentially bounded measurable function $f \co \R^+ \to \mathbb{C}$ by functional calculus. In harmonic analysis, the spectral multiplier problem then consists in establishing sharp estimates on the norm of the operator $f(A)$ acting on the subspace $\L^2 \cap \L^p$ equipped with the $\L^p$-norm. Even for the usual Laplacian $-\Delta$ on $\R^{d}$, this problem is not completely solved, and questions with deep consequences such as the Bochner-Riesz conjecture remain open. See e.g.~\cite[Chapter 5]{Gra14b}, \cite{GOWWZ21} and \cite{LuY13} and references therein. 

For Hamiltonians, the problem is not simpler, as can be seen in the fundamental example of the Hermite operator  
\begin{equation}
\label{Hermite-operator}
-\Delta+|x|^2
=-\sum_{k=1}^d \partial_{k}^2+\sum_{k=1}^d x_k^2
\end{equation}
on the Euclidean space $\R^d$, investigated in \cite{LeR22}, \cite{Tha89a}, \cite{Tha89b} and \cite{Tha91}. 
By \cite[p.~492]{Nee22}, this positive self-adjoint operator admits the discrete spectrum 
$$
2\mathbb{N}+d=\{d,2+d,4+d,\ldots\},
$$ 
in sharp contrast to the classical Laplacian $-\Delta$. In particular, this operator has a spectral gap. The Hermite operator and its eigenfunctions, the so-called Hermite functions, respectively represent the Hamiltonian and quantum states of the particle for the quantum-mechanical analogue of the classical harmonic oscillator.

Going beyond Euclidean geometry or simple Hamiltonians, the spectral multiplier problem rapidly gains in complexity and optimal estimates are currently out of reach. However, three types of results are available. First of all, there are abstract theories that reduce spectral multiplier estimates to heat kernel information (e.g.~\cite{COSY16} and \cite{KrW18}). Secondly, in concrete situations (primarily on Lie groups), strong results can be obtained through a subtle understanding of the relevant wave equation (the literature is vast, see \cite{MMN23} and references therein). Finally, one can start from the algebraic structures underlying the context, and use operator algebraic approaches, often with a probabilistic flavour (see \cite{JMPX21} and references therein). This is required in the context of noncommutative geometry and useful even in commutative situations, but quite difficult to implement in practice.

In this paper, we work at the intersection of these three approaches to derive concrete spectral multiplier estimates for the quantum harmonic oscillator in the noncommutative geometric context of quantum Euclidean spaces, but also in the commutative context of the <<deformed>> Hermite operator with the presence of a magnetic field. Actually, we establish a result for the <<sum of the squares>> $A_1^2+\cdots+A_n^2$ of a \textit{suitable} family $(A_1,\ldots,A_n)$ of unbounded operators acting on a (noncommutative) $\L^p$-space, which encompasses both situations. We call these families $\Theta$-Weyl tuples, see Definition \ref{defi-Weyl-tuple} for a precise definition, which depends on a $d \times d$ real skew-symmetric matrix $\Theta$. Moreover, in general the <<sum of the squares>> admits a spectral gap and we investigate the shifted operator $A_1^2+\cdots+A_n^2-\alpha$, where $\alpha \geq 0$ is the value of the spectral gap.

Since the spectral multiplier problem becomes significantly more challenging in noncommutative settings, it is natural to seek transference methods that allow us to leverage well-established techniques from classical harmonic analysis. A key aspect of our approach is the development of an abstract framework that enables the use of a suitable form of the <<transference technique>> of Coifman and Weis \cite{CoW76}. Roughly speaking, a  generalization of the formulation of \cite[Theorem 3.15 p.~19]{CoW76} says that if a linear operator $T \co X \to X$ can be written as a suitable integral
\begin{equation}
\label{trans-intro}
T
=\int _{G} b(s) \pi_s \d s
\end{equation}
for a continuous uniformly bounded representation $\pi$ of an amenable locally compact group $G$ on a Banach space $X$ and some suitable function $b \co G \to \mathbb{C}$ 
then the norm of the operator $T$ can be estimated by that of the convolution operator $C_b \co \L^{p}(G,X) \to \L^{p}(G,X)$ by the function $b$ acting on the Bochner space $\L^{p}(G,X)$, where $1 \leq p < \infty$. In other words, a harmonic analysis problem in a challenging setting (e.g.~in a noncommutative context) can be turned into a vector-valued harmonic analysis problem in a commutative context (e.g.~the Euclidean setting). For instance, instead of tackling spectral estimates for an operator in a noncommutative setting directly, one can transfer the problem to a classical setting where harmonic analysis problems may be easier to solve, before transferring the results back. Vector-valued harmonic analysis is thoroughly investigated in the books \cite{HvNVW16}, \cite{HvNVW18} and \cite{HvNVW23}. To be precise, note that, in our approach, we use a projective representation instead of an ordinary representation, and a convolution twisted  by a 2-cocycle instead of the classical convolution.

Although transference is a very natural idea from the perspective of representation theory, one major difficulty is to obtain an explicit decomposition as in \eqref{trans-intro}. In this paper, we construct a projective representation for each $\Theta$-Weyl tuple $(A_1,\ldots,A_n)$. Moreover, we define a universal family of operators, called universal $\Theta$-Weyl tuple, which allows us to reduce to this case via the transference method. Incidentally, the vector-valued harmonic analysis problems arising in this setting motivate the development of new techniques related to $R$-boundedness and square-max decompositions, allowing us, in the process, to even refine in Corollary \ref{cor-heat-R-bounded} and Example \ref{example-Lap-new} classical estimates for the Laplacian on $\R^n$ from the standard literature.

A transference approach was recently developed in \cite{NPS23} for the quintessential example of the Hermite operator defined in \eqref{Hermite-operator}. In this setting, the sum of squares associated with the universal Weyl tuple corresponds to the twisted Laplacian, while the group $G$ is identified with the Heisenberg group. The present paper can be seen as a far-reaching generalization of this particular case, extending the use of transference methods to non-trivial noncommutative geometry contexts. Specifically, we move beyond the Heisenberg group by replacing it with 2-step stratified Lie groups when $\Theta \not=0$, thus broadening the applicability of these techniques, including the context of quantum Euclidean spaces.

Quantum Euclidean spaces $\R^d_\Theta$, also known as Moyal-Groenewold planes \cite{Gro46}, \cite{Moy49} or simply Moyal planes (although the name Moyal-Groenewold $d$-spaces is undeniably more suitable if $d>2$), provide fundamental examples of \textit{noncompact} noncommutative (spin) manifolds as observed initially in \cite{GGISV04}. Their spectral action was computed in \cite{GaI05}. These spaces are also used as leading examples of quantum locally compact metric spaces as first noticed in \cite{Lat13}. They are locally compact counterparts of quantum tori and admit several equivalent definitions. Initially, in \cite{Gro46}, \cite{Moy49}, these entities were interpreted as noncommutative deformations of classical Euclidean spaces $\R^d$. From the more general point of view of Rieffel's deformation quantization theory, as described in the memoir \cite{Rie93}, one can describe $\R^d_\Theta$ using the algebra of classical smooth functions equipped with the Moyal star product 
\begin{equation}
\label{star-product}
(f \star_\Theta g)(x)
\ov{\mathrm{def}}{=} \frac{1}{(2\pi)^d} \int_{\R^d}\int_{\R^d} f(x+\tfrac{1}{2}\Theta u)g(x+s) e^{-\i \langle u,s \rangle} \d s \d u, \quad f,g \in \cal{S}(\R^d),
\end{equation}
which is a deformation of the classical product of functions. Here $\Theta$ denotes a $d\times d$ real skew-symmetric matrix. The relation between this deformation quantization and the operator-theoretic approach to quantum mechanics is given by the Weyl calculus, which we generalize in this paper in Definition \ref{defi-Weyl-calculus}.  

It is worth noting that the Moyal star product is associative and noncommutative when $\Theta \not=0$. For further details, we refer to \cite{GrV88} and \cite{VaG88}, where this product was extended beyond Schwartz functions to suitable tempered distributions, thus broadening its applicability.
This product enabled Moyal, in \cite{Moy49}, to develop an alternative formulation of quantum mechanics in phase space for some classes of phase spaces. In this formulation, the states of a quantum system are represented by their Wigner functions, which encode quantum information in terms of phase-space distributions.

A key aspect of this approach is that quantum observables retain their classical form as functions on phase space, but their multiplication follows the Moyal star product instead of the usual commutative product. This deformed product introduces quantum effects while preserving the overall structure of classical mechanics in phase space.

The expectation value of an observable in a given state is computed as an integral over phase space, involving the product of the Wigner function and the phase-space function representing the observable. This integral formulation provides an alternative to the traditional Hilbert space formalism of quantum mechanics, offering insights into quantum-classical correspondence and quantum decoherence.

For a more detailed presentation of this approach, we refer to \cite{CFZ14}, \cite{Gad95} and \cite{Mar21}.  For a historical perspective, see \cite{Hil21}. Additionally, this phase-space formulation has found applications in various domains, including quantum optics and  semiclassical approximations.

Several analytic and geometric aspects of quantum Euclidean spaces were investigated in \cite{SuZ18} (Connes integration formula), \cite{LSZ20} (Cwikel estimates), \cite{MSX20} (quantum differentiability), \cite{GJP21} \cite{MSZ19} (pseudo-differential calculus) and \cite{HWW22} (Fourier restriction estimates). We also refer to \cite{BrS11} and \cite{NeS98} for the construction of instantons in the case $d=4$ and to \cite{CaW10} and \cite{CDMW11} for investigations of some properties of the spectral distance. Finally, sigma-Model Solitons on these noncommutative spaces were studied in \cite{DLL15} using time-frequency analysis and Gabor analysis.

On a dual side, quantum Euclidean spaces can be viewed  as \textit{twisted} group von Neumann algebras deformed by 2-cocycles. More precisely, each of these spaces can be identified with the von Neumann algebra $\VN(\R^d,\sigma_\Theta)$ where $\sigma_\Theta \co \R^d \times \R^d \to \T$ is a 2-cocycle defined by
\begin{equation}
\label{def-cocycle-intro}
\sigma_\Theta(s,t) 
\ov{\mathrm{def}}{=} \e^{\frac{1}{2}\i\langle s, \Theta t\rangle} \quad s,t \in \R^d
\end{equation}
for a $d \times d$ real skew-symmetric matrix $\Theta$. It is a more algebraic way to define these structures, which allows one to introduce the associated noncommutative $\L^p$-spaces $\L^p(\R^d_\Theta)$. In particular, we can see the algebra $\R^d_\Theta$ as generated by $d$ unbounded self-adjoint operators $x_1,\ldots,x_d$ (<<noncommutative spatial variables>>) satisfying the commutation relations
\begin{equation}
\label{non-com-var}
\i[x_j, x_k] 
=\Theta_{jk}, \quad j,k \in \{1,\ldots,d\}. 
\end{equation}

We show here that an advantage of this approach is that it connects with the famous work \cite{Mac58} of Mackey, allowing us to exploit the <<Mackey group>> $\mathbb{H}_\Theta$, which is a nilpotent Lie group defined by $\mathbb{H}_\Theta \ov{\mathrm{def}}{=} \R^d \times \T$ and a \textit{twisted} group law. To the best of our knowledge, it is the first time where this group is used in this context. This opens the door to more extensive use of the heavy machinery of analysis on Lie groups in the study of the Moyal-Groenewold planes. 

Finally, it should be noted that exploring Moyal-Groenewold planes can be considered as a subset of the investigation of quantum phase spaces constructed from locally compact abelian groups equipped with a sufficiently regular 2-cocycle, see the recent \cite{BCLPY23}, \cite{FulG23} and references therein for more information. 

The interest for the analysis of these fuzzy structures is strengthened by their links with quantum field theory, quantum gravity, string theory and quantum Hall effect. Indeed, it is widely accepted that when we reach the Planck length scale, the continuous nature of space-time collapses, leading to a somewhat indistinct or noncommutative space-time. We refer to the surveys \cite{ABJ08}, \cite{BaQ06}, \cite{DoN01}, \cite{Riv07}, \cite{Sza03}, \cite{Wul06} and \cite{Wul19} for a presentation of these topics. 

In particular, Grosse and Wulkenhaar introduced in \cite{GrW05} a remarkable quantum field model $\phi^4$ on the four-dimensional Moyal plane $\R^4_\Theta$ that is renormalizable to all orders. Here, we suppose that the real skew-symmetric matrix $\Theta$ is invertible. The associated action
\begin{align}
\MoveEqLeft
\label{action-GW}S(\phi,m,\lambda,\Omega)
S(\phi,m,\lambda,\Omega)  \\
&\ov{\mathrm{def}}{=} \int \Big( \tfrac{1}{2} [\partial_i \phi  \star
\partial^i \phi] + \tfrac{\Omega^2}{2} (\tilde{x}_i \phi) \star (\tilde{x}^i \phi) + \tfrac{m^2}{2} \phi\star\phi
+ \tfrac{\lambda}{4!} \phi \star \phi\star \phi \star \phi\Big)(x) \d^4x  \nonumber       
\end{align}
is obtained by adding a <<harmonic term>> $\tfrac{\Omega^2}{2} (\tilde{x}_i \phi) \star (\tilde{x}^i \phi)$ to the naive action (this last one is obtained by replacing the usual commutative product by the Moyal star product introduced in \eqref{star-product} in the classical action), curing the UV/IR mixing curse of the naive model on $\R^4_\Theta$. Here the <<frequency parameter>> $\Omega$ is a positive dimensionless parameter, $m$ is the mass, $\lambda$ is the coupling constant and $\tilde{x}_i \ov{\mathrm{def}}{=} 2\sum_{j=1}^4[\Theta^{-1}]_{ij}x_j$. A particularly striking aspect of this $\phi^4$ model is that it exhibits more favourable behaviour in this noncommutative space compared to $\phi^4$ models on classical spaces. Indeed, it is shown in \cite{GrW04} that it has no Landau ghost contrarily to the case of the usual <<commutative>> $\phi^4$ theory. Finally, note that the model is covariant under the Langmann-Szabo duality transformation \cite{LaS02}, i.e.~we have 
\begin{equation*}
S(\phi,m, \lambda,\Omega)
=\Omega^2S(\phi,\tfrac{m}{\Omega},\tfrac{\lambda}{\Omega^2},\tfrac{1}{\Omega}).
\end{equation*} 
Consequently, the frequency parameter can be restricted to $\Omega \in [0,1]$. When $\Omega = 1$, the model is self-dual. 

\paragraph{H\"ormander functional calculus}
Recall the following famous theorem of Mikhlin \cite{Mik56} which is a cornerstone of harmonic analysis. This theorem establishes sufficient conditions for a function to act as a bounded Fourier multiplier on the space $\L^p(\R^d)$. Notably, the function may possess a singularity at 0. We refer also \cite[Proposition 3 p.~204]{Zim89} for a generalization for $\UMD$ vector-valued $\L^p$-spaces and to \cite[Corollary 13.2.15 p.~273]{HvNVW23} for an operator-valued version. In the next statement describing this result, $\cal{S}(\R^d)$ denotes the space of Schwartz functions on $\R^d$ and $\hat{g}$ is the Fourier transform of a function $g$ of $\cal{S}(\R^d)$.

\begin{thm}[Mikhlin]
\label{Th-Mikhlin}
Suppose that $f \in \L^\infty(\R^d)$. Assume also that for every distributional derivative $\partial_\alpha$ where $\alpha = (\alpha_1,\ldots,\alpha_d) \in \{0,1\}^d$, we have
\begin{equation}
\label{Mikhlin}
\sup_{\xi \neq 0}\, |\xi|^{|\alpha|} \left|\partial_\alpha f(\xi)\right|
<\infty, \quad \text{where } |\alpha| \ov{\mathrm{def}}{=}\alpha_1+\cdots+\alpha_d.
\end{equation}
If $1 < p < \infty$ then the operator $M_f \co \cal{S}(\R^d) \to \cal{S}'(\R^d)$, $g\mapsto \cal{F}^{-1}\big( f \hat{g} \big)$ extends to a bounded operator on the Banach space $\L^p(\R^d)$, where $\cal{F}^{-1}$ is the inverse Fourier transform.
\end{thm}

In 1960, H\"ormander significantly contributed in \cite[Theorem 2.5 p.~120]{Hor60} to the field of Fourier multiplier theorems by establishing what is currently recognized as the Mikhlin-H\"ormander multiplier theorem. H\"ormander showed that the conclusion of Mikhlin's theorem remains valid when we replace the Mikhlin condition \eqref{Mikhlin} by an estimate on an $\L^2$-average. We refer to \cite[Corollary 8.11 p.~165]{Duo01}, \cite[Theorem 6.2.7 p.~446]{Gra14a}, \cite[Theorem 7.9.5 p.~243]{Hor03} and \cite[p.~96]{Ste70}, for different proofs of this classical result.

\begin{thm}[H\"ormander]
\label{Hormander-theo}
Let $f \co \R^d \to \mathbb{C}$ be a bounded function of class $\mathrm{C}^{\lfloor \frac{d}{2}\rfloor +1}$ on $\R^d-\{0\}$. Suppose that for any multi-index $\alpha = (\alpha_1,\ldots,\alpha_d)$ of $\N^d$
\begin{equation}
\label{hor-1}
\sup_{R > 0} R^{|\alpha|} \bigg(\frac{1}{R^d}\int_{R < |\xi| < 2R}  \big| \partial_{\alpha} f(\xi) \big|^2  \d \xi \bigg)^{\frac{1}{2}}
< \infty, \quad 0 \leq |\alpha| \leq \lfloor \tfrac{d}{2}\rfloor+1.
\end{equation} 
If $1 < p < \infty$ then the operator $M_f \co \cal{S}(\R^d) \to \cal{S}'(\R^d)$, $g\mapsto \cal{F}^{-1}\big( f \hat{g} \big)$ extends to a bounded operator on the Banach space $\L^p(\R^d)$, where $\cal{F}^{-1}$ is the inverse Fourier transform.
\end{thm}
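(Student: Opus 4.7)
The plan is to use Calder\'on--Zygmund theory to reduce the $\L^p$-boundedness to a weak-$(1,1)$ bound, combined with a Littlewood--Paley decomposition driven by the hypothesis \eqref{hor-1}. The $\L^2$ case is immediate: since $\|f\|_\infty < \infty$, Plancherel gives $\|M_f g\|_2 = \|f \hat g\|_2 \leq \|f\|_\infty \|g\|_2$. Once the weak-$(1,1)$ endpoint is secured, Marcinkiewicz interpolation produces boundedness on $\L^p$ for $1<p\leq 2$, and duality (the formal adjoint of $M_f$ is $M_{\overline{f}}$, which satisfies \eqref{hor-1} with the same constants) extends this to $2 \leq p < \infty$.

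By the Calder\'on--Zygmund decomposition, the weak-$(1,1)$ estimate follows from the integral H\"ormander kernel condition
$$\sup_{y \neq 0} \int_{|x|>2|y|} |K(x-y) - K(x)| \, \d x < \infty,$$
where $K := \cal{F}^{-1}(f)$ is understood as a tempered distribution. To control $K$, fix a smooth radial bump $\varphi$ supported in $\{\tfrac{1}{2} \leq |\xi| \leq 2\}$ with $\sum_{j\in\Z} \varphi(2^{-j}\xi) = 1$ for $\xi \neq 0$, and set $K_j := \cal{F}^{-1}\!\big(f\cdot \varphi(2^{-j}\cdot)\big)$, so that $K = \sum_j K_j$. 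The technical core is the weighted $\L^2$ estimate
$$\int_{\R^d} \big(1 + 2^j|x|\big)^{2s} |K_j(x)|^2 \, \d x \leq C \cdot 2^{jd}, \qquad 0 \leq s \leq \lfloor \tfrac{d}{2}\rfloor+1,$$
obtained via Plancherel after rescaling $\xi \mapsto 2^j\xi$: the left-hand side becomes a sum of squared $\L^2$-norms of derivatives of $f(2^j\cdot)\varphi$ up to order $s$, each term controlled by \eqref{hor-1} on the annulus $\{|\xi|\sim 2^j\}$ together with the harmless derivatives of $\varphi$. Choosing $s > \tfrac{d}{2}$ (possible exactly because $\lfloor \tfrac{d}{2}\rfloor+1$ derivatives are available) and applying Cauchy--Schwarz yields the mass bound
$$\int_{|x|>r} |K_j(x)| \, \d x \leq C \min\bigl(1,\, (2^j r)^{d/2 - s}\bigr), \qquad r > 0.$$
A parallel argument applied to $\nabla K_j$, whose Fourier transform $\i\xi f(\xi)\varphi(2^{-j}\xi)$ is again supported in the annulus $\{|\xi|\sim 2^j\}$, yields the gradient bound $\int|\nabla K_j(x)| \, \d x \leq C\cdot 2^j$.

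To finish, given $y \neq 0$, split the sum $K = \sum_j K_j$ at the dyadic scale $j_0$ determined by $2^{j_0}|y| \sim 1$. For low frequencies $j \leq j_0$, the mean-value inequality plus the gradient bound give $\int |K_j(\cdot - y) - K_j|\,\d x \leq |y| \cdot C \cdot 2^j$, and summing yields $\lesssim |y|\cdot 2^{j_0} \lesssim 1$. For high frequencies $j > j_0$, the mass bound applied separately to both $K_j(\cdot - y)$ and $K_j$ over the region $\{|x|>2|y|\}$ (using $|x-y|>|y|$ there) gives a contribution $\lesssim (2^j|y|)^{d/2-s}$, and the sum is a convergent geometric series since $s > \tfrac{d}{2}$.

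I expect the main obstacle to be a clean derivation of the weighted $\L^2$ estimate on $K_j$ with the sharp number of derivatives $\lfloor\tfrac{d}{2}\rfloor+1$. In odd dimensions the threshold $s > \tfrac{d}{2}$ falls strictly between two integer smoothness orders, so one must interpolate (for instance via complex interpolation or a fractional Leibniz argument) between the integer-order bounds that follow directly from \eqref{hor-1}; this is the only step that genuinely exploits the fractional nature of H\"ormander's condition. Once this local Fourier analysis is in place, the Calder\'on--Zygmund conclusion is entirely standard.
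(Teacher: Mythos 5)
The paper does not prove this statement: Theorem \ref{Hormander-theo} is quoted as classical background, with the proof delegated to the cited references (H\"ormander's original article, Grafakos, Stein). Your argument is essentially the standard proof found in those references -- Plancherel for $p=2$, a dyadic decomposition $K=\sum_j K_j$, weighted $\L^2$ bounds on each piece giving the mass and gradient estimates, verification of the H\"ormander integral condition by splitting at the scale $2^{j_0}|y|\sim 1$, then Calder\'on--Zygmund theory, Marcinkiewicz interpolation and duality -- and it is correct as a scheme. Two small remarks. First, the obstacle you flag at the end is not actually there: you may simply take the integer value $s=\lfloor d/2\rfloor+1$, which satisfies $s>\frac{d}{2}$ in every dimension (even or odd), so all the weighted $\L^2$ estimates are integer-order consequences of \eqref{hor-1} (plus $\norm{f}_\infty$ for the zero-order term) and no complex interpolation or fractional Leibniz rule is needed; fractional Sobolev conditions only become relevant for the sharper variant \eqref{hor-2}. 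Second, to be fully rigorous one should note that the kernel estimates are proved for the dyadic pieces, so one either works with the truncated multipliers $\sum_{|j|\le N} f\,\varphi(2^{-j}\cdot)$, whose kernels satisfy the H\"ormander condition uniformly in $N$, and passes to the limit using the uniform weak-$(1,1)$ and $\L^2$ bounds, or one justifies that $\sum_j K_j$ converges to a function agreeing with the distribution kernel of $M_f$ away from the origin; this is the standard limiting step in Stein and Grafakos and is routine, but it should be said.
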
 

We refer to \cite{Hyt04} for a nice discussion on the difference between these theorems. It is worth noting that the assumption \eqref{hor-1} is more general than the condition 
\begin{equation*}
\label{}
|\xi|^{|\alpha|} |\partial_\alpha f(\xi)| \leq C
\end{equation*}
for all $\alpha \in \N^d$ with $0 \leq |\alpha| \leq \lfloor \frac{d}{2}\rfloor +1$, which is often unfortunately referred to as the Mikhlin condition in the literature, as in the short historical account \cite[Section 1]{Gra21}. A recurring theme in harmonic analysis involves providing proofs for various adaptations of these two theorems, as discussed in the following papers 
\cite{AnL86}, \cite{Ank90}, \cite{CGPT23} (Schur multipliers), \cite{CGPT23b} (group von Neumann algebras), \cite{GJP21}, \cite{Ion02} (symmetric spaces), \cite{JMP14} and \cite{JMP18}.

Theorem \ref{Hormander-theo} implies the following result for the Laplacian $\Delta$ on $\R^d$.

\begin{cor}[H\"ormander]
\label{Hormander-theorem}
Let $f \co \R_+ \to \mathbb{C}$ be a bounded function of class $\mathrm{C}^{\lfloor \frac{d}{2}\rfloor +1}$ on $\R_+^*$. Suppose that
\begin{equation*}
\label{hor-2-89}
\sup_{R> 0} R^{2k-1} \int_{R}^{2R}  \big| f^{(k)}(s) \big|^2  \d s 
< \infty, 
\quad 0 \leq k \leq \lfloor \tfrac{d}{2}\rfloor+1.
\end{equation*} 
If $1 < p < \infty$ then the operator $f(-\Delta)\co \cal{S}(\R^d) \to \cal{S}'(\R^d)$, $g\mapsto \cal{F}^{-1}\big( f(|x|^2) \hat{g} \big)$ extends to a bounded operator on the Banach space $\L^p(\R^d)$.
\end{cor} 

If $s >0$, using  the usual Sobolev norm 
\begin{equation}
\label{Sobolev-norm}
\norm{f}_{\W^{s,2}(\R)} 
\ov{\mathrm{def}}{=} \bnorm{(1+t^2)^{\frac{s}{2}}\hat{f}(t)}_{\L^2(\R)}
\end{equation}
and the inhomogeneous Sobolev space
\begin{equation}
\label{Sobolev-space}
\W^{s,2}(\R) 
\ov{\mathrm{def}}{=} \{f \in \cal{S}'(\R) : \norm{f}_{\W^{s,2}(\R) } < \infty \}
\end{equation}
it was later observed (though it is historically difficult to determine exactly who first noted this), that this theorem could be generalized to encompass all measurable functions $f$ that meet the criterion
\begin{equation}
\label{hor-2}
\norm{f}_{\Hor^s_2(\R_+^*)}
\ov{\mathrm{def}}{=} \sup_{t > 0} \norm{x \mapsto \eta(x) \cdot f(tx) }_{\W^{s,2}(\R)} 
< \infty,
\end{equation} 
where $\eta$ is any non-zero positive function of class $\mathrm{C}^\infty$ with compact support in $(0,\infty)$, and $s > \frac{d}{2}$. 
It is immediate from \eqref{hor-2} that for any $r > 0$, we have $\norm{f}_{\Hor^s_2(\R^*_+)} = \norm{f(r\,\cdot)}_{\Hor^s_2(\R^*_+)}$.
We refer to \cite[Theorem 6.24 p.~79]{Pel18} and to \cite[Theorem 8.10 p.~164]{Duo01}, \cite[Theorem 7.5.5 p.~565]{Gra14b} for a proof of slight variants and to \cite[Theorem 4.7]{CT77}, \cite[p.~63]{Kri09}, \cite[Lemma 1]{Lit65}  for related information.

Finally, note that the norm of radial Fourier multipliers which are bounded on the space $\L^p(\R^d)$ was described more recently in the paper \cite{HNS11} if $d \geq 4$ for $1 < p < \frac{2d-2}{d+1}$. When the symbol $f$ of these radial Fourier multipliers $M_{f}$ is compactly supported in the fixed dyadic annulus $\{ \xi : \frac{1}{2} < |\xi| < 2 \}$, their result takes a particularly satisfying form, namely $f$ induces a bounded operator $M_f \co \L^{p}(\R^{d}) \to \L^{p}(\R^{d})$ if and only if $\hat{f} \in \L^{p}(\R^{d})$. For $d=4$, this was extended in the paper \cite{Cla18} to the range $1 < p < \frac{36}{29}$. Note that proving such a result for $1 <p < \frac{2d}{d+1}$ would have far reaching consequences (implying the Bochner-Riesz conjecture). These results on radial Fourier multipliers are much stronger than Theorem \ref{Hormander-theo}.

H\"ormander's theorem gives results about the functional calculus for the operator $-\Delta$, i.e. for the generator of the heat semigroup on $\R^d$. A traditional broadening of this context involves replacing this semigroup by more general symmetric contraction semigroups of operators acting on classical or noncommutative $\L^p$-spaces. If $(T_t)_{t \geq 0}$ is such a semigroup with generator $-A$ acting on a noncommutative $\L^p$-space $\L^p(\cal{M})$, then the operator $A$ is positive and self-adjoint on $\L^2(\cal{M})$. The spectral theorem thus allows one to obtain a functional calculus $\scr{L}^\infty(\R_{+}) \to \B(\L^2(\cal{M}))$, $f \mapsto f(A)$ where the operator $f(A) \co \L^2(\cal{M}) \to \L^2(\cal{M})$ is well-defined and bounded on the complex Hilbert space $\L^2(\cal{M})$. If $1 < p < \infty$, the function $f$ is said to be an $\L^p$-spectral multiplier if $f(A)$ extends to a bounded operator on $\L^p(\cal{M})$. 
 Giving a version of H\"ormander's theorem in this context entails demonstrating that $f$ is an $\L^p$-spectral multiplier if it fulfils \eqref{hor-2} for some $s > 0$. This often requires additional or different assumptions. 

For instance, Christ, Mauceri and Meda proved in \cite[Theorem 1 p.~74]{Chr91} and \cite{MaM90} that the sub-Laplacian of a stratified Lie group $G$ satisfies a H\"ormander theorem for any $s > \frac{Q}{2}$, where $Q$ is the homogeneous dimension of $G$. An alternative proof of this result has been given by Sikora in \cite{Sik92}. Although the dimensional parameter $\frac{Q}{2}$ is very natural in this context, it is not always the most effective choice. Indeed, Hebisch significantly advanced this area in his work \cite[Theorem 1.1 p.~233]{Heb93}, where he demonstrated an improved condition for sub-Laplacians on products of Heisenberg-type groups. This development builds on and enhances the earlier work of M\"uller and Stein \cite{MuS94}, who studied direct products of Heisenberg groups with Euclidean groups. Specifically, Hebisch proved that the sufficient condition $s> \frac{Q}{2}$ can be more effectively replaced by $s>\frac{d}{2}$, where $d$ represents the topological dimension of the corresponding Lie group. Analogous results were proven by Cowling and Sikora in \cite[Theorem 1.1 p.~2]{CoS01} for the compact Lie group $\mathrm{SU}(2)$ and by Martini and M\"uller in \cite{MaM13} for the 6-dimensional free 2-step nilpotent Lie group $N_{3,2}$ on three generators. See also \cite{Mar15}, \cite{MaM14b} and \cite{Nie23} for similar results for other classes of groups. Actually, as proved in the paper \cite[Theorem 3]{MaM16}, it is known that one cannot do better than $\frac{d}{2}$ for any 2-step stratified Lie group of topological dimension $d$. Finally, it is worth noting that the approach of Duong in \cite[Theorem 3]{Duo96} for sub-Laplacians on stratified Lie groups relates the $\L^1$-norm of the complex time heat kernels of a sub-Laplacian on a stratified Lie group with the order of regularity $s$ needed in the H\"ormander multiplier theorem. 

On a \textit{compact} smooth Riemannian manifold $M$ of dimension $d \geq 2$ without boundary, a second order self-adjoint positive elliptic differential operator $A$ also satisfies a H\"ormander theorem for any $s > \frac{d}{2}$, by \cite[Theorem 3.1. p.~723]{SeS89} (see also \cite[Theorem 5.3.1 p.~155]{Sog17}). See also \cite[p.~469]{DOS02} and \cite{Xu07} for other proofs. It is worth noting that, in the recent paper \cite{MMN23}, it is proved that, for a general class of smooth second-order differential operators associated with a sub-Riemannian structure on a smooth $d$-dimensional manifold, regularity of order $s > \frac{d}{2}$ is necessary for a H\"ormander theorem to hold.

The literature on spectral multiplier theorems is even broader, and includes, in particular,  \cite{Ale94} (sub-Laplacians on Lie groups of polynomial growth), \cite{Blu03}, \cite{DOS02}, \cite{KuU15} (homogeneous metric measure spaces), \cite{CCMS17} (Kohn Laplacian on some spheres), \cite{GCM+01,Har19} (Ornstein-Uhlenbeck operator), \cite{Heb90} (Schr\"odinger operators), \cite{MaM14a} (Grushin operators),  \cite{Med90} (negative generators of symmetric semigroups), \cite{MPR15} (Hodge Laplacian on the Heisenberg group), \cite{CGHM94}, \cite{MPR07} (Laplacians on groups of exponential growth) \cite{Sha21} \cite{Sha22} (abstract group generators with finite speed of propagation and Sobolev embedding properties) and \cite{SWX23} (partial harmonic oscillator).

The present paper relies on two abstract approaches to spectral multiplier theorems: a characterisation using $R$-bounds in the spirit of $\H^{\infty}$ functional calculus (and the aforementioned approach of Duong) proven in \cite{KrW18} (and generalized in \cite{HaP23}), and a transference result in the style of \cite{CoW76} and \cite{NPS23}.

\paragraph{The harmonic oscillator on the Moyal-Groenewold plane $\R^d_\Theta$}

Our spectral multiplier theorem is primarily designed for the harmonic oscillator on quantum Euclidean spaces (introduced in \cite{Gro46}), which can be seen as a natural generalization of the classical harmonic oscillator when the space is quantized. This operator is formally defined by $-\sum_{k=1}^d \partial_k^2+\sum_{k=1}^d X_k^2$ where $X_k$ is the left multiplication by the <<spatial variable>> $x_k$ and where $\partial_k$ is some suitable <<quantum partial derivation>> acting on the Banach space $\L^p(\R^d_\Theta)$ and satisfying 
\begin{equation}
\label{}
\i[X_j, X_k] 
=\Theta_{jk}, \quad[\partial_j,\partial_k]
=0
\quad \text{and} \quad 
[\partial_j,X_k]
=\delta_{j=k}, \quad 1 \leq j,k\leq d.
\end{equation}
We obtain in Theorem \ref{thm-Hormander-calculus-for-quantum-euclidean-space} a bounded H\"ormander functional calculus of order $s > d + \frac12$ on the space $\L^p(\R^d_\theta)$ for a shifted version of the harmonic oscillator. Note that our approach can be used for a variant of this operator when <<magnetic fields>> are present. We refer to \cite{NeP20} for the case of the classical (euclidean) harmonic oscillator. Operators with the same algebraic structure (i.e.~sums of squares of generators of a Heisenberg algebra), acting on UMD Banach lattices, have been studied in the paper \cite{NPS23}. The case of the operator $\sum_{k=1}^d X_k^2$ acting on quantum Euclidean spaces was considered in the PhD thesis of Sharma \cite{Sha22}, and motivated our work on this topic. Going from $\sum_{k=1}^d X_k^2$ to the true harmonic oscillator 
\begin{equation*}
\label{}
-\sum_{k=1}^d \partial_k^2+\sum_{k=1}^d X_k^2
\end{equation*}
was the motivation to generalize the Weyl calculus studied in \cite{NeP20} to the $\Theta$-Weyl calculus studied here.

Note also that  it is shown in \cite{GRV06} that the propagator of the Grosse-Wulkenhaar model defined by \eqref{action-GW} is the kernel of the inverse of a shifted <<classical>> harmonic oscillator $-\Delta +\Omega^2\tilde{x} + m^2$, allowing a second proof in \cite{GMR06} of the renormalizability of this model to all orders. Recall that the <<frequency parameter>> $\Omega$ is a positive dimensionless parameter. This operator is also connected to noncommutative geometry, see \cite{GaW13}, where a related spectral triple is introduced. Our result on the harmonic oscillator $-\sum_{k=1}^d \partial_k^2+\sum_{k=1}^d X_k^2$ (which is different from the operator $-\Delta +\Omega^2\tilde{x} + m^2$) combined with the result \cite[Lemma 3.5 p.~28]{JMX06} allows us to obtain the (uniform) boundedness of the $\H^\infty$ functional calculus of the operator 
\begin{equation*}
\label{}
-\sum_{k=1}^d \partial_k^2+\sum_{k=1}^d  X_k^2 + m^2
\end{equation*}
for any <<mass>> $m > 0$.

\paragraph{Application to maximal regularity of evolution equations}
Recall that a H\"ormander type spectral multiplier theorem implies the boundedness of the $\H^\infty(\Sigma_\omega)$ functional calculus \cite{Haa06}, \cite[Chapter 10]{HvNVW18} for any angle $\omega > 0$, where 
\begin{equation*}
\label{}
\Sigma_\omega \ov{\mathrm{def}}{=}  \{ z \in \mathbb{C}^*: |\arg z| <\omega \}
\end{equation*}
is the open sector of angle $2\omega$. Our result thus implies that the harmonic oscillator on $\R^{d}_{\Theta}$ has maximal regularity. Let us recall classical concepts of maximal regularity, and refer to \cite{Den21}, \cite{HvNVW23}, \cite{KuW04} (see also \cite{Are04}) and the references therein for a much more complete account. If $A$ is a closed densely defined linear operator on a Banach space $X$, we can consider the following inhomogeneous autonomous Cauchy problem
\begin{equation}
\label{Cauchy-problem}
\begin{cases}
y'(t)+A(y(t))=f(t), \quad t \in ]0,T[ \\
y(0)=x_0
\end{cases},
\end{equation}
where $f \co ]0,T[ \to X$ is a function whose equivalence class is an element of the Bochner space $\L^1(]0,T[,X)$ with $0 < T  <\infty$ and $x_0 \in X$.

If $-A$ is the generator of a strongly continuous semigroup $(T_t)_{t \geq 0}$ on a Banach space $X$ then by \cite[Proposition 3.1.16 p.~118]{ABHN11} or \cite[Theorem G.3.2 p.~531]{HvNVW18} the Cauchy problem \eqref{Cauchy-problem} admits a unique \textit{mild} solution given by 
\begin{equation}
\label{sol-Cauchy}
y(t) 
= T_t(x_0) +\int_{0}^{t} T_{t-s}(f(s)) \d  s, \quad 0 \leq t < T. 
\end{equation}

Assume that $1 < p < \infty$. If the right-hand side $f$ of \eqref{Cauchy-problem} belongs to the Banach space $\L^p(]0,T[,X)$, <<optimal regularity>> would imply that both $y'$ and $A y$ should also belong to the space $\L^p(]0,T[,X)$. Following \cite[Definition 17.2.4 p.~577]{HvNVW23} and \cite[Proposition 17.2.11 p.~581]{HvNVW23}, we define the operator $A$ to have maximal $\L^p$-regularity on $]0,T[$ if for any vector-valued function $f \in \L^p(]0,T[,X)$ the mild solution of the Cauchy problem \eqref{Cauchy-problem} with $x_0=0$  is a <<strong $\L^p$-solution>>, meaning that the following two properties hold:
\begin{enumerate}
	\item $y$ takes values in $\dom A$ almost everywhere,
	\item the class of the function $t \mapsto A(y(t))$ belongs to the space $\L^p(]0,T[,X)$.
\end{enumerate}
 
It is a commonly known fact \cite[Theorem 17.2.15 p.~586]{HvNVW23} that this property implies that the operator $-A$ generates an analytic semigroup on $X$. Note also that by \cite[Theorem 17.2.31 p.~604]{HvNVW23}
this property is independent of $p$. A classical result \cite[Corollary 17.3.6 p.~631]{HvNVW23} (combined with \cite[Lemma 17.2.16 p.~590]{HvNVW23}) says that an operator admitting a bounded $\H^\infty(\Sigma_\theta)$ functional calculus of angle $0<\theta <\frac{\pi}{2}$ on a $\UMD$ Banach space $X$ has maximal $\L^p$-regularity on $]0,T[$. Consequently, our main result implies that the harmonic oscillator on the Moyal-Groenewold plane has $\L^p$-maximal regularity on $]0,T[$ on the Banach space $\L^q(\R^d_\Theta)$ for any $1 < p,q < \infty$. This result is in the continuity of \cite{McD24}, where a preliminary theory of partial differential equations on this structure is presented, and of \cite{GJP21}, where singular integral theory and pseudodifferential calculus are thoroughly investigated. We also refer to \cite{RST23} for more harmonic analysis results in $\L^q(\R^d_\Theta)$.

An immediate consequence of maximal $\L^p$-regularity is the following estimate \cite[p.~577]{HvNVW23}
\begin{equation}
\label{}
 \norm{Ay}_{\L^p([0,T[ ,X)}
 \leq C\norm{f}_{\L^p([0,T[ ,X)},
\end{equation}
for a positive constant $C \geq 0$ that does not depend on $f$. This bound plays a crucial role in the analysis of quasilinear problems, as it enables an effective approach via linearization techniques and the contraction mapping principle. For a comprehensive discussion on these applications, we refer to \cite[Chapter 18]{HvNVW23}.

\paragraph{Approach and structure of the paper}

In Section \ref{sec-prelim}, we collect background information in operator theory, functional calculus theory, and noncommutative analysis.

To describe the set up of our approach in Section \ref{Sec-Weyl1}, let us first recall that the Stone-von Neumann theorem \cite[Theorem 14.8 p.~286]{Hal13}, \cite[Theorem 11.24 p.~501]{Mor17} states that, if $(A_1,\ldots,A_n)$ and $(B_1,\ldots,B_n)$ are self-adjoint operators acting irreducibly on a complex Hilbert space $H$, and satisfying the exponentiated commutation relations
\begin{align}
e^{\i sA_j} e^{\i tA_k} & = e^{\i tA_k} e^{\i sA_j} \label{first1} \\
 e^{\i s B_j} e^{\i t B_k} & = e^{\i t B_k} e^{\i s B_j}  , \quad \quad \qquad s,t \in \R,\ j,k \in \{1,\ldots,n\} \label{second1} \\
e^{\i t A_j} e^{\i sB_k} &= e^{-\i ts \delta_{j=k}} e^{\i sB_k} e^{\i tA_j} \label{third},
\end{align}
then the operators are unitarily equivalent to the <<position operators>> $(X_1,\ldots,X_n)$ and <<momentum operators>> $(Q_1,\ldots,Q_n)$ defined on $\L^2(\R^d)$ by
\begin{align*}
(Q_jf)(x) = x_jf(x),
\quad (P_jf)(x) = \tfrac{1}{\i} \partial_j f(x), 
\quad x \in \R^d.
\end{align*}

In \cite[Definition 3.1 p.~262]{NeP20}, the authors investigated Weyl pairs $(A_1,\ldots,A_n)$ and $(B_1,\ldots,B_n)$ of closed densely defined operators acting on a complex \textit{Banach space }$X$ satisfying the previous relations. In Section \ref{Sec-Weyl1}, we generalize this theory by introducing (see Definition \ref{defi-Weyl-tuple}) the notion of $\Theta$-Weyl tuple $A=(A_1,\ldots,A_n)$ of operators acting on a Banach space $X$. Loosely speaking, if $\Theta \in \M_n(\R)$ denotes a fixed skew-symmetric matrix, we say that $(A_1,\ldots,A_n)$ is a $\Theta$-Weyl tuple if we have some associated groups of operators which satisfy the following commutation relations
\begin{equation}
\label{Weyl-tuple-intro}
e^{\i t A_j} e^{\i s A_k} 
= e^{\i \Theta_{jk} ts} e^{\i s A_k} e^{\i t A_j}, \quad j,k \in \{1,\ldots,n\},\: s,t \in \R. 
\end{equation}
We show in Example \ref{ex-Hall} how to associate to a $d$-dimensional Weyl pair as in \eqref{first1} a $\Theta$-Weyl tuple for some suitable matrix $\Theta$. In Lemma \ref{lem-Weyl-product}, we construct a projective representation $\R^n \to \B(X)$, $t \mapsto e^{\i t \cdot A}$ of $\R^n$ on $X$ associated with each $\Theta$-Weyl tuple $(A_1,\ldots,A_n)$. This projective representation allows us to define the associated notion of $\Theta$-Weyl functional calculus $a \mapsto a(A)$ by the formula \eqref{Weyl-calculus}.

In Section \ref{sec-twisted}, we prove a twisted transference result (Proposition \ref{Prop matricial Transference}) that is a generalization of the transference principle popularized in \cite{CoW76}. Note that this result can also be used in the more general setting described in \cite{FulG23} which encompasses the context of \cite{LeL22} and \cite{Lei11}. It estimates the norms of operators $a(A)$ defined through the $\Theta$-Weyl calculus by the norm of some twisted convolution operator. Finally, we introduce a specific concrete example of a $\Theta$-Weyl tuple $A=(A_1,\ldots,A_n)$ acting on the space $\L^p(\R^n)$, which we call the universal $\Theta$-Weyl tuple $A_\univ^\Theta$. This object plays a fundamental role in the sequel as we transfer properties of this universal $\Theta$-Weyl tuple to \textit{arbitrary} $\Theta$-Weyl tuples.

We focus on operators $a(A)$ that arise as spectral multipliers associated with a strongly continuous semigroup $(T_t)_{t \geq 0}$ generated by the abstract <<sum of squares>> $\sum_{k=1}^n A_k^2$. We give some preliminary information about this semigroup in Section \ref{subsec-semigroup-A} by connecting it to the theory of 2-step nilpotent Lie groups, using the <<Mackey group>> $\mathbb{H}_\Theta$ (as well as to some semigroups of operators previously investigated by Robinson). We focus on the case where $A_\univ^\Theta$ is the universal $\Theta$-Weyl tuple defined in \eqref{A-univ}. We are able to show in Proposition \ref{prop-HI-calculus-non-shifted}, that the operator $\sum_{k=1}^n A_k^2$ has a bounded $\HI(\Sigma_\omega)$ functional calculus on the Bochner space $\L^p(\R^n,S^p)$ for any angle $\omega > \frac{\pi}{2}$. 
An interesting technical aspect of our paper at this point is that we use contractively regular semigroups that are not necessarily positive. Prior to our work, the application of these semigroups in analysis seemed to be quite limited. In this paper, they naturally emerge in the context of twisted convolutions.

In Section \ref{subsec-shifted-HI-calculus}, and in subsequent sections of the paper, we need to examine in depth the semigroup and its integral description. We show that there exists a tensor decomposition of this semigroup which essentially reduces the difficulties to the case of the matrix $\Theta=\begin{pmatrix} 
0 & 1 \\ 
-1 & 0 
\end{pmatrix}$. It turns out that the spectrum of $\sum_{k=1}^n A_k^2$ is contained in $[\alpha,\infty)$ for some constant $\alpha \geq 0$ depending on the eigenvalues of the skew-symmetric matrix $\Theta$. We show even more, namely that the operator $\sum_{k=1}^n A_k^2 - \alpha \Id_{\L^p(\R^n,S^p)}$ has a bounded $\HI(\Sigma_\omega)$ functional calculus, for any angle $\omega \in (\frac{\pi}{2},\pi)$, see Corollary \ref{Cor-512}.

The next goal is then to enhance the boundedness of $\HI$ functional calculus of the operator $\A - \alpha \Id_{\L^p(\R^n,S^p)}$ to a bounded H\"ormander functional calculus with respect to (non-holomorphic) spectral multipliers defined on the positive half-axis. For this task, we proceed by showing $R$-boundedness of the semigroup generated by this operator to the maximal angle $\frac{\pi}{2}$. In doing so, we control the dependency on the angle as required in \cite{KrW18}. Such $R$-boundedness is usually proved with estimates of the radially decreasing kernel of the semigroup together with boundedness of the maximal Hardy-Littlewood operator. In the short Section \ref{subsec-maximal-estimates}, we explain how to use the boundedness of this maximal operator on the Bochner space $\L^p(\R^n,S^p)$ in our context, preparing the ground for the Section \ref{subsec-R-sectoriality}.

In Section \ref{sec-R-boundedness}, we prove a stability result for $R$-boundedness under the operation of tensor product which will be useful in Section \ref{subsec-R-sectoriality}. In Section \ref{sec-square-max}, we connect $\L^p$-square-max decompositions of \cite{GJP17} to $R$-boundedness by showing, in Proposition \ref{prop-square-Rad-scalar}, that the existence of an $\L^p$-square-max decomposition leads to an $R$-bound.

Such an $\L^p$-square-max decomposition is critical to our approach, and gives, in Section \ref{subsec-R-sectoriality}, the $R$-boundedness of the complex time semigroup (indeed a more general result) generated by the shifted <<sum of squares>> on $\L^p(\R^n)$ for the universal $\Theta$-Weyl tuple $A_\univ^\Theta$. A more elaborate argument relying on similar ideas gives $R$-boundedness of this complex time semigroup on the Bochner space $\L^p(\R^n,S^p)$. In Theorem \ref{thm-514}, we deduce the desired boundedness of the H\"ormander calculus on the Bochner space $\L^p(\R^n,S^p)$ by using the sufficient abstract condition of \cite{KrW18}.

In Section \ref{subsec-functional-calculus-general-Theta-Weyl-tuple}, we return to general $\Theta$-Weyl tuples of operators acting on noncommutative $\L^p$-spaces. We are able to use the transference result from Corollary \ref{cor-transference} to transfer the boundedness of the H\"ormander functional calculus of the <<sum of squares>> $\sum_{j=1}^n A_j^2$ of the universal $\Theta$-Weyl tuple $A_{\univ}^\Theta$ from Theorem \ref{thm-514} to the case of the H\"ormander functional calculus of the sum of squares of a \textit{general} $\Theta$-Weyl tuple (see Corollary \ref{cor-Hormander-calculus-for-Theta-Weyl-tuple-on-LpM}). In a final Proposition, we briefly discuss the case of $\Theta$-Weyl tuples acting on UMD Banach spaces, generalizing the case of noncommutative $\L^p$-spaces, in which many results of the whole section still hold, but for which, for now, we are only able to show the boundedness of the $\HI(\Sigma_\omega)$ functional calculus of the associated (shifted) harmonic oscillator for an angle $\omega > \frac{\pi}{2}$.
We finish the section by giving applications to Schur multipliers in Example \ref{rem-Schur-multipliers}. In Section \ref{sec-Bochner-Riesz-means}, we will obtain consequences for the Bochner-Riesz means. 

We are finally able to apply our theory to the Moyal-Groenewold plane $\R^d_\Theta$ (for any skew-symmetric matrix $\Theta \in \M_d(\R)$) in Section \ref{sec-quantum-harmonic-oscillator}. Noncommutative spatial variables $X_k$ and quantum derivations $\partial_k$ are rigorously introduced by considering the associated groups of operators. In Proposition \ref{prop-quantum-commutation-rules}, we observe that these semigroups form a $\Theta$-Weyl tuple where $
\Theta 
\ov{\mathrm{def}}{=} \begin{pmatrix} 
\theta & - \Id \\ 
\Id & 0 
\end{pmatrix}$. This allows us to define the harmonic oscillator $-\sum_{k=1}^d \partial_k^2+\sum_{k=1}^d X_k^2$ and to apply the theory developed in the previous sections to conclude that a suitable shifted version $-\sum_{k=1}^d \partial_k^2+\sum_{k=1}^d X_k^2 - \alpha \Id$ admits a H\"ormander functional calculus.



\vspace{0.2cm}

%

\section{Preliminaries}
\label{sec-prelim}

\paragraph{Gaussian integrals} 
We will use the two following classical integrals. If $a>0$, we have
\begin{equation}
\label{Gaussian-integral}
\int_{-\infty}^{\infty} \e^{-a x^2} \d x
=\frac{\sqrt{\pi}}{\sqrt{a}}
\quad\text{and} \quad
\int_{-\infty}^{\infty} x^2\e^{-a x^2} \d x
=\frac{\sqrt{\pi}}{2a^{3/2}}.
\end{equation}

\paragraph{Bochner spaces}
Over the past few decades, a key focus in Banach space theory has been exploring the extent to which results and methods applicable to scalar-valued functions can be extended to their vector-valued counterparts. In this context, for any measure space $(\Omega,\mu)$, any Banach space $X$ and any $1 \leq p < \infty$, we use the Bochner space
\begin{equation*}
\label{Bochner-space}
\L^p(\Omega,X) 
\ov{\mathrm{def}}{=} \bigg\{ \text{classes of strongly measurable } f \co \Omega \to X : \int_\Omega \norm{f(t)}_X^p \d \mu(t) <\infty \bigg\},
\end{equation*}
equipped with the norm 
\begin{equation}
\label{norm-Bochner}
\norm{f}_{\L^p(\Omega,X) } \ov{\mathrm{def}}{=} \bigg(\int_\Omega \norm{f(t)}_X^p \d\mu(t)\bigg)^{\frac{1}{p}}.
\end{equation}
We refer to the books \cite{ABHN11} and \cite{HvNVW16} for more information.

We proceed with additional background on operator theory.

\paragraph{Bounded holomorphic semigroups} We refer to \cite{Paz83} for more information on semigroups of operators. Suppose that $0 < \theta \leq \frac{\pi}{2}$. A strongly continuous semigroup $(T_t)_{t \geq 0}$ of operators acting on a Banach space $X$ is called a bounded holomorphic semigroup of angle $\theta$ if it has a bounded holomorphic extension to $\Sigma_{\theta'} \ov{\mathrm{def}}{=} \{ z \in \mathbb{C}^*: |\arg z| < \theta' \}$ for each $0 <\theta'<\theta$. The following characterization is \cite[Theorem 3.7.19 p.~157]{ABHN11} and \cite[Theorem G.5.3 p.~541]{HvNVW18}.

\begin{prop}
\label{Prop-caract-analytic}
A strongly continuous bounded semigroup $(T_t)_{t \geq 0}$ of operators acting on a Banach space $X$ with generator $-A$ is a bounded holomorphic semigroup if and only if $T_t(X) \subset \dom A$ for any $t > 0$ and 
\begin{equation*}
\label{}
\sup_{t > 0} t\norm{A T_t}_{X \to X} < \infty.
\end{equation*}
\end{prop}

\paragraph{Functional calculus of sectorial operators}
For any angle $\omega \in (0,\pi)$, we will use the open sector symmetric around the positive real half-axis with opening angle $2\omega$
\begin{equation*}
\label{def-sigma-omega}
\Sigma_{\omega} 
\ov{\mathrm{def}}{=} \big\{ z \in \C \backslash \{ 0 \} : \: | \arg z | < \omega\big\},
\end{equation*}
see Figure 1. It will be convenient to set $\Sigma_0 \ov{\mathrm{def}}{=} (0,\infty)$. 

\begin{figure}[ht]
\begin{center}
\includegraphics[scale=0.4]{sector.jpg}
\begin{picture}(0,0)

\put(-70,38){ $\omega$}

\put(-45,20){$\Sigma_\omega$}
\end{picture}

Figure 1: open sector $\Sigma_\omega$ of angle $\omega$
\end{center}
\label{figure-sector}
\end{figure}

We refer to the books \cite{Haa06}, \cite{HvNVW18}, \cite{JMX06} for background on sectorial operators and their $\HI$ functional calculus, introduced in the fundamental papers \cite{CDMY96} and \cite{McI86}. Let $A \co \dom A \subset X \to X$ be a closed densely defined linear operator acting on a Banach space $X$. We say that $A$ is a sectorial operator of type $\omega \in (0,\pi)$ if its spectrum $\sigma(A)$ is a subset of the closed sector $\ovl{\Sigma_\omega}$ and if for any angle $\nu \in (\omega,\pi)$, the set 
\begin{equation}
\label{2Sectorial}
\Big\{zR(z,A) : z \in \mathbb{C} - \overline{\Sigma_\nu}\Big\}
\end{equation}
is bounded in the algebra $\B(X)$ of bounded operators acting on $X$\label{bounded-BX}, where $R(z,A) \ov{\mathrm{def}}{=} (z\Id-A)^{-1}$\label{resolvent-operator} is the resolvent operator. The operator $A$ is said to be sectorial if it sectorial operator of type $\omega$ for some angle $\omega \in (0,\pi)$. In this case, we can introduce the angle of sectoriality
\begin{equation}
\label{equ-sectorial-operator}
\omega_{\sec}(A) 
\ov{\mathrm{def}}{=} \inf\{ \omega \in (0,\pi) : A \textrm{ is sectorial of type $\omega$} \}.
\end{equation}

Recall the following result \cite[Example 10.1.2 p.~362]{HvNVW18}.

\begin{prop}
\label{prop-sec-pi-2}
If $-A$ is the generator of a bounded strongly continuous semigroup $(T_t)_{t \geq 0}$ on a Banach space $X$ then the operator $A$ is sectorial of type $\frac{\pi}{2}$, i.e.~$\omega_{\sec}(A) \leq \frac{\pi}{2}$.
\end{prop}

Moreover, by \cite[Example 10.1.3 p.~362]{HvNVW18}, $A$ is sectorial of type $< \frac{\pi}{2}$ if and only if $-A$ generates a bounded holomorphic strongly continuous semigroup. If $A$ is a sectorial operator on a \textit{reflexive} Banach space $X$, we have by \cite[Proposition 2.1.1 (h) p.~21]{Haa06} or \cite[Proposition 10.1.9 p.~367]{HvNVW18} a topological decomposition
\begin{equation}
\label{decompo-reflexive}
X
=\ker A \oplus \ovl{\Ran A}.
\end{equation}


For any angle $\theta \in (0,\pi)$, we consider the algebra $\H^{\infty}(\Sigma_\theta)$\label{algebra-Hinfty} of all bounded analytic functions $f \co  \Sigma_\theta\to \C$, equipped with the supremum norm 
\begin{equation}
\label{norm-Hinfty}
\norm{f}_{\H^{\infty}(\Sigma_\theta)}
\ov{\mathrm{def}}{=} \sup\bigl\{\vert f(z)\vert \, :\, z\in \Sigma_\theta\bigr\}.
\end{equation}
Let $\H^{\infty}_{0}(\Sigma_\theta)$\label{algebra-Hinfty0} be the subalgebra of bounded analytic functions $f \co \Sigma_\theta\to \C$ for which there exist $s,c>0$ such that 
\begin{equation*}
\label{ine-Hinfty0}
\vert f(z)\vert
\leq C\min\{|z|^s,|z|^{-s}\}, \quad z \in \Sigma_\theta,
\end{equation*} 
as discussed in \cite[Section 2.2]{Haa06}

\begin{example} \normalfont
\label{Example-Haase}
Let $\alpha$ and $\beta$ be complex numbers such that $0 < \Re \beta < \Re \alpha$. Then for any angle $\theta \in(0,\pi)$ the function $z \mapsto \frac{z^\beta}{(1+z)^\alpha}$ belongs  to the algebra $\H^{\infty}_{0}(\Sigma_\theta)$, as observed in \cite[Example 2.2.5 p.~29]{Haa06}.
\end{example}

Let $A$ be a sectorial operator acting on a Banach space $X$. Consider some angle $\theta \in (\omega_{\sec}(A), \pi)$ and a function $f \in \H^\infty_0(\Sigma_\theta)$. Following \cite[p.~30]{Haa06}, \cite[p.~5]{LM99} (see also \cite[p.~369]{HvNVW18}), for any angle $\nu \in (\omega_{\sec}(A),\theta)$ 
 we introduce the operator
\begin{equation}
\label{2CauchySec}
f(A)
\ov{\mathrm{def}}{=} \frac{1}{2\pi \i}\int_{\partial\Sigma_\nu} f(z) R(z,A) \d z,
\end{equation}
acting on $X$, with a Cauchy integral, where the boundary $\partial\Sigma_\nu$ is oriented counterclockwise. The sectoriality condition ensures that this integral is absolutely convergent and defines a bounded operator on the Banach space $X$. Using Cauchy's theorem, it is possible to show that this definition does not depend on the choice of the chosen angle $\nu$. The resulting map $\H^\infty_0(\Sigma_\theta) \to \B(X)$, $f \mapsto f(A)$ is an algebra homomorphism.

Following \cite[Definition 2.6 p.~6]{LM99} (see also \cite[p.~114]{Haa06}), we say that the operator $A$ admits a bounded $\H^\infty(\Sigma_\theta)$ functional calculus if the latter homomorphism is bounded, i.e.~if there exists a constant $C \geq 0$ such that 
\begin{equation}
\label{Def-functional-calculus}
\norm{f(A)}_{X \to X} 
\leq C\norm{f}_{\H^\infty(\Sigma_\theta)},\quad f \in \H^\infty_0(\Sigma_\theta).
\end{equation}
If the operator $A$ has dense range and admits a bounded $\H^\infty(\Sigma_\theta)$ functional calculus, then the previous homomorphism naturally extends to a bounded homomorphism $f \mapsto f(A)$ from the algebra $\H^\infty(\Sigma_\theta)$ into the algebra $\B(X)$ of bounded linear operators on $X$. In this context, we can introduce the $\H^\infty$-angle
\begin{equation*}
\label{angle-Hinfty}
\omega_{\H^\infty}(A) 
\ov{\mathrm{def}}{=} \inf\{\theta \in (\omega_{\sec}(A),\pi) : A \text{ admits a bounded $\H^\infty(\Sigma_\theta)$ functional calculus} \}.
\end{equation*}

\begin{example} \normalfont
\label{Laplacian-funct}
If $1 < p < \infty$ and if $X$ is a $\UMD$ Banach space, then by \cite[Theorem 10.2.25 p.~391]{HvNVW18}, the Laplacian $-\Delta$ admits a bounded $\H^\infty(\Sigma_\theta)$ functional calculus on the Bochner space $\L^p(\R^n,X)$ for any angle $\theta >0$, i.e.~$\omega_{\H^\infty}(-\Delta)=0$.
\end{example}

We will make use of the following elementary and general result concerning the $\HI(\Sigma_\omega)$ functional calculus for sectorial operators shifted to the right.

\begin{lemma}
\label{lem-shifted-HI-calculus}
Let $A$ be a sectorial operator on a Banach space $X$ and $\omega \in (\omega_{\sec}(A), \pi)$. The following statements are equivalent:
\begin{enumerate}
\item $A + \epsi$ has a bounded $\mathrm{H}^\infty(\Sigma_\omega)$ functional calculus on $X$ for some $\epsi > 0$,
\item $A + \epsi$ has a bounded $\mathrm{H}^\infty(\Sigma_\omega)$ functional calculus on $X$ for all $\epsi > 0$.
\end{enumerate}
\end{lemma}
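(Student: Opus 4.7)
The direction (2) $\Rightarrow$ (1) is trivial. For (1) $\Rightarrow$ (2), the plan is to use the composition rule of the holomorphic functional calculus with the shift map $\phi_c(z)=z+c$. Let $\epsilon_0>0$ witness (1), fix $\epsilon_1>0$, and set $c:=\epsilon_1-\epsilon_0$. The key geometric fact is that for any $\omega\in(0,\pi)$ and any $a\geq 0$, the sector $\Sigma_\omega$ is invariant under the translation $z\mapsto z+a$: if $z\in\Sigma_\omega$, then adding a nonnegative real number rotates $z$ toward the positive real axis, so $|\arg(z+a)|\leq|\arg z|<\omega$, whence $\Sigma_\omega+a\subset\Sigma_\omega$.

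If $c\geq 0$, then for any $f\in \H^\infty(\Sigma_\omega)$ the function $g(z) := f(z+c)$ lies in $\H^\infty(\Sigma_\omega)$ by translation invariance, with $\|g\|_{\H^\infty(\Sigma_\omega)} \leq \|f\|_{\H^\infty(\Sigma_\omega)}$. A change of variable in the Cauchy integral defining the functional calculus (using the resolvent identity $R(z,A+\epsilon_1) = R(z-c,A+\epsilon_0)$) gives the composition formula $f(A + \epsilon_1) = g(A + \epsilon_0)$, so the assumed bounded calculus of $A+\epsilon_0$ immediately produces the desired estimate.

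If $c<0$, set $d:=-c>0$, and consider the substitution $g(w):=f(w-d)$, which is now defined only on the strictly smaller sub-sector $\Sigma_\omega+d\subsetneq\Sigma_\omega$. Since $\omega>\omega_0$ and $\epsilon_1>0$, the spectrum $\sigma(A+\epsilon_0)\subset\overline{\Sigma_{\omega_0}}+\epsilon_0$ is strictly contained in $\Sigma_\omega+d$ with positive distance to its boundary, so $g(A+\epsilon_0)$ is well defined by Cauchy integration on a contour inside $\Sigma_\omega+d$ enclosing $\sigma(A+\epsilon_0)$, and the inverse change of variable identifies it with $f(A+\epsilon_1)$. The main obstacle is to upgrade this identification to the norm bound $\|f(A+\epsilon_1)\|_{X\to X}\lesssim \|f\|_{\H^\infty(\Sigma_\omega)}$, since $g\notin\H^\infty(\Sigma_\omega)$ in general and the naive Cauchy estimate diverges. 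I would address this by first working with $f\in \H^\infty_0(\Sigma_\omega)$, for which the Cauchy integral converges absolutely, and exploiting the bounded calculus of $A+\epsilon_0$ (after a contour deformation made possible by the slack $\omega-\omega_0>0$ and the strict spectral inclusion) to obtain a bound depending only on $\|f\|_\infty$; density and the McIntosh convergence lemma then extend the estimate to all of $\H^\infty(\Sigma_\omega)$.
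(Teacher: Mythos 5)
The easy direction and the half of your argument where the shift increases ($c\ge 0$, i.e.\ $\epsilon_1\ge\epsilon_0$) are correct: there $g(z)=f(z+c)$ genuinely belongs to $\H^\infty(\Sigma_\omega)$ with no larger norm, and the composition identity is routine. The problem is that the case $c<0$ is not a symmetric afterthought but the whole content of the lemma, and it is exactly the case the paper needs (in the proof of Proposition \ref{prop-Hinfty-shifted} the calculus is known for $A+(\alpha+2\beta)$ and wanted for $A+2\beta$). Your sketch for this case stops precisely where the work begins. The function $g(w)=f(w-d)$ is holomorphic only on $\Sigma_\omega+d$, which omits a neighbourhood of the origin and of both boundary rays of $\Sigma_\omega$; it is therefore not an element of $\H^\infty(\Sigma_\omega)$, so the hypothesis that $A+\epsilon_0$ has a bounded $\H^\infty(\Sigma_\omega)$ calculus cannot be applied to $g$ directly, and, as you note, the naive Cauchy estimate diverges. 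A contour deformation does not by itself produce a bound in terms of $\norm{f}_{\H^\infty(\Sigma_\omega)}$: after shifting the contour one still faces two unbounded rays on which only the functional-calculus hypothesis (not absolute convergence) can help, and to invoke it one needs the true but nontrivial fact that an \emph{invertible} sectorial operator with bounded $\H^\infty(\Sigma_\omega)$ calculus admits a bounded calculus for functions bounded and holomorphic merely on $\Sigma_\omega\setminus\overline{B(0,r)}$ for small $r>0$. Even granting that, the required inclusion $\Sigma_{\omega'}\setminus\overline{B(0,r)}\subset\Sigma_\omega+d$ forces $r\ge d\sin\omega/\sin(\omega-\omega')$, while the spectral gap only permits $r<\epsilon_0$; when $\epsilon_1$ is small, $d=\epsilon_0-\epsilon_1$ is close to $\epsilon_0$ and these constraints are incompatible in a single step, so you would additionally have to iterate the reduction through a finite chain of intermediate shifts. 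None of this is carried out in the proposal.

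The paper avoids all of this: it observes that $A+\epsilon_1=(A+\epsilon_0)+(\epsilon_1-\epsilon_0)\Id_X$ is a bounded perturbation of the \emph{invertible} sectorial operator $A+\epsilon_0$, with the sum again invertible and sectorial, and cites the perturbation theorem \cite[Corollary 5.5.5 p.~119]{Haa06}. That corollary is essentially the black box your deformation argument would have to reprove. To repair your proof, either quote such a bounded-perturbation (or ``spectral gap'') result explicitly for the case $\epsilon_1<\epsilon_0$, or carry out the iterated contour argument in detail.
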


\begin{proof} 
Let $\epsi, \epsi' > 0$ be such that $\epsi' + A$ has a bounded $\mathrm{H}^\infty(\Sigma_\omega)$ functional calculus on $X$. Both $\epsi + A$ and $\epsi' + A$ are invertible and we can write
$\epsi + A 
= (\epsi ' + A) + B$ with $B 
\ov{\mathrm{def}}{=} (\epsi - \epsi') \Id_X$. By \cite[Corollary 5.5.5 p.~119]{Haa06}, the operator $\epsi + A$ admits a bounded $\mathrm{H}^\infty(\Sigma_\omega)$ functional calculus.
\end{proof}

The following is \cite[Proposition 1.4.2 p.~16]{Fac14} and \cite[Corollary 10.9 p.~214]{KuW04}. It gives rise to a large class of operators possessing such a functional calculus, although the resulting angle might be somewhat larger than desirable for applications. Recall that a strongly continuous bounded group $(U_t)_{t \geq 0}$ of operators acting on a Banach space $Y$ is a dilation of a strongly continuous semigroup $(T_t)_{t \geq 0}$ of operators acting on a Banach space $X$, if there exists bounded maps $J \co X \to Y$ and $P \co Y \to X$ such that
$T_t = PU_tJ$
for any $t \geq 0$. We also refer to \cite{AFM17} for strongly related results.

\begin{thm}
\label{thm-Hinfty-dilation}
Let $X$ be a $\UMD$ Banach space space. If the strongly continuous semigroup $(T_t)_{t \geq 0}$ on $X$ with infinitesimal generator $-A$ admits a dilation to a bounded strongly continuous group on a $\UMD$ Banach space $Y$, then $A$ admits a bounded $\H^\infty(\Sigma_\theta)$ functional-calculus for any angle $\theta \in (\frac{\pi}{2},\pi)$, i.e.~$\omega_{\H^\infty}(A) \leq \frac{\pi}{2}$.
\end{thm}

A similarity transform is an operation that preserves boundedness of the functional calculus, as shown by the following result \cite[Lemma 4.3.3 p.~135]{Fac14}.

\begin{prop}
\label{prop-isomorphism}
Let $X$ and $Y$ be Banach spaces and let $-A$ be the generator of a bounded strongly continuous semigroup $(T_t)_{t \geq 0}$ of operators acting on $X$ such that $A$ admits a bounded $\H^\infty$ functional calculus. If $S \co Y \to X$ is an isomorphism and if $-B$ generates the induced strongly continuous semigroup $(S^{-1}T_t S)_{t \geq 0}$ on $Y$, then the operator $B$ admits a bounded $\H^\infty$ functional calculus with 
\begin{equation*}
\label{}
\omega_{\H^\infty}(A)
=\omega_{\H^\infty}(B).
\end{equation*}
\end{prop}

Finally, we will use the following result \cite[Theorem 1.1 p.~353]{LM03} in Section \ref{subsec-shifted-HI-calculus}.

\begin{thm}
\label{Th-com-LM}
Let $A$ and $B$ be two commuting sectorial operators on a Banach
space $X$ and let $\theta_1,\theta_2 \in (0, \pi)$ such that $\theta_1+\theta_2 < \pi$. Assume that $A$ has a bounded $\H^\infty(\Sigma_{\theta_1})$ functional calculus, $B$ has a bounded $\H^\infty(\Sigma_{\theta_2})$ functional calculus and that $X$ has property $(\Delta)$. Then the operator $A + B$ is closed and admits a bounded $\H^\infty(\Sigma_\theta)$ functional calculus for any angle $\theta > \max\{\theta_1,\theta_2\}$.
\end{thm}

Very often in this paper, the operators we consider will have an angle of sectoriality equal to zero. In such cases, the operator $A$ may not only admit a bounded $\H^\infty(\Sigma_\theta)$ functional calculus for any angle $\theta >0$, but we can in fact define $f(A)$ meaningfully for functions $f$ belonging to the H\"ormander class introduced in \eqref{hor-2}. More precisely, we will show in Theorem~\ref{thm-514} and Corollary~\ref{cor-Hormander-calculus-for-Theta-Weyl-tuple-on-LpM} that suitable operators satisfy the following definition (see \cite[Definition 3.8]{KrW18}).

\begin{defi}
\label{defi-Hor-calculus}
Let $A$ be a sectorial operator on a Banach space $X$.
Let $s > \frac12$. We say that $A$ admits a bounded $\Hor^s_2(\R_+^*)$ H\"ormander functional calculus if for some angle $\theta > \omega_{\sec}(A)$, there exists a constant $C$ such that for any function $f \in \H^\infty_0(\Sigma_\theta)$, we have 
\begin{equation}
\label{equ-1-defi-Hor-calculus}
\norm{f(A)}_{X \to X} 
\leq C \norm{f}_{\Hor^s_2(\R_+^*)}.
\end{equation}
\end{defi}

If $s > \frac{1}{2}$, we introduce the space 
\begin{equation*}
\label{defi-mathcal-W}
\mathcal{W}^{s,2} 
\ov{\mathrm{def}}{=} \big\{ f \co (0,\infty) \to \C :  f \circ \exp \in \W^{s,2}(\R) \big\}
\end{equation*}
coming equipped with the norm 
\begin{equation}
\label{norm-Sobolev-Mellin}
\norm{f}_{\mathcal{W}^{s,2}} 
\ov{\mathrm{def}}{=} \norm{f \circ \exp}_{\W^{s,2}(\R)},
\end{equation}
where the Sobolev space $\W^{s,2}(\R)$ is introduced in \eqref{Sobolev-space}.
The space $\mathcal{W}^{s,2}$ appears for integer $s$ as the so-called Kondratiev space $V^s_{2,s-\frac12}(0,\infty)$ in \cite[p.~191]{KMR97}.
The following density result is stated in \cite[Lemma 3.5]{KrW18} which is in turn based on \cite[Lemma 4.15 p.~68]{Kri09}.

\begin{lemma}
\label{lem-density-mathcal-W}
For any angle $\theta \in (0,\pi)$, the subspace $\H^\infty(\Sigma_\theta) \cap \mathcal{W}^{s,2}$ is dense in the space $\mathcal{W}^{s,2}$.
\end{lemma}

\begin{proof}
A variant is proved in \cite[Lemma 4.15 p.~68]{Kri09} that concerns holomorphic functions on a strip $\Str_\theta \ov{\mathrm{def}}{=}  \{ z \in \C :\: |\Im z |< \theta \}$\label{strip} and the usual Sobolev space $\W^{s,2}(\R)$.
Namely the latter source says that the intersection $\H^\infty(\Str_\theta) \cap \W^{s,2}(\R)$ is dense in the space $\W^{s,2}(\R)$. Now, consider a function $f \in \mathcal{W}^{s,2}$.
Then the function $f \circ \exp$ belongs to $\W^{s,2}(\R)$ and for any $\epsi > 0$, there exists an $\epsi$-approximation $g \in \H^\infty(\Str_\theta) \cap \W^{s,2}(\R)$ of $f \circ \exp$ in the space $\W^{s,2}(\R)$. We have $g \circ \log \in \H^\infty(\Sigma_\theta) \cap \cal{W}^{s,2}$ as well as $\norm{g \circ \log - f}_{\cal{W}^{s,2}} \ov{\eqref{norm-Sobolev-Mellin}}{=} \norm{g - f \circ \exp}_{\W^{s,2}(\R)} < \epsi$.
The proof is complete.
\end{proof}

Note that the following norm estimates hold. We refer to \cite[Proposition 9.1 b) p.~1398]{HaP23} for generalizations of the (folklore) second inequality with different arguments.

\begin{lemma}
\label{lem-norm-comparisons-Hor-HI}
Let $s > \frac12$. Then
\begin{equation*}
\label{}
\norm{f}_{\Hor^s_2(\R_+^*)} 
\lesssim \norm{f}_{\mathcal{W}^{s,2}}, \quad f \in \mathcal{W}^{s,2}.
\end{equation*}
Moreover, for any angle $\theta \in (0,\pi)$, we have
\begin{equation}
\label{Hor-vs-Hinfty}
\norm{f}_{\Hor^s_2(\R_+^*)} 
\lesssim \norm{f}_{\H^\infty(\Sigma_\theta)}, \quad f \in \H^\infty(\Sigma_\theta).
\end{equation}
\end{lemma}

\begin{proof}
The first point follows from \cite[Proposition 4.9 (3) p.~62]{Kri09} and the equivalence of (2) and (3) in \cite[Proposition 4.11 p.~63]{Kri09}. Indeed, the latter proposition gives the equivalence   
\begin{equation}
\label{inter45}
\norm{f}_{\Hor^s_2(\R^*_+)} 
\cong \sup_{n \in \Z} \norm{\varphi_n (f \circ \exp)}_{\W^{s,2}(\R)},
\end{equation}
for the norm of the space $\Hor^s_2(\R^*_+)$, where $(\varphi_n)_{n \in \Z}$ is a smooth partition of unity, meaning that $\sum_{n \in \Z} \varphi_n(t) = 1$ $(t \in \R)$ and $\supp \varphi_n \subseteq [n-1,n+1]$. 
The result \cite[Proposition 4.9 (3) p.~62]{Kri09} provides the estimate $\sup_{n \in \Z} \norm{\varphi_n f}_{\W^{s,2}(\R)} \lesssim \norm{f}_{\W^{s,2}(\R)}$. By composition with the exponential function, we conclude that
\[ 
\norm{f}_{\Hor^s_2(\R^*_+)} 
\ov{\eqref{inter45}}{\cong}  \sup_{n \in \Z} \norm{\varphi_n (f \circ \exp)}_{\W^{s,2}(\R)}
\lesssim \norm{f \circ \exp}_{\W^{s,2}(\R)} 
\ov{\eqref{norm-Sobolev-Mellin}}{=} \norm{f}_{\cal{W}^{s,2}}. 
\]
This establishes the estimate stated in the first part.

For the second one, consider some function $f \in \H^\infty(\Sigma_\theta)$. For any $y > 0$, note that the ball $B(y,\frac12\sin(\theta) y)$ is a subset of the sector $\Sigma_\theta$. By the Cauchy integral formula, for any integer $k \geq 0$, we see that
\[ 
\frac{\d^kf}{\d y^k}(y) 
= \frac{k!}{2\pi i} \int_{\partial B(y,\frac12\sin(\theta) y)} \frac{f(z)}{(z - y)^{k+1}} \d z. 
\]
Thus,
\begin{equation}
\label{inter-fin-7322}
\left| y^k \frac{\d^k f}{\d y^k}(y) \right| 
\leq k! y^k \frac{1}{2\pi} \cdot 2 \pi \frac12 \sin(\theta) y \frac{1}{|\frac12 \sin(\theta) y|^{k+1}} \norm{f}_{\H^\infty(\Sigma_\theta)} \lesssim \norm{f}_{\H^\infty(\Sigma_\theta)}. 
\end{equation}
Consider a function $\eta \co ]0,\infty[ \to \R$ as in \eqref{hor-2}, supported in $[a,b]$. Consider any integer $n \in \N$ with $n \geq s$. Recall that we have a contractive inclusion $\W^{n,2}(\R) \subset \W^{s,2}(\R)$ and that the norm of a function $g$ in the space $\W^{n,2}(\R)$ is equivalent to $\sup_{0 \leq k \leq n} \norm{\frac{\d^k g}{\d x^k} }_{\L^2(\R)}$. For any $t > 0$, we have using 
\cite[Exercise after Definition 15.17]{Tah15} 
in the second inequality
\begin{align*}
\MoveEqLeft
\norm{x \mapsto \eta(x) \cdot f(t x)}_{\W^{s,2}(\R)}
\leq \norm{x \mapsto \eta(x) \cdot f(t x)}_{\W^{n,2}(\R)} \\
&\lesssim \sup \left\{ \left|\frac{\d^k(f(tx))}{\d x^k} \right| : \: x \in [a,b],\: k \leq n \right\} \\
& = \sup \left\{ \left|t^k\frac{\d^k f}{\d x^k}(t x) \right| : \: x \in [a,b],\: k \leq n \right\} \\
& \lesssim \sup \left\{ \left|(tx)^k\frac{\d^k f}{\d x^k}(t x) \right| : \: x \in [a,b],\: k \leq n \right\} \\
& \ov{\eqref{inter-fin-7322}}{\lesssim} \norm{f}_{\H^\infty(\Sigma_\theta)},
\end{align*}
with implied constant independent of $t$. With \eqref{hor-2}, we deduce \eqref{Hor-vs-Hinfty}.
\end{proof}

An immediate consequence of Lemma \ref{lem-norm-comparisons-Hor-HI} is that if $A$ is a sectorial operator admitting a bounded $\Hor^s_2(\R_+^*)$ H\"ormander functional calculus, then it also satisfies the estimate
\begin{equation*}
\label{inter-1245a}
\norm{f(A)}_{X \to X} 
\lesssim_\theta \norm{f}_{\H^\infty(\Sigma_\theta)}, \quad f \in \H^\infty_0(\Sigma_\theta)
\end{equation*}
for any angle $\theta \in (0,\pi)$. Thus, the operator $A$ admits a bounded $\H^\infty(\Sigma_\theta)$ functional calculus.

If in addition $X$ is reflexive and the operator $A$ is injective with dense range, then first by the explanations following \eqref{Def-functional-calculus}, the operator $f(A)$ is meaningfully defined for any function $f \in \H^\infty(\Sigma_\theta)$. Consider the function
\begin{equation*}
\label{function-rho}
\rho(z) 
\ov{\mathrm{def}}{=} \frac{z}{(1 + z)^{2}},
\end{equation*}
which belongs to the space $\H^\infty_0(\Sigma_\theta)$ by Example \ref{Example-Haase}. Since $\HI_0(\Sigma_\theta)$ is an ideal of the algebra $\HI(\Sigma_\theta)$, the function $f_n \ov{\mathrm{def}}{=} f \rho^{\frac{1}{n}}$ belongs to $\HI_0(\Sigma_\theta)$ for each integer $n \geq 1$. Moreover, the sequence $(f_n)$ converges pointwise to $f$ on the sector $\Sigma_\theta$ and is bounded for the norm of $\HI(\Sigma_\theta)$ since 
\begin{equation}
\label{inter-345pp}
\bnorm{\rho^{\frac{1}{n}}}_{\HI(\Sigma_\theta)}
\ov{\eqref{norm-Hinfty}}{=} \norm{\rho}_{\HI(\Sigma_\theta)}^{\frac{1}{n}}.
\end{equation} 
By using the convergence lemma \cite[Proposition 5.1.4 p.~104]{Haa06} \cite[Theorem 10.2.13 p.~381]{HvNVW18} of the $\H^\infty$ functional calculus, one can deduce that the sequence $(f_n(A))$ converges to $f(A)$ in the strong operator topology. Consequently, we have 
\begin{align}
\MoveEqLeft
\label{equ-4-defi-Hor-calculus}
 \norm{f(A)}_{X \to X} 
\lesssim \liminf_n \norm{f_n(A)}_{X \to X} 
\ov{\eqref{equ-1-defi-Hor-calculus}}{\lesssim} \liminf_n \norm{f_n}_{\Hor^s_2(\R^*_+)} \\
&\lesssim \norm{f}_{\Hor^s_2(\R^*_+)} \liminf_n \bnorm{\rho^{\frac1n}}_{\Hor^s_2(\R^*_+)} \nonumber
\ov{\eqref{Hor-vs-Hinfty}}{\lesssim} \norm{f}_{\Hor^s_2(\R^*_+)} \liminf_n \bnorm{\rho^{\frac1n}}_{\H^\infty(\Sigma_\theta)}\nonumber \\
&\ov{\eqref{inter-345pp}}{\lesssim} \norm{f}_{\Hor^s_2(\R^*_+)} \liminf_n \norm{\rho}_{\H^\infty(\Sigma_\theta)}^{\frac1n} 
 = \norm{f}_{\Hor^s_2(\R^*_+)}, \nonumber
\end{align}
where we used the strong lower semicontinuity of the operator norm \cite[Theorem 4.23 (a) p.~75]{Wei80} in the first inequality. In other words, the estimate \eqref{equ-1-defi-Hor-calculus} extends to all functions $f \in \HI(\Sigma_\theta)$.

Second, by density of the subspace $\H^\infty(\Sigma_\theta) \cap \mathcal{W}^{s,2}$ in the space $\mathcal{W}^{s,2}$ according to Lemma \ref{lem-density-mathcal-W}, $f(A)$ can be meaningfully defined by Lemma \ref{lem-norm-comparisons-Hor-HI} for $f \in \mathcal{W}^{s,2}$ and moreover, we have
\[ 
\norm{f(A)}_{X \to X} 
\lesssim \norm{f}_{\mathcal{W}^{s,2}}, \quad f \in \mathcal{W}^{s,2}. 
\]
In fact, we can say a bit more.

\begin{lemma}
Let $A$ be a sectorial operator on some reflexive Banach space $X$ with dense range. Suppose that $A$ admits a bounded $\Hor^s_2(\R^*_+)$ H\"ormander functional calculus for some $s > \frac12$. Then
\begin{equation*}
\label{Hormander-W}
\norm{f(A)}_{X \to X} 
\lesssim \norm{f}_{\Hor^s_2(\R^*_+)}, \quad f \in \mathcal{W}^{s,2}.
\end{equation*}
Moreover, $\mathcal{W}^{s,2}$ is a Banach algebra with respect to pointwise multiplication and for any functions $f,g \in \mathcal{W}^{s,2}$, we have $(fg)(A) = f(A)g(A)$.
\end{lemma}

\begin{proof}
Let $f \in \mathcal{W}^{s,2}$.
Then by density, we can find a sequence $(f_n)$ in the space $\HI(\Sigma_\theta) \cap \mathcal{W}^{s,2}$ such that $\norm{f_n - f}_{\mathcal{W}^{s,2}} \to 0$.
By Lemma \ref{lem-norm-comparisons-Hor-HI}, we also have 
\begin{equation}
\label{inter-2457654}
\norm{f}_{\Hor^s_2(\R^*_+)} 
= \lim_n \norm{f_n}_{\Hor^s_2(\R^*_+)}.
\end{equation}
Therefore,
\[
\norm{f(A)}_{X \to X} 
= \lim_n \norm{f_n(A)}_{X \to X}  
\ov{\eqref{equ-4-defi-Hor-calculus}}{\lesssim} \lim_n \norm{f_n}_{\Hor^s_2(\R^*_+)} 
\ov{\eqref{inter-2457654}}{=} \norm{f}_{\Hor^s_2(\R^*_+)}.
\]

That $\mathcal{W}^{s,2}$ is a Banach algebra follows directly from the fact that $\W^{s,2}(\R)$ is a Banach algebra for $s > \frac12$, see \cite[6.11 (a) p.~162]{Ste70} and \cite[Theorem 1 p.~222, Proposition p.~14]{RuSi96}. 
Then the multiplicativity of the $\mathcal{W}^{s,2}$ functional calculus follows from the principle of extension of algebraic identities on a dense subset.
\end{proof}



The following lemma is proved in \cite[proof of Lemma 4.3 p.~417 bottom]{KrW18}.

\begin{lemma}
\label{Lemma-Chris-Weis}
Let $s > \frac12$. For any function $f \in \Hor^s_2(\R^*_+)$ and any angle $\theta > 0$, the function $f \rho^\theta$ belongs to the space $\mathcal{W}^{s,2}$.
\end{lemma}

Let $A$ be a sectorial operator on some reflexive Banach space $X$ with dense range, admitting a bounded $\Hor^s_2(\R^*_+)$ H\"ormander functional calculus for some $s > \frac12$. We will use the notation $\Closed(X)$ for the set of closed unbounded operators acting on the Banach space $X$. Recall that $\cal{W}^{s,2}$ is a subalgebra of $\Hor^s_2(\R^*_+)$ and that each operator 
$\rho^\frac{1}{n}(A)$ is bounded and injective by \cite[Proposition 3.1.1 p.~61]{Haa06}, since the bounded operator $\rho(A) = A(1+A)^{-2}$ is injective.

Using \cite[Theorem 7.5 p.~109]{Haa18} and \cite[Lemma 1.2.1 p.~5]{Haa06} (see also \cite{Haa05}) and Lemma \ref{Lemma-Chris-Weis}, we see that the map $\Phi \co \cal{W}^{s,2} \to \B(X)$, $f \mapsto f(A)$ admits an extension $\hat{\Phi} \co \Hor^s_2(\R^*_+) \to \Closed(X)$, which is a <<calculus>> (in the sense of \cite[Section 2.4]{Haa18}), satisfying 
$$
\hat{\Phi}(f) 
= g(A)^{-1}(g f)(A).
$$
for any function $f \in \Hor^s_2(\R^*_+)$ and any function $g \in \cal{W}^{s,2}$ such that $g f \in \cal{W}^{s,2}$ with $g(A)$ injective. Note that the sequence $(\rho^\frac{1}{n})_{n \geq 1}$ is an approximate identity in $\cal{W}^{s,2}$ with respect to $\hat{\Phi}$, in the sense of \cite[Definition 7.21 p.~118]{Haa18}. This is a consequence of the convergence lemma \cite[Proposition 5.1.4 p.~104]{Haa06} \cite[Theorem 10.2.13 p.~381]{HvNVW18} of the $\H^\infty$ functional calculus.
Consequently, by \cite[Lemma 7.2.2 p.~119]{Haa18}, for any function $f \in \Hor^s_2(\R^*_+)$, the operator $\hat{\Phi}(f)$ is densely defined. Moreover, the subspace
$$
\Span \cup_{n \geq 1} \Ran \rho^\frac{1}{n}(A)
$$
is dense in the Banach space $X$ and is a core of the operator $\hat{\Phi}(f)$ for any function $f \in \Hor^s_2(\R^*_+)$. Actually, the operator $\hat{\Phi}(f)$ is bounded since for any integer $n \geq 1$, any $x \in X$ and any $\theta >0$ we have
\begin{align*}
\MoveEqLeft
 \bnorm{\hat{\Phi}(f) \rho^{\frac{1}{n}}(A)x}_X 
= \bnorm{(f\rho^\frac{1}{n})(A)x}_X
\ov{\eqref{Hormander-W}}{\lesssim} \bnorm{f \rho^\frac{1}{n}}_{\Hor^s_2(\R^*_+)} \norm{x}_X \\
& \lesssim \norm{f}_{\Hor^s_2(\R^*_+)} \bnorm{\rho^\frac{1}{n}}_{\Hor^s_2(\R^*_+)} \norm{x}_X 
 \ov{\eqref{Hor-vs-Hinfty}}{\lesssim} \norm{f}_{\Hor^s_2(\R^*_+)} \bnorm{\rho^\frac{1}{n}}_{\H^\infty(\Sigma_\theta)} \norm{x}_X  \\
&\lesssim \norm{f}_{\Hor^s_2(\R^*_+)} \norm{x}_X.
\end{align*}
We denote by $f(A) \co X \to X$ its closure. Finally, observe that by \cite[Corollary 7.23 p.~119]{Haa18}, this calculus is multiplicative, i.e.~$(fg)(A)=f(A)g(A)$.

In this paper, the sectorial operator $A$ does not have dense range in general.
However, they will be defined on a reflexive Banach space $X$, in which case, it is well-known that the space $X$ splits into a direct sum of complemented subspaces $X = \ker A \oplus \ovl{\Ran A}$, see \eqref{decompo-reflexive}. We denote by $P_{\ker A} \co X \to X$ is the projection of $X$ onto the subspace $\ker A$ along the closed subspace $\ovl{\Ran A}$. Moreover, these subspaces are $A$-invariant. Namely, we have $A = 0 \oplus A_1$, where $A_1$ is the part of $A$ on the subspace $\ovl{\Ran A}$ is again sectorial, but has in addition dense range by \cite[Proposition 10.1.8 p.~365]{HvNVW18}. If the operator $A$ admits a bounded $\H^\infty(\Sigma_\theta)$ functional calculus for some angle $\theta > 0$ in the sense of \eqref{Def-functional-calculus}, this usually enables one to set $f(A) \ov{\mathrm{def}}{=} f(0) P_{\ker A} \oplus f(A_1)$ for each function $f \in \H^\infty(\Sigma_\theta)$, whenever $f(0)$ is meaningful.

Essentially the same approach can be applied to the H\"ormander functional calculus.

\begin{lemma}
\label{lem-Hor-calculus-reflexive}
Let $X$ be a reflexive Banach space and let $A$ be a sectorial operator on $X$ having a bounded $\Hor^s_2(\R^*_+)$ functional calculus. Then the part $A_1$ of the operator $A$ on the subspace $\ovl{\Ran A}$ also admits a bounded $\Hor^s_2(\R^*_+)$ H\"ormander functional calculus. For any function $f \in \Hor^s_2(\R^*_+)$ such that $f(0) \ov{\mathrm{def}}{=} \lim_{y \to 0} f(y)$ exists,  we can introduce the bounded operator $f(A) \ov{\mathrm{def}}{=} f(0)P_{\ker A} \oplus f(A_1)$ acting on $X$. Then this functional calculus is again multiplicative and we have
\[ 
\norm{f(A)}_{X \to X} 
\lesssim |f(0)| + \norm{f}_{\Hor^s_2(\R^*_+)}.
\]
\end{lemma}

\begin{proof}
The fact that the operator $A_1$ also admits a bounded $\Hor^s_2(\R^*_+)$ H\"ormander functional calculus follows from the identity $g(A)|_{\overline{\Ran A}} = g(A_1)$ for all function $g \in \H^\infty_0(\Sigma_\theta)$.
That $f(A)$ is a bounded operator on $X$ follows from the fact that $f(A_1)$ is bounded on the subspace $\ovl{\Ran A}$ and from boundedness of the projection $P_{\ker A}$. Finally, the multiplicativity of the functional calculus is easy to check.
\end{proof}

\paragraph{Commutation relations} The following is extracted from \cite{Kat62}.

\begin{prop}
\label{prop-Kato}
Let $(e^{tA})_{t \geq 0}$ and $(e^{tB})_{t \geq 0}$ be two strongly continuous semigroups on a Banach space $X$ such that
\begin{equation*}
\label{}
e^{s A}e^{t B}
=e^{cst}e^{t B}e^{s A}, \quad 0 \leq s, t < \infty
\end{equation*}
for some constant $c \in \C$. Then the subspace $\dom(BA) \cap \dom(A B)$ is dense in $X$ and 
\begin{equation*}
\label{}
(AB-BA)f
=cf
\end{equation*}
if $f \in \dom(BA) \cap \dom(A B)$. 
\end{prop}


\paragraph{2-cocycles}
Let $G$ be a locally compact group equipped with a left Haar measure $\mu_G$. We first recall that a Borel measurable 2-cocycle on $G$ with values in the one-dimensional torus $\T \ov{\mathrm{def}}{=} \{z \in \mathbb{C} : |z| = 1 \}$ \label{torus} is a Borel measurable map $\sigma \co G \times G \to \T$ such that 
\begin{equation}
\label{equation-2-cocycle}
\sigma(s,t)\sigma(st,r) 
= \sigma(s,tr)\sigma(t,r), \quad	s, t, r \in G. 
\end{equation}
The terminologies <<multiplier>> or <<factor system>> are also used in the literature. This implies that
\begin{equation*}
\sigma(s,e)
= \sigma(e,e) 
= \sigma(e,s) , \quad s \in G.
\end{equation*}
Indeed, the cocycle condition \eqref{equation-2-cocycle} evaluated with $(s^{-1},s,e)$ instead of $(s,t,r)$ gives
\begin{equation*}
\label{}
\sigma(s^{-1},s)\sigma(e,e)
=\sigma(s^{-1},s)\sigma(s,e).
\end{equation*}
Hence $
\sigma(e,e) 
= \sigma(s,e)$. 
Similarly the 2-cocycle condition applied with $(e,s,s^{-1})$ instead of $(s,t,r)$ gives
\begin{equation*}
\label{}
\sigma(e,s)\sigma(s,s^{-1})
=\sigma(e,e)\sigma(s,s^{-1}).
\end{equation*}
Thus $\sigma(e,s) = \sigma(e,e)$. The set of Borel measurable 2-cocycles is a group under pointwise multiplication. The 2-cocycle $\sigma$ is said to be normalized if 
\begin{equation}
\label{normalized}
\sigma(s,s^{-1}) = 1, \quad s \in G
\end{equation}
In particular, $\sigma(e,e)=1$. 

\begin{example} \normalfont
\label{cocycle-Rd}
Observe that for any $d \times d$ real skew-symmetric matrix $\Theta$,  the 2-cocycle $\sigma_\Theta \co \R^d \times \R^d \to \T$, $(s,t) \mapsto \e^{\frac{1}{2}\i\langle s, \Theta t\rangle}$ defined in \eqref{def-cocycle-intro} is normalized. Actually, according to \cite[Theorem 7.38 p.~276]{Var85}, these 2-cocycles are the only ones that the locally compact group $\R^d$ admits.
\end{example}

\begin{lemma}
\label{Lemma-2-cocycle}
If the 2-cocycle $\sigma\co G \times G \to \T$ is normalized, we have
\begin{equation}
\label{very-useful-1}
\sigma(s,s^{-1}t)
=\sigma(t^{-1},s), \quad s,t \in G.
\end{equation}
\end{lemma}

\begin{proof}
Using the cocycle condition with $s$ replaced by $t^{-1}$, $t$ by $s$ and $r=s^{-1}t$, we obtain
\begin{equation*}
\label{}
\sigma(t^{-1},s)
\ov{\eqref{normalized}}{=} \sigma(t^{-1},s)\sigma(t^{-1}s,s^{-1}t)
\ov{\eqref{equation-2-cocycle}}{=} \sigma(t^{-1}, t)\sigma(s,s^{-1}t)
\ov{\eqref{normalized}}{=} \sigma(s,s^{-1}t).
\end{equation*}
\end{proof}

%
%

\paragraph{Twisted group von Neumann algebras}
Let $G$ be a locally compact group. Recall that if $X$ is a Banach space then a map $\pi \co G \to \B(X)$ for some Banach space $X$ is called a (strongly) continuous $\sigma$-projective unitary representation of $G$ on $X$ for some Borel measurable 2-cocycle $\sigma \co G \times G \to \T$ if the map $G \to X$, $s \mapsto \pi_sx$ is continuous for any $x \in X$ and if we have
\begin{equation}
\label{eq-proj-rep}
\pi_s\pi_t 
= \sigma(s,t)\pi_{st}, \quad s,t \in G.
\end{equation} 
The representation is said to be a (strongly) continuous $\sigma$-projective unitary representation if $X$ is a Hilbert space $H$ and if each $\pi_s$ is a unitary operator.  
We refer to the nice survey \cite{Pac08} for a description of some applications of projective representations of locally compact groups, and to \cite[Chapter VIII, Section 10]{FeD88}, \cite{Kle74} and \cite{Mor17} for more information. 

If $\sigma \co G \times G \to \T$ is a Borel measurable 2-cocycle, an example of continuous $\sigma$-projective unitary representation is given by the left regular $\sigma$-projective representation $\lambda_\sigma \co G \to \B(\L^2(G))$, which is defined by
\begin{equation}
\label{def-lambda-sigma}
(\lambda_{\sigma,s}f)(t) 
\ov{\mathrm{def}}{=} \sigma(s,s^{-1}t)f(s^{-1}t),\quad s,t \in G, f \in \L^2(G).
\end{equation}
For any $s,t \in G$, we have 
\begin{equation}
\label{product-adjoint-twisted}
\lambda_{\sigma,s} \lambda_{\sigma,t} 
= \sigma(s,t) \lambda_{\sigma,st}, 
\qquad 
\big(\lambda_{\sigma,s}\big)^* 
= \ovl{\sigma(s,s^{-1})} \lambda_{\sigma,s^{-1}}.	
\end{equation}
For any function $f \in \L^1(G)$, we let 
\begin{equation*}
\label{def-lambda-sigma}
\lambda_\sigma(f) 
\ov{\mathrm{def}}{=} \int_G f(s)\lambda_{\sigma,s} \d\mu_G(s),
\end{equation*}
where the integral is understood in the weak sense. We define the twisted group von Neumann algebra $\VN(G,\sigma)$ of the group $G$ to be the von Neumann algebra generated by $\{\lambda_\sigma(f) : f \in \L^1(G)\}$ (or equivalently by $\{ \lambda_{\sigma,s} : s\in G \}$) in the algebra $\B(\L^2(G))$.


\paragraph{A central extension} 

Suppose that $G$ and $K$ are two locally compact groups. Each neutral element is denoted by $e$. An extension of $G$ by $K$ is an exact sequence $\{e\} \to K \xra{i} H \xra{j} G \to \{e\}$ where $H$ is a locally compact group, $i$ is a topological isomorphism onto its image and where $j$ is continuous and open. It is said to be a central extension if $i(K)$ is contained in the center of the group $H$. 

Assume that $G$ is second countable. Let $\sigma \co G \times G \to \T$ be a continuous normalized 2-cocycle on $G$. Consider the set
\begin{equation}
\label{def-central-extension}
G_\sigma 
\ov{\mathrm{def}}{=} G \times \T
\end{equation}
endowed with the group law 
\begin{equation*}
\label{}
(s,z) \cdot (t,w) 
\ov{\mathrm{def}}{=} (st,zw\sigma(s,t)),
\end{equation*}
where $s,t \in G$ and $z,w \in \T$. As in \cite[Section 2]{Mac58}, \cite[Section 3]{Par69} and \cite[Theorem 7.8]{Var85}, we equip $G_\sigma$ with the product topology.   
Then $G_\sigma$ becomes a second countable locally compact group with left Haar measure $\mu_G \ot \mu_\T$ by \cite[p.~13]{Par69} and \cite[Remark p.~255]{Var85}.  We warn the reader that in the theory of Mackey, we can use cocycles that are only Borel measurable instead of continuous. In this case, the used topology on the <<Mackey group>> $G_\sigma$ is the <<Weil topology>> and this is not identical in general to the product topology on $G \times \T$.

Introducing the continuous maps $i \co \T \to G_\sigma$, $z \mapsto (e,z)$ and $j \co G_\sigma \to G$, $(s,z) \mapsto s$ then by \cite[Theorem 7.8]{Var85} the triple $(G_\sigma,i,j)$ defines a central extension of the group $G$ by $\T$. This means that we have a short exact sequence 
\begin{equation}
\label{central-extension}
\{e\} \to \T \xra{i} G_\sigma \xra{j} G \to \{e\}
\end{equation}
such that the image of $\T$ is a subgroup of the center of the group $G_\sigma$.

We will use the following result \cite[Corollary 5.2 p.~220]{Rag94} or \cite[Theorem 7.21 p.~262]{Var85}.

\begin{thm}
\label{Th-Lie-extension}
Let $G$ be a connected Lie group. Let $\sigma \co G \times G \to \T$ be a continuous 2-cocycle. Then there exists a unique analytic structure on $G_\sigma$ which converts $G_\sigma$ into a Lie group.
\end{thm}

Let $\mathfrak{a}$ and $\mathfrak{g}$ be Lie algebras. An extension of $\mathfrak{g}$ by $\mathfrak{a}$ is a short exact sequence 
\begin{equation*}
\label{}
0 \to \mathfrak{a} \xra{\lambda} \mathfrak{h} \xra{\mu} \mathfrak{g} \to 0,
\end{equation*}
where $\mathfrak{h}$ is a Lie algebra, $\mu$ is a surjective homomorphism from $\mathfrak{h}$ onto $\mathfrak{g}$ and where $\lambda$ is an injective homomorphism from $\mathfrak{a}$ onto the kernel of $\mu$. Two extensions 
\begin{equation*}
\label{}
0 \to \mathfrak{a} \xra{\lambda} \mathfrak{h} \xra{\mu} \g \to 0 
\quad \text{and} \quad
0 \to \mathfrak{a} \xra{\lambda'} \mathfrak{h}' \xra{\mu'} \g \to 0
\end{equation*}
are said to be equivalent if there exists a homomorphism $\nu \co \mathfrak{g} \to \mathfrak{g}'$ such that $\lambda'=\nu \circ \lambda$ and $\mu=\mu' \circ \nu$. In this case, $\nu$ is an isomorphism. 
An extension is said to be central if $\mathfrak{a}$ is contained in the center of the Lie algebra $\mathfrak{h}$, that is if $[\mathfrak{a},\mathfrak{h}] = 0$. 

A 2-cocycle on a Lie algebra $\mathfrak{g}$ with values in an \textit{abelian} Lie algebra $\mathfrak{a}$, is a bilinear map $c \co \mathfrak{g} \times \mathfrak{g} \to \mathfrak{a}$, which is antisymmetric (i.e.~satisfies $c(X,Y) = -c(Y,X)$ for any $X,Y \in \mathfrak{g}$) and satisfying the cocycle identity
\begin{equation*}
\label{Jacobi-cocycle}
c([X, Y], Z) + c([Z, X], Y) + c([Y,Z],X) 
= 0, \quad X,Y,Z \in \mathfrak{g}.
\end{equation*}
For any linear form $f \co \g \to \mathbb{C}$, one can define a 2-cocycle $c_f$ by the formula
\begin{equation*}
\label{def-cf}
c_f(X,Y) \ov{\mathrm{def}}{=} f([X,Y]),
\end{equation*}  
where $X,Y \in \g$. Such a 2-cocycle is called a 2-coboundary or a trivial 2-cocycle on $\g$. A 2-cocycle $c_1$ is said to be equivalent to a 2-cocycle $c_2$ if the difference $c_1-c_2$ is trivial.

Recall that there is a one-to-one correspondence between the equivalence classes of central extensions of $\g$ by an abelian Lie algebra $\mathfrak{a}$ and the equivalence classes of  cocycles. In particular, for any central extension $\mathfrak{h}$ of $\mathfrak{g}$ by $\mathfrak{a}$ there exists by \cite[p.~68]{Sch08} a 2-cocycle $c \co \mathfrak{g} \times \mathfrak{g} \to \mathfrak{a}$ such that the Lie algebra $\mathfrak{h}$ is isomorphic to the Lie algebra defined by the vector space $\mathfrak{g} \oplus \mathfrak{a}$ and by the bracket 
\begin{equation}
\label{bracket-central}
[(X,Z),(Y,Z')]
=[X,Y]_{\mathfrak{g}}+c(X,Y), \quad X,Y \in \mathfrak{g},Z,Z' \in \mathfrak{a}.
\end{equation}

In the situation of Theorem \ref{Th-Lie-extension}, we obtain by \cite[p.~64]{Sch08} that the Lie algebra $\g_\sigma$ \label{def-gsigma} of the Lie group $G_\sigma$ is a central extension of the Lie algebra $\mathfrak{t}$ of the Lie group $\mathbb{T}$ by the Lie algebra $\g$ of the Lie group $G$. This means that we have a short exact sequence 
\begin{equation}
\label{central-extension-algebras}
0 \to \mathfrak{t} \to \g_\sigma \to \g \to 0.
\end{equation}
with $[\mathfrak{t},\g_\sigma] = 0$. Assume that the 2-cocycle $\sigma \co G \times G \to \T$ is smooth. If $\gamma_t \co \R \to G$ and $\beta_s \co \R \to G$ are smooth curves in $G$ such that $\frac{\d}{\d t}|_{t=0} \gamma_t = X$ and $\frac{\d}{\d s}|_{s=0} \beta_s = Y$ then the formula
\begin{equation}
\label{recov-cocycle}
c(X,Y) 
=\frac{\d^2}{\d t\d s}|_{t=0,s=0} \sigma(\gamma_t,\beta_s) -\frac{\d^2}{\d t\d s}|_{t=0,s=0} \sigma(\beta_s, \gamma_t),
\end{equation}
defines by a particular case of \cite[Proposition 3.14 p.~26]{KhW09} a 2-cocycle $c \co \g \times \g \to \mathfrak{t}$ defining the corresponding central extension \eqref{central-extension-algebras}. See also \cite[Corollary 1.8]{Sun91} for a different but similar formula.

It is well-established that the representations of the group $G$ and the $\sigma$-projective representations of the Mackey group $G_\sigma$ are closely related, as detailed in \cite[Chapter VIII, Section 10]{FeD88}, \cite[Section 2]{Mac58} and \cite[Corollary p.~223]{Kle74}.
Roughly speaking, the theory of $\sigma$-projective representations of $G$ can be reduced to the theory of unitary representations of the slightly larger group $G_\sigma$. We need a  part of this correspondence for $\sigma$-projective representations of locally compact groups on \textit{Banach spaces}, which we now define. If $X$ is a Banach space, a map $\pi \co G \to \B(X)$ is called a (strongly) continuous $\sigma$-projective representation on $G$ if the map $G \to X$, $s \mapsto \pi_s(x)$ is continuous for any $x \in X$ and if we have the equality \eqref{eq-proj-rep}.


\begin{prop}
\label{prop-corre}
Let $X$ be a Banach space. Consider a second countable locally compact group $G$ equipped with a continuous 2-cocycle $\sigma \co G \times G \to \T$. For each continuous $\sigma$-projective representation $\pi \co G \to \B(X)$, the map $\tilde{\pi} \co G_\sigma \to \B(X)$, $(s,z) \mapsto z\pi_s$ defines a continuous representation of the group $G_\sigma$.
\end{prop}

\begin{proof}
For any elements $(s,z)$ and $(t,w)$ of the group $G_\sigma$, we have the equalities 
\begin{equation*}
\label{}
\tilde{\pi}(s,z)\tilde{\pi}(t,w)
=zw\pi_s \pi_t
\end{equation*}
and 
\begin{equation*}
\label{}
\tilde{\pi}((s,z)\cdot (t,w))=\tilde{\pi}(st, zw\sigma(s,t))=zw\sigma(s,t)\pi_{st}\ov{\eqref{eq-proj-rep}}{=} zw\pi_{s}\pi_{t}.
\end{equation*} 
Consequently, $\tilde{\pi}$ is a representation of the group $G_\sigma$. The continuity is obvious.
\end{proof}

\chapter{$\Theta$-Weyl tuples and associated operators}

\section{Weyl calculus for tuples of $\mathrm{C}_0$-groups with $\Theta$-commutation relations}
\label{Sec-Weyl1}

Let $n \in \N$. In all the paper, $\Theta \in \M_n(\R)$ denotes a fixed skew-symmetric matrix, i.e.~satisfying $\Theta^T = - \Theta$, where $\Theta^T$ is the transpose \label{transpose} of the matrix $\Theta$. In the sequel, it is convenient to introduce the upper and lower triangle parts \label{upper} of the matrix $\Theta$ by $(\Theta^\uparrow)_ {jk} \ov{\mathrm{def}}{=} \delta_{j \leq k} \Theta_{jk}$ and $(\Theta^\downarrow)_{jk} \ov{\mathrm{def}}{=} \delta_{j \geq k} \Theta_{jk}$ where $1 \leq j,k \leq n$. We clearly have the equalities 
\begin{equation*}
\label{}
\Theta = \Theta^\uparrow + \Theta^\downarrow
\end{equation*}
(recall that $\Theta$ vanishes on the diagonal) and 
\begin{equation*}
\label{}
(\Theta^\uparrow)^T 
= - \Theta^\downarrow.
\end{equation*}

We start by introducing the following fundamental definition. Independently, Gerhold and Shalit introduced in \cite[Definition 1.1]{GeS23} a similar notion in the Hilbert space setting. 

\begin{defi}
\label{defi-Weyl-tuple}
Let $X$ be a Banach space. For each $k \in \{1,\ldots,n\}$, consider some strongly continuous bounded group $(e^{\i t A_k} )_{t \in \R}$ of operators acting on $X$ with generator $\i A_k$. We say that $(A_1,\ldots,A_n)$ is a $\Theta$-Weyl tuple if the groups satisfy the following commutation relations:
\begin{equation}
\label{Weyl-tuple}
e^{\i t A_j} e^{\i s A_k} 
= e^{\i \Theta_{jk} ts} e^{\i s A_k} e^{\i t A_j}, \quad j,k \in \{1,\ldots,n\},\: s,t \in \R. 
\end{equation}
\end{defi}

We will use the following useful notation that we will see as a generalization of \cite[notation 14.3 p.~284]{Hal13} and \cite[(4.1) p.~268]{NeP20}.

\begin{defi}
Let $(A_1,\ldots,A_n)$ be a $\Theta$-Weyl tuple. Let $(t_1,\ldots,t_n) \in \R^n$.
We define
\begin{equation}
\label{def-eitA}
e^{\i t \cdot A} 
\ov{\mathrm{def}}{=} e^{-\frac12 \i \langle t , \Theta^\uparrow t \rangle} \prod_{k=1}^n e^{\i t_k A_k} 
= e^{-\frac12 \i \langle t , \Theta^\uparrow t \rangle} e^{\i t_1 A_1} e^{\i t_2 A_2} \cdots e^{\i t_n A_n}. 
\end{equation}
\end{defi}


\begin{example} \normalfont
\label{ex-Hall}
Suppose that $n=2d$ for some integer $d \geq 1$. Consider a $d$-dimensional Weyl pair $(A,B)=(A_1,\ldots,A_d,B_1,\ldots,B_d)$ as in \cite[Definition 3.1 p.~262]{NeP20} i.e.~satisfying the relations \eqref{first1}, \eqref{second1} and \eqref{third}. We will use the matrix $\Theta=\begin{bmatrix}
  0   & - \I_d \\
  \I_d   &  0 \\
\end{bmatrix}$. We have $\Theta^\uparrow=\begin{bmatrix}
  0   &  -\I_d \\
  0   &  0 \\
\end{bmatrix}$. By introducing the notation $(A_{d+1},\ldots,A_{2d}) \ov{\mathrm{def}}{=} (B_{1},\ldots,B_{d})$, we can write $(A_1,\ldots,A_{2d})=(A_1,\ldots,A_{d},B_{1},\ldots,B_{d})$ and this notation defines a $\Theta$-Weyl tuple. Furthermore, if $t=(u,v)=(u_1,\ldots,u_d,v_1,\ldots,v_d)$ is an element of $\R^{2d}$, we obtain $\langle t , \Theta^\uparrow t \rangle=- u \cdot v$. Consequently, we recover the notation
\begin{equation}
\label{Weyl-Portal-2}
e^{\i (u A+v B)} 
\ov{\mathrm{def}}{=} e^{\frac12 \i u \cdot v} e^{\i u_1 A_1} \cdots e^{\i u_d A_d}
e^{\i v_1 B_1}  \cdots e^{\i v_d B_d}
\end{equation}
of \cite[Notation 14.3 p.~284]{Hal13} with $\hbar=1$ and $d$ instead of $n$ and the one of \cite[(4.1) p.~268]{NeP20}. In particular, the case $d=1$ gives the equality
\begin{equation}
\label{Weyl-Portal}
e^{\i (u A_1+v B_2)} 
\ov{\mathrm{def}}{=} e^{\frac12 \i u v} e^{\i u A_1} e^{\i v B_2}, \qquad u,v \in \R.
\end{equation}
\end{example}

Now, we prove a commutation rule for the operators defined by the equation \eqref{def-eitA}. This result says that we have a projective representation of $\R^n$ with respect to the 2-cocycle defined in \eqref{def-cocycle-intro}.
\begin{lemma}
\label{lem-Weyl-product}
Let $(A_1,\ldots,A_n)$ be a $\Theta$-Weyl tuple. Let $(t_1,\ldots,t_n),\: (s_1,\ldots,s_n) \in \R^n$. Then
\begin{equation}
\label{commute-eit}
e^{\i t \cdot A} \circ e^{\i s \cdot A} 
= e^{\frac{1}{2}\i  \langle t, \Theta s \rangle} e^{\i(t+s)\cdot A} . 
\end{equation}
Moreover, the map $\R^n \to \B(X)$, $t \mapsto e^{\i t \cdot A}$ is continuous when the space $\B(X)$ is equipped with the strong operator topology.
\end{lemma}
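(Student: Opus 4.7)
The plan is to establish \eqref{commute-eit} by a direct computation: expand both sides using \eqref{def-eitA}, then use the commutation relations \eqref{Weyl-tuple} to reorder the middle factors and collect the resulting scalar phases.

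First I would write
\[
e^{\i t\cdot A}\circ e^{\i s\cdot A}
= e^{-\tfrac{1}{2}\i\langle t,\Theta^\uparrow t\rangle}\,e^{-\tfrac{1}{2}\i\langle s,\Theta^\uparrow s\rangle}
\Bigl(\prod_{k=1}^n e^{\i t_k A_k}\Bigr)\Bigl(\prod_{k=1}^n e^{\i s_k A_k}\Bigr),
\]
and aim to bring the middle bracket into the canonical ordered form $\prod_{k=1}^n e^{\i(t_k+s_k)A_k}$. To do so I would move, for each $k=1,2,\dots,n$ in turn, the factor $e^{\i s_k A_k}$ leftward past $e^{\i t_n A_n},e^{\i t_{n-1}A_{n-1}},\dots,e^{\i t_{k+1}A_{k+1}}$, using the rewritten form $e^{\i t_j A_j}e^{\i s_k A_k}=e^{\i\Theta_{jk}t_js_k}e^{\i s_kA_k}e^{\i t_jA_j}$ of \eqref{Weyl-tuple}. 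Once $e^{\i s_k A_k}$ is adjacent to $e^{\i t_k A_k}$, the two factors fuse into $e^{\i(t_k+s_k)A_k}$ since $(e^{\i r A_k})_{r\in\R}$ is a group. Carrying out this procedure for all $k$ accumulates exactly the scalar
\[
\prod_{k=1}^{n-1}\prod_{j=k+1}^n e^{\i\Theta_{jk}t_js_k}
= e^{\i\sum_{j>k}\Theta_{jk}t_js_k}
= e^{\i\langle t,\Theta^\downarrow s\rangle},
\]
since $\Theta^\downarrow$ is the (strict) lower-triangular part of $\Theta$.

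It then remains to verify the scalar identity
\[
-\tfrac{1}{2}\langle t,\Theta^\uparrow t\rangle
-\tfrac{1}{2}\langle s,\Theta^\uparrow s\rangle
+\langle t,\Theta^\downarrow s\rangle
= \tfrac{1}{2}\langle t,\Theta s\rangle
-\tfrac{1}{2}\langle t+s,\Theta^\uparrow(t+s)\rangle.
\]
Expanding the last term cancels the diagonal quadratic contributions, and using $\Theta=\Theta^\uparrow+\Theta^\downarrow$ together with the relation $(\Theta^\uparrow)^T=-\Theta^\downarrow$ recalled just before Definition \ref{defi-Weyl-tuple}, the identity reduces to $\langle s,\Theta^\uparrow t\rangle=-\langle t,\Theta^\downarrow s\rangle$, which is immediate from the transpose relation. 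This yields \eqref{commute-eit}.

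For the strong continuity of $t\mapsto e^{\i t\cdot A}$, I would observe that the scalar prefactor $e^{-\frac{1}{2}\i\langle t,\Theta^\uparrow t\rangle}$ is continuous in $t\in\R^n$, and that each one-parameter group $t_k\mapsto e^{\i t_k A_k}$ is strongly continuous and uniformly bounded in $t_k$ (since it is a bounded $C_0$-group). A standard telescoping argument then shows that a finite product of such uniformly bounded, strongly continuous families is jointly strongly continuous, giving the continuity claim. The main (only) obstacle is the bookkeeping in the reordering step; the remaining algebraic identity and the continuity are routine.
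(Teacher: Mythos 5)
Your proposal is correct and follows essentially the same route as the paper: the same iterative reordering of $e^{\i s_k A_k}$ past the $t$-factors using \eqref{Weyl-tuple}, the same accumulated phase (your $e^{\i\langle t,\Theta^\downarrow s\rangle}$ equals the paper's $e^{-\i\langle s,\Theta^\uparrow t\rangle}$ by the transpose relation), and the same final algebraic simplification; the strong continuity is likewise handled by the standard boundedness-plus-telescoping argument that underlies the joint continuity of composition on bounded sets invoked in the paper. No gaps.
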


\begin{proof}
According to the commutation relations from Definition \ref{defi-Weyl-tuple}, we have for any $j \in \{1,\ldots,n-1\}$,
\begin{align}
\left( \prod_{k=j}^n e^{\i t_k A_k} \right) e^{\i s_j A_j} 
&=e^{\i t_j A_j} e^{\i t_{j+1} A_{j+1}} \cdots e^{\i t_n A_n} e^{\i s_j A_j} \nonumber \\
& \ov{\eqref{Weyl-tuple}}{=} e^{\i \sum_{k = j+1}^n \Theta_{kj} t_k s_j} e^{\i t_jA_j} e^{\i s_j A_j} \prod_{k=j+1}^n e^{\i t_k A_k} \nonumber \\
& = e^{-\i s_j (\Theta^\uparrow t)_j} e^{\i(t_j+s_j)A_j} \prod_{k=j+1}^n e^{\i t_k A_k} \label{error}
\end{align}
since $\sum_{k=j+1}^n \Theta_{kj} t_k s_j = - \sum_{k=j+1}^n \Theta_{jk} t_k s_j = - s_j(\Theta^\uparrow t)_j $. Using the equality \eqref{error} $n$ times with $j=1,\ldots,j=n$ to gradually eliminate the factor $ \prod_{j=1}^n e^{\i s_j A_j}$, we deduce that
\begin{align*}
e^{\i t \cdot A} \circ e^{\i s \cdot A} 
& \ov{\eqref{def-eitA}}{=}  e^{-\frac12 \i \langle t , \Theta^\uparrow t \rangle} e^{-\frac12 \i \langle s , \Theta^\uparrow s \rangle}  \left( \prod_{k=1}^n e^{\i t_k A_k} \right)\left( \prod_{j=1}^n e^{\i s_j A_j} \right) \\
&\ov{\eqref{error}}{=}  e^{-\frac12 \i ( \langle t, \Theta^\uparrow t \rangle + \langle s, \Theta^\uparrow s \rangle)} e^{-\i s_1 (\Theta^\uparrow t)_1} e^{\i(t_1+s_1)A_1} \left( \prod_{k=2}^n e^{\i t_k A_k} \right) \left( \prod_{j=2}^n e^{\i s_j A_j} \right) \\
&\ov{\eqref{error}}{=} \cdots 
= e^{-\frac12 \i ( \langle t, \Theta^\uparrow t \rangle + \langle s, \Theta^\uparrow s \rangle)} e^{-\i \sum_{k=1}^n s_k (\Theta^\uparrow t)_k} \prod_{k=1}^n e^{\i(t_k+s_k)A_k}.
\end{align*}
We conclude since the exponents in the exponential function simplify to
\begin{align*}
\MoveEqLeft -\frac12 \i \big[ \langle t, \Theta^\uparrow t \rangle + \langle s, \Theta^\uparrow s \rangle\big] - \i \langle s, \Theta^\uparrow t \rangle 
= - \frac12 \i \big[ \langle t, \Theta^\uparrow t \rangle + \langle s , \Theta^\uparrow s \rangle  + 2 \langle s , \Theta^\uparrow t \rangle\big] \\
& = - \frac12 \i \big[ \langle t, \Theta^\uparrow t \rangle + \langle s, \Theta^\uparrow s \rangle + \langle s, \Theta^\uparrow t \rangle + \langle t, \Theta^\uparrow s \rangle \big] + \frac12 \i \langle t, \Theta^\uparrow s \rangle - \frac12 \i \langle s, \Theta^\uparrow t \rangle \\
& = - \frac12 \i \big\langle (t+s), \Theta^\uparrow (t+s) \big\rangle + \frac12 \i \langle t, \Theta^\uparrow s \rangle - \frac12 \i \langle s, \Theta^\uparrow t \rangle \\
& = - \frac12 \i \big\langle (t+s), \Theta^\uparrow (t+s) \big\rangle + \frac12 \i \langle t, \Theta^\uparrow s \rangle + \frac12 \i \langle \Theta^\downarrow s , t \rangle \\
& = - \frac12 \i \big\langle (t+s), \Theta^\uparrow (t+s) \big\rangle + \frac12 \i \langle t, \Theta s \rangle.
\end{align*}
The strong continuity is a consequence of the joint continuity of the composition of operators on bounded sets for the strong operator topology, see \cite[Proposition C.19 p.~517]{EFHN15}.
\end{proof}


\begin{example} \normalfont
\label{ex-Hall-2}
Using the notations of Example \ref{ex-Hall}, , we observe that for any elements $t=(u_1,\ldots,u_d,v_1,\ldots,v_d)$ and $s=(u_1',\ldots,u_d',v_1',\ldots,v_d')$ in $\R^{2d}$ we have $\langle t , \Theta s \rangle= -u \cdot v' +v\cdot u'$. Hence, we recover the commutation rule
\begin{equation}
\label{equ-Hall-2}
e^{\i (u A+v B)} e^{\i (u' A+v' B)} 
=e^{-\frac12\i (u \cdot v' -v\cdot u')} e^{\i ((u+u') A+(v+v')B)} 
\end{equation}
of \cite[Notation 14.10]{Hal13} with $\hbar=1$ and the one of \cite[(4.2) p.~268]{NeP20}.
\end{example}


We proceed by introducing the Weyl functional calculus which is suitable for our $\Theta$-Weyl tuples.

\begin{defi}
\label{defi-Weyl-calculus}
Let $(A_1,\ldots,A_n)$ be a $\Theta$-Weyl tuple on a Banach space $X$. Consider some Schwartz class function $a \in \cal{S}(\R^n)$. 
We define the $\Theta$-Weyl functional calculus by
\begin{equation}
\label{Weyl-calculus}
a(A)x 
\ov{\mathrm{def}}{=} \frac{1}{(2 \pi)^{\frac{n}{2}}} \int_{\R^n} \hat{a}(u) e^{\i u \cdot A}x \d u, \quad x \in X,
\end{equation}
where we use the definition of the Fourier transform
\begin{equation}
\label{Fourier-transform}
\hat{a}(u) 
\ov{\mathrm{def}}{=} \frac{1}{(2\pi)^{\frac{n}{2}}} \int_{\R^n} a(x) e^{-\i \langle u , x \rangle } \d x. 
\end{equation}
\end{defi}

Since the groups the groups $(e^{\i tA_k})_{t \in \R}$ are bounded for any integer $k \in \{1, \ldots,n\}$, it follows that $a(A)$ defines a bounded operator on $X$ for any function $a \in \L^1(\R^n)$ with $\hat{a} \in \L^1(\R^n)$.

\begin{remark} \normalfont
\label{rem-tempered}
We can define $a(A)$ if $a$ is a tempered distribution on $\R^n$ with $\hat{a} \in \L^1(\R^n)$.
\end{remark}


\begin{example} \normalfont
\label{Ex-Portal-3}
With the notations of Example \ref{ex-Hall} where $n=2d$, we recover the Weyl calculus 
\begin{equation}
\label{}
a(A,B)x
\ov{\mathrm{def}}{=} \frac{1}{(2\pi)^{d}} \int_{\R^{2d}} \hat{a}(u,v) e^{\i(u A+v B)} x \d u \d v, \quad x \in X.
\end{equation}
of \cite[Definition 4.2]{NeP20}. 
\end{example}

\section{Twisted transference principle and universal $\Theta$-Weyl tuples}
\label{sec-twisted}

\paragraph{Twisted convolution}
Let $G$ be a locally compact group equipped with a left Haar measure $\mu_G$ and a Borel measurable 2-cocycle $\sigma \co G \times G \to \T$ on $G$. Consider a Banach space $X$. Following \cite[p.~120]{EdL69a}, we define the twisted convolution $f*_{\sigma} g \co G \to X$ of two functions $f \co G \to \mathbb{C}$ and $g \co G \to X$, when it exists, by
\begin{equation}
\label{twisted-convolution}
(f *_\sigma g)(t) 
\ov{\mathrm{def}}{=} \int_G \sigma(s,s^{-1}t)f(s) g(s^{-1}t) \d\mu_G(s), \quad t \in G.
\end{equation}
If $X=\mathbb{C}$, it is immediate to see that the twisted convolution is associative on $\L^1(G)$ and generalizes the classical convolution defined by
\begin{equation}
\label{Convolution-formulas}
(f*g)(t)
\ov{\mathrm{def}}{=} \int_G f(s)g(s^{-1}t) \d\mu_G(s), \quad t \in G.
\end{equation}
If the cocycle $\sigma$ is normalized, note that 
\begin{align}
\MoveEqLeft
\label{twisted-second}
(f *_\sigma g)(t)          
\ov{\eqref{very-useful-1}}{=} \int_G \sigma(t^{-1},s) f(s) g(s^{-1}t)\d\mu_G(s). 
\end{align}

\begin{example} \normalfont 
In the case of the group $G=\R^d$ equipped with the 2-cocycle $\sigma_\Theta \co \R^d \times \R^d \to \T$, $(s,t) \mapsto \e^{\frac{1}{2}\i\langle s, \Theta t\rangle}$, as introduced in \eqref{def-cocycle-intro}, we obtain with \eqref{twisted-second} and we will use the notation
\begin{equation}
\label{twisted-Rn}
(f *_{\Theta} g)(x) 
\ov{\mathrm{def}}{=} \int_{\R^d} f(y)g(x-y) \e^{-\frac{\i}{2} \langle x, \Theta y\rangle} \d y.
\end{equation}
\end{example}
We need the notion of regular operator. Indeed, in Section \ref{subsec-semigroup-A}, we will have to deal with contractive operators that are non-positive yet contractively regular.

\paragraph{Regular operators} Recall that a linear operator $T \co \L^p(\Omega) \to \L^p(\Omega')$ is regular if and only if for any Banach space $X$ the map $T \ot \Id_X$ induces a bounded operator between the Bochner spaces $\L^p(\Omega,X)$ and $\L^p(\Omega',X)$. In this case, the regular norm is given by
\begin{equation*}
\label{Norm-reg-c}
\norm{T}_{\reg,\L^p(\Omega) \to \L^p(\Omega')}
\ov{\mathrm{def}}{=} \sup_X \norm{T \ot \Id_X}_{\L^p(\Omega,X)\to \L^p(\Omega',X)},
\end{equation*}
where the supremum runs over all Banach spaces $X$. The operator $T$ is said to be contractively regular if $\norm{T}_{\reg,\L^p(\Omega) \to \L^p(\Omega')} \leq 1$. We refer to \cite{AbA02}, \cite{ArK23} and \cite{Pis10} for more information on regular operators. 

\begin{example} \normalfont
\label{Example-regular}
If $T \co \L^p(\Omega) \to \L^p(\Omega')$ is a positive operator, 
then $T$ is regular and we have the equality  
$
\norm{T}_{\reg,\L^p(\Omega) \to \L^p(\Omega')}
=\norm{T}_{\L^p(\Omega) \to \L^p(\Omega')}$. For any $p \in [1,\infty)$ with $p \not= 2$, it is known that any isometry $T \co \L^p(\Omega) \to \L^p(\Omega')$ is contractively regular. An absolute contraction, or a Dunford–Schwartz operator, over a measure
space $\Omega$ is an operator $T \co \L^1(\Omega) + \L^\infty(\Omega) \to \L^1(\Omega') + \L^\infty(\Omega')$ satisfying 
\begin{equation*}
\label{}
\norm{T(f)}_{\L^1(\Omega')} \leq \norm{f}_{\L^1(\Omega)}\text{ and } \norm{T(f)}_{\L^\infty(\Omega')} \leq \norm{f}_{\L^\infty(\Omega)}, \quad f \in \L^1(\Omega) \cap \L^\infty(\Omega).
\end{equation*}
According to \cite[Lemma 3.20 p.~36]{ArK23} and \cite[p.~13]{Pis10}, $T$ extends uniquely to a contractively regular operator $T \co \L^p(\Omega) \to \L^p(\Omega')$ for any $1 \leq p \leq \infty$. 
\end{example}

The proof of the following Young's inequality is elementary. 

\begin{lemma}
\label{lemma-Young-twisted}
Let $G$ be a locally compact group equipped with a left Haar measure $\mu_G$ and a Borel measurable 2-cocycle $\sigma \co G \times G \to \T$ on $G$. For any $1 \leq p \leq \infty$ and any Banach space $X$, the operator $C_{f,\sigma} \co \L^p(G,X) \to \L^p(G,X)$, $g \mapsto f *_{\sigma} g$\label{def-Cfsigma} is a well-defined linear operator on the Bochner space $\L^p(G,X)$ for any function $f \in \L^1(G)$ and we have
\begin{equation}
\label{Young-twisted}
\norm{f *_{\sigma} g}_{\L^p(G,X)}
\leq \norm{f}_{\L^1(G)} \norm{g}_{\L^p(G,X)}, \quad g \in \L^p(G,X).
\end{equation}
\end{lemma}

\begin{proof}
It suffices to show that $C_{f,\sigma}$ induces a bounded operator on the Banach spaces $\L^1(G)$ and $\L^\infty(G)$, because then, it is a regular operator on any $\L^p(G)$ by Example \ref{Example-regular}, and as such extends to a bounded operator on the Bochner space $\L^p(\R^n,X)$ for any Banach space $X$ by definition \cite[p.~13]{Pis10}. For any $t \in G$, one has
\begin{align*}
\MoveEqLeft
|(C_{f,\sigma} g)(t)| 
\ov{\eqref{twisted-convolution}}{=} \left|\int_G \sigma(s,s^{-1}t)f(s) g(s^{-1}t) \d\mu_G(s)\right| \\
&\leq  \int_{G} |f(s)| \, |g(s^{-1}t)| \d \mu_G(s) 
\ov{\eqref{Convolution-formulas}}{=} (|f| \ast |g|)(t).
\end{align*}
Therefore, by the classical Young inequality \cite[Proposition 2.40 p.~57]{Fol16}, we obtain the inequalities 
\begin{equation*}
\label{}
\norm{C_{f,\sigma} g}_{\L^1(G)} \leq \norm{f}_{\L^1(G)} \norm{g}_{\L^1(G)}
\end{equation*}
and 
\begin{equation*}
\label{}
\norm{C_{f,\sigma} g}_\infty \leq \norm{f}_{\L^1(G)} \norm{g}_{\L^\infty(G)}.
\end{equation*}
\end{proof}

\begin{remark} \normalfont
\label{twisted-conv-is-regular}
In particular, for any function $f \in \L^1(G)$, the twisted convolution operator $C_{f,\sigma} \co \L^p(G) \to \L^p(G)$, $g \mapsto f *_{\sigma} g$ is regular with 
\begin{equation*}
\label{}
\norm{C_{f,\sigma}}_{\reg, \L^p(G) \to \L^p(G)} 
\leq \norm{f}_{\L^1(G)}.
\end{equation*}
\end{remark}

\paragraph{Integrated form of a projective representation} Generalizing the notion of integrated form of a continuous representation of a locally compact group to continuous projective representations, we introduce the following definition. A continuous projective representation $\pi \co G \to \B(X)$ is said to be uniformly bounded if there exists a constant $M$ such that $\norm{\pi(s)}_{X \to X} \leq M$ for any $s \in G$.

\begin{defi}
\label{defi-integrated}
Let $G$ be a locally compact group endowed with a left Haar measure $\mu_G$. Consider a continuous uniformly bounded projective representation $\pi \co G \to \B(X)$ of $G$ on a Banach space $X$. For any function $f \in \L^1(G)$ and any $x \in X$, we let
\begin{equation}
\label{integrated-rep}
\pi(f)x 
\ov{\mathrm{def}}{=} \int_{G} f(s) \pi_{s}x \d \mu_G(s).
\end{equation}
\end{defi}
By \cite[Theorem A.22 p.~285]{Fol16}, the integral is a \textit{weak integral} and we have for any $x \in X$ and any function $f \in \L^1(G)$
\begin{equation*}
\label{}
\norm{\int_{G} f(s) \pi_{s}x \d \mu_G(s)}_X
\leq M\norm{x}_{X}\norm{f}_{\L^1(G)}.
\end{equation*} 
 So we have a linear operator $\pi(f) \co X \to X$ and this operator is clearly bounded with 
\begin{equation*}
\label{}
\norm{\pi(f)}_{X \to X} 
\leq M\norm{f}_{\L^1(G)}.
\end{equation*}
The operator $\pi(f)$ is also denoted by $\int_{G} f(s)\pi_{s} \d\mu_G(s)$.

\paragraph{Twisted transference} 
The following result is a generalization of the transference result \cite[Theorem 2.8 p.~6]{BGM89}. See also \cite{BPW94} and \cite[Theorem 3.15 p.~19]{CoW76}. 


\begin{prop}
\label{Prop matricial Transference} 
Let $G$ be an amenable locally compact group equipped with a normalized Borel measurable 2-cocycle $\sigma$. Consider a continuous $\sigma$-projective representation $\pi \co G \to \B(X)$ of $G$ on a separable Banach space $X$ which is uniformly bounded with
\begin{equation}
\label{def-de-M}
M
\ov{\mathrm{def}}{=} \sup\big\{\norm{\pi_s}_{X \to X} \co s \in G \big\}
<\infty.
\end{equation}
Suppose that $1 \leq p < \infty$. Then for any function $f \in \L^1(G)$, we have
\begin{equation}
\label{}
\Bgnorm{\int_{G} f(s)\pi_{s} \d\mu_G(s)}_{X \to X} 
\leq M^2 \norm{C_{f,\sigma}}_{\L^p(G,X) \to \L^p(G,X)},
\end{equation}
where we use the twisted convolution operator $C_{f,\sigma} \co g \to  f *_{\sigma} g$.
\end{prop}

\begin{proof}
Note that both sides of the inequality define continuous functions on $\L^1(G)$. So by density, we can suppose that the function $f$ is continuous with compact support $\cal{K}$. Let $\epsi >0$. Since the group $G$ is amenable, we can use Leptin's characterization of amenability \cite[Theorem 7.9 p.~70]{Pie84}. There exists a measurable subset $V$ of $G$ with $0 < \mu(V) < \infty$ such that 
\begin{equation*}
\label{}
\frac{\mu(\cal{K}^{-1}V)}{\mu(V)}
\leq 1+\epsi.
\end{equation*}
For any $x \in X$, we define the vector-valued function $h \co G \to X$ by
\begin{equation}
\label{def-func}
h(s)
\ov{\mathrm{def}}{=} 1_{V^{-1}\cal{K}}(s^{-1})\pi_{s^{-1}} x.
\end{equation}
Note that $h$ is strongly measurable and that
\begin{align}
\label{div-543}
\norm{h}_{\L^p(G,X)}^p
&\ov{\eqref{def-func}}{=} \int_G \bnorm{1_{V^{-1}\cal{K}}(s^{-1})\pi_{s^{-1}}x}_{X}^p \d s
=\int_G \norm{\pi_{s^{-1}}x}_{X}^p 1_{V^{-1}\cal{K}}(s^{-1}) \d s \\
&\ov{\eqref{def-de-M}}{\leq} M^p \int_G \norm{x}_{X}^p 1_{V^{-1}\cal{K}}(s^{-1}) \d s
=\mu(\cal{K}^{-1}V) M^p\norm{x}_{X}^p. \nonumber
\end{align}
Introducing the relation $\pi_{t}\pi_{t^{-1}} \ov{\eqref{eq-proj-rep}}{=} \sigma(t,t^{-1})\pi_e \ov{\eqref{normalized}}{=} \Id_X$ in the first equality, we observe that for any element $t \in V$
\begin{align*}
\MoveEqLeft
\Bgnorm{\int_{G} f(s) \pi_{s}x \d s}_{X}
= \Bgnorm{\pi_{t}\pi_{t^{-1}}\bigg(\int_{G} f(s)\pi_{s}x \d s\bigg)}_{X} 
\ov{\eqref{def-de-M}}{\leq} M \Bgnorm{\bigg(\int_{G} f(s) \pi_{t^{-1}}\pi_{s}x \d s\bigg) }_{X} \\
&\ov{\eqref{eq-proj-rep}}{=} M \Bgnorm{\int_{G} \sigma(t^{-1},s) f(s) \pi_{t^{-1}s}x \d s}_{X}.
\end{align*}
Raising to the $p^{\textrm{th}}$ power and averaging over $V$, we obtain 
\begin{align*}
\MoveEqLeft
\Bgnorm{\int_{G} f(s)\pi_{s}x \d s}_{X}^p
\leq M^p\frac{1}{\mu(V)}\int_{V}\Bgnorm{\int_{K} \sigma(t^{-1},s) f(s) \pi_{t^{-1}s}x \d s}_{X}^p \d t \\
&= M^p \frac{1}{\mu(V)} \int_{V}\Bgnorm{\int_{G} \sigma(t^{-1},s) f(s) 1_{V^{-1}\cal{K}}(t^{-1}s)\pi_{t^{-1}s}(x) \d s}_{X}^p \d t \\
&\ov{\eqref{def-func}}{\leq} M^p \frac{1}{\mu(V)} \int_{G} \Bgnorm{\int_{G} \sigma(t^{-1},s) f(s) h(s^{-1}t) \d s}_{X}^p \d t\\
&\ov{\eqref{twisted-second}}{=} M^p \frac{1}{\mu(V)} \int_{G} \bnorm{(f *_{\sigma}h)(t)}_{X}^p \d t \\
&\leq M^p \frac{1}{\mu(V)} \norm{C_{f,\sigma}}_{\L^p(G,X) \to \L^p(G,X)}^p \norm{h}_{\L^p(G,X)}^p  \\
&\ov{\eqref{div-543}}{\leq} M^{2p}(1+\epsi) \norm{C_{f,\sigma}}_{\L^p(G,X) \to \L^p(G,X)}^p \norm{x}_{X}^p.
\end{align*}
The result follows by taking the limit $\epsi \to 0$.
\end{proof}

\begin{remark} \normalfont
\label{rem-not-clear}
It seems to us that we cannot deduce this result from the standard transference result by using the Mackey group and the lifting of the projective representation on this group.
\end{remark}

Using the projective representation $t \mapsto e^{\i t\cdot A}$ provided by Lemma \ref{lem-Weyl-product}, we deduce the following result.

\begin{cor}
\label{cor-transference}
Let $A = (A_1,\ldots,A_n)$ be a $\Theta$-Weyl tuple acting on some separable Banach space $X$. Set 
\begin{equation*}
\label{}
M_A \ov{\mathrm{def}}{=} \sup_{t \in \R^n} \norm{e^{\i t\cdot A}}_{X \to X}.
\end{equation*}
Suppose that $1 \leq p < \infty$. Then for any tempered distribution $a$ with $\hat{a} \in \L^1(\R^n)$, we have
\begin{equation}
\label{transference-estimate}
\norm{a(A)}_{X \to X} 
\leq M_A^2 \norm{C_{\hat{a}}^\Theta}_{\L^p(\R^n,X) \to \L^p(\R^n,X)}
\end{equation}
where
\begin{equation}
\label{convolution-bis}
(C_a^\Theta g)(x) 
\ov{\mathrm{def}}{=} \frac{1}{(2\pi)^{\frac{n}{2}}} \int_{\R^n} e^{\frac12 \i \langle y , \Theta x \rangle} a(y) g(x-y) \d y. 
\end{equation}
\end{cor}

\paragraph{Universal $\Theta$-Weyl tuple} Suppose that $1 \leq p < \infty$. For each $k \in \{ 1, \ldots, n\}$, consider the strongly continuous groups $(e^{\i sQ_k})_{t \in \R}$ and $(e^{\i sP_k})_{t \in \R}$ of operators acting on the Banach space $\L^p(\R^n)$ defined by
\begin{equation}
\label{def-groupes}
(e^{\i sQ_k} f)(x) 
= e^{\i sx_k} f(x) 
\text{ and }
(e^{\i sP_k}f)(x) 
= f(x+s e_k), \quad f \in \L^p(\R^n).
\end{equation}
Note that the infinitesimal generators act on their domains by 
\begin{equation}
\label{pos-mom}
(Q_k f)(x) 
= x_k f(x)
\quad \text{and} \quad
(P_kf)(x) 
= -\i \partial_k f(x).
\end{equation}
We refer to \cite[p.~65]{EnN06} for more information. Now, we introduce the universal $\Theta$-Weyl tuple, which plays a special role in the theory of $\Theta$-Weyl tuples.

\begin{prop}
\label{prop-universal-Theta-Weyl-tuple}
Consider the families $Q = (Q_1,\ldots,Q_n)$ and $P = (P_1,\ldots,P_n)$. The tuple
\begin{equation}
\label{A-univ}
A_{\univ}^\Theta 
\ov{\mathrm{def}}{=} \bigg(\frac12 \sum_{l=1}^{n} \Theta_{1l}Q_l - P_1,\ldots,\frac12 \sum_{l=1}^{n} \Theta_{kl}Q_l - P_k,\ldots,\frac12 \sum_{l=1}^{n} \Theta_{nl}Q_l - P_n\bigg)
\end{equation}
is a $\Theta$-Weyl tuple. Moreover, its $\Theta$-Weyl calculus satisfies for any function $a \in \cal{S}(\R^n)$ the equality
\begin{equation}
\label{aA-as-twisted-conv}
a(A_{\univ}^\Theta) 
= C_{\hat{a}}^\Theta. 
\end{equation}
More precisely, we have
\begin{equation}
\label{equ-universal-Theta-Weyl-explicit-formula}
(e^{\i t \cdot A_{\univ}^\Theta}f)(x) 
= e^{\frac12 \i \langle t, \Theta x \rangle} f(x-t), \quad x,t \in \R^n.
\end{equation}
The tuple $A_{\univ}^\Theta$ is said to be the universal $\Theta$-Weyl tuple, also denoted by 
\begin{equation*}
\label{}
A_{\univ}^\Theta
=\frac12 \Theta Q - P.
\end{equation*}
\end{prop}

\begin{proof}
For any $k \in \{1, \ldots, n\}$, we have 
\begin{equation*}
\label{}
A_k = \frac12 \sum_{l=1}^{n} \Theta_{kl}Q_l - P_k.
\end{equation*}
Since $Q_l$ and $P_k$ commute for any distinct indices $l$ and $k$ and since $\Theta_{kk} = 0$ by skew-adjointness of $\Theta$, all the operators constituting $A_k$ commute. Thus, we see that
\begin{equation}
\label{equa-inter-678}
(e^{\i t A_k}f)(x) 
= \prod_{l=1}^n e^{\frac12 \i t \Theta_{kl} Q_l} e^{-\i t P_k}f(x) 
\ov{\eqref{def-groupes}}{=} \prod_{l=1}^n e^{\frac12 \i t \Theta_{kl} x_l} f(x - te_k). 
\end{equation}
We infer that
\begin{align*}
\MoveEqLeft
(e^{\i t A_j} e^{\i s A_k}f)(x) 
\ov{\eqref{equa-inter-678}}{=} e^{\i t A_j} \prod_{l=1}^n e^{\frac12 \i s \Theta_{kl} x_l} f(x - s e_k) \\
& \ov{\eqref{equa-inter-678}}{=} \prod_{m=1}^n e^{\frac12 \i t \Theta_{jm} x_m} \left(\prod_{l\neq j} e^{\frac12 \i s \Theta_{kl} x_l}\right) e^{\frac12 \i s \Theta_{kj} (x_j - t)} f(x- se_k - te_j) \\
& = \prod_{m=1}^n e^{\frac12 \i t \Theta_{jm} x_m} \prod_{l=1}^n e^{\frac12 \i s \Theta_{kl} x_l} e^{-\frac12 \i s t \Theta_{kj}} f(x - se_k - te_j).
\end{align*}
In the same way, we obtain
\begin{align*}
\MoveEqLeft
(e^{\i s A_k } e^{\i t A_j} f)(x) 
= \prod_{m=1}^n e^{\frac12 \i t \Theta_{jm} x_m} \prod_{l=1}^n e^{\frac12 \i s \Theta_{kl} x_l} e^{+\frac12 \i s t \Theta_{kj}} f(x - se_k - te_j) \\
& = e^{\i st \Theta_{kj}} e^{\i t A_j} e^{\i s A_k}f(x).
\end{align*}
By \eqref{Weyl-tuple}, we infer that $A_{\univ}^\Theta$ is a $\Theta$-Weyl tuple.

Next, we note that for any $t = (t_1,\ldots,t_n) \in \R^n$ and any $k = 1, \ldots, n$, we have 
\[ 
(e^{\i t_k A_k} f)(x) 
\ov{\eqref{equa-inter-678}}{=} \prod_{l=1}^n e^{\frac12 \i t_k \Theta_{kl} x_l} f(x - t e_k) 
= e^{\frac12 \i t_k (\Theta x)_k} f(x - te_k) .
\]
Thus we obtain
\begin{align*}
\MoveEqLeft
\Big(\prod_{k=1}^n e^{\i t_k A_k}f\Big)(x) 
= e^{\frac12 \i t_1 (\Theta x)_1} e^{\frac12 \i t_2(\Theta(x-t_1e_1))_2} \cdots e^{\frac12 \i t_n(\Theta(x-t_1e_1-\ldots - t_{n-1}e_{n-1}))_n}f(x-t) \\
& = e^{\frac12 \i \sum_{k=1}^n t_k (\Theta x)_k} e^{-\frac12 \i \sum_{k=2}^n \sum_{l = 1}^{k-1} t_k t_l \Theta_{kl}}f(x-t) \\
& = e^{\frac12 \i \langle t, \Theta x \rangle} e^{-\frac12 \i \langle t, \Theta^\downarrow t \rangle} f(x-t).
\end{align*}
We deduce that
\[
(e^{\i t \cdot A_{\univ}^\Theta}f)(x) 
\overset{\eqref{def-eitA}}{=} e^{-\frac12 \i \langle t , \Theta^\uparrow t \rangle } e^{-\frac12 \i \langle t , \Theta^\downarrow t \rangle} e^{\frac12 \i \langle t, \Theta x \rangle} f(x-t) 
= e^{\frac12 \i \langle t, \Theta x \rangle} f(x-t).
\]
We have proven \eqref{equ-universal-Theta-Weyl-explicit-formula}. Therefore, the Weyl calculus is given by 
\begin{align*}
\MoveEqLeft
(a(A_{\univ}^\Theta)f)(x) 
\ov{\eqref{Weyl-calculus}}{=} \frac{1}{(2\pi)^{\frac{n}{2}}} \int_{\R^n} \hat{a}(u) e^{\i u \cdot A_{\univ}^\Theta}f(x) \d u \\
&=  \frac{1}{(2\pi)^{\frac{n}{2}}} \int_{\R^n} \hat{a}(u) e^{\frac12 \i \langle u, \Theta x \rangle} f(x-u) \d u    \ov{\eqref{convolution-bis}}{=} C_{\hat{a}}^\Theta f(x).
\end{align*}
\end{proof}

\begin{example} \normalfont
\label{Example-J-n=2}
If $n=2$ and if $\Theta=\begin{bmatrix}
  0   & - 1 \\
  1   &  0 \\
\end{bmatrix}$, we recover the Weyl pair 
\begin{equation*}
\label{}
(A,B)=(-\tfrac{1}{2}Q_2-P_1,\tfrac{1}{2}Q_1-P_2)
\end{equation*}
considered in \cite[Lemma 6.4 p.~284]{NeP20}. Note that by Proposition \ref{prop-Kato}, we have 
\begin{equation*}
\label{}
[A,B]=-\i.
\end{equation*} 
\end{example}

\section{Composition property of the Weyl functional calculus}
\label{sec-composition-property}

The next result states that $\pi \co \L^1(G) \to \B(X)$ is a homomorphism of algebras, where the Banach space $\L^1(G)$ is equipped with the twisted convolution product $\ast_\sigma$, introduced in \eqref{twisted-convolution}.

\begin{prop}
\label{prop-integrated}
Let $\pi \co G \to \B(X)$ be a continuous $\sigma$-projective uniformly bounded representation of a locally compact group $G$ on a Banach space $X$. For any functions $f, g \in \L^1(G)$, we have 
\begin{equation}
\label{compose-proj-rep}
\pi(f)\pi(f)
=\pi(f \ast_\sigma g).
\end{equation}
\end{prop}

\begin{proof}
For any $x \in X$, we have in a \textit{weak sense}
\begin{align*}
\pi(f) \pi(g) x
&\ov{\eqref{integrated-rep}}{=} \bigg(\int_{G} f(s) \pi_{s} \d s\bigg) \bigg(\int_{G} g(t) \pi_{t}x \d t\bigg) \\
&= \int_{G} \int_{G} f(s) g(t) \pi_{s} \pi_{t}x \d s \d t \\
&\ov{\eqref{eq-proj-rep}}{=} \int_{G} \int_{G} \sigma(s,t) f(s) g(t) \pi_{s t}x \d s \d t \\
&=\int_{G} \int_G \sigma(s,s^{-1}t)f(s) g(s^{-1}t) \pi_{t}x\d\mu_G(s)  \d t\\
&\ov{\eqref{twisted-convolution}}{=} \int_{G} (f \ast_\sigma g)(t) \pi_{t}x \d t
\ov{\eqref{integrated-rep}}{=} \pi(f \ast_\sigma g).
\end{align*}
\end{proof}
Let $G$ be a locally compact abelian group. The Moyal product $a \star_{\sigma} b \co G \to \mathbb{C}$ of two (enough regular) functions $a,b \co \hat{G} \to \mathbb{C}$ is defined by
\begin{equation}
\label{def-Moyal-product-33}
\widehat{a \star_{\sigma} b}
\ov{\mathrm{def}}{=} \hat{a} \mathbin{\ast_{\sigma}} \hat{b},
\end{equation}
where $\hat{a}$ and $\hat{b}$ are the Fourier transform of $a$ and $b$.

\begin{example} \normalfont
\label{exa-Moyal-product-star-Theta}
Let $\Theta \in \M_n(\R)$ be a skew-symmetric matrix. Moreover, consider the associated 2-cocycle $\sigma_\Theta \co \R^n \times \R^n \to \T$, $(x,y) \mapsto e^{\frac12 \i \langle x, \Theta y \rangle}$ of Example \ref{cocycle-Rd}. Then for any pair of Schwartz functions $f,g \in \cal{S}(\R^n)$, we have
\begin{equation}
\label{Moyal-def}
(2\pi)^{\frac{n}{2}} \widehat{f \star_\Theta g}
= \hat{f} \mathbin{\ast_{\sigma_\Theta}} \hat{g}.
\end{equation}
%
Indeed, first observe that the twisted convolution of Fourier transforms equals
\begin{align*}
\MoveEqLeft
(\hat{f} \mathbin{\ast_{\sigma_\Theta}} \hat{g})(u) 
\ov{\eqref{twisted-convolution}}{=} \int_{\R^n} \sigma_\Theta(\xi,u-\xi)\hat{f}(\xi) \hat{g}(u-\xi)  \d \xi \\
&\ov{\eqref{def-cocycle-intro}}{=} 
\int_{\R^n} \e^{\frac{1}{2}\i\langle \xi, \Theta (u-\xi)\rangle} \hat{f}(\xi)\hat{g}(u-\xi)  \d \xi,
\end{align*}
where $u \in \R^n$. If $A(u) \ov{\mathrm{def}}{=} e^{\i\langle u, x \rangle} \hat{g}(u)$, the inverse Fourier transform of $\hat{f} \mathbin{\ast_{\sigma_\Theta}} \hat{g}$ is given by (note that since $\Theta$ is skew-symmetric, we have$\langle \xi , \Theta \xi \rangle = 0$)
\begin{align*}
\MoveEqLeft
 \frac{1}{(2\pi)^{\frac{n}{2}}} \int_{\R^n}e^{\i \langle u , x \rangle} \bigg(\int_{\R^n} e^{\frac12 \i \langle \xi,\Theta(u - \xi) \rangle}\hat{f}(\xi)\hat{g}(u-\xi)   \d\xi \bigg) \d u\label{equ-1-lemma-homo}\\
& = \frac{1}{(2\pi)^{\frac{n}{2}}} \int_{\R^n} \bigg(\int_{\R^n} \underset{\ov{\mathrm{def}}{=}B_u(\xi)}{\underbrace{\left[ e^{\i \langle \xi , x + \frac12 \Theta u \rangle } \hat{f}(\xi)\right]}}\underset{= A(u-\xi)}{\underbrace{\left[ e^{\i\langle u - \xi, x \rangle} \hat{g}(u-\xi) \right]}}  \d\xi \bigg) \d u\nonumber \\
&\ov{\eqref{Convolution-formulas}}{=} \frac{1}{(2\pi)^{\frac{n}{2}}} \int_{\R^n}(B_u \ast A)(u) \d u 
=  \int_{\R^n} \mathcal{F}\left[\check{B}_u \cdot \check{A} \right](u) \d u \nonumber \\
& \ov{\eqref{Fourier-transform}}{=} \frac{1}{(2\pi)^{\frac{n}{2}}} \int_{\R^n} \int_{\R^n} e^{- \i \langle u, s \rangle} \check{B}_u(s) \check{A}(s)  \d s \d u \nonumber \\
&=\frac{1}{(2\pi)^{\frac{n}{2}}}\int_{\R^n} \int_{\R^n} e^{-\i \langle u , s \rangle }  f(s+x + \textstyle{\frac{1}{2}} \Theta u) g(x+s)\d s \d u. \nonumber
\end{align*}
Now, assume that the matrix $\Theta$ is invertible. Noting that its inverse, $\Theta^{-1}$ is also skew-symmetric (in particular $\langle 2\Theta^{-1}s,s \rangle= 0$ for any $s \in \R^n$), we proceed with a change of variables and continue the computation as follows.
Note that if $\Theta$ is invertible, $u \mapsto f(s+x+\frac12 \Theta u)$ is rapidly decaying and we can apply Fubini's theorem. Hence
\begin{align*}
\MoveEqLeft 
\mathcal{F}^{-1}(\hat{f} \ast_{\sigma_\Theta} \hat{g})(x) = \frac{1}{(2\pi)^{\frac{n}{2}}} \int_{\R^n} \bigg(\int_{\R^n} e^{- \i \langle u, s \rangle} f\big(s+x + {\textstyle \frac{1}{2}} \Theta u\big) g(x+s)  \d s \bigg)\d u \\
&= \frac{1}{(2\pi)^{\frac{n}{2}}} \int_{\R^n} \bigg(\int_{\R^n} e^{- \i \langle u, s \rangle} f\big(s+x + \textstyle{\frac{1}{2}} \Theta u\big) g(x+s)  \d u\bigg) \d s\\
& \ov{u = v - 2 \Theta^{-1}s}{=} \frac{1}{(2\pi)^{\frac{n}{2}}} \int_{\R^n} \int_{\R^n} e^{- \i \langle (v -2 \Theta^{-1}s), s \rangle}  f\big(s+x + \textstyle{\frac{1}{2}} \Theta (v-2 \Theta^{-1}s)\big) g(x+s) \d v \d s\\
& = \frac{1}{(2\pi)^{\frac{n}{2}}} \int_{\R^n} \int_{\R^n} e^{- \i \langle v, s \rangle}  f\big(x +\textstyle{\frac{1}{2}} \Theta v\big) g(x+s) \d v \d s \\
& = \frac{1}{(2\pi)^{\frac{n}{2}}} \int_{\R^n} \int_{\R^n} e^{- \i \langle v, s \rangle}  f\big(x +\textstyle{\frac{1}{2}} \Theta v\big) g(x+s) \d s \d v\\ 
&\ov{\eqref{star-product}}{=} (2\pi)^{\frac{n}{2}}(f \star_\Theta g)(x).
\end{align*}
We now turn to the general case. The skew-symmetric matrix $\Theta \in \M_n(\R)$ can always be described by a block diagonal form through an orthogonal change of basis. More precisely, there exists an orthogonal matrix $O \in \M_n(\R)$ such that  
\begin{equation}
\label{diago-Theta-bis}
\Theta 
= O^T J O,
\end{equation}  
where $J$ is a block diagonal matrix consisting of $2 \times 2$ skew-symmetric blocks, given by  
\begin{equation}
\label{def-de-J-bis}
J \ov{\mathrm{def}}{=}  
\begin{pmatrix} 
J_1 & 0 & 0 & \ldots & 0 \\ 
0 & J_2 & 0 & \ldots & 0\\ 
\vdots & \ddots & \ddots & \vdots & \vdots \\ 
0 & \ldots & \ddots & J_k & 0 \\ 
0 & \ldots & \ldots & \ldots & 0 
\end{pmatrix},  
\quad \text{where }  
J_j \ov{\mathrm{def}}{=} 
\begin{pmatrix} 
0 & \alpha_j \\ 
- \alpha_j & 0 
\end{pmatrix},  
\quad j = 1,\ldots,k.
\end{equation}  
Here, $k$ is an integer satisfying $0 \leq k \leq \frac{n}{2}$, and the scalars $\alpha_1, \dots, \alpha_k$ are strictly positive real numbers. 

In the following calculation, we will use that if $h$ is a sufficiently regular and decaying function, then
\begin{equation}
\label{useful-3456}
\frac{1}{(2\pi)^m}\int_{\R^m} \int_{\R^m} e^{-\i \langle u, s\rangle} h(s+x)\d s \d u 
= h(x).
\end{equation}
Indeed, we have
\begin{align*}
\MoveEqLeft
\frac{1}{(2\pi)^m} \int_{\R^m} \int_{\R^m} e^{-\i \langle u , s \rangle} h(s+x)\d s \d u 
= \frac{1}{(2\pi)^m} \int_{\R^m} \int_{\R^m} e^{-\i \langle u , s-x \rangle} h(s) \d s \d u \\
&=\frac{1}{(2\pi)^m} \int_{\R^m} \bigg(\int_{\R^m} e^{-\i \langle u , s \rangle} h(s)\d s\bigg)e^{\i \langle u,x \rangle}  \d u \\
& \ov{\eqref{Fourier-transform}}{=} \frac{1}{(2\pi)^\frac{m}{2}} \int_{\R^m} \hat{h}(u) e^{\i \langle u,x \rangle} \d u 
= h(x).
\end{align*}
Assume now that $\Theta$ has block diagonal form $\Theta = \begin{pmatrix} \Theta' & 0 \\ 0 & 0 \end{pmatrix}$ over $\R^n = \R^{2k+(n-2k)}$ with $\Theta' \in \M_{2k}(\R)$ skew-symmetric and invertible.
Then according to the invertible case (over $\R^{2k}$), with variables $x = (x_1,x_2) \in \R^{2k} \times \R^{n-2k} = \R^n$ and similar notation for $s,u \in \R^n$, we obtain the following.
Hereby, note firstly that according to the invertible case applied with the functions $f'(s_1) \ov{\mathrm{def}}{=} f(s_1,s_2+x_2)$, $g'(s_1) \ov{\mathrm{def}}{=} g(s_1,s_2+x_2)$, the invertible matrix $\Theta'$ and $x_1$ instead of $x$, the calculation preceding \eqref{diago-Theta-bis} yields the identity
\begin{align}
\label{equ-1-exa-Moyal-product-star-Theta}
\MoveEqLeft
\int_{\R^{2k}}\int_{\R^{2k}} e^{-\i \langle u_1,s_1 \rangle}f\big(s+x+\tfrac12 (\Theta'u_1,0)\big) g(s+x) \d u_1 \d s_1\\
& = \int_{\R^{2k}}\int_{\R^{2k}} e^{-\i \langle u_1,s_1 \rangle} f\big((0,s_2)+x+\tfrac12(\Theta'u_1,0)\big) g(s+x) \d u_1 \d s_1. \nonumber
\end{align}
Then note secondly that we can use Fubini's theorem to interchange the integrals $\int_{\R^{2k}}$ and $\int_{\R^{n}}$, since 
\[ 
\int_{\R^{2k}} \int_{\R^{n}} \left| e^{-\i \langle u_1, s_1 \rangle} f\big(s+x+\tfrac12 (\Theta'u_1,0)\big)g(s+x)\right| \d s \d u_1 
< \infty.
\]
We get
\begin{align*}
\MoveEqLeft
(2\pi)^{\frac{n}{2}} \mathcal{F}^{-1}(\hat{f} \ast_{\sigma_\Theta} \hat{g})(x) 
= \int_{\R^n} \bigg(\int_{\R^n} e^{- \i \langle u, s \rangle} f\big(s+x + \tfrac{1}{2} \Theta u\big) g(x+s)  \d s \bigg)\d u \\
& =  \int_{\R^{n-2k}} \int_{\R^{n-2k}} e^{-\i \langle u_2,s_2\rangle} \bigg(\int_{\R^{2k}} \int_{\R^{2k}} e^{- \i \langle u_1,s_1\rangle}  f\big(s+x+\tfrac{1}{2} (\Theta' u_1,0)\big) g(s+x) \\
&\d u_1 \d s_1 \bigg)\d s_2 \d u_2 \\
& \overset{\eqref{equ-1-exa-Moyal-product-star-Theta}}{=} \int_{\R^{n-2k}} \int_{\R^{n-2k}} e^{-\i \langle u_2,s_2\rangle} \bigg(\int_{\R^{2k}} \int_{\R^{2k}} e^{- \i \langle u_1,s_1\rangle}   f\big((0,s_2)+x + \tfrac{1}{2} (\Theta' u_1,0)\big)  \\
&g(s+x)\d u_1  \d s_1 \bigg)\d s_2 \d u_2
=\int_{\R^{n-2k}} \int_{\R^{n-2k}} e^{-\i \langle u_2,s_2\rangle} h(s_2+x_2) \d s_2 \d u_2,
\end{align*}
where 
\begin{equation*}
\label{}
h(s_2) 
\ov{\mathrm{def}}{=} \int_{\R^{2k}} \int_{\R^{2k}} e^{- \i \langle u_1 , s_1 \rangle} f(x_1+\tfrac12\Theta'u_1,s_2)g(s_1+x_1,s_2) \d u_1 \d s_1.
\end{equation*}
Thus, according to \eqref{useful-3456} with $m$ replaced by $n-2k$, the previous double integral equals
\begin{equation}
\label{inter-789}
(2\pi)^{n-2k} h(x_2)
= (2\pi)^{n-2k}\int_{\R^{2k}} \int_{\R^{2k}} e^{- \i \langle u_1, s_1 \rangle} f\big(x_1+\tfrac12\Theta'u_1,x_2\big) g(s_1+x_1,x_2) \d u_1 \d s_1. 
\end{equation}
In a similar manner, we have
\begin{align*}
\MoveEqLeft
(2\pi)^n (f\star_\Theta g)(x) 
\ov{\eqref{star-product}}{=} \int_{\R^n}\int_{\R^n} e^{-\i \langle u,s \rangle}f(x+\tfrac{1}{2}\Theta u)g(x+s)  \d s \d u\\
& =  \int_{\R^{n-2k}} \int_{\R^{n-2k}}  e^{-\i \langle u_2,s_2\rangle}\bigg( \int_{\R^{2k}} \int_{\R^{2k}} e^{- \i \langle u_1,s_1\rangle} f\big(x+\tfrac{1}{2} (\Theta' u_1,0)\big) g(s+x)\\
&\d u_1  \d s_1\bigg) \d s_2 \d u_2 
= \int_{\R^{n-2k}} \int_{\R^{n-2k}} e^{-\i \langle u_2,s_2\rangle} l(s_2+x_2)\d s_2\d u_2  ,
\end{align*}
where $l(s_2) \ov{\mathrm{def}}{=} \int_{\R^{2k}} \int_{\R^{2k}} e^{- \i \langle u_1, s_1 \rangle} f\big(x +\tfrac12 (\Theta' u_1,0)\big) g(s_1+x_1,s_2)\d u_1 \d s_1$.
Thus, according to \eqref{useful-3456} with $h$ replaced by $l$, the previous double integral equals
\begin{align*}
\MoveEqLeft
(2\pi)^{n-2k}l(x_2) 
= (2\pi)^{n-2k}\int_{\R^{2k}} \int_{\R^{2k}} e^{- \i \langle u_1, s_1 \rangle}f\big(x_1+\tfrac12\Theta'u_1,x_2\big) g(s_1+x_1,x_2)  \d u_1 \d s_1 \\
&\ov{\eqref{inter-789}}{=} (2\pi)^{n-2k}h(x_2).         
\end{align*}
We have shown \eqref{Moyal-def} in the block diagonal case.

In the case of a general skew-adjoint matrix $\Theta$, write $\Theta = O^T J O$ with $J$ in block diagonal form as previously and $O$ some orthogonal matrix. Then some simple changes of variables reveal that
\begin{align*}
\MoveEqLeft
(2\pi)^{\frac{n}{2}} \mathcal{F}^{-1} (\hat{f} \ast_{\sigma_\Theta} \hat{g})(x) = \int_{\R^n} \bigg(\int_{\R^n} e^{- \i \langle u, s \rangle} f\big(s+x + \tfrac{1}{2} \Theta u\big) g(x+s)  \d s \bigg)\d u \\
&=\int_{\R^n} \bigg(\int_{\R^n} e^{- \i \langle u, s \rangle} f\big(s+x + \tfrac{1}{2} O^T J O u\big) g(x+s)  \d s \bigg)\d u\\
&\ov{v=O u}{=}\int_{\R^n} \bigg(\int_{\R^n} e^{- \i \langle O^Tv, s \rangle} f\big(s+x + \tfrac{1}{2} O^T J v\big) g(x+s)  \d s \bigg)\d v\\
&\ov{r=O s}{=} \int_{\R^n} \int_{\R^n} e^{ - \i \langle v , r \rangle}  f\big(O^T(r+ Ox + \tfrac12 Jv)\big) g\big(O^T(Ox+r)\big) \d r \d v \\
& = (2\pi)^{\frac{n}{2}} \cal{F}^{-1}\Big(\widehat{f(O^T \cdot)} \ast_{\sigma_J} \widehat{g(O^T \cdot)}\Big)(Ox).
\end{align*}
and similarly
\begin{align*}
\MoveEqLeft
(2\pi)^n (f \star_\Theta g)(x) 
\ov{\eqref{star-product}}{=} \int_{\R^n}\int_{\R^n} e^{-\i \langle u,s \rangle} f\big(x+\tfrac{1}{2}\Theta u\big)g(x+s)  \d s \d u \\
&=\int_{\R^n} \int_{\R^n} e^{-\i \langle u,s \rangle} f\big(x+\tfrac{1}{2}O^T J O u\big)g(x+s)  \d s \d u\\
&\ov{v=O u}{=} \int_{\R^n} \int_{\R^n} e^{-\i \langle O^T v,s \rangle} f\big(x+\tfrac{1}{2}O^T J v\big)g(x+s)  \d s \d v\\
&\ov{r=O s}{=} \int_{\R^n} \int_{\R^n} e^{-\i \langle v,r \rangle} f\big(O^T(Ox + \tfrac12 Jv)\big) g\big(O^T(Ox+r)  \big) \d r \d v \\
& \ov{\eqref{star-product}}{=} (2\pi)^n \big(f(O^T \cdot) \star_J g(O^T \cdot)\big)(Ox).
\end{align*}
Thus the general case follows from the block diagonal case.
\end{example}


Using the $\sigma_\Theta$-projective representation $\pi \co \R^n \to \B(X)$, $u \mapsto e^{\i u \cdot A}$ introduced in \eqref{lem-Weyl-product}, we obtain the following composition formula for the Weyl calculus defined in \eqref{Weyl-calculus}. 

\begin{prop}
\label{prop-compo-Weyl-34}
Let $(A_1,\ldots,A_n)$ be a $\Theta$-Weyl tuple on a Banach space $X$. For any Schwartz class functions $a,b \in \cal{S}(\R^n)$, we have
\begin{equation*}
\label{}
a(A) \circ b(A) 
= (a \star_\Theta b)(A).
\end{equation*}
\end{prop}

\begin{proof}
We have
\begin{align*}
\MoveEqLeft
a(A) \circ b(A) 
\ov{\eqref{Weyl-calculus}}{=} \frac{1}{(2\pi)^{n}} \bigg(\int_{\R^n} \hat{a}(u)\pi_u  \d u\bigg)\bigg(\int_{\R^n} \hat{b}(u) \pi_u \d u\bigg)       
\ov{\eqref{integrated-rep}}{=} \frac{1}{(2\pi)^{n}}\pi(\hat{a}) \pi(\hat{b}) \\
&\ov{\eqref{compose-proj-rep}}{=} \frac{1}{(2\pi)^{n}}\pi(\hat{a} \ast_{\sigma_\Theta} \hat{b})
\ov{\eqref{Moyal-def}}{=} \frac{1}{(2\pi)^{\frac{n}{2}}}\pi(\widehat{a \star_{\Theta} b}) \\
&\ov{\eqref{integrated-rep}}{=} \frac{1}{(2\pi)^{\frac{n}{2}}} \int_{\R^n} (\widehat{a \star_{\Theta} b})(u) \pi_u \d u
\ov{\eqref{Weyl-calculus}}{=} (a \star_\Theta b)(A).
\end{align*}
\end{proof}

\section{Associated semigroup and functional calculus of the operator $\sum_{k=1}^n A_k^2$}
\label{subsec-semigroup-A}

In this section, we introduce a semigroup generated by the <<sum of squares>> $\sum_{k=1}^n A_k^2$ associated with a $\Theta$-Weyl tuple $A = (A_1,\ldots,A_n)$. In the particular case where $A$ is the universal $\Theta$-Weyl tuple $A_\univ ^{\Theta}$, we can explicitly describe the integral kernel of this semigroup and show the complete boundedness of the $\HI(\Sigma_\theta)$ functional calculus of the generator for  any angle $\frac{\pi}{2}<\theta<\pi$.

\paragraph{Nilpotent Lie algebras and nilpotent Lie groups} We need to briefly review some elementary theory of nilpotent Lie groups.  Let $\g$ be a finite-dimensional Lie algebra over $\mathbb{R}$. The descending central series of $\g$ is defined inductively as in \cite[Definition 5.2.1 p.~91]{HiN12} by
\[
\g^{1}
\ov{\mathrm{def}}{=} \g, \quad \g^{2}=[\g,\g], \quad \g^{3}=[\g,[\g,\g]]
\quad \text{and} \quad
\g^{k+1}
\ov{\mathrm{def}}{=}[\g,\g^{k}].
\]
We have $\g^{k+1} \subset \g^{k}$ for any integer $k \geq 1$. The Lie algebra $\mathfrak{g}$ is said to be nilpotent if $\g^{s+1}=\{0\}$ for some $s \in \mathbb{N}$. If $s$ is the smallest integer such that $\g^{s+1}=0$, then $\g$ is said to be nilpotent of step $s$. Every abelian Lie algebra is nilpotent of step 1. By \cite[Theorem 11.2.5 p.~444]{HiN12}, a connected Lie group $G$ is nilpotent if and only if its Lie algebra is nilpotent. A connected nilpotent Lie group $G$ is said to be of step $s$ if its Lie algebra has step $s$.

A stratification \cite[Definition 2.2.3 p.~122]{BL07} (see also \cite[Definition 3.1.5 p.~93]{FiR16}) 
of a finite-dimensional real Lie algebra $\mathfrak{g}$ is a vector space decomposition $
\mathfrak{g}
= \mathfrak{g}_1 \oplus \dots \oplus \mathfrak{g}_s$ such that 
\begin{equation*}
\g_{k+1}
=[\g_1 , \g_k ], \quad 1 \leq k \leq s-1, \quad [\g_1 , \g_s ]=0.
\end{equation*}
In other words, we have
\begin{equation*}
\label{}
\g
=\g_1 \oplus [\g_1,\g_1] \oplus [\g_1,[\g_1,\g_1]] \oplus [\g_1,[\g_1,[\g_1,\g_1]]] \oplus \cdots.
\end{equation*}
The subspace $\g_{1}$ is often referred to as the first layer or the horizontal layer and generates $\g$ as a Lie algebra by \cite[Proposition A.17 p.~346]{BiB19}. 
By \cite[Lemma A.20 p.~347]{BiB19}, the existence of such a stratification with $\g_s\not= \{0\}$ implies that the Lie algebra $\g$ is nilpotent of step $s$. We say that a simply connected nilpotent Lie group $G$ is stratified when its Lie algebra is equipped with a stratification and we say that it has $m$ generators if $m \ov{\mathrm{def}}{=} \dim \g_1$.



%

\paragraph{The Mackey group} Let $n \geq 1$. We start by describing some information on the Mackey group $\mathbb{H}_\Theta \ov{\mathrm{def}}{=} \R^n \times \T$ \label{def-Htheta}introduced in \eqref{def-central-extension} associated to the 2-cocycle $\sigma_\Theta$ defined in \eqref{def-cocycle-intro} on the group $\R^n$ by the skew-symmetric matrix $\Theta$.

\begin{prop}
\label{prop-Lie-group-exists}
There exists a unique structure of Lie group on the locally compact group $\mathbb{H}_\Theta$. Moreover, this group is connected  and nilpotent. Furthermore,  its Lie algebra $\mathfrak{h}_\Theta$ identifies to $\R^n \oplus \R$ equipped with the Lie bracket
\begin{equation}
\label{Lie-bracket}
[(x_1,z_1),(x_2,z_2)]
=(0,\la x_1,\Theta x_2 \ra), \quad x_1,x_2 \in\R^n, z_1,z_2 \in \R
\end{equation}
and if $\Theta \not=0$, this decomposition defines a 2-step stratification $\mathfrak{h}_\Theta=\mathfrak{g}_1 \oplus \mathfrak{g}_2$ of $\mathfrak{h}_\Theta$.
\end{prop}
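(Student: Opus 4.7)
The plan is to verify the four assertions---Lie group structure and uniqueness, connectedness, explicit Lie bracket with $2$-step stratification, and dimension at infinity---by direct computation with the cocycle $\sigma_\Theta$. First, I would observe that $\sigma_\Theta(s,t) = \e^{\frac{1}{2}\i \langle s, \Theta t\rangle}$ is real-analytic on $\R^n \times \R^n$, so the product law $(s,z)\cdot(t,w) = (s+t, zw\sigma_\Theta(s,t))$ is smooth on the manifold $\R^n \times \T$. Using the skew-symmetry of $\Theta$, which gives $\langle s, \Theta s \rangle = 0$ and hence $\sigma_\Theta(s,-s) = 1$, the inverse is simply $(s,z)^{-1} = (-s, z^{-1})$, likewise smooth. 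This endows $\mathbb{H}_\Theta$ with a Lie group structure; uniqueness follows from the classical fact that a locally compact topological group admits at most one compatible Lie group structure (every continuous homomorphism between Lie groups being automatically smooth). Connectedness is immediate from the product decomposition $\R^n \times \T$.

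Next, I would identify $\mathfrak{h}_\Theta = T_e \mathbb{H}_\Theta \cong \R^n \oplus \R$ with basis vectors $X_1,\ldots,X_n, Z$ tangent respectively to the curves $t \mapsto (t e_j, 1)$ and $t \mapsto (0, \e^{\i t})$. Because $\Theta_{jj} = 0$, each curve $t \mapsto (te_j, 1)$ is actually a one-parameter subgroup, which allows me to read off the Lie bracket from the group commutator. A direct computation using $\Theta_{kj} = -\Theta_{jk}$ twice yields
\[
(se_j,1)(te_k,1)(-se_j,1)(-te_k,1) = (0,\e^{\i s t \Theta_{jk}}),
\]
and differentiating at $s=t=0$ gives $[X_j, X_k] = \Theta_{jk}\, Z$. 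The vector $Z$ is central in $\mathfrak{h}_\Theta$ because $\{0\}\times\T$ is contained in the center of $\mathbb{H}_\Theta$, as seen from $\sigma_\Theta(0,t) = \sigma_\Theta(t,0) = 1$; hence $[X_j, Z] = [Z, Z] = 0$. Bilinear extension produces precisely the formula \eqref{Lie-bracket}.

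Finally, formula \eqref{Lie-bracket} shows that $[\mathfrak{h}_\Theta, \mathfrak{h}_\Theta] \subset \{0\} \oplus \R$, which is central, so $[\mathfrak{h}_\Theta,[\mathfrak{h}_\Theta,\mathfrak{h}_\Theta]] = 0$ and $\mathfrak{h}_\Theta$ is $2$-step nilpotent. If $\Theta \neq 0$, setting $\mathfrak{g}_1 = \R^n \oplus 0$ and $\mathfrak{g}_2 = 0 \oplus \R$ defines a stratification in the sense of \eqref{stratification}: some $\Theta_{jk}$ is nonzero, so $[\mathfrak{g}_1,\mathfrak{g}_1] = \mathfrak{g}_2$ (hence $\mathfrak{g}_1$ generates $\mathfrak{h}_\Theta$), and $[\mathfrak{g}_1,\mathfrak{g}_2] = 0$. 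For the dimension at infinity, since $\mathbb{H}_\Theta$ is diffeomorphic to $\R^n \times \T$ as a manifold and $\T$ is compact, the volume of any ball $B_r$ with respect to a left-invariant Riemannian metric grows as $r^n$ when $r \to \infty$, giving dimension at infinity equal to $n$. The only mildly subtle step is the careful sign bookkeeping in the commutator calculation for the Lie bracket; the remaining assertions are essentially routine.
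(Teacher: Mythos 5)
Your proof is correct, but it takes a more hands-on route than the paper. For the Lie group structure, the paper invokes the general theory of central extensions of Lie groups (citing Varadarajan/Raghunathan for the existence of a canonical Lie structure on a central extension of $\R^n$ by $\T$, then Schottenloher and a differentiation formula for the group $2$-cocycle to identify the Lie algebra $2$-cocycle as $c(x_1,x_2)=\i\langle x_1,\Theta x_2\rangle$). You instead exploit the fact that $\sigma_\Theta$ is explicitly real-analytic to check smoothness of the product and inverse directly, obtain uniqueness from automatic smoothness of continuous homomorphisms between Lie groups, and recover the bracket from the group commutator $(se_j,1)(te_k,1)(se_j,1)^{-1}(te_k,1)^{-1}=(0,e^{\i st\Theta_{jk}})$ — a computation I checked and which does give $[X_j,X_k]=\Theta_{jk}Z$ with the correct sign. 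Your approach buys concreteness and avoids three external references, at the cost of generality (it relies on the cocycle being smooth, which the abstract central-extension machinery does not need). For the dimension at infinity, both arguments rest on the same idea — quotienting out the compact central circle — but the paper cites \cite[II.4.13]{DtER03} while you argue the volume growth $\asymp r^n$ directly; your version is essentially right because the center is compact (this is exactly what prevents the quadratic contribution of the central direction that one sees in the simply connected Heisenberg group), though you could make it airtight by noting that the projection onto $\R^n$ is a Lipschitz group homomorphism with compact kernel, so balls in $\mathbb{H}_\Theta$ are comparable to $B_{\R^n}(0,r)\times\T$. Finally, to conclude that the \emph{group} (not just the algebra) is nilpotent you should add the one-line remark that a connected Lie group is nilpotent iff its Lie algebra is, as the paper does via \cite[Theorem 11.2.5]{HiN12}.
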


\begin{proof}
Recall that by \eqref{central-extension} the group $\mathbb{H}_\Theta$ is a central extension of the group $\R^n$ by $\T$. Consequently, it admits by Theorem \ref{Th-Lie-extension} a canonical structure of Lie group. Moreover, by \eqref{central-extension-algebras} we deduce that its Lie algebra $\mathfrak{h}_\Theta$ is a central extension of the Lie algebra $\mathfrak{t}$ of $\mathbb{T}$ by the Lie algebra $\g$ of $\R^n$. This means that we have an exact sequence 
\begin{equation*}
\label{}
0 \to \mathfrak{t} \to \mathfrak{h}_\Theta \to \g \to 0
\end{equation*}
with $[\mathfrak{t},\mathfrak{h}_\Theta] = 0$. By \eqref{bracket-central}, the Lie algebra $\mathfrak{h}_\Theta$ identifies as a vector space to $\g \oplus \mathfrak{t}= \R^n \oplus \i\R$ and that the Lie bracket has the form
\begin{equation*}
\label{}
[(x_1,\i z_1),(x_2,\i z_2)]
\ov{\eqref{bracket-central}}{=} (0,c(x_1,x_2)), \quad x_1,x_2 \in\R^n, z_1,z_2 \in \R,
\end{equation*}
where $c \co \R^n \times \R^n \to \i\R$ is the associated Lie algebra 2-cocycle. Here, we use the well-known identification of $\mathfrak{t}$ with the real vector space $\i\R$, e.g.~\cite[p.~71]{HiN12}. We can obtain a concrete description of this cocycle by using a suitable differentiation. Indeed, for any $x_1,x_2 \in \R^n$, we have 
\begin{align*}
\MoveEqLeft
c(x_1,x_2) \\
&\ov{\eqref{recov-cocycle}}{=} \frac{\d^2}{\d s\d t}|_{t=0,s=0} \big[\sigma_\Theta(\exp_{\R^n}(t x_1),\exp_{\R^n}(s x_2)) - \sigma_\Theta(\exp_{\R^n}(s x_2),\exp_{\R^n}( tx_1)\big] \\
&=\frac{\d^2}{\d s\d t}|_{t=0,s=0} \big[\sigma_\Theta(tx_1,s x_2) - \sigma_\Theta(s x_2, t x_1)\big] \\
& \ov{\eqref{def-cocycle-intro}}{=} \frac{\d^2}{\d s\d t}|_{t=0,s=0} \big[e^{\frac{1}{2}\i\langle t x_1, \Theta sx_2\rangle}  -e^{\frac{1}{2}\i\langle  s x_2, \Theta t x_1\rangle} \big] \\
&=\frac{\d^2}{\d s\d t}|_{t=0,s=0} \big[e^{\frac{1}{2}\i t s\langle  x_1, \Theta x_2\rangle}  -e^{\frac{1}{2}\i s t\langle   x_2, \Theta  x_1\rangle} \big] \\
&=\frac{\i}{2}\la x_1,\Theta x_2 \ra-\frac{\i}{2}\la x_2,\Theta x_1 \ra
=\i\la x_1,\Theta x_2 \ra.         
\end{align*}
Now, identifying $\i\R$ with $\R$ we obtain \eqref{Lie-bracket}. Furthermore, it is clear that the decomposition $\mathfrak{h}_\Theta = \R^n \oplus \R$ defines a 2-step stratification of the Lie algebra $\mathfrak{h}_\Theta$ if $\Theta \not=0$.
\end{proof}

\begin{remark} \normalfont
We can easily describe some additional properties of this group. Indeed, it is unimodular by \cite[p.~53]{Fol16} or \cite[Proposition 10.4.11 p.~410]{HiN12} 
and amenable since any solvable locally compact group is amenable by \cite[Corollary 13.5 p.~120]{Pie84}.
\end{remark}


Recall that by \cite[Proposition 9.5.1 p.~335]{HiN12} the Lie group $\mathbb{H}_\Theta$ and its universal covering $\tilde{\mathbb{H}}_\Theta$ admit isomorphic Lie algebras. Consequently, the universal covering $\tilde{\mathbb{H}}_\Theta$ is a (simply connected) 2-step stratified nilpotent Lie group. So the previous result leads us to introduce some useful concepts for the study of 2-step stratified Lie groups. It is widely recognized that these groups are non-abelian Lie groups which closely resemble abelian groups, although they display distinct phenomena not present in abelian groups.

\paragraph{Kaplan $J$-operators on 2-step stratified Lie groups} 
Let $G$ be a 2-step stratified connected Lie group and $\mathfrak{g} = \mathfrak{g}_1 \oplus \mathfrak{g}_2$ be the stratification of its Lie algebra. Let $\mathfrak{g}_2^*$ be the dual of $\mathfrak{g}_2$.
Any element $\mu \in \g_2^*$ gives rise to a skew-symmetric bilinear form $\omega_\mu \co \mathfrak{g}_1 \times \mathfrak{g}_1 \to \R$ defined by
\begin{equation*}
\label{def-omega-mu}
\omega_\mu(x,y) 
= \mu([x,y]),\quad x,y \in \g_1.
\end{equation*}
Following \cite[Proposition 3.7.3 p.~174]{BL07}, we say that the group $G$ is called a M\'etivier group if $\omega_\mu$ is non-degenerate for all \textit{non-zero} linear forms $\mu \in \g_2^*$. 

Consider an inner product $\la \cdot,\cdot \ra$ on the Lie algebra $\g$  for which the decomposition $\g = \g_1 \oplus \g_2$ is orthogonal. The inner product $\la \cdot,\cdot \ra$ induces a norm on the dual $\g_2^*$ that we denote by $|\cdot|$. Let $J_\mu \co \g_1 \to \g_1$ be the unique skew-symmetric endomorphism such that
\begin{equation}
\label{endomorphism}
\langle J_\mu(x),y \rangle 
= \mu([x,y]), \qquad x,y \in \g_1.
\end{equation} 
Then it is immediate that $G$ is a M\'etivier group if and only if $J_\mu$ is invertible for all non-zero linear forms $\mu \in \g_2^*$. We call $G$ a Heisenberg type group (or $H$-type group) if the endomorphisms $J_\mu$ are orthogonal for all $\mu \in \g_2^*$ of norm 1. This is equivalent to say that
\[
J_\mu^2 
= - |\mu|^2 \Id_{\g_1}\quad\text{for all } \mu\in \g_2^*.
\]
This class of groups was introduced by Kaplan in \cite{Kap80}. The class of Heisenberg type groups is a strict subclass of that of M\'etivier groups.



If $G$ is a simply-connected 2-step nilpotent Lie group and equipped with the left invariant metric determined by the inner product on $\g=\mathrm{T}_eG$, all the geometry of $G$ can be described by the maps $J_\mu$, as showed in \cite{Ebe94}, at least in the case where $\g_2$ is the center of $\g$. These operators were introduced by Kaplan in \cite{Kap80}, \cite{Kap81} and \cite{Kap83}, initially for studying the Heisenberg type groups.



\paragraph{Exponential maps on connected nilpotent Lie groups} 
If $\frak{g}$ is a nilpotent Lie algebra, then by \cite[Corollary 11.2.6 p.~445]{HiN12} the Dynkin series defines a polynomial map 
\begin{equation*}
\label{}
* \co \mathfrak{g} \times \mathfrak{g} \to \mathfrak{g}, (x, y) \mapsto x + y +\frac{1}{2}[x, y] + \cdots.
\end{equation*}
Moreover, $(\frak{g},*)$ defines a Lie group whose Lie algebra is $\frak{g}$ with $\exp_\frak{g} = \Id_\frak{g}$.  For nilpotent algebras of step $2$, observe that the product is given by
\begin{equation}
\label{prod-*}
x*y
=x+y+\frac{1}{2}[x,y],\quad x,y \in \mathfrak{g}.
\end{equation}
Recall that by \cite[Corollary 11.2.7 p.~446]{HiN12}, the exponential map 
\begin{equation*}
\label{exponential-map}
\exp_G \co (\frak{g},*) \to G
\end{equation*} 
of a connected nilpotent Lie group is the universal covering morphism of $G$. In particular, the exponential map of $G$ is surjective. If in addition $G$ is simply connected then the exponential map $\exp_G \co \frak{g} \to G$ is a diffeomorphism and can identify $G$ with $(\mathfrak{g},*)$. 
In this case, the Haar measure on $G$ coincides with the Lebesgue measure on $\mathfrak{g}$ by \cite[Theorem 1.2.10 p.~19]{CoG90}.

\paragraph{The universal covering of $\mathbb{H}_\Theta$}
Now, we will describe the universal covering of the connected nilpotent Lie group $\mathbb{H}_\Theta$. We can consider the canonical basis $(a_1,\ldots,a_{n+1})$ of $\R^n \oplus \R$. In particular, we obtain
\begin{equation}
\label{basis-aj}
c(a_i,a_j)
=\Theta_{ij}, \quad i,j \in \{1,\ldots,n\}.
\end{equation}
Note that we have an isomorphism $\mathfrak{g}_2^* \to \R$, $\mu \mapsto \mu(a_{n + 1})$.
  Consider a basis $(a_1,\dots,a_{n})$ of $\mathfrak{g}_1$ where $n =\dim \mathfrak{g}_1$. Let $\langle \cdot,\cdot \rangle$ be the inner product on $\mathfrak{g}_1$ that turns $(a_1,\dots,a_{n+1})$ into an orthonormal basis. 

\begin{prop}
\label{universal-covering}
The universal covering $\tilde{\mathbb{H}}_\Theta$ of the Lie group $\mathbb{H}_\Theta$ is given by $\tilde{\mathbb{H}}_\Theta=\mathbb{R}^n \times \R$ equipped with the product 
\begin{equation}
\label{law-1}
(s,u) \cdot (s', u') 
=  \big(s + s', u + u' + \textstyle{\frac{1}{2}} \langle s, \Theta s'\rangle\big), \quad s,s' \in \R^n, u,u' \in \R
\end{equation}
and the morphism $\tilde{\mathbb{H}}_\Theta \to \mathbb{H}_\Theta$ is defined by $(s,u) \mapsto (s,e^{\i u})$. Moreover, for any $\mu \in \R$, the skew-symmetric endomorphism $J_\mu \co \R^n \to \R^n$ is determined by $J_\mu = -\mu  \Theta$. Finally, the Lie group $\tilde{\mathbb{H}}_\Theta$ is a M\'etivier group if and only if the matrix $\Theta$ is invertible and a Heisenberg type group if and only if the matrix $\Theta$ is orthogonal.
\end{prop}

\begin{proof}
The first sentence is a consequence of \eqref{Lie-bracket} and \eqref{prod-*}. For any $\mu \in \R$, we have
\begin{equation*}
\label{}
\langle J_\mu x, y \rangle_{\R^n} 
\ov{\eqref{endomorphism}}{=} \mu [x, y] 
\ov{\eqref{Lie-bracket}}{=} \mu \la x,\Theta y\ra
=-\mu  \langle \Theta x, y \rangle_{\R^n}.
\end{equation*} 
We deduce that $J_\mu = -\mu  \Theta$. The last sentence is obvious.
\end{proof}

Note that by \cite[Theorem 3.2.2 p.~160]{BL07} the groups defined in \eqref{law-1} are \textit{exactly} the $n+1$-dimensional stratified groups of step two with $n$ generators. 

\paragraph{Partial Fourier transform} Let $G$ be a 2-step stratified simply connected Lie group and $\mathfrak{g} = \mathfrak{g}_1 \oplus \mathfrak{g}_2$ be the stratification of its Lie algebra. If $f \in \L^1(G)$ and $\mu \in \mathfrak{g}_2^*$, then we denote by $f^\mu$ the $\mu$-section of the partial Fourier transform of $f$ along $\mathfrak{g}_2$, given by
\begin{equation}
\label{partial-Fourier}
f_\mu(s) 
\ov{\mathrm{def}}{=} \int_{\mathfrak{g}_2} f(s,u) e^{-\i \langle \mu,u\rangle} \d u, \quad s \in \mathfrak{g}_1.
\end{equation}
We also define the even meromorphic functions $S$ and $R$ by 
\begin{equation}
\label{functions-S-and-R}
S(z) 
\ov{\mathrm{def}}{=}  \frac{z}{\sin(z)} 
\quad \text{and} \quad 
R(z) 
\ov{\mathrm{def}}{=}  \frac{z}{\tan(z)}, \quad 
z \in \mathbb{C} - \{k\pi : 0 \not= k \in \Z\}
\end{equation}
and the square root determined by the principal branch of the logarithm 
\begin{equation*}
\label{}
\log \co \C \backslash (-\infty,0] \to \C.
\end{equation*} 
Let $a_1,\ldots,a_{n}$ be a basis of $\frak{g}_1$ and let 
\begin{equation*}
\label{}
\Delta = -\sum_{i=1}^{n} a_i^2
\end{equation*}
be the corresponding subelliptic Laplacian, where $n=\dim \frak{g}_1$, see \cite[Definition 2.2.25 p.~144]{BL07} or \cite[p.~296]{Rob91}. For any $t > 0$, we denote by $K_t$ the kernel of the operator $e^{-t\Delta}$. Observe that by \cite[Lemma 4.15 p.~327]{Rob91}, the semigroup $(e^{-t\Delta})_{t \geq 0}$ on $\L^2(G)$ is a bounded holomorphic semigroup of angle $\frac{\pi}{2}$. We denote by $K_z$ its kernel.

Note that $J_\mu \co \frak{g}_1 \to \frak{g}_1$ is naturally identified with a skew-symmetric endomorphism of the complexification $(\frak{g}_1)_{\mathbb{C}}$ of $\frak{g}_1$, endowed with the corresponding hermitian inner product, and, for all $z \in \mathbb{C}$, the map $zJ_\mu$ is a normal endomorphism of $(\frak{g}_1)_{\mathbb{C}}$. It is stated in \cite[Proposition 4]{MaM16} (see also \cite{Cyg79}) that
\begin{equation}
\label{prop-heatkernel-MMformula}
K_{z,\mu}(s) 
= \frac{1}{(4\pi z)^{n / 2}}\sqrt{\det( S(z J_\mu))} \, \exp\Bigl(-\frac{1}{4z} \langle R(zJ_\mu)s, s \rangle \Bigr), \quad \Re z > 0
\end{equation}
for any $\mu \in \mathfrak{g}_2^*$ and any $s \in \frak{g}_1$. We also refer to \cite[Theorem 4, p.~318]{BGG6} and \cite[Theorem 10.2.7, p.~249]{CCFI11}, which present formulas that are likely equivalent for real times.

\paragraph{The sum of squares $\sum_{k=1}^n A_k^2$ and its associated semigroup}
Now, we connect the <<sum of squares>> associated to a Weyl pair to a class of <<subelliptic>> operators studied in \cite[Chapter IV]{Rob91}, while a less general setting is investigated in the thesis of Langlands, see \cite{Lan60} and \cite{Lan59}. These operators are constructed from a connected real finite-dimensional unimodular Lie group $G$, equipped with a Lie algebraic basis $(a_1,\ldots,a_n)$ of the Lie algebra $\frak{g}$, that is a finite sequence of linearly independent elements of $\frak{g}$ generating $\frak{g}$,  and a continuous representation $\pi \co G \to \B(X)$ on a Banach space $X$. Using the classical notation $\d\pi(a_i)$\label{dpi} of \cite[Definition 3.12 p.~51]{Mag92} for the infinitesimal generator of the strongly continuous group $(\pi(\exp_G(t a_i)))_{t \in \R}$, we can consider the operator
\begin{equation}
\label{sub-Rob}
H
\ov{\mathrm{def}}{=} -(\d\pi(a_1))^2-\cdots -(\d\pi(a_n))^2.
\end{equation}
We warn the reader that the notation $A_i$ is often used for the operator $\d\pi(a_i)$ but in our paper, the notation $A_i$ denotes an element of a $\Theta$-Weyl tuple.

The following result is a combination of \cite[Theorem 2.1]{ElR92}, \cite[Proposition 3.2 p.~127]{Jor75} and \cite[Corollary 4.17 p.~332]{Rob91}.

\begin{thm}
\label{th-Rob}
\begin{enumerate}
	\item The operator $H$ defined by \eqref{sub-Rob} is closable and its closure $\ovl{H}$ generates a continuous bounded holomorphic strongly continuous semigroup $(T_t)_{t \geq 0}$ on the Banach space $X$ of angle $0 < \varphi \leq \frac{\pi}{2}$ with the property that $\Ran  T_t$ is a subspace of the domain of any monomial in the operators $\d\pi(a_1),\ldots, \d\pi(a_n)$ for all $t > 0$.

\item If $K_t$ is the kernel of the operator $e^{-t\Delta}$ where $\Delta=-\sum_{i=1}^{n} a_i^2$ then 
\begin{equation}
\label{semi-int-rep}
T_t 
= \int_G K_t(s) \pi(s) \d s,\quad t > 0.
\end{equation}


\end{enumerate}
\end{thm}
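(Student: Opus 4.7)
The plan is to bootstrap from the scalar case of the sub-Laplacian on the group $G$ itself, and then transfer to an arbitrary representation $\pi$ via an integral formula. First, I would consider the left regular representation $\lambda \co G \to \B(\L^2(G))$. The operators $\d\lambda(a_i)$ are then the left-invariant vector fields $\tilde{a}_i$ associated to $a_i$, and $H_0 \ov{\mathrm{def}}{=} -\sum_{i=1}^d \tilde{a}_i^2$ is the corresponding sub-Laplacian. Since $(a_1,\ldots,a_d)$ is a Lie algebraic basis of $\frak{g}$, H\"ormander's bracket condition is satisfied, so $H_0$ is hypoelliptic. Standard heat kernel theory on Lie groups (as developed in \cite{Rob91}) then produces a smooth convolution kernel $p_t \in \L^1(G)$ with the semigroup property $p_{t+s} = p_t * p_s$, with $\|p_t\|_{\L^1(G)} = 1$ (or uniformly bounded, depending on normalizations), and with Gaussian-type upper bounds. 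Crucially, one obtains a holomorphic extension $z \mapsto p_z$ on a sector $\Sigma_\varphi$ with controlled $\L^1$-norm.

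Next, for general $\pi$, I would \emph{define} the candidate semigroup directly by
\begin{equation*}
T_t x \ov{\mathrm{def}}{=} \int_G p_t(s) \pi(s) x \d s, \quad t > 0,
\end{equation*}
which is a Bochner integral of a bounded function into $X$; this gives \eqref{semi-int-rep} by construction. The semigroup law $T_{t+s} = T_t T_s$ reduces to the convolution identity $p_{t+s} = p_t * p_s$ combined with the projective (here representation) property $\pi(s)\pi(s') = \pi(ss')$. Strong continuity and the bounded-holomorphic extension follow from the corresponding properties of $z \mapsto p_z$ in $\L^1(G)$, since $\|T_z\|_{X\to X} \leq \|p_z\|_{\L^1(G)} \cdot \sup_{s \in G}\|\pi(s)\|_{X\to X}$ on bounded sets.

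The delicate step is identifying the generator with $\ovl{H}$ and establishing closability. The standard route is via the G\aa rding subspace $\{\pi(\phi)x : \phi \in \mathrm{C}_c^\infty(G), x \in X\}$, which is dense in $X$ and invariant under each $\d\pi(a_i)$. On such vectors one can differentiate under the integral sign to check $T_t x \to x$ and $\frac{1}{t}(T_t x - x) \to -Hx$ as $t \to 0^+$, so the generator extends $-H$; that the G\aa rding space is a core then forces the generator to coincide with $-\ovl{H}$, proving in particular that $H$ is closable. The main obstacle is making these differentiations rigorous at the level of the Banach-space-valued integral, and showing that higher-order derivatives of $p_t$ remain in $\L^1(G)$ so that any monomial $\d\pi(a_{i_1})\cdots \d\pi(a_{i_k}) T_t x = \int_G (\tilde{a}_{i_1}\cdots \tilde{a}_{i_k} p_t)(s)\,\pi(s)x \d s$ makes sense; this is where the full force of subelliptic estimates for $H_0$ on $G$ (combined with the smoothing property of the holomorphic kernel $p_z$ for $\re z > 0$) is needed to guarantee $\Ran T_t \subset \bigcap_{k, (i_1,\ldots,i_k)} \dom(\d\pi(a_{i_1}) \cdots \d\pi(a_{i_k}))$.
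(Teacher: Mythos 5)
The paper gives no proof of this theorem at all: it is quoted as a combination of results of Jorgensen, Robinson and ter Elst--Robinson, and your sketch follows exactly the strategy implemented in those references (heat kernel of the sub-Laplacian in the left regular representation, transference through $T_t=\int_G p_t(s)\pi(s)\,\d s$, identification of the generator with $\ovl{H}$ via the G\aa rding space as a core, and subelliptic/holomorphic kernel estimates for the smoothing property of $\Ran T_t$). The one point to watch is that your estimate $\norm{T_z}_{X\to X}\leq \norm{p_z}_{\L^1(G)}\sup_{s\in G}\norm{\pi(s)}_{X\to X}$ presupposes a uniformly bounded representation, whereas a general continuous representation only satisfies a locally bounded, at most exponentially growing bound and one must instead play the Gaussian decay of $p_z$ against that growth (as the cited works do); in the applications of this paper $\pi$ is uniformly bounded, so your argument suffices there.
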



Now, we are able to prove the following result.

\begin{prop}
\label{prop-semigroup}
Let $A = (A_1,\ldots,A_n)$ be a $\Theta$-Weyl tuple on some Banach space $X$.
\begin{enumerate}
\item The opposite of the closure $\A$ of the operator $\sum_{k=1}^n A_k^2$ generates a strongly continuous semigroup $(T_t)_{t \geq 0}$ of operators acting on $X$ satisfying
\begin{equation}
\label{def-Tt}
T_t
= \int_{\tilde{\QH}_\Theta} K_t(s,u)\pi(s,u) \d s\d u, \quad t \geq 0
\end{equation}
where $(K_t)_{t \geq 0}$ is the kernel semigroup associated to the subelliptic Laplacian $\Delta$ on the universal covering $\tilde{\mathbb{H}}_\Theta$ of the nilpotent Lie group $\mathbb{H}_\Theta$ and where $\pi \co \tilde{\mathbb{H}}_\Theta \to \B(X)$ is a continuous representation.

\item The semigroup generated by $-\A$ is bounded holomorphic of some angle $\varphi \in (0,\frac{\pi}{2}]$ and can be written as
\begin{equation}
\label{Tz-descr}
T_z x 
=\int_{\R^n} p_z^\Theta(s)  e^{\i s \cdot A} x \d s, \quad x \in X, |\arg z| <\varphi
\end{equation}
where the kernel $(p_z^\Theta)_{\Re z>0}$ is given by
\begin{equation}
\label{def-pzTheta}
p_z^\Theta(s) 
\ov{\mathrm{def}}{=} \frac{1}{(4 \pi z)^{\frac{n}{2}}} \sqrt{\det S(z\Theta)} \exp\bigg(-\frac{1}{4z} \langle R(z \Theta)s, s \rangle \bigg)
=\int_{\R} e^{\i u} K_z(s,u)  \d u.
\end{equation}
where $s \in \R^n$.

\item In particular, if $A_{\univ}^\Theta$ is the universal $\Theta$-Weyl tuple on $\L^p(\R^n)$ from \eqref{A-univ}, then 
\begin{equation}
\label{Autre-rep-Tz}
(T_zg)(x) 
=  \int_{\R^n} k_z^\Theta(x,y)  g(x-y) \d y, \quad \text{a.e. }x \in \R^d,\: |\arg z| <\varphi,\: g \in \L^p(\R^n)
\end{equation}
where the kernel $(k_z^\Theta)_{\Re z>0}$ is defined by
\begin{equation}
\label{kzxy}
k_z^\Theta(x,y) 
\ov{\mathrm{def}}{=} e^{\frac12 \i \langle y , \Theta x \rangle} p_z^\Theta(y), \quad x,y \in \R^d.
\end{equation}
\end{enumerate}
\end{prop}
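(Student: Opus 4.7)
The plan is to realize $\A$ as an instance of Robinson's framework in Theorem~\ref{th-Rob}. Lemma~\ref{lem-Weyl-product} produces a strongly continuous $\sigma_\Theta$-projective representation $t \mapsto e^{\i t \cdot A}$ of $\R^n$ on $X$. By Proposition~\ref{prop-corre} this lifts to a continuous representation $\tilde\pi$ of $\mathbb{H}_\Theta$, which pulls back along the covering $\tilde{\mathbb{H}}_\Theta \to \mathbb{H}_\Theta$ to a continuous representation $\pi \co \tilde{\mathbb{H}}_\Theta \to \B(X)$. Because $\Theta$ has zero diagonal, \eqref{def-eitA} gives $\pi(\exp_{\tilde{\mathbb{H}}_\Theta}(t a_k)) = e^{\i t A_k}$ for $k \in \{1,\ldots,n\}$, so $\d\pi(a_k) = \i A_k$ and $-\sum_{k=1}^n (\d\pi(a_k))^2 = \sum_{k=1}^n A_k^2$. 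Since $(a_1,\ldots,a_n)$ spans the first stratum $\frak{g}_1$, which generates $\frak{h}_\Theta$ through the bracket \eqref{Lie-bracket}, it is a Lie algebraic basis of $\frak{h}_\Theta$. Theorem~\ref{th-Rob} then furnishes the closability of $\sum_{k=1}^n A_k^2$, the bounded holomorphic semigroup $(T_t)_{t \geq 0}$ at some angle $\varphi \in (0, \tfrac{\pi}{2}]$, and the integral representation \eqref{def-Tt}.

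\textbf{Part 2 (main obstacle).} The plan is to reduce \eqref{def-Tt} to an integral over $\R^n$ via the partial Fourier transform \eqref{partial-Fourier} in the central direction of $\tilde{\mathbb{H}}_\Theta$. Under $\pi$, the central one-parameter subgroup $u \mapsto \exp_{\tilde{\mathbb{H}}_\Theta}(u a_{n+1})$ acts as the scalar $e^{\i u}\Id_X$, so Fubini turns \eqref{def-Tt} into
\[
T_t = \int_{\R^n} \bigg(\int_\R p_t(s, u)\, e^{\i u}\, \d u\bigg) e^{\i s \cdot A}\, \d s.
\]
On any 2-step stratified nilpotent Lie group, the partial Fourier transform of the sub-Laplacian heat kernel admits a classical Mehler-type expression in terms of the Kaplan operator \eqref{endomorphism}; here \eqref{basis-aj} gives $J_\mu = \mu(a_{n+1})\Theta$, and at $\mu(a_{n+1}) = 1$ the Mehler formula produces exactly \eqref{def-pzTheta}. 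To avoid invoking this as a black box, one can alternatively define the candidate $U_z x \ov{\mathrm{def}}{=} \int_{\R^n} p_z^\Theta(s)\, e^{\i s \cdot A} x \d s$, verify $U_z \to \Id$ strongly as $z \to 0^+$ by a Gaussian calculation, and establish the semigroup law via the composition rule \eqref{commute-eit}, which after the substitution $r = s+t$ reduces $U_{z_1} U_{z_2} = U_{z_1+z_2}$ to the twisted Gaussian convolution identity
\[
\int_{\R^n} p_{z_1}^\Theta(s)\, p_{z_2}^\Theta(r-s)\, e^{\frac12 \i \langle s, \Theta r\rangle}\, \d s = p_{z_1+z_2}^\Theta(r);
\]
the generator of $(U_t)$ is then matched with $-\A$ on a smooth core built from \eqref{Weyl-calculus}. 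The main difficulty is this twisted convolution identity: after diagonalization of $\Theta$ into $2 \times 2$ blocks it reduces to a product of planar Gaussian identities that precisely encode the defining trigonometric identities of $S$ and $R$. Holomorphic extension to $|\arg z| < \varphi$ is routine, since the purely imaginary spectrum of $\Theta$ keeps $\Re \big(\tfrac{1}{4z} R(z\Theta)\big)$ positive definite on a sector, ensuring integrability of $p_z^\Theta$.

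\textbf{Part 3.} Once Part 2 is in hand, the specialization to the universal tuple is immediate from \eqref{equ-universal-Theta-Weyl-explicit-formula}: substituting $(e^{\i s \cdot A_{\univ}}g)(x) = e^{\frac12 \i \langle s, \Theta x\rangle} g(x-s)$ into \eqref{Tz-descr} gives
\[
(T_z g)(x) = \int_{\R^n} p_z^\Theta(s)\, e^{\frac12 \i \langle s, \Theta x\rangle}\, g(x-s)\, \d s,
\]
which is \eqref{Autre-rep-Tz} with the kernel \eqref{kzxy}.
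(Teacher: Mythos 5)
Your proposal follows essentially the same route as the paper: lift the projective representation of Lemma \ref{lem-Weyl-product} via Proposition \ref{prop-corre} to the universal cover $\tilde{\mathbb{H}}_\Theta$ of the Mackey group, apply Robinson's Theorem \ref{th-Rob} to obtain closability and the representation \eqref{def-Tt}, compute the kernel by a partial Fourier transform in the central variable combined with the Mehler-type formula for the Kaplan operator $J_\mu$ (your sign convention for $J_\mu$ differs from the paper's $J_\mu=-\mu\Theta$, but this is immaterial since $S$ and $R$ are even), and specialize via \eqref{equ-universal-Theta-Weyl-explicit-formula} for Part 3. The alternative self-contained verification you sketch (direct proof of the twisted Gaussian semigroup law) is not carried out and is not needed, since your primary argument coincides with the paper's proof.
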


\begin{proof}
Let $(A_1,\ldots,A_n)$ be a $\Theta$-Weyl tuple on a Banach space $X$. Recall that by Lemma \ref{lem-Weyl-product} we have a projective representation $\R^n \to \B(X)$, $s \mapsto e^{\i s \cdot A}$. By Proposition \ref{prop-corre}, we can consider its lifting $\pi_\Theta \co \mathbb{H}_\Theta \to \B(X)$ defined by 
\begin{equation*}
\label{}
\pi_\Theta(s,e^{\i u})=e^{\i u} e^{\i s\cdot A},
\end{equation*}
which is a continuous representation. By composition, we obtain a representation $\pi \co \tilde{\mathbb{H}}_\Theta \to \B(X)$ of the universal covering $\tilde{\mathbb{H}}_\Theta$ of the Lie group $\mathbb{H}_\Theta$. 

Recall that by \cite[Proposition 9.5.1 p.~335]{HiN12} the Lie groups $\mathbb{H}_\Theta$ and $\tilde{\mathbb{H}}_\Theta$ admit isomorphic Lie algebras. Then the family $(a_1,\ldots,a_{n})$ of \eqref{basis-aj} is a H\"ormander system. We define $(K_z)_{\Re z >0}$ to be the heat kernel of the subelliptic Laplacian $\Delta = -\sum_{j = 1}^n a_j^2$.


For any $1 \leq j \leq n$ and any $t \in \R$, using \cite[Proposition 6.12 p.~99]{Upm85} note that 
\begin{equation*}
\label{}
\pi(\exp(ta_j))
=\exp(\pi_*(ta_j))
=e^{\i tA_j},
\end{equation*}
where the computation of $\pi_*$ is left to the reader. We deduce that $\d\pi(a_j) =\i A_j$. Consequently, by Theorem \ref{th-Rob},  
the unbounded operator 
\begin{equation*}
\label{}
\sum_{j = 1}^n A_j^2 
= - \sum_{j = 1}^n (\d\pi(a_j))^2
\end{equation*}
defined on the subspace $\bigcap_{j = 1}^n \dom A_j^2$ of the Banach space $X$ is closable, and its closure $\mathcal{A}$ generates a strongly continuous semigroup $(T_t)_{t \geq 0}$ of operators acting on $X$ defined by \eqref{def-Tt}.
 Furthermore, the same result says that this semigroup is bounded analytic of angle $0<\varphi \leq \frac{\pi}{2}$. Its extension by $(T_z)_{|\arg z| <\varphi}$ is given by 
\begin{equation}
\label{Tz-def}
T_z
= \int_{\tilde{\mathbb{H}}_\Theta} K_z(s,u)\pi(s,u) \d s \d u, \quad |\arg z| <\varphi.
\end{equation}


2. 
For any $x \in X$ and any complex number $z \in \mathbb{C}^*$ with $|\arg z| <\varphi$, we can describe $T_z$ by the formula
\begin{align*}
\MoveEqLeft
T_z(x) 
\ov{\eqref{Tz-def}}{=} \int_{\tilde{\mathbb{H}}_\Theta} K_z(s,u) \pi(s,u) x  \d s \d u
\ov{\eqref{prop-Lie-group-exists}}{=} \int_{\tilde{\mathbb{H}}_\Theta} K_z(s,u) e^{\i u} e^{\i s\cdot A}x \d s \d u \\
&=\int_{\R^{n}} \Bigl( \int_{\R} e^{\i u} K_z(s,u)  \d u\Bigr) e^{\i s \cdot A}x  \d s
\ov{\eqref{prop-heatkernel-Weylsymbol}}{=} \int_{\R^n} \hat{a}_z(s) e^{\i s \cdot A} x \d s
\end{align*}
where
\begin{equation}
\label{prop-heatkernel-Weylsymbol}
\hat{a}_z(s)
\ov{\mathrm{def}}{=}  \int_{\R} e^{\i u} K_z(s,u) \d u.
\end{equation}
We obtain
\begin{align*}
\MoveEqLeft
\hat{a}_z(s)          
\ov{\eqref{prop-heatkernel-Weylsymbol}}{=} \int_{\R} e^{\i u} K_z(s,u)  \d u
\ov{\eqref{partial-Fourier}}{=} K_{z,{-1}}(s) \\
&\ov{\eqref{prop-heatkernel-MMformula}}{=}  \frac{1}{(4\pi z)^{n / 2}} \sqrt{\det( S(z \Theta))} \, \exp\Bigl(-\frac{1}{4z} \langle R(z\Theta)s, s \rangle \Bigr).
\end{align*}

3. This follows directly from the explicit description of $e^{\i u \cdot A}$ from \eqref{equ-universal-Theta-Weyl-explicit-formula} in the case where $A$ is the universal $\Theta$-Weyl tuple.
\end{proof}

\begin{remark} \normalfont
Using \cite[II.22 p.~25]{DtER03}, we see that the local dimension of the couple $(\tilde{\mathbb{H}}_\Theta,(X_1,\ldots,X_n))$ is equal to $n+2$.
\end{remark}

\begin{remark} \normalfont
\label{Rem-angle}
It might be feasible to replace the angle $\varphi$ by $\frac{\pi}{2}$ in the previous result by re-examining the method of \cite{Kis76} that establishes holomorphicity at angle $\frac{\pi}{2}$.
\end{remark}

We now turn to the functional calculus properties of the operator $\A$, when $A_{\univ}^\Theta$ is the universal $\Theta$-Weyl tuple from Proposition \ref{prop-universal-Theta-Weyl-tuple}.

\begin{prop}
\label{prop-HI-calculus-non-shifted}
Let $X$ be a $\UMD$ Banach space. Consider the operator $\A$ associated to the universal $\Theta$-Weyl tuple $A_{\univ}^\Theta$. Then $\A \ot \Id_X$ admits a bounded $\HI(\Sigma_\omega)$ functional calculus on the Bochner space $\L^p(\R^n,X)$ for any $1 < p <\infty$ and any angle $\omega \in (\frac{\pi}{2},\pi)$.
\end{prop}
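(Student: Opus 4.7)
The plan is to transfer the $\HI(\Sigma_\omega)$ calculus from the sub-Laplacian $\Delta$ on the universal cover $\tilde{\mathbb{H}}_\Theta$ of the Mackey group (a 2-step stratified nilpotent Lie group) to the operator $\A$ on $\L^p(\R^n, X)$, exploiting the integral representation \eqref{def-Tt} of the semigroup $(T_t)$ via the representation $\pi$ of $\tilde{\mathbb{H}}_\Theta$ constructed in Proposition \ref{prop-semigroup}.

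First, I would verify that $\pi$, given explicitly by $\pi(s,u) g(x) = e^{\i u} e^{\frac{\i}{2}\langle s, \Theta x\rangle} g(x-s)$ (readable from \eqref{equ-universal-Theta-Weyl-explicit-formula}), defines an isometric strongly continuous representation of $\tilde{\mathbb{H}}_\Theta$ on the Bochner space $\L^p(\R^n, X)$; this is immediate since it combines multiplication by a unimodular phase with translation, both of which preserve the Bochner $\L^p$-norm. Next, for $f \in \HI_0(\Sigma_\omega)$, the Cauchy integral formula for $f(\A)$, combined with the Laplace transform of the resolvents $(z-\A)^{-1}$ in terms of $(T_t)$, yields an integral representation $f(\A) = \int_{\tilde{\mathbb{H}}_\Theta} K_f(g) \pi(g) \, dg$, where $K_f$ is the right convolution kernel on the Lie group of the spectral multiplier $f(\Delta)$. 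By the Coifman-Weiss transference principle (applicable since $\tilde{\mathbb{H}}_\Theta$ is amenable as a nilpotent Lie group and $\pi$ is isometric), this yields
\[ \|f(\A)\|_{\L^p(\R^n, X) \to \L^p(\R^n, X)} \leq \|f(\Delta)\|_{\L^p(\tilde{\mathbb{H}}_\Theta, Y) \to \L^p(\tilde{\mathbb{H}}_\Theta, Y)}, \]
where $Y = \L^p(\R^n, X)$ is UMD since $X$ is.

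It then suffices to establish that the sub-Laplacian $\Delta$ admits a bounded $\HI(\Sigma_\omega)$ calculus on $\L^p(\tilde{\mathbb{H}}_\Theta, Y)$ for any UMD Banach space $Y$ and any angle $\omega > \frac{\pi}{2}$. For this wide angle one can exploit Gaussian upper bounds on the heat kernel $p_z$ of $\Delta$ along the complex time sector $|\arg z|<\varphi$ (readable from the Mehler-type formula \eqref{def-pzTheta} via the behavior of $S(z\Theta)$ and $R(z\Theta)$), combined with classical $\HI$ calculus results for sub-Laplacians on stratified nilpotent Lie groups (Hebisch, Mauceri-Meda, Duong) extended to UMD-valued $\L^p$ through Kriegler-Weis-type operator-valued Calder\'on-Zygmund theory on spaces of homogeneous type.

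The main technical obstacle will be rigorously justifying the representation $f(\A) = \int K_f(g) \pi(g) \, dg$ for general $f \in \HI_0(\Sigma_\omega)$, since $K_f$ is typically only a distribution on $\tilde{\mathbb{H}}_\Theta$ rather than an integrable function; this is customarily resolved by approximating $f$ by auxiliary functions whose convolution kernels on the group are integrable (such as $f_\epsi(z)=f(z)(1+\epsi z)^{-N}$ for large $N$), applying transference to the approximations, and passing to the limit using the uniform $\HI$-bound. An alternative route, consistent with the introduction's emphasis on contractively regular semigroups in the setting of twisted convolutions, is to bypass the Lie-group transference entirely by noting that $T_z$ is a twisted convolution with an $\L^1$-kernel $p_z^\Theta$ (hence a regular contraction on $\L^p(\R^n, X)$), and to deduce the $\HI(\Sigma_\omega)$ calculus directly from regularity-based semigroup arguments of Cowling-Meyer type adapted to non-positive regular semigroups.
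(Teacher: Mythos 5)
Your proposal is essentially correct, and in fact your closing ``alternative route'' is the paper's actual proof: the authors observe that $\norm{p_t}_{\L^1(\tilde{\QH}_\Theta)}=1$ and that each $\pi(s,u)$ (phase times translation) is a regular contraction on $\L^p(\R^n)$, so that $(T_t)_{t\geq 0}$ is a semigroup of \emph{contractively regular} (not positive) operators; they then invoke Fendler's dilation theorem for such semigroups and the standard UMD argument \cite[Theorem 10.7.10]{HvNVW18} to get the $\HI(\Sigma_\omega)$ calculus for $\omega>\frac{\pi}{2}$ when $p\neq 2$, and interpolate to reach $p=2$. Your primary route (Coifman--Weiss transference from the sub-Laplacian $\Delta$ on $\tilde{\QH}_\Theta$ to $\A$) is genuinely different and viable: your worry about $K_f$ being only a distribution evaporates precisely because $\omega>\frac{\pi}{2}$, so $f\in\HI_0(\Sigma_\omega)$ is the Laplace transform of a bounded measure and $K_f=\int_0^\infty p_t\,\d\mu(t)\in\L^1(\tilde{\QH}_\Theta)$ — this is exactly the mechanism the paper itself uses later (Lemma \ref{lem-Hille-Phillips} and Proposition \ref{prop-nc-transference}), only in the opposite direction, to transfer from the universal tuple to general ones. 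The one step of your primary route that is not justified as stated is the UMD-valued $\HI(\Sigma_\omega)$ calculus for $\Delta$ on $\L^p(\tilde{\QH}_\Theta,Y)$: Hebisch, Mauceri--Meda and Duong are scalar H\"ormander-type theorems, and pushing them to $Y$-valued $\L^p$ via operator-valued Calder\'on--Zygmund/Kriegler--Weis machinery needs additional $R$-boundedness input you have not supplied. Since you only need an angle strictly larger than $\frac{\pi}{2}$, the clean fix is to note that the heat semigroup of the sub-Laplacian is a symmetric submarkovian (positive contraction) semigroup and apply the very same dilation/UMD theorem — at which point your transference layer buys nothing over the paper's direct argument, which applies that theorem to the twisted semigroup itself, at the cost of having to work with regular rather than positive contractions. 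In short: your alternative route matches the paper; your primary route is correct in outline but heavier, and its key vector-valued multiplier step should be replaced by the diffusion-semigroup/dilation argument rather than scalar spectral multiplier theorems.
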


\begin{proof}
The heat kernel $p_t$ on $\tilde{\QH}_\Theta$ from Proposition \ref{prop-semigroup} satisfies $\norm{p_t}_{\L^1(\tilde{\QH}_\Theta)} = 1$ for all $t > 0$. Note that by the formula \eqref{twisted-Rn} defining the twisted convolution, we see the formula \eqref{Autre-rep-Tz} says that each operator $T_t$ is a twisted convolution operator. Then Remark \ref{twisted-conv-is-regular} implies that each $T_t$ is contractively regular (and even an absolute contraction).

Consequently, we have a semigroup of contractively regular operators. So, we can use \cite[p.~738]{Fen97} with Example \ref{Example-regular} to obtain a dilation of the semigroup $(T_t \ot \Id_X)_{t \geq 0}$. If $p \neq 2$, using the contractive regularity of the operators of the dilation, we conclude with Theorem \ref{thm-Hinfty-dilation} that $\A \ot \Id_X$ admits a bounded $\HI(\Sigma_\omega)$ functional calculus on the Bochner space $\L^p(\R^n,X)$ for any angle $\omega \in (\frac{\pi}{2},\pi)$.
Then if $p = 2$, the operator $\A \ot \Id_X$ still admits a bounded $\HI(\Sigma_\omega)$ functional calculus on $\L^2(\R^n,X)$ for any angle $\omega \in (\frac{\pi}{2},\pi)$, since we can interpolate its spectral multipliers as bounded operators on the Bochner space $\L^{p_1}(\R^n,X)$ and $\L^{p_2}(\R^n,X)$ to obtain bounded operators on the Banach space $\L^2(\R^n,X)$, where $\frac12 =  \frac{\theta}{p_2} + \frac{1-\theta}{p_1}$ for some suitable choice of the parameters $p_1,p_2$ and $\theta$.
\end{proof}

\begin{remark} \normalfont
Since the strongly continuous groups $(e^{\i t A_k})_{t \in \R}$ consist in particular of $\L^2$ isometries, the operators $A_k$ are self-adjoint on $\L^2$ by Stone's theorem ($k=1,\ldots,n$). Thus $(T_t)_{t \geq 0}$ is a diffusion semigroup in the sense of \cite[Section 5]{JMX06} on a \textit{commutative} von Neumann algebra.
\end{remark}

\begin{example} \normalfont
\label{Example-twisted-Laplacian}
It is worth noting that the <<sum of squares>> associated to the Weyl pair
\begin{equation*}
\label{}
(A,B)=(-\tfrac{1}{2}Q_2-P_1,\tfrac{1}{2}Q_1-P_2)
\end{equation*}
of Example \ref{Example-J-n=2} identifies to the <<twisted Laplacian>> $L$ on $\R^2$. A formal computation gives
\begin{align*}
\MoveEqLeft
(-\tfrac{1}{2}Q_2-P_1)^2+(\tfrac{1}{2}Q_1-P_2)^2        
=P_1^2+P_2^2+\frac{1}{4}Q_1^2+\frac{1}{4}Q_2^2+Q_2P_1-Q_1P_2 \\
&\ov{\eqref{pos-mom}}{=} -(\partial_1^2+\partial_2^2)+\frac{1}{4}(x_1^2+x_2^2)-\i(x_2\partial_1-x_1\partial_2).
\end{align*}
Note that this linear operator $L$ has been studied by many people, see e.g.~\cite{Car20}, \cite[Section 7.9 p.~477]{DOS02}, \cite{KoR07}, \cite{JLR23}, \cite{JLR22}, \cite{Nar03}, \cite{StZ98}, \cite[(1.4.23) p.~19]{Tha98}, \cite{Tha12}, \cite{Tie06} and \cite{Won05}. It is a perturbation of the operator 
\begin{equation*}
\label{}
-(\partial_1^2+\partial_2^2)+\frac{1}{4}(x_1^2+x_2^2)
\end{equation*}
by the partial differential operator $\i(x_1\partial_2-x_2\partial_1)$. The twisted Laplacian $L$ emerges in physics as the Hamiltonian representing the movement of a free electron in an infinite two-dimensional plane, subjected to a constant magnetic field oriented perpendicularly to the plane, following the seminal work \cite{Lan30} of Landau. Furthermore, it is well-known that this operator is connected to the subelliptic Laplacian 
\begin{equation*}
\label{}
\Delta \ov{\mathrm{def}}{=} -X^2-Y^2
\end{equation*}
of \cite[(18.1) p.~139]{Won23} on the Heisenberg group $\mathbb{C} \times \R$. Here, the group law is defined following e.g.~\cite[p.~6]{Tha12} by 
\begin{equation}
\label{law-2}
(s,u) \cdot (s',u') 
=\left (s+s',u+u'+\tfrac{1}{2}\Im (s \ovl{s'})\right), \quad (s,u),(s',u') \in \mathbb{C} \times \R,
\end{equation}
and $X$ and $Y$ are the left-invariant vector fields on the Heisenberg group $\mathbb{C} \times \R$ defined by $X \ov{\mathrm{def}}{=}\partial_x +\tfrac{1}{2}y\partial_u$ and $Y \ov{\mathrm{def}}{=} \partial_y-\tfrac{1}{2}x\partial_u$ where the complex variable $s$ identifies to $x+\i y$. These vector fields satisfy $[X,Y]=-T$ where $T \ov{\mathrm{def}}{=} \partial_u$ and $(X,Y,T)$ is a basis of the Lie algebra. Indeed, by \cite[p.~9]{Tha12} for any suitable function $f \co \R^2 \to \R$ we have
\begin{equation*}
\label{}
\Delta(e^{\i t} f)
=e^{\i t}L(f), \quad t \in \R.
\end{equation*}
Finally, replacing the matrix $\Theta$ by the matrix $\cal{J} \ov{\mathrm{def}}{=}\begin{bmatrix}
  0   & - 1 \\
  1   &  0 \\
\end{bmatrix}$ in the group law \eqref{law-1}, we recover the group law \eqref{law-2} of the Heisenberg group observing that $\langle s, \cal{J} s'\rangle=\Im (s\ovl{s'})$ for any elements $s,s'$ of $\R^2=\mathbb{C}$. Denoting $(K_t)_{t \geq 0}$ the kernel semigroup associated to the subelliptic Laplacian $\Delta$, we have by \cite[Theorem 2.8.1 p.~84]{Tha04} the formula
\begin{equation}
\label{kernel-Heis}
\int_{\R} e^{\i \lambda u} K_t(s,u) \d u 
=\frac{\lambda}{4\pi \sinh \lambda t} \exp\bigg(-\frac{\lambda\coth (t\lambda)}{4}(s_1^2+s_2^2)\bigg), \quad (s,u) \in \R^2 \times \R,
\end{equation}
where $\lambda \in \R$. So, for $s \in \R^2$ and $t > 0$ we recover the formula
\begin{align*}
\MoveEqLeft         
p_t^\cal{J}(s)  \ov{\eqref{def-pzTheta}}{=} \int_{\R} e^{\i u} K_t(s,u)  \d u 
\ov{\eqref{kernel-Heis}}{=} \frac{1}{4\pi \sinh t} \exp\bigg(-\frac{\coth (t)}{4}(s_1^2+s_2^2)\bigg)
\end{align*}
proved in \cite[Theorem 4.3 p.~277]{Won05} using an alternative approach. 
We will generalize this formula in \eqref{kernel-cal-J} to complex times by a different method. 
%
\end{example}

\begin{remark} \normalfont
\label{Rem-5-10}
If the skew-symmetric matrix $\Theta$ is orthogonal, we have seen in Proposition \ref{universal-covering} that the Lie group $\tilde{\mathbb{H}}_\Theta$ is an $H$-type Lie group. Then using the description of the heat kernel of \cite{Ran96} (rediscovered in \cite[Theorem 3.2]{YaZ08}), we can determine similarly an expression for $p_t^\Theta$ for any $t > 0$.
\end{remark}

\begin{example} \normalfont
\label{twisted-bis}
More generally, using the Weyl pair described in \cite[Lemma 6.4 p.~284]{NeP20}, the <<sum of squares>> identifies to the twisted Laplacian on $\R^d$.
\end{example}

%

%
%
%
%
%
%
%
%
%
%
%

\begin{example} \normalfont
\label{osc-harmo}
Suppose that $n=2d$ for some integer $d \geq 1$. Consider the standard Weyl pair $(Q_1,\ldots,Q_d,P_1,\ldots,P_d)$ considered in \cite[Example 3.3 p.~263]{NeP20}, where the operators $Q_j$ and $P_j$ are defined in \eqref{pos-mom}. By Example \ref{ex-Hall}, this sequence defines a $\Theta$-Weyl tuple on the Banach space $\L^p(\R^d)$ for the matrix $\Theta=\begin{bmatrix}
  0   & - \I_d \\
  \I_d   &  0 \\
\end{bmatrix}$. The <<sum of squares>> identifies to the Hermite operator
\begin{equation*}
\label{}
-\Delta+|x|^2
\end{equation*}
acting on a dense subspace of the Banach space $\L^p(\R^d)$. By \cite[p.~62]{CCFI11} or \cite[p.~10]{Tha12}, we have Mehler's formula
\begin{align*}
\MoveEqLeft
(T_tf)(x) \\
&=\frac{1}{(2\pi\sinh 2t)^{\frac{d}{2}}}\int_{\R^d} \exp\bigg(-\frac{1}{2\sinh(2t)}\big[(|x|^2+|y|^2)\coth(2t)-2 \la x, y\ra\big] \bigg) \\
&f(y) \d y,        
\end{align*}
where $t > 0$ and $x \in \R^d$, which describes the associated semigroup $(T_t)_{t \geq 0}$ on the space $\L^p(\R^d)$. Now, consider the simplest case $d=1$ with the notations $P \ov{\mathrm{def}}{=} P_1$, $Q \ov{\mathrm{def}}{=} Q_1$ and $\Theta=\begin{bmatrix}
  0   & - 1 \\
  1   &  0 \\
\end{bmatrix}$. For any $s=(s_1,s_2)$ of $\R^2$, note the equality $\langle s , \Theta^\uparrow s \rangle=-s_1s_2$. Using the second part of Lemma \ref{lem-technical-kernel}, the action of the operators of the associated semigroup on any function $f \in \L^p(\R)$ can also be described by
\begin{align*}
\MoveEqLeft
T_z f 
\ov{\eqref{Tz-descr}}{=} \int_{\R^2} p_z^\Theta(x)  e^{\i s \cdot A} f \d s      
\ov{\eqref{def-eitA}}{=} \int_{\R^2} p_z^\Theta(x)   e^{\i (x P+ y Q)} f \d x \d y \\
& \ov{\eqref{kernel-cal-J}}{=} \frac{1}{4\pi\sinh(z)}\int_{\R^2}  \exp\bigg(- \frac{\coth(z)}{4} (x^2 + y^2) \bigg) e^{\i (x P+ y Q)} f \d x\d y,
\end{align*}
where $|\arg z| < \varphi$, in the spirit of the formula \cite[(3) p.~50]{Lus06} for the operator $T_{\frac{t^2}{2}}$. This formula seems to be new.
\end{example}

\begin{example} \normalfont
\label{Example-Heat}
For any integer $n \geq 1$, the choice $\Theta=0_n$ in Proposition \ref{prop-universal-Theta-Weyl-tuple} gives the universal $0_n$-Weyl tuple
\begin{equation*}
\label{}
A_{\univ}^{0_n}=(-P_1,\ldots,-P_n),
\end{equation*}
where the operator $P_i$ is defined in \eqref{pos-mom}. The <<sum of squares>> is the Laplacian $-\Delta$ on the Banach space $\L^p(\R^n)$. 
\end{example}

\section{$\HI$ calculus for the shifted harmonic oscillator of the univ. $\Theta$-Weyl tuple}
\label{subsec-shifted-HI-calculus}

Consider a skew-symmetric matrix $\Theta \in \M_n(\R)$. In the following, it will turn out that the <<sum of squares>> $\A$, defined as the closure of the operator $\sum_{k=1}^n A_k^2$ of the universal $\Theta$-Weyl tuple, has a spectral gap at $0$. The value of this gap is explicitly given in \eqref{def-alpha}.

Note that the skew-symmetric matrix $\Theta \in \M_n(\R)$ is orthogonally similar to a suitable block diagonal matrix, as observed in \eqref{diago-Theta-bis}. More precisely, there exists a decomposition 
\begin{equation}
\label{diago-Theta}
\Theta 
= O^T J O
\end{equation}
where $O$ is an orthogonal matrix of $\M_n(\R)$ and $J$ is the $2 \times 2$ block diagonal matrix
\begin{equation}
\label{def-de-J}
J 
\ov{\mathrm{def}}{=} \begin{pmatrix} 
J_1 & 0 & 0 & \ldots & 0 \\ 
0 & J_2 & 0 & \ldots & 0\\ 
\vdots & \ddots & \ddots & \vdots & \vdots \\ 
0 & \ldots & \ddots & J_k & 0 \\ 
0 & \ldots & \ldots & \ldots & 0 
\end{pmatrix}, 
\quad \text{where }
J_j 
\ov{\mathrm{def}}{=} \begin{pmatrix} 
0 & \alpha_j \\ 
- \alpha_j & 0 
\end{pmatrix},  
\quad j = 1,\ldots,k
\end{equation}
for some integer $k \in [0,\frac{n}{2}]$ with $\alpha_1,\ldots,\alpha_k >0$. 
We put
\begin{equation}
\label{def-alpha}
\alpha \ov{\mathrm{def}}{=} \sum_{j=1}^k \alpha_j > 0
\end{equation}
with the convention that $\alpha = 0$ if $\Theta = 0$.
We also set $\beta \ov{\mathrm{def}}{=} \min\{\alpha_1,\ldots,\alpha_k\} > 0$ in the case where $\Theta \neq 0$.
Note that $\alpha$ and $\beta$ do not depend on the particular choice of $O$ and $J$ in the decomposition $\Theta = O^T J O$ hereabove.
Finally, we will use the matrix
\begin{equation}
\label{def-cal-J}
\cal{J}
\ov{\mathrm{def}}{=} \begin{pmatrix} 
0 & 1 \\ 
-1 & 0 
\end{pmatrix}.
\end{equation}

Let us now calculate more explicitly the integral kernel of the semigroup from Proposition \ref{prop-semigroup} in some particular cases. The kernel of the third point is the heat kernel on $\R^n$.

\begin{lemma}
\label{lem-technical-kernel}
Let $p_z^\Theta$ denote the kernel defined in \eqref{def-pzTheta}.
\begin{enumerate}
\item
Decomposing $\Theta=  O^T J O$ as in \eqref{diago-Theta}, we have
\begin{equation}
\label{similar-pz}
p_z^\Theta(x) 
= p_z^J(Ox), \quad \Re z>0, \: x \in \R^n.
\end{equation}
\item The kernel $p_z^\cal{J}$ can be written as
\begin{align}
\MoveEqLeft
\label{kernel-cal-J}
p_z^\cal{J}(x) 
= \frac{1}{2\pi} \frac{1}{e^{z} - e^{- z}} \exp\bigg(- \frac{1}{4} \frac{e^{z} + e^{-z}}{e^{z} - e^{- z}} (x_{1}^2 + x_{2}^2) \bigg) \nonumber \\
&=\frac{1}{4\pi\sinh(z)} \exp\bigg(- \frac{\coth(z)}{4} (x_{1}^2 + x_{2}^2) \bigg).         
\end{align}

\item The kernel $p_z^{0_n}$ is equal to 
\begin{equation}
\label{heat-kernel}
p_z^{0_n}(x)
= \frac{1}{(4\pi z)^{\frac{n}{2}}} \exp\bigg(-\frac{1}{4z}  \sum_{j=1}^n x_j^2 \bigg).
\end{equation}
\item Let $J = \alpha \cal{J}$ for some $\alpha > 0$.
Then we have
\begin{equation}
\label{rescaled-3}
p_z^{\alpha \cal{J}} (x) 
= \alpha p_{\alpha z}^{\cal{J}}(\sqrt{\alpha} x) .
\end{equation}
\end{enumerate}
\end{lemma}

\begin{proof}
Recall that the meromorphic functions $S(z) = \frac{z}{\sin(z)}$ and $R(z) = \frac{z}{\tan(z)}$ are defined in \eqref{functions-S-and-R}.

1. We have 
\begin{equation*}
\label{}
S(z\Theta) \ov{\eqref{diago-Theta}}{=} S(z O^T J O) =O^T S(zJ) O.
\end{equation*}
Thus, $\det S(z\Theta) = \det S(zJ)$. On the other hand, $R(z\Theta) \ov{\eqref{diago-Theta}}{=} R(z O^T J O)= O^T R(zJ) O$, so that 
\begin{equation*}
\label{}
\langle R(z\Theta)x,x \rangle  = \langle O^T R(zJ)Ox, x \rangle = \langle R(zJ)Ox,Ox \rangle.
\end{equation*}
Recalling the formula \eqref{def-pzTheta} for $p_z^\Theta$, we deduce the first part of the lemma.

2. The matrix $\cal{J}$ is skew-adjoint hence normal.
Its eigenvalues being $\i$ and $-\i$, we can write $\cal{J} = U^{-1} \begin{bmatrix} \i & 0 \\ 0 & - \i \end{bmatrix} U$ for some invertible $U$.
As $S$ is even, functional calculus of matrices yields 
\[
S(z \cal{J}) 
= U^{-1} S\left( \begin{bmatrix} 
\i z & 0 \\ 
0 & -\i z 
\end{bmatrix} \right) U 
= U^{-1} 
\begin{bmatrix} 
S(\i z) & 0 \\ 
0 & S(-\i z) 
\end{bmatrix} 
U = S(\i z) U^{-1} \Id U 
= S(\i z) \Id. 
\]
Therefore, 
\[ 
\sqrt{\det(S(z\cal{J}))} 
= \sqrt{S(\i z)^2} 
\ov{\eqref{functions-S-and-R}}{=} \frac{\i z }{\sin(\i z)} 
= \frac{2z}{e^{z} - e^{-z}}. 
\]
Recall that the function $R$ is also even. Thus, we have similarly
\begin{equation}
\label{div-R-e}
R(z\cal{J})
=\begin{bmatrix}
  R(\i z)   & 0  \\
   0  &  R(\i z) \\
\end{bmatrix}
\ov{\eqref{functions-S-and-R}}{=} \begin{bmatrix}
  z \frac{e^{z} + e^{- z}}{e^{ z} - e^{- z}}  & 0  \\
   0  &  z \frac{e^{z} + e^{- z}}{e^{ z} - e^{- z}} \\
\end{bmatrix}.
\end{equation}
Consequently, 
we deduce that
\begin{align*}
\MoveEqLeft
\exp\bigg(-\frac{1}{4z} \big\la R(z\cal{J})x,x \big\ra \bigg) 
\ov{\eqref{div-R-e}}{=} \exp\bigg(-\frac{1}{4z} \bigg\langle\begin{bmatrix}
  z \frac{e^{z} + e^{- z}}{e^{ z} - e^{- z}}  & 0  \\
   0  &  z \frac{e^{z} + e^{- z}}{e^{ z} - e^{- z}} \\
\end{bmatrix}
x,x\bigg\rangle\bigg) \\
&= \exp\bigg(-\frac{1}{4} \frac{e^{z} + e^{-z}}{e^{z} - e^{- z}}(x_{1}^2 + x_{2}^2) \bigg).
\end{align*}

3. We have $S(z0_{n})=\I_n$ and $R(z0_{n})=\I_n$. Consequently
\begin{equation*}
\label{}
p_z^{0_{n}}(x) 
\ov{\eqref{def-pzTheta}}{=} \frac{1}{(4 \pi z)^{\frac{n}{2}}} \sqrt{\det \I_n} \exp\bigg(-\frac{1}{4z} \langle \I_nx, x \rangle \bigg)
=\frac{1}{(4\pi z)^{\frac{n}{2}}} \exp\bigg(-\frac{1}{4z}  \sum_{j=1}^n x_j^2 \bigg).
\end{equation*}

4. Using the explicit description of $p_z^{\Theta}$, we have
\begin{align*}
\MoveEqLeft
p_z^{\alpha \cal{J}}(x) 
\ov{\eqref{def-pzTheta}}{=} \frac{1}{4\pi z} \sqrt{S(z \alpha \cal{J})} \exp\left( - \frac{1}{4z} R(z \alpha \cal{J}) x, x \rangle \right) \\
& = \alpha \cdot \frac{1}{4 \pi \alpha z} \sqrt{S(\alpha z \cal{J})} \exp\left( - \frac{1}{4 \alpha z} R(\alpha z \cal{J}) \sqrt{\alpha}x, \sqrt{\alpha}x \rangle \right) \\
&\ov{\eqref{def-pzTheta}}{=} \alpha p_{\alpha z}^{\cal{J}}(\sqrt{\alpha} x).
\end{align*}
\end{proof}

In the sequel, we will show that the shifted operator $(\A - \alpha \Id_{\L^p(\R^n)}) \ot \Id_{X}$ admits a bounded $\HI(\Sigma_\omega)$ functional calculus on the Bochner space $\L^p(\R^n,X)$, provided $X$ has the UMD property. For that, we will use the following shorthand notation
\begin{equation}
\label{equ-A-alpha}
\A_\alpha 
\ov{\mathrm{def}}{=} (\A - \alpha \Id_{\L^p(\R^n)}) \ot \Id_X,
\end{equation}
where $-\A$ is the generator of the semigroup associated with the universal $\Theta$-Weyl tuple for some fixed skew-symmetric matrix $\Theta \in \M_n(\R)$. Recall that the kernel $k_z^\Theta$ of the semigroup $(e^{-z\A})_{\Re z >0}$ is defined in \eqref{kzxy}.

The semigroup generated by $\A$ naturally splits into tensor factors acting on $\L^p(\R^{2})$ and on $\L^p(\R^{n-2k})$, corresponding to the block diagonal form of the matrix $J$ with non-zero $2 \times 2$ blocks and the big vanishing block.
This is the content of the next lemma.

\begin{lemma}
\label{lem-kernel-splitting}
Let $\Theta = J$ be a $2 \times 2$ block diagonal skew-symmetric matrix of $\in \M_n(\R)$ as in \eqref{def-de-J}. Let $A_\univ^J$ be the universal $J$-Weyl tuple. 
Then the kernel $k_t^J(x,y) $ of Proposition \ref{prop-semigroup} splits accordingly to the space decomposition $\R^n = \R^{2} \oplus \cdots \oplus \R^{2} \oplus \R^{n-2k}, x = x_1+\cdots +x_k + x''$ to
\begin{equation}
\label{kt-as-a-product}
k_t^J(x,y) 
= k_t^{J_1}(x_1,y_1) \cdots k_t^{J_k}(x_k,y_k) k_t^{0_{n-2k}}(x'',y'').
\end{equation}
Consequently, the semigroup $(e^{-t\A})_{t \geq 0}$ splits into 
\begin{equation}
\label{}
e^{-t\A} 
= e^{-t\A^{J_1}} \ot \cdots \ot e^{-t\A^{J_k}}\ot e^{-t\A^{0_{n-2k}}}
\end{equation}
on the space $\L^p(\R^n) = \L^p(\R^{2})\ot_p \cdots \ot_p \L^p(\R^{2}) \ot_p \L^p(\R^{n-2k})$ where $e^{-t \A^{J_j}}$ is associated with the kernel $k_t^{J_j}$. Moreover, the generator $\A$ is the closure of 
\begin{equation*}
\label{}
\A^{J_1} \ot \Id_{\L^p(\R^{n-2})} +\cdots +\Id_{\L^p(\R^{2k-2})}\ot \A^{J_k} \ot \Id_{\L^p(\R^{n-2k})}+\Id_{\L^p(\R^{2k})} \ot \A^{0_{n-2k}}
\end{equation*}
defined on $\dom \A^{J_1} \ot \cdots \ot \dom \A^{0_{n-2k}}$. 
\end{lemma}

\begin{proof}
For any $t \geq 0$, note that
\begin{equation}
\label{}
\det S(tJ)
=\prod_{j=1}^{k+1} \det S(t J_j)
\end{equation}
where $J_{k+1}=0_{n-2k}$. Thus, according to Lemma \ref{lem-technical-kernel}, the kernel splits as
\begin{align*}
\MoveEqLeft
k_t^J(x,y) 
\ov{\eqref{kzxy}}{=} e^{\frac12 \i \langle y , J x \rangle} p_t^J(y)\\
&\ov{\eqref{def-pzTheta}}{=} \frac{1}{(4 \pi t)^{\frac{n}{2}}}  e^{\frac12 \i \langle y , J x \rangle}\sqrt{\det S(tJ)} \exp\bigg(-\frac{1}{4t} \langle R(t J)y, y \rangle \bigg) \\
&=\frac{1}{(4 \pi t)^{\frac{n}{2}}}  e^{\frac12 \i \langle y , J x \rangle}\prod_{j=1}^{k+1} \sqrt{\det S(t J_j)} \\
&\exp\left(-\frac{1}{4t} \left\langle \begin{bmatrix}
   R(tJ_1)  & 0 & \cdots & 0\\
    0 & \ddots & &	\vdots\\
    \vdots &  & R(tJ_k)  & 0\\
		0&  & 0  & R(t0_{n-2k})\\
\end{bmatrix}y, y \right\rangle \right) \\
&=\bigg[\frac{1}{4 \pi t} e^{\frac12 \i \langle y_1 , J x_1 \rangle}\sqrt{\det S(tJ_1)} \exp\bigg(-\frac{1}{4t} \langle R(t J_1)y_1, y_1 \rangle \bigg)\bigg]\cdots\\
& \bigg[\frac{1}{4 \pi t}  e^{\frac12 \i \langle y_k , J_k x_k \rangle}\sqrt{\det S(tJ_k)} \exp\bigg(-\frac{1}{4t} \langle R(t J_k)y_k, y_k \rangle \bigg)\bigg] \\
&\bigg[\frac{1}{(4 \pi t)^{\frac{n-2k}{2}}}  \sqrt{\det S(t0_{n-2k})} \exp\bigg(-\frac{1}{4t} \langle R(t 0_{n-2k})y'', y'' \rangle \bigg)\bigg] \\
&\ov{\eqref{def-pzTheta}}{=} \big[e^{\frac12 \i \langle y_1 , J_1 x_1 \rangle} p_t^{J_1}(y_1)\big]\cdots \big[e^{\frac12 \i \langle y_k , J_k x_k \rangle} p_t^{J_k}(y_k)\big]p_t^{0_{n-2k}}(y'') \\
&\ov{\eqref{kzxy}}{=} k_t^{J_1}(x_1,y_1) \cdots k_t^{J_k}(x_k,y_k)k_t^{0_{n-2k}}(x'',y'').
\end{align*}
By definition, $k_t^J(x,y)$ is the kernel of the semigroup $T_t = e^{-t\A}$ on $\L^p(\R^n)$.
Moreover, it is well-known that $k_t^{0_{n-2k}}(x'',y'')$ is the kernel of the heat semigroup $T_t''$ on $\L^p(\R^{n-2k})$.
Denote the operator $T_t^j$ given by the integral kernel $k_t^{J_j}(x_j,y_j)$, $j = 1,\ldots,k$.
Then $(T_t^j)_{t \geq 0}$ is the semigroup on $\L^p(\R^{2})$ associated with the universal $J_j$-Weyl tuple. 
By identification, we find 
\begin{equation*}
\label{}
T_t(f_1 \ot f_2 \ot \cdots \ot f_k \otimes f'') = T_t^1(f_1) \ot T_t^2(f_2) \ot \cdots \ot T_t^k(f_k) \ot T_t''(f'').
\end{equation*}
The last assertion on the generator is a direct consequence of \cite[p.~23]{AGG86}.
\end{proof}

\begin{prop}
\label{prop-kernel-estimate}
Suppose that $1 < p < \infty$ and $A_\univ^\cal{J}$ be the universal $\cal{J}$-Weyl tuple acting on the Banach space $\L^p(\R^2)$. 
Then for any complex number $z$ with $\Re z>0$, there exists a smooth even function $\h_z \co \R \to \R_+$ which is decreasing on $\R_+$ such that
\begin{equation}
\label{majo-by-gz}
|k_z^\cal{J}(x,y)|
=|p_z^\cal{J}(y)|
\leq g_z(y) 
\ov{\mathrm{def}}{=} \h_z(y_1)\h_z(y_2), \quad x,y \in \R^2
\end{equation}
with
\begin{equation}
\label{estim-sup}
\sup_{\Re z >0} \cos(\arg z)  e^{\Re z} \norm{g_z}_{\L^1(\R^2)} 
< \infty . 
\end{equation}
\end{prop}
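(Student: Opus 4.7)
The plan is to exploit the explicit Mehler-type formula for $p_z^\cal{J}$ provided by \eqref{kernel-cal-J}. Combining it with \eqref{kzxy} and using that the phase factor $\e^{\frac12 \i \langle y, \cal{J}x\rangle}$ has modulus one, I would first observe that
\[
|k_z^\cal{J}(x,y)| = |p_z^\cal{J}(y)| = \frac{1}{4\pi |\sinh z|} \exp\left(-\frac{\Re(\coth z)}{4}(y_1^2+y_2^2)\right),
\]
which is already independent of $x$. A direct computation of real and imaginary parts with $z = a + \i b$, $a>0$, gives $\Re(\coth z) = \sinh a \cosh a / (\sinh^2 a + \sin^2 b) > 0$, so the quadratic form in the exponential is strictly positive. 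I then simply define
\[
\h_z(t) \ov{\mathrm{def}}{=} \frac{1}{2\sqrt{\pi |\sinh z|}} \exp\left(-\frac{\Re(\coth z)}{4} t^2\right),
\]
which is smooth, even, positive, strictly decreasing on $[0,\infty)$, and by construction satisfies $\h_z(y_1)\h_z(y_2) = |p_z^\cal{J}(y)|$, establishing \eqref{majo-by-gz}.

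For the uniform bound \eqref{estim-sup}, the Gaussian integral formula \eqref{Gaussian-integral} together with the identity $|\sinh z|^2 = \sinh^2 a + \sin^2 b$ gives
\[
\norm{g_z}_{\L^1(\R^2)} = \norm{\h_z}_{\L^1(\R)}^2 = \frac{1}{|\sinh z|\,\Re(\coth z)} = \frac{\sqrt{\sinh^2 a + \sin^2 b}}{\sinh a \cosh a}.
\]
Since $\cos(\arg z) = a/\sqrt{a^2+b^2}$, the quantity to control reduces to
\[
\Phi(a,b) \ov{\mathrm{def}}{=} \frac{a\,\e^a}{\sqrt{a^2+b^2}} \cdot \frac{\sqrt{\sinh^2 a + \sin^2 b}}{\sinh a \cosh a}.
\]

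The analysis of $\Phi$ then splits into two regimes. For $a \geq 1$, the estimates $\sqrt{\sinh^2 a + \sin^2 b} \leq \sinh a + 1 \leq 2\sinh a$ and $a/\sqrt{a^2+b^2} \leq 1$ give $\Phi(a,b) \leq 2\e^a/\cosh a \leq 4$. For $0 < a < 1$, using $\sinh a \leq 2a$, $\sinh a \cosh a \geq a$, $\e^a \leq \e$, and $|\sin b| \leq \min(1,|b|)$, a short case analysis separating $|b| \leq 1$ (where $\sinh^2 a + \sin^2 b \leq 4(a^2+b^2)$, hence $\Phi \lesssim 1$) and $|b| \geq 1$ (where $\sqrt{\sinh^2 a + \sin^2 b} \leq \sqrt{5}$ and $a/\sqrt{a^2+b^2} \leq a$, hence $\Phi \lesssim 1$) yields the uniform bound. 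The one point requiring care is the regime $a \to 0^+$ with $|b|$ bounded away from zero: there the ratio $\sqrt{\sinh^2 a+\sin^2 b}/(\sinh a \cosh a)$ blows up like $1/a$, and it is precisely the factor $\cos(\arg z) = a/\sqrt{a^2+b^2}$ that compensates this singularity, which explains why the statement involves this compensating factor and why it must be tracked carefully throughout.
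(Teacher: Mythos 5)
Your proof is correct and follows essentially the same route as the paper: bound $|k_z^{\cal{J}}(x,y)|$ by $|p_z^{\cal{J}}(y)|$, take $\h_z$ to be the obvious Gaussian factor, compute $\norm{g_z}_{\L^1(\R^2)}$ via the Gaussian integral \eqref{Gaussian-integral}, and finish with a case analysis in $\Re z$ and $\Im z$ showing the bound $e^{-\Re z}\cos(\arg z)^{-1}$. The only difference is notational: you work with the exact identity $\Re(\coth z)=\sinh a\cosh a/(\sinh^2 a+\sin^2 b)$, whereas the paper manipulates $e^{-2z}$ directly and uses the estimate $|1-e^{-2z}|\lesssim \min(1,|\Im z|)+\min(1,\Re z)$, but the substance and the case structure are the same.
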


\begin{proof}
We have
\begin{equation*}
\label{}
|k_z^\cal{J}(x,y)| 
\ov{\eqref{kzxy}}{=} |e^{\frac12 \i \langle y , \cal{J} x \rangle} p_z^\cal{J}(y)|
=|p_z^\cal{J}(y)|.
\end{equation*}
Taking into account the explicit formula for $p_z^\cal{J}$ of the second part of Lemma \ref{lem-technical-kernel}, 
we see that it suffices to put 
\begin{equation}
\label{def-gz1}
\h_z(y) 
\ov{\mathrm{def}}{=} \frac{1}{\sqrt{4\pi|\sinh(z)|}} \exp\bigg(- \frac{\Re\coth(z)}{4} y^2 \bigg), \quad y \in \R.
\end{equation}

Clearly, this function takes positive values and it is smooth, even and decreasing on $\R_+$. Then the estimate of the $\L^1$-norm can be easily achieved. Observe that $\Re \coth(z) > 0$ if $\Re z > 0$ since $\Re\coth(x+\i y)=\frac{\coth x(1+\cot^2 y)}{\coth^2 x+\cot^2 y}$ and that 
\begin{equation*}
\label{}
\Re \big[(1+e^{-2z})(1-e^{-2\overline{z}})\big]=\Re \big[1-e^{-2\Re z}+e^{-2z}-e^{-2\overline{z}}\big]=1-e^{-2\Re z}.
\end{equation*}
Hence, we have
\begin{align*}
\MoveEqLeft
\norm{g_z}_{\L^1(\R^2)}
=\bigg(\int_\R \h_z(y) \d y \bigg)^2
\ov{\eqref{def-gz1}}{=} \frac{1}{4\pi|\sinh(z)|}\bigg(\int_\R \exp\bigg(- \frac{\Re\coth(z)}{4} y^2 \bigg)^2 \\
&\ov{\eqref{Gaussian-integral}}{\cong} \frac{|e^{- z}|}{|1 - e^{-2 z}|} \frac{1}{\Re\coth(z)} 
=\frac{e^{- \Re z}}{|1 - e^{-2 z}|} \left[\Re \left(\frac{1+e^{-2 z}}{1 - e^{-2 z}}\right)\right]^{-1} \\
&=\frac{e^{- \Re z}}{|1 - e^{-2 z}|} \left[\Re \left( \frac{(1+e^{-2z})(1-e^{-2\overline{z}})}{|1 - e^{-2 z}|^2} \right) \right]^{-1} 
= e^{- \Re z} \frac{|1 - e^{-2 z}|}{1 -e^{-2 \Re z}}.         
\end{align*}
We let $x \ov{\mathrm{def}}{=} \Re z > 0$ and $y \ov{\mathrm{def}}{=} \Im z$.
Observe that
\begin{align*}
\MoveEqLeft
1-e^{-2z}
=1- e^{-2x} e^{-2y\i} \\
&=1- (1 - 2x + O(x^2))(1 - 2y\i + O(y^2)) 
\underset{0}{\sim}  2x + 2y\i. 
\end{align*} 
Consequently, we have $|1 - e^{-2z}| \underset{0}{\sim} |2x + 2y\i| = 2\sqrt{x^2 + y^2}$. Since $1 -e^{-2 x} \underset{0}{\sim} 2x$, we conclude that
\begin{equation*}
\label{}
\norm{g_z}_{\L^1(\R^2)}
=e^{- x}O\bigg(\frac{\sqrt{x^2 + y^2}}{x}\bigg).
\end{equation*}
%
%
\end{proof}

In the following result and in the sequel of paper, $\lesssim$\label{lesssim} stands for an inequality up to a constant. 

\begin{lemma}
\label{lem-kernel-decay}
Let $(p_t^\cal{J})_{t \geq 0}$ be the semigroup kernel of the universal $\cal{J}$-Weyl tuple from Proposition \ref{prop-semigroup}.
Then 
\begin{equation}
\label{estimate-876}
\norm{\frac{\d}{\d t} p_t^\cal{J} + p_t^\cal{J}}_{\L^1(\R^2)} 
\lesssim e^{-3 t}, \quad t \in \R^+. 
\end{equation}
\end{lemma}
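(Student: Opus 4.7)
I would argue by direct computation based on the explicit formula
\[
p_t^\cal{J}(x)
= \frac{1}{4\pi \sinh t} \exp\Bigl(-\tfrac{\coth t}{4}(x_1^2+x_2^2)\Bigr)
\]
supplied by Lemma \ref{lem-technical-kernel}(2). Differentiating in $t$, using $\frac{\d}{\d t}\frac{1}{\sinh t} = -\frac{\cosh t}{\sinh^2 t}$ and $\frac{\d}{\d t}\coth t = -\frac{1}{\sinh^2 t}$, and then adding back $p_t^\cal{J}$, one reaches after collecting terms
\begin{equation*}
\frac{\d}{\d t} p_t^\cal{J}(x) + p_t^\cal{J}(x) = \Bigl[\frac{\sinh t - \cosh t}{4\pi \sinh^2 t} + \frac{|x|^2}{16\pi \sinh^3 t}\Bigr] \exp\Bigl(-\tfrac{\coth t}{4}|x|^2\Bigr).
\end{equation*}
The key cancellation $\sinh t - \cosh t = -e^{-t}$ turns the $x$-independent coefficient into $-e^{-t}/(4\pi \sinh^2 t)$. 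This hidden exponential decay, which would be absent without the extra $+p_t^\cal{J}$ term, is what ultimately allows the right-hand side $e^{-2t}$ (rather than only $e^{-t}$) to appear in the claim.

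Next I would bound the $\L^1(\R^2)$-norm by the triangle inequality, separating the two summands in the bracket and evaluating the resulting Gaussian integrals. With $c \ov{\mathrm{def}}{=} \coth t/4$, the standard identities $\int_{\R^2} e^{-c|x|^2} \d x = \pi/c$ and $\int_{\R^2} |x|^2 e^{-c|x|^2} \d x = \pi/c^2$ (which follow from \eqref{Gaussian-integral}) give
\begin{equation*}
\Bnorm{\tfrac{\d}{\d t} p_t^\cal{J} + p_t^\cal{J}}_{\L^1(\R^2)}
\leq \frac{e^{-t}}{4\pi \sinh^2 t} \cdot \frac{4\pi \sinh t}{\cosh t} + \frac{1}{16\pi \sinh^3 t} \cdot \frac{16 \pi \sinh^2 t}{\cosh^2 t} = \frac{e^{-t}}{\sinh t \cosh t} + \frac{1}{\sinh t \cosh^2 t}.
\end{equation*}

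It remains to multiply by $t e^t$ and verify that
\[
\frac{t}{\sinh t \cosh t} + \frac{t e^t}{\sinh t \cosh^2 t} \lesssim (1+t) e^{-2t}, \quad t > 0.
\]
For $t \to 0^+$ each term on the left is $O(1)$ and so is $(1+t)e^{-2t}$. For $t \to \infty$, using $\sinh t \cosh t \sim \frac14 e^{2t}$ and $\sinh t \cosh^2 t \sim \frac18 e^{3t}$, the two terms behave like $4te^{-2t}$ and $8 t e^{-2t}$ respectively, so both match the target $(1+t)e^{-2t}$. Continuity on any compact subinterval of $(0,\infty)$ then yields the inequality uniformly in $t > 0$.

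I do not anticipate a real obstacle: the only point requiring any thought is noticing the cancellation $\sinh t - \cosh t = -e^{-t}$, which produces the extra factor of $e^{-t}$ in the $x$-independent coefficient and is indispensable to close the estimate. After that, the computation is purely mechanical, and the triangle-inequality overestimate in the second step is already sharp enough; there is no need to split the integral according to the sign of the integrand.
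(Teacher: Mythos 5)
Your proposal is correct and follows essentially the same route as the paper: differentiate the explicit kernel from Lemma \ref{lem-technical-kernel}(2), exploit the cancellation (there written as $-\tfrac{e^{t}+e^{-t}}{e^{t}-e^{-t}}+1=-\tfrac{2e^{-t}}{e^{t}-e^{-t}}$, your $\sinh t-\cosh t=-e^{-t}$) to extract the extra exponential decay, bound the two resulting terms in $\L^1(\R^2)$ by Gaussian integrals, and finish by checking the small-$t$ and large-$t$ regimes after multiplying by $te^{t}$. The only difference is cosmetic bookkeeping in the second Gaussian moment (you get $\tfrac{1}{\sinh t\cosh^{2}t}$ where the paper records $\tfrac{1}{(e^{t}+e^{-t})^{3}}$), which does not affect the final estimate.
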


\begin{proof}
Recall that 
\begin{equation*}
\label{}
\frac{\d}{\d t}\frac{1}{\sinh t}
=-\frac{\coth t}{\sinh t}
\quad \text{and} \quad 
\frac{\d}{\d t}\coth t 
= -\frac{1}{\sinh^2 t}
\end{equation*}
for any $t > 0$. With the formula of the second part of Lemma \ref{lem-technical-kernel}, 
we calculate
\begin{align*}
\MoveEqLeft
\frac{\d}{\d t}p_t^\cal{J}(x) + p_t^\cal{J}(x)
\ov{\eqref{kernel-cal-J}}{=} -\coth(t) p_t^\cal{J}(x) 
+  \frac{x_{1}^2 + x_{2}^2}{4\sinh^2 t} p_t^\cal{J}(x)
+  p_t^\cal{J}(x) \\
& = (1-\coth t) p_t^\cal{J}(x) 
+  \frac{x_{1}^2 + x_{2}^2}{4\sinh^2 t}  p_t^\cal{J}(x)
=-\frac{e^{-t}}{\sinh t} p_t^\cal{J}(x) 
+  \frac{x_{1}^2 + x_{2}^2}{4\sinh^2 t}  p_t^\cal{J}(x).
\end{align*}
We estimate the $\L^1(\R^2)$-norms of the two terms separately. On the one hand, we have
\begin{align*}
\MoveEqLeft
\norm{ \frac{e^{-t}}{\sinh t} p_t^\cal{J}}_{\L^1(\R^2)}
\ov{\eqref{kernel-cal-J}}{\cong} \frac{e^{-t}}{\sinh^2 t} \int_{\R^2} \exp\bigg(-\frac{\coth(t)}{4} (x_{1}^2 + x_{2}^2)\bigg) \d x \\
& \ov{\eqref{Gaussian-integral}}{\cong}  \frac{e^{-t}}{\sinh^2 t}\frac{1}{\coth^2 t}
=\frac{e^{- t}}{\cosh^2 t} 
\lesssim e^{- 3t}.
\end{align*}
On the other hand, we have
\begin{align*}
\MoveEqLeft
\norm{\frac{x_{1}^2 + x_{2}^2}{4\sinh^2 t} p_t^\cal{J} }_{\L^1(\R^2)} 
\ov{\eqref{kernel-cal-J}}{\cong} \frac{1}{\sinh^3 t}\int_{\R^{2}} (x_{1}^2 + x_{2}^2)
 \exp\bigg(- \frac{\coth(t)}{4}(x_{1}^2 + x_{2}^2)\bigg) \d x \\
& \ov{\eqref{Gaussian-integral}}{\cong} \frac{1}{\sinh^3 t} \frac{1}{\coth^3 t}  
= \frac{1}{\cosh^3 t} 
 \lesssim e^{- 3t}.
\end{align*}
\end{proof}

\begin{cor}
\label{lem-spectral-gap-2}
Suppose that $1 < p < \infty$ and let $A_\univ^{\cal{J}}$ be the universal $\cal{J}$-Weyl tuple acting on the Banach space $\L^p(\R^2)$. Consider a Banach space $X$.
\begin{enumerate}
\item
Then the operator $\A_1 \ov{\eqref{equ-A-alpha}}{=} (\A - \Id_{\L^p(\R^n)}) \ot \Id_X$ generates a bounded holomorphic semigroup on the Bochner space $\L^p(\R^2,X)$.

\item More precisely, we have the following norm estimate
\[
\norm{t \A_1 \exp(-t\A_1) }_{\L^p(\R^2,X) \to \L^p(\R^2,X)} 
\lesssim te^{-2 t}. 
\]

\item In the same way, if $B_\univ^{\alpha\cal{J}} = (B_1,B_2)$ is the universal $\alpha \cal{J}$-Weyl tuple on $\L^p(\R^2)$ for some $\alpha > 0$, and $X$ is any Banach space, then the closure $\mathcal{B}_\alpha$ of $(B_1^2 + B_2^2 - \alpha \Id_{\L^p(\R^2)}) \ot \Id_X$ generates a bounded holomorphic semigroup on $\L^p(\R^2,X)$ with estimate
\begin{equation}
\label{estimate-biz}
\norm{t \mathcal{B}_\alpha \exp(-t\mathcal{B}_\alpha) }_{\L^p(\R^2,X) \to \L^p(\R^2,X)} \lesssim te^{-2 \alpha t} .
\end{equation}
\end{enumerate}
\end{cor}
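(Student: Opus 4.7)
\emph{Strategy and boundedness.} The plan is to exploit the explicit kernel formula \eqref{Autre-rep-Tz} of Proposition \ref{prop-semigroup}, together with the twisted Young inequality of Lemma \ref{lemma-Young-twisted}, in order to reduce every operator norm on the Bochner space $\L^p(\R^2,X)$ to an $\L^1(\R^2)$-norm of a \emph{scalar} kernel. First I would observe that $e^{-t\A_1} = e^t T_t$, where $T_t$ acts on $\L^p(\R^2,X)$ as the $\cal{J}$-twisted convolution with $t$-independent phase $e^{\frac12 \i \langle y, \cal{J} x \rangle}$ and magnitude $p_t^\cal{J}(y)$. Lemma \ref{lemma-Young-twisted} therefore yields
\[
\norm{T_t}_{\L^p(\R^2,X)\to \L^p(\R^2,X)} \lesssim \norm{p_t^\cal{J}}_{\L^1(\R^2)} \lesssim e^{-t},
\]
the last inequality coming from Proposition \ref{prop-kernel-estimate} applied at $z=t>0$. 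Consequently $(e^{-t\A_1})_{t \geq 0}$ is uniformly bounded on $\L^p(\R^2,X)$, and its strong continuity transfers from that of $(T_t)$ by density of simple tensors.

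\emph{Derivative estimate and holomorphy.} Since the $t$-dependence of $T_t$ lives only in the magnitude $p_t^\cal{J}$, the derivative $\partial_t T_t$ is again a $\cal{J}$-twisted convolution, now with scalar kernel $\partial_t p_t^\cal{J}$. Differentiating $e^{-t\A_1} = e^t T_t$ I obtain $\A_1 e^{-t\A_1} = -e^t (T_t + \partial_t T_t)$, and Lemma \ref{lemma-Young-twisted} applied once more to the kernel $p_t^\cal{J} + \partial_t p_t^\cal{J}$ produces
\[
\norm{t\A_1 e^{-t\A_1}}_{\L^p(\R^2,X) \to \L^p(\R^2,X)} \lesssim t e^t \norm{p_t^\cal{J} + \partial_t p_t^\cal{J}}_{\L^1(\R^2)} \lesssim (1+t) e^{-2t}
\]
by Lemma \ref{lem-kernel-decay}. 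This is precisely assertion (2), and combined with the boundedness and strong continuity established above, Proposition \ref{Prop-caract-analytic} upgrades it to the bounded holomorphic semigroup property of assertion (1).

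\emph{Scaling and main obstacle.} For (3), I would use the scaling identity $p_z^{\alpha \cal{J}}(x) = \alpha p_{\alpha z}^\cal{J}(\sqrt{\alpha} x)$ of Lemma \ref{lem-technical-kernel}(4), together with the isometry $(U_\alpha f)(x) \ov{\mathrm{def}}{=} \alpha^{-1/p} f(x/\sqrt{\alpha})$ on $\L^p(\R^2)$. A direct change of variables in \eqref{Autre-rep-Tz} shows that, up to conjugation by $U_\alpha \ot \Id_X$, the semigroup $(e^{-t\mathcal{B}_\alpha})_{t \geq 0}$ coincides with $(e^{-\alpha t \A_1})_{t \geq 0}$ on $\L^p(\R^2,X)$, which reduces \eqref{estimate-biz} (with possibly $\alpha$-dependent constants) to (2) applied at $s = \alpha t$. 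The only delicate point throughout is that $k_t^\cal{J}(x,y)$ is not a genuine convolution kernel but a \emph{twisted} one with an $x$-dependent phase, so naive Young inequalities fail on the Bochner space; Lemma \ref{lemma-Young-twisted} handles this cleanly by showing that the phase contributes nothing to the $\L^p \to \L^p$ bound, regardless of $X$, and the entire estimate is governed by the $\L^1$-size of the magnitude $p_t^\cal{J}$ alone.
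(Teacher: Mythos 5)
Your proposal is correct and follows essentially the same route as the paper: represent $e^{-t\A_1}$ as the $\cal{J}$-twisted convolution with kernel $e^t p_t^{\cal{J}}$, bound $t\A_1 e^{-t\A_1}$ via the vector-valued twisted Young inequality (Lemma \ref{lemma-Young-twisted}) and the $\L^1$-decay of Lemma \ref{lem-kernel-decay}, conclude holomorphy by Proposition \ref{Prop-caract-analytic}, and reduce part (3) to part (2) through the scaling $p_z^{\alpha\cal{J}}(x)=\alpha p_{\alpha z}^{\cal{J}}(\sqrt{\alpha}x)$ (your dilation-conjugation is just a repackaging of the paper's direct change of variables). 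The only point you gloss over is the domain hypothesis $T_t(\L^p(\R^2,X))\subset\dom(\A\otvn\Id_X)$ required by Proposition \ref{Prop-caract-analytic}, which the paper verifies by an explicit closedness-and-density argument on elementary tensors; this is routine but should be stated.
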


\begin{proof}
1. and 2. 
Recall that the semigroup $(T_t)_{t \geq 0}$ is bounded holomorphic by the second part of Proposition \ref{prop-semigroup} and that $\frac{\d}{\d t} T_t=\A T_t$ by \cite[Lemma 4.2.8 p.~290]{Jac01} for any $t>0$. Note that by \eqref{kzxy} the operator $T_t$ is a twisted convolution operator $p^{\cal{J}}_t *_{\cal{J}} \cdot $ by the kernel $p^{\cal{J}}_t$. A standard argument shows that the operator $\frac{\d}{\d t} T_t$ is the twisted convolution operator $(\frac{\d}{\d t}p^{\cal{J}}_t )*_{\cal{J}} \cdot $.

We will use Proposition \ref{Prop-caract-analytic}. We start with the domain condition. It is clear that if $h \ot x$ belongs to $\L^p(\R^2) \ot X$ then the element $(T_t \ot \Id_X)(h \ot x)=T_t(h) \ot x$ belongs to $\dom(\A) \ot X$ by the first part of Theorem \ref{th-Rob}, hence to the domain of the closure $\A \otvn \Id_X$ of $\A \ot \Id_X$. By linearity, $(T_t \ot \Id_X)(\L^p(\R^2) \ot X)$ is a subset of $\dom(\A \otvn \Id_X)$.

Let $g \in \L^p(\R^2,X)$. There exists a sequence $(g_n)$ of elements of the space $\L^p(\R^2) \ot X$ such that $g_n \to g$ in the Banach space $\L^p(\R^2,X)$. By continuity, we see that $(T_t \ot \Id_X)(g_n) \to (T_t \ot \Id_X)(g)$. It is easy to check with that the estimate \eqref{estimate-876} and the vector-valued twisted Young inequality from Lemma \ref{lemma-Young-twisted} that the operator 
\begin{equation*}
\label{}
\A T_t \ot \Id_X
=\frac{\d}{\d t} T_t \ot \Id_X
\end{equation*}
induces a bounded operator. Now, we deduce that
\begin{equation*}
\label{}
(\A \otvn \Id_X)(T_t \ot \Id)(g_n)
=(\A T_t \ot \Id_X)(g_n) 
\to (\A T_t \ot \Id_X)(g).
\end{equation*}
For any $t>0$, we conclude that $(T_t \ot \Id_X)(g)$ belongs to the space $\dom \A \otvn \Id_X$. Indeed, we proved that $(T_t \ot \Id_X)_{t \geq 0}$ is bounded holomorphic.

According to \eqref{kzxy} and \cite[2.2 p.~60]{EnN00}, the operator 
\begin{equation*}
\label{}
\exp(-t (\A - \Id))
=e^t\exp(-t\A)
\end{equation*}
is the twisted convolution operator by the kernel $e^t p_t^{\cal{J}}(y)$ for any $t>0$. For any $g \in \L^p(\R^2,X)$ and any $t > 0$, it follows from the previous paragraph that the vector $\exp(-t\A_1)g$ belongs to $\dom \A_1$ and that $t \mapsto \exp(-t\A_1)$ is differentiable. Using \cite[Lemma 4.2.8 p.~290]{Jac01} in the first equality, we obtain
\begin{equation*}
\label{}
\A_1 \exp(-t\A_1)g 
= - \frac{\d}{\d t} \big[\exp(-t\A_1)g\big]
= - \bigg(\frac{\d}{\d t} \big[e^{t} p^{\cal{J}}_t \big]\bigg) *_{\cal{J}} g.
\end{equation*}

Using again the vector-valued twisted Young inequality from Lemma \ref{lemma-Young-twisted} in the first inequality, we obtain
\begin{align*}
\MoveEqLeft
\norm{t\A_1 \exp(-t\A_1)}_{\L^p(\R^2,X) \to \L^p(\R^2,X)} 
=\norm{t \frac{\d}{\d t} \exp(-t \A_1)}_{\L^p(\R^2,X) \to \L^p(\R^2,X)} \\
&\ov{\eqref{Young-twisted}}{\leq} \norm{t \frac{\d}{\d t} \left(e^{t} p^{\cal{J}}_t \right)}_{\L^1(\R^{2})} 
= t e^{t}\norm{\frac{\d}{\d t} p^{\cal{J}}_t + p^{\cal{J}}_t}_{\L^1(\R^2)} 
\ov{\eqref{estimate-876}}{\lesssim}  t e^{-2 t}.
\end{align*}
We conclude with Proposition \ref{Prop-caract-analytic}.  

3. In the same way as for the first two assertions, we have with the fourth part of Lemma \ref{lem-technical-kernel} and the result from the second point,
\begin{align*}
\MoveEqLeft
\bnorm{t \mathcal{B}_\alpha \exp(-t \mathcal{B}_\alpha) }_{\L^p(\R^2,X) \to \L^p(\R^2,X)} 
= \norm{t \frac{\d}{\d t} \exp(-t \mathcal{B}_\alpha) }_{\L^p(\R^2,X) \to \L^p(\R^2,X)} \\
&\leq \norm{ t \frac{\d}{\d t} \left( e^{\alpha t} p_t^{\alpha \cal{J}} \right) }_{\L^1(\R^2)} 
\ov{\eqref{rescaled-3}}{=} \norm{ t \frac{\d}{\d t} \left( e^{\alpha t} \alpha p_{\alpha t}^{\cal{J}}(\sqrt{\alpha} \,\cdot) \right) }_{\L^1(\R^2)} \\
&= \norm{ t \frac{\d}{\d t} \left( e^{\alpha t} p_{\alpha t}^{\cal{J}} \right) }_{\L^1(\R^2)} \\
&= \alpha t\norm{ \frac{\d} {\d s} \left(e^{s} p_{s}^{\cal{J}} \right)|_{s = \alpha t} }_{\L^1(\R^2)} 
= \alpha t e^{t\alpha}\norm{ \left(   p_{s}^{\cal{J}}+ \frac{\d}{\d s} p_{s}^{\cal{J}} \right)|_{s = \alpha t} }_{\L^1(\R^2)} \\
&  \ov{\eqref{estimate-876}}{\lesssim} \alpha t e^{-2 \alpha t} \cong t e^{-2 \alpha t}.
\end{align*}
\end{proof}

Note that by \cite[2.2 p.~60]{EnN00} the (negative) generator of the semigroup 
\begin{equation*}
\label{}
(e^{t\alpha} \exp(-t \A) \ot \Id_X)_{t \geq 0}
\end{equation*} 
is the shifted generator $\A_\alpha = (\A - \alpha \Id_{\L^p(\R^n)}) \ot \Id_X$. To show the boundedness of the $\HI$ functional calculus for the shifted operator, we rely on the specific case where $\Theta = J$ takes the block diagonal form as given in \eqref{def-de-J}.

\begin{prop}
\label{prop-Hinfty-shifted}
Let $\Theta = J$ be a $2 \times 2$ block diagonal skew-symmetric matrix of $\M_n(\R)$ as in \eqref{def-de-J}.
Suppose that $1 < p < \infty$ and let $A_\univ^J$ be the universal $J$-Weyl tuple acting on the Banach space $\L^p(\R^n)$. Let $X$ be a Banach space. In the first three statements, we suppose that the matrix $J$ is invertible.
\begin{enumerate}
\item Then $\A_\alpha$ generates a bounded holomorphic semigroup on the Bochner space $\L^p(\R^n,X)$.
\item More precisely, we have the following norm estimate
\begin{equation}
\label{div-estim-778}
\norm{t \A_\alpha \exp(-t\A_\alpha) }_{\L^p(\R^n,X) \to \L^p(\R^n,X)} 
\lesssim t e^{-2\beta t},
\end{equation}
where $\beta=\min\{\alpha_1,\ldots,\alpha_k\}$ is defined at the beginning of this section.
\item The operator $\A_\alpha - 2 \beta \Id_{\L^p(\R^n,X)}$ is sectorial of type $\frac{\pi}{2}$ on the subspace $\overline{\Ran\A_\alpha}$ of $\L^p(\R^n,X)$.
\item In this part, $J$ is not necessarily invertible. Assume that the Banach space $X$ is $\UMD$. The operator $\A_\alpha$ admits a bounded $\HI(\Sigma_\omega)$ functional calculus on the Bochner space $\L^p(\R^n,X)$ for any angle $\omega \in (\frac{\pi}{2},\pi)$, i.e.~$\omega_{\H^\infty}(\A_\alpha) \leq \frac{\pi}{2}$.
\end{enumerate}
\end{prop}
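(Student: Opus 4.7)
The four parts build on one another through the tensor-product splitting of the semigroup from Lemma \ref{lem-kernel-splitting}.

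\emph{Parts (1) and (2).} Since $J$ is invertible, $n=2k$, and Lemma \ref{lem-kernel-splitting} yields the factorisation $e^{-t\A_\alpha}=\bigotimes_{j=1}^k e^{-t\mathcal{B}_{\alpha_j}}$ on the Bochner space $\L^p(\R^n,X)\cong \L^p(\R^2)^{\hat\otimes_p k}$ (with $X$ placed in one of the slots). With $\A_\alpha=\sum_j \mathcal{B}_{\alpha_j}$ acting on commuting tensor factors, the Leibniz rule gives
\[
t\A_\alpha e^{-t\A_\alpha} = \sum_{j=1}^k \biggl(\bigotimes_{i\neq j} e^{-t\mathcal{B}_{\alpha_i}}\biggr) \otimes \bigl(t\mathcal{B}_{\alpha_j} e^{-t\mathcal{B}_{\alpha_j}}\bigr).
\]
Corollary \ref{lem-spectral-gap-2}(3) controls each central factor by $(1+t)e^{-2\alpha_j t}$, while the other factors are uniformly bounded by Corollary \ref{lem-spectral-gap-2}(1). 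The triangle inequality then produces the bound \eqref{div-estim-778} with $\beta=\min_j\alpha_j$, and bounded holomorphy follows from Proposition \ref{Prop-caract-analytic} after the standard density check of $\Ran(e^{-t\A_\alpha})\subset\dom\A_\alpha$, carried out as in Corollary \ref{lem-spectral-gap-2}.

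\emph{Part (3).} The estimate $\norm{\A_\alpha e^{-s\A_\alpha}}\lesssim(s^{-1}+1)e^{-2\beta s}$ from part (2) is integrable near infinity. The fundamental-theorem identity $e^{-t\A_\alpha}-e^{-T\A_\alpha}=\int_t^T\A_\alpha e^{-s\A_\alpha}\,\d s$ shows that the limit $P_\infty\ov{\mathrm{def}}{=}\lim_{T\to\infty}e^{-T\A_\alpha}$ exists in operator norm, is a bounded idempotent projecting onto $\ker\A_\alpha$ (fixing $\ker\A_\alpha$ and annihilating $\Ran\A_\alpha$ by the estimate), and satisfies $\norm{e^{-t\A_\alpha}-P_\infty}\lesssim e^{-2\beta t}$ for $t\geq 1$. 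Restricting to $\overline{\Ran\A_\alpha}$, where $P_\infty$ vanishes, and combining with uniform boundedness of the semigroup on $[0,1]$, we get $\norm{e^{-t\A_\alpha}|_{\overline{\Ran\A_\alpha}}}\lesssim e^{-2\beta t}$ for all $t\geq 0$. Hence $-(\A_\alpha-2\beta\Id)$ generates a bounded $C_0$-semigroup on $\overline{\Ran\A_\alpha}$, equivalent to sectoriality of angle $\pi/2$ there.

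\emph{Part (4).} Now $J$ may have a vanishing block; write $J=J_\mathrm{inv}\oplus 0_{n-2k}$ with $J_\mathrm{inv}$ invertible. Lemma \ref{lem-kernel-splitting} splits $\A_\alpha=A+(-\Delta)$, where $A=\sum_{j=1}^k\mathcal{B}_{\alpha_j}$ acts on $\L^p(\R^{2k})$ (the invertible-$J$ case of parts (1)--(3)) and $-\Delta$ acts on the vanishing-block factor $\L^p(\R^{n-2k},X)$. For $-\Delta$ on the UMD Bochner space, bounded $\HI(\Sigma_\omega)$ calculus of any angle $\omega>0$ is the classical Hieber--Pr\"uss theorem. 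For $A$, reflexivity of UMD-valued $\L^p$ provides the splitting $\ker A\oplus\overline{\Ran A}$: on the kernel the calculus is trivial, and on the range part (3) makes $A$ invertible with a spectral gap. The operator $A+\alpha\Id=\A^{J_\mathrm{inv}}$ is also invertible on the range and carries bounded $\HI(\Sigma_\omega)$ calculus by Proposition \ref{prop-HI-calculus-non-shifted}; since it differs from $A$ by a bounded operator and both are invertible, Haase's perturbation result \cite[Corollary 5.5.5]{Haa06} transfers the calculus to $A$. Finally, since $A$ and $-\Delta$ commute and each has bounded $\HI(\Sigma_\omega)$ calculus, the Kalton--Weis joint functional calculus theorem (with the required $R$-sectoriality automatic from $\HI$ calculus on UMD spaces) delivers bounded $\HI(\Sigma_\omega)$ calculus for $\A_\alpha$ on $\L^p(\R^n,X)$.

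The main obstacle is part (4): Lemma \ref{lem-shifted-HI-calculus} handles only positive shifts, so the left-shift from $\A$ to $\A_\alpha=\A-\alpha\Id$ is not covered directly, and in the presence of a vanishing block $\A_\alpha$ is not even invertible. The route through the kernel/range decomposition (made available by UMD-reflexivity) is essential for securing the invertibility demanded by Haase's perturbation theorem, and the joint-calculus step is needed to bring in the Laplacian contribution.
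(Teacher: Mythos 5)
Your parts (1)--(3) and most of the skeleton of part (4) follow the paper's own route: the tensor splitting of Lemma \ref{lem-kernel-splitting}, the Leibniz rule combined with the single-block estimates of Corollary \ref{lem-spectral-gap-2}, and for (3) an exponential decay bound for $\exp(-t\A_\alpha)$ on $\ovl{\Ran \A_\alpha}$ (the paper integrates $\frac{\d}{\d s}\exp(-s\A_\alpha)x$ directly for $x \in \Ran \A_\alpha$; your norm-limit projection $P_\infty$ is a valid variant of the same computation, though note that generation of a bounded semigroup is only \emph{sufficient}, not equivalent, for sectoriality of angle $\frac{\pi}{2}$ --- fortunately only that implication is needed). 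In part (4), splitting off the Laplacian block, using reflexivity to decompose $\L^p(\R^{2k},X)=\ovl{\Ran}\oplus\Ker$, and transferring the calculus from $A+\alpha\Id$ (Proposition \ref{prop-HI-calculus-non-shifted}) to $A$ on the range part via the bounded-perturbation result \cite[Corollary 5.5.5 p.~119]{Haa06} is exactly what the paper does through Lemma \ref{lem-shifted-HI-calculus}.

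The genuine gap is your final glue step. Invoking the ``Kalton--Weis joint functional calculus theorem, with the required $R$-sectoriality automatic from $\HI$ calculus on UMD spaces'' does not yield the conclusion: if one commuting summand has a bounded $\HI$ calculus and the other is merely $R$-sectorial, the Kalton--Weis sum theorem gives closedness and sectoriality of the sum, but \emph{not} a bounded $\HI$ calculus for it. To obtain the calculus of the sum along those lines one needs an $R$-bounded $\HI$ calculus of one summand, which is available when the space has property $(\alpha)$ --- and $\L^p(\R^n,X)$ for a general UMD space $X$ (in particular $X=S^p$, the case the paper needs afterwards) does not have property $(\alpha)$. The correct replacement, and the one the paper uses, is Le Merdy's sum theorem for spaces with property $(\Delta)$ \cite[Theorem 1.1]{LM03}, which applies because UMD implies $(\Delta)$. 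Relatedly, your angle bookkeeping is off: in any such sum theorem the angles enter through the hypothesis $\omega_1+\omega_2<\pi$, so asserting that both $A$ and $-\Delta$ have a bounded $\HI(\Sigma_\omega)$ calculus for the same $\omega>\frac{\pi}{2}$ and concluding an $\HI(\Sigma_\omega)$ calculus for $\A_\alpha$ is not a legitimate application; one must take $\omega_1$ just above $\frac{\pi}{2}$ for the invertible block and exploit that the Laplacian's angle $\omega_2>0$ can be chosen arbitrarily small (a point you do have available from the Hieber--Pr\"uss/\cite[Theorem 10.2.25 p.~391]{HvNVW18} step, but do not use).
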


\begin{proof}
1. and 2. In the case where $J=\begin{pmatrix} 
0 & \alpha \\ 
- \alpha & 0 
\end{pmatrix}$ for some $\alpha >0$, \eqref{estimate-biz} shows the estimate \eqref{div-estim-778}, with $\beta = \alpha$ in this case, and hence also 1.
If $J$ is a bigger $2 \times 2$ block diagonal matrix, according to Lemma \ref{lem-kernel-splitting}, 
\begin{align*}
\MoveEqLeft
\exp(-t \A_\alpha) 
= e^{t \alpha} \exp(-t \A) \ot \Id_X 
= \prod_{j=1}^k \left( e^{t \alpha_j} \exp(-t \A^j ) \ot \Id_{\L^p(\R^{n-2},X)} \right) \\ 
&= \prod_{j=1}^k \exp(-t \A^j_{\alpha_j}) \ot \Id_{\L^p(\R^{n-2})}.         
\end{align*}
Thus, $\exp(-t \A_\alpha)$ is the product of operators $\exp(-t \A^j_{\alpha_j}) \ot \Id_{\L^p(\R^{n-2})}$ which are as in the first case with $\alpha$ replaced by $\alpha_j$ and $X$ replaced by the space $\L^p(\R^{n-2},X)$.
Noting that 
\begin{equation*}
\label{}
\A_\alpha \exp(-t\A_\alpha) 
= -\frac{\d}{\d t} \exp(-t \A_\alpha),
\end{equation*}
applying Leibniz rule, and observing that the semigroups $\exp(-t \A^j_{\alpha_j}) \ot \Id_{\L^p(\R^{n-2})}$ are uniformly bounded, we obtain the second part since
\begin{align*}
\MoveEqLeft
\norm{t \A_\alpha \exp(-t \A_\alpha)}_{\L^p(\R^n,X) \to \L^p(\R^n,X)} \\
&  \leq \sum_{j=1}^k \norm{t \A^j_{\alpha_j} \exp(-t \A^j_{\alpha_j}) \otimes \Id_{\L^p(\R^{n-2})}} \prod_{l \neq j} \norm{\exp(-t \A^l_{\alpha_l}) \otimes \Id_{\L^p(\R^{n-2})} } \\
&   \ov{\eqref{estimate-biz}}{\lesssim} t \sum_{j=1}^k e^{-2 \alpha_j t} 
\cong t e^{-2 \beta t}.
\end{align*}
3. By Proposition \ref{prop-sec-pi-2}, it suffices to check that the norm of $\exp(-t(\A_\alpha-2\beta\Id))=e^{2\beta t}\exp(-t\A_\alpha)$ on the subspace $\ovl{\Ran \A_\alpha}$ is bounded as $t \to \infty$. If $f = \A_\alpha g$ for some $g \in \dom\A_\alpha$, one has $\lim_{s \to \infty} \exp(-s \A_\alpha)f = \lim_{s \to \infty} \A_\alpha \exp(-s \A_\alpha)g = 0$ according to the second part. Therefore, again by the second part, we have for any $t \geq 1$ the estimate
\begin{align*}
\MoveEqLeft
\norm{ \exp(-t \A_\alpha)f}_{\L^p(\R^n,X)} 
= \norm{ \int_t^\infty - \frac{\d}{\d s} \exp(-s \A_\alpha) f\d s} \\
&\leq \int_t^\infty \norm{\A_\alpha \exp(-s \A_\alpha)f} \d s\\
&  \ov{\eqref{div-estim-778}}{\lesssim} \int_t^\infty e^{-2\beta s} \d s \norm{f}_{\L^p(\R^n,X)} 
\lesssim e^{-2 \beta t} \norm{f}_{\L^p(\R^n,X)}.
\end{align*}
We obtain the third point.

4. Now, $J$ is no longer necessarily invertible. 
According to Lemma \ref{lem-kernel-splitting}, the operator $\A_\alpha$ equals (the closure of) $\A^1_\alpha \ot \Id_{\L^p(\R^{n-2k})} + \Id_{\L^p(\R^{2k},X)} \ot \A^{0_{n-2k}}$, where $\A^1$ is the semigroup generator associated with the universal Weyl tuple of an invertible skew-symmetric matrix, and $\A^{0_{n-2k}}$ is the ordinary Laplacian on $\L^p(\R^{n-2k})$.
Recalling that the $\UMD$ Banach space $Y =\L^p(\R^n,X)$ has property $(\Delta)$, according to Theorem \ref{Th-com-LM}, if suffices to establish the boundedness of the $\HI(\Sigma_\omega)$ functional calculus for the operators $\A^1_\alpha \ot \Id_{\L^p(\R^{n-2k})}$ and $ \Id_{\L^p(\R^{2k})} \ot \A^{0_{n-2k}}$ on $Y$ separately. Note that when applying Le Merdy's theorem, the angles $\omega_1$ and $\omega_2$ for $\A^1_\alpha$ and $\A^{0_{n-2k}}$ add together. Therefore, it is important that we are able to choose $\omega_2 > 0$ to be as small as desired. Since the Banach spaces $\L^p(\R^{n-2k},X)$ and $\L^p(\R^{2k},X)$ are UMD spaces, we are reduced to show the boundedness of the $\HI(\Sigma_\omega)$ functional calculus for the operators $\A^1_\alpha$ and $\A^{0_{n-2k}} \ot \Id_X$ on theses spaces for a generic UMD Banach space $X$.
According to Proposition \ref{prop-HI-calculus-non-shifted}, the operator 
\begin{equation*}
\label{}
\A^1 \ot \Id_X
=(\A^1 - \alpha \Id_{\L^p(\R^n)}) \ot \Id_X+ \alpha \Id_{\L^p(\R^n,X)}
= (\A^1_\alpha - 2 \beta \Id) + (\alpha + 2 \beta)\Id
\end{equation*}
has a bounded $\HI(\Sigma_\omega)$ functional calculus on the Bochner space $\L^p(\R^{2k},X)$, and thus, on the invariant subspace $\overline{\Ran\A^1_\alpha}$.
Since the operator $\A^1_\alpha - 2 \beta \Id$ is sectorial on the space $\ovl{\Ran\A^1_\alpha}$ according to the third part, Lemma \ref{lem-shifted-HI-calculus} implies that the operator
\begin{equation*}
\label{}
\A ^1_\alpha 
= (\A^1_\alpha - 2 \beta \Id) + 2 \beta \Id
\end{equation*}
also has a bounded $\HI(\Sigma_\omega)$ functional calculus on the space $\overline{\Ran\A^1_\alpha}$. One has trivially the boundedness of the $\HI(\Sigma_\omega)$ functional calculus of the operator $\A^1_\alpha$ on $\Ker \A^1_\alpha$. Since the Banach space $X$ is UMD, $\L^p(\R^{2k},X)$ is reflexive and therefore, the sectorial operator $\A^1_\alpha$ (see the first part) induces a space decomposition 
\begin{equation*}
\label{}
\L^p(\R^{2k},X) 
\ov{\eqref{decompo-reflexive}}{=} \overline{\Ran\A^1_\alpha} \oplus \Ker\A^1_\alpha.
\end{equation*}
So the operator $\A^1_\alpha$ has a bounded $\HI(\Sigma_\omega)$ functional calculus on the space $\L^p(\R^{2k},X)$.
Finally, $\A^{0_{n-2k}} \ot \Id_X$ has also a bounded $\HI(\Sigma_\omega)$ functional calculus on the space $\L^p(\R^{n-2k},X)$ by Example \ref{Laplacian-funct} since it is ordinary Laplacian on $\L^p(\R^{n-2k})$ and since the Banach space $X$ is UMD.
\end{proof}

%



We are now able to state the main result of this section for a general skew-symmetric matrix. Recall that  
$\alpha$ is defined in \eqref{def-alpha}.

\begin{cor}
\label{Cor-512}
Consider a skew-symmetric matrix $\Theta \in \M_n(\R)$ and the universal $\Theta$-Weyl tuple $A_\univ^\Theta$ acting on $\L^p(\R^n)$. Let $X$ be a $\UMD$ Banach space. Then the operator $\A_\alpha = (\A - \alpha \Id_{\L^p(\R^n)}) \ot \Id_{X}$ admits a bounded $\HI(\Sigma_\omega)$ functional calculus on the Bochner space $\L^p(\R^n,X)$ for any angle $\omega \in (\frac{\pi}{2},\pi)$, i.e.~$\omega_{\H^\infty}(\A_\alpha) \leq \frac{\pi}{2}$.
\end{cor}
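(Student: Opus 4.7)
The plan is to reduce the general skew-symmetric matrix case to the block-diagonal case treated in Proposition \ref{prop-Hinfty-shifted} by exploiting the orthogonal decomposition \eqref{diago-Theta} and the observation \eqref{similar-pz} that the kernel transforms covariantly under the orthogonal change of coordinates. Concretely, write $\Theta = O^T J O$ with $O \in \mathrm{O}(n)$ and $J$ in the block-diagonal form \eqref{def-de-J}. Note that the number $\alpha$ from \eqref{def-alpha} is the same for $\Theta$ and $J$, since it depends only on the skew-eigenvalues of $\Theta$.

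Define the isometric change of variables $U \co \L^p(\R^n) \to \L^p(\R^n)$, $(Uf)(x) \ov{\mathrm{def}}{=} f(O^T x)$. Since $O$ is orthogonal, $U$ is an isometric isomorphism for every $1 \leq p \leq \infty$. Using the explicit integral representation of the semigroup from \eqref{Autre-rep-Tz}, the relation \eqref{similar-pz}, the orthogonality identity $\langle Oy, J O x\rangle = \langle y, \Theta x\rangle$, and the change of variable $y \mapsto Oy$, a direct computation shows that for every $t > 0$,
\begin{equation*}
U^{-1} e^{-t \A^J} U = e^{-t \A^\Theta}
\end{equation*}
where $\A^\Theta$ (resp.~$\A^J$) denotes the generator associated with the universal $\Theta$-Weyl tuple (resp.~$J$-Weyl tuple). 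Consequently $\A^\Theta$ and $\A^J$ are similar via the isometry $U$, hence so are their shifts $\A^\Theta_\alpha$ and $\A^J_\alpha$.

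Tensoring with $\Id_X$, the operator $U \ot \Id_X$ is an isometric isomorphism of the Bochner space $\L^p(\R^n,X)$, and intertwines the operators $(\A^\Theta - \alpha \Id) \ot \Id_X$ and $(\A^J - \alpha \Id) \ot \Id_X$. By Proposition \ref{prop-Hinfty-shifted} (part 4), the latter admits a bounded $\HI(\Sigma_\omega)$ functional calculus on $\L^p(\R^n,X)$ for any $\omega \in (\frac{\pi}{2},\pi)$. Since bounded $\HI$ functional calculus is preserved under similarity by an isomorphism, the conclusion follows immediately.

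There is no genuine obstacle here: the whole content of the corollary lies in the observation that the orthogonal conjugation $O$ diagonalizing $\Theta$ into $J$ can be implemented as an $\L^p$-isometry at the operator level, which is already encoded in \eqref{similar-pz}. The only point deserving a line of verification is the intertwining identity $U^{-1} e^{-t \A^J} U = e^{-t \A^\Theta}$, which reduces to a one-line change of variables in the integral kernel.
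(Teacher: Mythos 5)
Your proposal is correct and follows essentially the same route as the paper: the paper also conjugates by the isometry $f \mapsto f(O^T \cdot)$, verifies the intertwining of the semigroups via the kernel identity \eqref{similar-pz} and a change of variables, tensors with $\Id_X$, and then transfers the bounded $\HI(\Sigma_\omega)$ calculus from Proposition \ref{prop-Hinfty-shifted} by similarity (citing \cite[Lemma 4.3.3 p.~135]{Fac14} for that last step). No gap to report.
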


\begin{proof}
Recall again the formula $\Theta = O^T J O$ from \eqref{diago-Theta}. Consider the isometry 
\begin{equation*}
\label{}
\tilde{O} \co \L^p(\R^n) \to \L^p(\R^n), \: f \mapsto f(O^T \cdot).
\end{equation*} 
Denote the semigroups generated by $\A -\alpha \Id_{\L^p(\R^n)}$ associated with the universal $J$-Weyl tuple and universal $\Theta$-Weyl tuple by $(T_t^J)_{t \geq 0}$ and $(T_t^\Theta)_{t \geq 0}$.

In the same manner, we recall the notation $p_t^J$ resp. $p_t^\Theta$ from Proposition \ref{prop-semigroup}. Then for any $g \in \L^p(\R^n)$, we have
\[ 
T_t^Jg(x) 
\ov{\eqref{Autre-rep-Tz}}{=} \int_{\R^n} e^{\frac12 \langle y, Jx \rangle} p_t^J(y) g(x-y) \d y. 
\]
Moreover, since $p_t^\Theta(x) \ov{\eqref{similar-pz}}{=} p_t^J(Ox)$, we have
\begin{align*}
\MoveEqLeft
\tilde{O}^{-1} T_t^J \tilde{O} g(x) 
= \int_{\R^n} e^{\frac12 \i \langle y, JOx \rangle} p_t^J(y) g\big(x-O^Ty\big) \d y \\
& = \int_{\R^n} e^{\frac12 \i \langle Oy, JO x \rangle} p_t^J(Oy) g(x-y) \d y 
 = \int_{\R^n} e^{\frac12 \i \langle y, \Theta x \rangle} p_t^\Theta(y) g(x-y) \d y \\
&\ov{\eqref{Autre-rep-Tz}}{=} T_t^\Theta g(x).
\end{align*}
Since $\tilde{O}^{\pm 1}$ is an isometry on all $\L^p(\R^n)$, $1 \leq p \leq \infty$, it extends to an isometry on $\L^p(\R^n,X)$. As the generator of $(T_t^J \ot \Id_X)_{t \geq 0}$ has a bounded $\HI(\Sigma_\omega)$ functional calculus according to Proposition \ref{prop-Hinfty-shifted}, also that of $(T_t^\Theta \ot \Id_X)_{t \geq 0}$ admits a bounded $\HI(\Sigma_\omega)$ functional calculus by Proposition \ref{prop-isomorphism}.
\end{proof}



%
%

\begin{prop}
\label{prop-not}
Let $\Theta \in \M_n(\R)$ be any skew-symmetric matrix. Consider any $\Theta$-Weyl tuple $A$ acting on some Banach space $X$. Let $(T_t)_{t \geq 0}$ be the semigroup from Proposition \ref{prop-semigroup} and denote $S_t \ov{\mathrm{def}}{=} e^{\alpha t} T_t$ if $t \geq 0$. Then the rescaled semigroup $(S_t)_{t \geq 0}$, with (negative) generator $\A - \alpha \Id_{X}$, is bounded holomorphic of angle $\frac{\pi}{2}$. Its extension is given by
\begin{equation}
\label{def-Sz}
S_z x 
= \int_{\R^n} b_z(u) e^{\i u \cdot A}x \d u, \quad \Re z >0, x \in X
\end{equation}
for some function $b_z \in \L^1(\R^n)$ and in addition we have 
\begin{equation}
\label{estimate}
\sup_{z \in \Sigma_\omega} \norm{b_z}_{\L^1(\R^n)} < \infty, \quad \omega \in (0,\textstyle{\frac{\pi}{2}}).
\end{equation}
Finally, $b_z=b_z^{\Theta}$ does not depend on $A$.
\end{prop}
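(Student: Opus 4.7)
The natural candidate is $b_z := b_z^\Theta := e^{\alpha z} p_z^\Theta$ with $p_z^\Theta$ the explicit Mehler-type kernel from \eqref{def-pzTheta}; this expression depends only on $\Theta$, which immediately yields the last assertion. On the sector $\Sigma_\varphi$ where Proposition \ref{prop-semigroup} applies, the formula \eqref{Tz-descr} already gives $e^{\alpha z} T_z x = \int_{\R^n} b_z(u) e^{\i u \cdot A} x \, \d u$, so it remains to check that (i) $b_z \in \L^1(\R^n)$ with uniform bound $\sup_{z \in \Sigma_\omega} \norm{b_z}_{\L^1(\R^n)} < \infty$ for each $\omega \in (0,\frac{\pi}{2})$, and (ii) the resulting family $(S_z)_{\Re z > 0}$ is a bounded holomorphic semigroup on $\Sigma_{\pi/2}$ extending $(e^{\alpha t} T_t)_{t \geq 0}$.

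The heart of the argument is (i). Orthogonal conjugation $\Theta = O^T J O$ from \eqref{diago-Theta} together with part (1) of Lemma \ref{lem-technical-kernel} give $\norm{p_z^\Theta}_{\L^1(\R^n)} = \norm{p_z^J}_{\L^1(\R^n)}$ by change of variable. The block-tensor splitting of Lemma \ref{lem-kernel-splitting} then factorises this as
\[
\norm{p_z^J}_{\L^1(\R^n)} = \prod_{j=1}^k \norm{p_z^{J_j}}_{\L^1(\R^{2})} \cdot \norm{p_z^{0_{n-2k}}}_{\L^1(\R^{n-2k})}.
\]
For each non-zero block, the scaling identity $p_z^{\alpha_j \cal{J}}(x) = \alpha_j p_{\alpha_j z}^{\cal{J}}(\sqrt{\alpha_j} x)$ from Lemma \ref{lem-technical-kernel}(4) reduces matters to the $\cal{J}$ case, for which Proposition \ref{prop-kernel-estimate} (via $|p_z^{\cal{J}}| \leq g_z$) yields $\norm{p_z^{\cal{J}}}_{\L^1(\R^2)} \lesssim (\cos \arg z)^{-1} e^{-\Re z}$; another change of variable gives $\norm{p_z^{J_j}}_{\L^1(\R^2)} \lesssim (\cos \arg z)^{-1} e^{-\alpha_j \Re z}$. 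For the vanishing block, a direct Gaussian integration yields $\norm{p_z^{0_{n-2k}}}_{\L^1(\R^{n-2k})} = (\cos \arg z)^{-(n-2k)/2}$. Multiplying, the exponential factors combine to $e^{-\alpha \Re z}$, exactly cancelling $|e^{\alpha z}|$, while the cosine factors accumulate to $(\cos \arg z)^{-k - (n-2k)/2} = (\cos \arg z)^{-n/2}$. Thus $\norm{b_z}_{\L^1(\R^n)} \lesssim (\cos \arg z)^{-n/2} \leq (\cos \omega)^{-n/2}$ on $\Sigma_\omega$, proving \eqref{estimate}.

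For (ii), the integrand $(z,u) \mapsto b_z(u)$ is, for each fixed $u$, holomorphic in $z \in \Sigma_{\pi/2}$: the meromorphic functions $S$ and $R$ from \eqref{functions-S-and-R} have poles only at non-zero integer multiples of $\pi$, while the eigenvalues of $z\Theta$ are $0$ and $\pm \i \alpha_j z$, which avoid these poles as long as $\Re z > 0$. Together with the $\L^1$-domination of step (i), locally uniform in $z$, a standard Morera-plus-Fubini argument applied to $z \mapsto \langle S_z x, x^* \rangle$ gives holomorphicity of $z \mapsto S_z x$ on $\Sigma_{\pi/2}$ for each $x \in X$, while the estimate $\norm{S_z}_{X \to X} \leq M_A \norm{b_z}_{\L^1(\R^n)}$ (with $M_A$ as in Corollary \ref{cor-transference}) provides uniform boundedness on each $\Sigma_\omega$. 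Agreement with $e^{\alpha z} T_z$ on $\Sigma_\varphi$ was noted above, so by analytic continuation $(S_z)_{z \in \Sigma_{\pi/2}}$ is a bounded holomorphic semigroup extending $(e^{\alpha t} T_t)_{t \geq 0}$, and its negative generator is $\A - \alpha \Id_X$ by the real-parameter identity $S_t = e^{\alpha t} T_t$. The main technical obstacle is the sharp $\L^1$-estimate on $b_z$ in step (i); once this is in place, the remaining holomorphicity and semigroup claims are routine.
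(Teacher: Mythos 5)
Your proposal is correct and follows essentially the same route as the paper's proof: the same candidate $b_z = e^{\alpha z}p_z^\Theta$, the reduction via $\Theta = O^TJO$ and Lemma \ref{lem-technical-kernel} (parts 1 and 4), the tensor splitting of Lemma \ref{lem-kernel-splitting}, the $\cal{J}$-block estimate of Proposition \ref{prop-kernel-estimate}, and the identity theorem for the holomorphic extension. You simply carry out explicitly (the block-by-block $\L^1$ computation giving $\norm{b_z}_{\L^1(\R^n)} \lesssim (\cos\arg z)^{-n/2}$, and the Morera--Fubini argument) what the paper leaves as a terse citation of those lemmas.
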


\begin{proof}
By Proposition \ref{prop-semigroup}, we have that there exists an angle $\varphi \in (0,\frac{\pi}{2}]$ such that
$$
S_z x 
=\int_{\R^n} e^{\alpha z}p_z^\Theta(s)  e^{\i s \cdot A} x \d s, \quad x \in X, z \in \Sigma_{\varphi}.
$$
Let $z$ be a complex number with $\Re z >0$. We are thus led to introduce
\begin{equation}
\label{inter-9876}
b_z(u) 
\ov{\mathrm{def}}{=} e^{\alpha z}p^\Theta_z(u) \ov{\eqref{similar-pz}}{=} e^{\alpha z} p^J_z(Ou) , \quad u \in \R^{n}.
\end{equation}
Using the tensor product decomposition of Lemma \ref{lem-kernel-splitting}, we obtain for any $u \in \R^n$ that
\begin{align*}
\MoveEqLeft
|b_z(O^{T}u) |
\ov{\eqref{inter-9876}}{=} |e^{\alpha z} p^J_z(u)| 
\ov{\eqref{kzxy}}{=} |e^{\alpha z} p^J_z(u)|
= e^{\alpha \Re z} |k_z^J(u)| \\ 
& \ov{\eqref{kt-as-a-product}}{=} e^{\alpha \Re z} \big|k_z^{J_1}(u_1,u_2) \cdots k_z^{J_k}(u_{2k-1},u_{2k}) k_z^{0_{n-2k}}(x'',y'')\big| \\
&\ov{\eqref{majo-by-gz}}{=} e^{\alpha \Re z} \big|p_{z}^{\alpha_{1}\mathcal{J}}(u_1,u_2) \cdots p_{z}^{\alpha_{k}\mathcal{J}}(u_{2k-1},u_{2k}) p_{z}^{0_{n-2k}}(x'',y'')\big|\\
&\ov{\eqref{rescaled-3}}{=} e^{\alpha \Re z} \big|\alpha_1 p_{\alpha_1 z}^{\cal{J}}(\sqrt{\alpha_1}u_1,\sqrt{\alpha_1}u_2) \cdots \\
&\alpha_k p_{\alpha_k z}^{\mathcal{J}}(\sqrt{\alpha_k} u_{2k-1},\sqrt{\alpha_k} u_{2k}) p_{z}^{0_{n-2k}}(x'',y'')\big|.
\end{align*}
Using $\alpha=\sum_{j=1}^{k} \alpha_j$, we obtain that
\begin{align*}
|b_z(O^{T}u) | 
= |p_{z}^{0_{n-2k}}(x'',y'')|\prod \limits_{j=1}^{k} e^{\alpha_{j} \Re z}\alpha_{j} \big|p_{\alpha_{j}z}^{\mathcal{J}}(\sqrt{\alpha_{j}}u_{2j-1},\sqrt{\alpha_{j}}u_{2j})\big| ,
\quad u \in \R^{n}.
\end{align*}
Therefore, using Proposition \ref{prop-kernel-estimate}, we have that, for all angle $\omega \in (0,\frac{\pi}{2})$, there exists a positive constant $C$ such that for any complex number $z \in \Sigma_{\omega}$ 
\begin{align*}
\norm{b_{z}}_{\L^1(\R^n)} 
&\lesssim \bigg(\frac{1}{\cos(\arg z)}\bigg)^{\frac{n-2k}{2}} \bigg(\prod \limits _{j=1} ^{k}\frac{1}{\cos(\arg z)}\bigg) 
\lesssim \frac{1}{(\cos \omega)^{\frac{n}{2}}},      
\end{align*}
where we used the estimate on the classical heat kernel provided by \cite[Remark 3.7.10 p.~153]{ABHN11} (or \cite[p.~543]{HvNVW18}). This proves \eqref{estimate}. Note that for any $x \in X$, we have
\begin{align*}
\MoveEqLeft
\norm{\int_{\R^n} b_z(u) e^{\i u \cdot A}x \d u}_X        
\leq \int_{\R^n} |b_z(u)| \norm{e^{\i u \cdot A}x}_X \d u 
\ov{\eqref{def-eitA}}{\lesssim} \int_{\R^n} |b_z(u)| \d u \norm{x}_X,
\end{align*}
This implies that
\begin{equation*}
\label{}
z\mapsto \bigg[x \mapsto \int_{\R^n} b_z(u) e^{\i u \cdot A}x \d u\bigg]
\end{equation*}
defines a bounded holomorphic semigroup on the sector $\Sigma_{\omega}$, equal to $(S_{z})_{z \in \Sigma_{\varphi}}$ on $\Sigma_{\varphi}$. The result follows from the identity theorem for analytic functions.
\end{proof}

\chapter{Maximal estimates and $R$-boundedness}
\section{Maximal estimates in noncommutative $\L^p$-spaces}
\label{subsec-maximal-estimates}

In this section, we delve into some aspects of harmonic analysis within noncommutative $\L^p$-spaces. Specifically, we give estimates of some convolution type maximal function defined by well-behaving kernels.

\paragraph{$\ell^\infty$-valued noncommutative $\L^p$-spaces} Let $\cal{M}$ be a von Neumann algebra equipped with a normal semifinite faithful trace and let $\Omega$ be a measure space. Suppose that $1 \leq p < \infty$. 
We define $\L^p(\cal{M},\ell^\infty_I)$ \label{LpMellI} as the space consisting of all families $x=(x_i)_{i \in I}$ of elements of $\L^p(\cal{M})$ that admit a factorization 
\begin{equation*}
\label{}
x_i
=a y_i b, \quad i \in I,
\end{equation*}
where $a, b \in \L^{2p}(\cal{M})$ and where $(y_i)_{i \in I}$ is a bounded family of elements of $\L^\infty(\cal{M})$. The norm of each element $x$ of $\L^p(\cal{M},\ell^\infty_I)$ is defined by
\begin{equation*}
\label{}
\norm{x}_{\L^p(\cal{M},\ell^\infty_I)}
\ov{\mathrm{def}}{=}\inf\Big\{\norm{a}_{\L^{2p}(\cal{M})} \sup_{i \in I} \norm{y_i}_{\L^\infty(\cal{M})} \norm{b}_{\L^{2p}(\cal{M})}\Big\} ,
\end{equation*}
where the infimum runs over all factorizations of $x$ as previously. We will follow the classical convention which is that the norm $\norm{x}_{\L^p(\cal{M},\ell^\infty_I)}$ is denoted by $\norm{\sup_{i \in I}^+ x_i}_{\L^p(\cal{M})}$ or $\norm{\sup_{i \in I}^+ x_i}_p$. We refer to \cite{JuX07} for more information. In the sequel, we will use the fact that if $x_i \geq 0$ for any $i \in I$, we have
\begin{equation}
\label{Norm-Lp-Linfty-Omega}
\norm{{\sup_{i \in I}}^+ x_i}_p
=\inf \big\{\norm{y}_{\L^p(\cal{M})} : x_i \leq y \text{ for any } i \in I \big\}.
\end{equation}
Let $\Omega$ be a measurable space and $Z$ be an index set. We will use the following elementary fact that if $(\mu_{z})_{z \in Z}$ is a family of finite positive measures on $\Omega$ and $(R_{\omega})_{\omega \in \Omega}$ is a family of positive operators $R_{\omega} \co \L^p(\cal{M}) \to \L^p(\cal{M})$ such that $\omega \mapsto R_{\omega}(u)$ is strongly $\mu_{z}$-measurable for any $u \in \L^p(\cal{M})$ and any $z \in Z$, then the following bound holds for any positive element $u \in \L^p(\cal{M})$
\begin{equation}
\label{averageMax}
\norm{{\mathop{\mathrm{sup}^+}_{z \in Z}} \Big\{ \int_{\Omega} R_{\omega}(u) \d \mu_{z}( \omega) \Big\} }_p
\leq \bigg( \sup_{z \in Z} \mu_{z}(\Omega) \bigg) \norm{{\mathop{\mathrm{sup}^+}_{\omega \in \Omega}} R_{\omega}(u)}_p.
\end{equation}

\paragraph{Hardy-Littlewood maximal operators}
Let $B$ be a symmetric convex body in $\R^d$ and let $\cal{N}$ be a semifinite von Neumann algebra equipped with a normal semifinite faithful trace. For any $t > 0$, we may consider the operator-valued Hardy-Littlewood operator $\Phi_t \co \L^p(\R^d,\L^p(\cal{N})) \to \L^p(\R^d,\L^p(\cal{N}))$ defined by 
\begin{equation}
\label{def-Hardy}
(\Phi_t f)(x)
\ov{\mathrm{def}}{=} \frac{1}{\mu(B)}\int_{B} f(x-ty) \d y, \quad x \in \R^d.
\end{equation}
By \cite[Theorem 5.13 (3) p.~79]{HWW23}, if $B$ is the closed Euclidean unit ball of $\R^d$, then there exists a positive constant $C_p$ which is independent of the dimension $d$ such that for any $1 < p < \infty$, we have
\begin{equation}
\label{estimte-Hardy}
\norm{{\sup_{t > 0}}^+ \Phi _{t}f}_p
\leq C_p\norm{f}_{\L^p(\R^d,\L^p(\cal{N}))}, \qquad  f \in \L^p(\R^d,\L^p(\cal{N})).
\end{equation}
Using an argument based on direct sums of semifinite von Neumann algebras, one can show that $C_p$ can be chosen independently of the von Neumann algebra $\mathcal{N}$. We also refer to the papers \cite{HLW21} and\cite{Hon20} for related facts. For any $t>0$ and any integer $1 \leq j \leq n$, we introduce the operator $R_t^j \co \L^p(\R^n,S^p) \to \L^p(\R^n,S^p)$ defined by
\begin{equation}
\label{def-Rt}
(R_t^j f)(x)
\ov{\mathrm{def}}{=} \frac{1}{2t} \int_{x_j-t}^{x_j+t} f(x_1,\ldots,x_{j-1},y,x_{j+1},\ldots,x_n) \d y, \quad \text{a.e. }x \in \R^n. 
\end{equation}
Using \cite[Proposition 1.2.24 p.~25]{HvNVW16} together with \cite[Proposition 1.2.2 p.~16]{ABHN11}, it is easy to check that the map $(0,\infty) \to \L^p(\R^n,S^p)$, $t \mapsto R_t^j f$ is continuous, hence strongly measurable for any $1 \leq j \leq n$ and any vector-valued function $f \in\L^p(\R^n,S^p)$. 

From the estimate \eqref{estimte-Hardy} applied with $d = 1$, $B= [-1,1]$ and $\L^p(\cal{N}) = \L^p(\R^{n-1},S^p)$, we immediately deduce with \eqref{Norm-Lp-Linfty-Omega} the following result, using the equality 
\begin{equation*}
\label{}
\L^p(\R,\L^p(\cal{N})) = \L^p(\R^n,S^p).
\end{equation*}

\begin{prop}
\label{prop-nc-maximal-inequality}
Let $1 < p < \infty$ and $j \in \{ 1,\ldots,n\}$. Let $u$ be a positive element of the space $\L^p(\R^n,S^p)$. Then there exists some positive element $v \in \L^p(\R^n,S^p)$ such that for any $t>0$
\[ 
R_t^j(u) \leq v 
\quad \text{with} \quad
\norm{v}_{\L^p(\R^n,S^p)} 
\leq C_{p} \norm{u}_{\L^p(\R^n,S^p)}. 
\]
\end{prop}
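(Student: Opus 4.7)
The statement is essentially a direct translation of the non-commutative Hardy–Littlewood maximal inequality \eqref{estimte-Hardy} combined with the infimum characterisation \eqref{Norm-Lp-Linfty-Omega}. The only thing to do is pick the ambient von Neumann algebra carefully so that the one-dimensional maximal function along the $j$-th variable matches $R_t^j$.

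First, I would apply \eqref{estimte-Hardy} with $d=1$, $B = [-1,1] \subset \R$, and $\cal{N} = \L^\infty(\R^{n-1}) \otvn \B(\ell^2)$, so that $\L^p(\cal{N}) = \L^p(\R^{n-1}, S^p)$. Via the natural isometric Fubini-type identification
\[
\L^p(\R^n, S^p) \cong \L^p(\R, \L^p(\R^{n-1}, S^p))
\]
that singles out the $j$-th coordinate (up to the coordinate permutation that moves $x_j$ to the first slot), the operator $R_t^j$ defined in \eqref{def-Rt} is exactly the operator $\Phi_t$ defined in \eqref{def-Hardy}. Since the constant $C_p$ in \eqref{estimte-Hardy} may be chosen independently of $\cal{N}$ (as noted just after that inequality, via a direct sum argument), applying \eqref{estimte-Hardy} yields
\[
\Bnorm{{\sup_{t>0}}^+ R_t^j(u)}_{\L^p(\R^n,S^p)} \leq C_p \norm{u}_{\L^p(\R^n, S^p)}.
\]

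Second, I would invoke \eqref{Norm-Lp-Linfty-Omega} in the positive case. Since $u \geq 0$ and $R_t^j$ is manifestly positivity-preserving (it is an average of translations of $u$ with positive weights), $R_t^j(u) \geq 0$ for all $t > 0$. The infimum in \eqref{Norm-Lp-Linfty-Omega} then provides, for each $\epsilon > 0$, a positive $v_\epsilon \in \L^p(\R^n, S^p)$ with $R_t^j(u) \leq v_\epsilon$ (for a.e.\ $t>0$) and $\norm{v_\epsilon}_{\L^p(\R^n,S^p)} \leq C_p\norm{u}_{\L^p(\R^n,S^p)} + \epsilon$. A mild modification of $C_p$ (or a standard weak-compactness argument in the reflexive range $1 < p < \infty$) absorbs the $\epsilon$, and strong continuity of $t \mapsto R_t^j u$ as a map into $\L^p(\R^n, S^p)$ upgrades the "a.e.\ $t$" domination to a domination for every $t > 0$.

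There is essentially no serious obstacle: the whole proposition is a bookkeeping exercise once one recognises that $R_t^j$ is the scalar Hardy–Littlewood average along a single variable with operator-valued integrand in $\L^p(\R^{n-1}, S^p)$. The only real care needed is to verify that the maximal inequality of \cite{HWW23} applies with a constant independent of the auxiliary factor $\L^p(\R^{n-1}, S^p)$, which is exactly the content of the remark following \eqref{estimte-Hardy}.
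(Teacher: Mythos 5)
Your proposal is correct and follows essentially the same route as the paper: apply the noncommutative Hardy--Littlewood maximal inequality \eqref{estimte-Hardy} with $d=1$, $B=[-1,1]$ and $\L^p(\cal{N})=\L^p(\R^{n-1},S^p)$, identify $\L^p(\R,\L^p(\cal{N}))$ with $\L^p(\R^n,S^p)$ so that $R_t^j$ becomes $\Phi_t$, and then extract the dominating element $v$ via \eqref{Norm-Lp-Linfty-Omega}. Your extra remarks on absorbing the $\epsilon$ and upgrading a.e.\ $t$ to all $t>0$ are harmless refinements of details the paper leaves implicit.
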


The next result is fundamental for the next section.

\begin{lemma}
\label{lem-nc-maximal-inequality-convolution}
Let $Z$ be an index set. For any integer $j \in \{1, \ldots, n\}$ and any $z \in Z$, consider a smooth even integrable function $\h_z \co \R \to \R_+$ which is decreasing on $\R_+$. Write $g_z(y_1,y_2) \ov{\mathrm{def}}{=} \h_z(y_1)\h_z(y_2)$ for any $y_1,y_2 \in \R$. Assume that 
\begin{equation*}
\label{}
\sup_{z \in Z} \norm{g_z}_{\L^1(\R^2)} 
< \infty.
\end{equation*}
Suppose that $1 < p < \infty$. Let $u \in \L^p(\R^2,S^p)$ be a positive element. Then
\begin{equation}
\label{div-4556}
\norm{{\mathop{\mathrm{sup}^+}_{z \in Z}} g_z \ast u }_{\L^p(\R^2,S^p)}
\leq C_p^2 \sup_{z \in Z} \norm{g_z}_{\L^1(\R^2)}\norm{u}_{\L^p(\R^2,S^p)}.
\end{equation}
\end{lemma}

\begin{proof}
Consider a positive element $u$ of the space $\L^p(\R^{2},S^p)$. Let $\epsi > 0$. By Proposition \ref{prop-nc-maximal-inequality}, there exists some positive element $u_2 \in \L^p(\R^2,S^p)$ such that $R_{t}^2(u) \leq u_2$ for all $t > 0$ with 
\begin{equation*}
\label{}
\norm{u_2}_{\L^p(\R^2,S^p)} 
\leq C_p \norm{u}_{\L^p(\R^2,S^p)}+\epsi.
\end{equation*}
For any $t \in (0,\infty)$, note that $R_{t}^1(u_2)$ is a positive element. Using again Proposition \ref{prop-nc-maximal-inequality}, we see that there exists some positive element $u_{1} \in \L^p(\R^2,S^p)$ such that $R_{t}^{1}(u_2) \leq u_{1}$ for all $t > 0$ with 
\begin{equation*}
\label{}
\norm{u_{1}}_{\L^p(\R^2,S^p)} 
\leq C_p \norm{u_2}_{\L^p(\R^2,S^p)}+\epsi
\leq C_p^2 \norm{u}_{\L^p(\R^2,S^p)}+2\epsi.
\end{equation*}
In particular, the positivity of each operator $R_{t}^{1}$ entails that 
\begin{equation*}
\label{}
R_{t_{1}}^{1} R_{t_2}^2(u) \leq R_{t_{1}}^{1}(u_2) \leq u_{1}
\end{equation*}
for any $t_1,t_2 > 0$. By \eqref{Norm-Lp-Linfty-Omega}, we deduce that
\begin{equation*}
\label{}
\norm{{\sup}^+_{t_1,t_2>0} R_{t_{1}}^{1} R_{t_2}^2(u)}_p 
\leq C_p^2 \norm{u}_{\L^p(\R^2,S^p)}.
\end{equation*} 
Now, consider a family $(\mu_z)_{z \in Z}$ of positive finite measures on the product $(0,\infty) \times (0,\infty)$ such that $
\sup_{z \in Z} \norm{\mu_z}_{\M((0,\infty)^2)} 
< \infty$. According to \eqref{averageMax}, we obtain that
\begin{equation*}
\norm{{\mathop{\mathrm{sup}^+}_{z \in Z}} \int_0^\infty \int_0^\infty R_{t_{1}}^{1} R_{t_2}^2(u) \d \mu_{z}(t_1,t_2)  }_p
\leq C_p^2 \bigg( \sup_{z \in Z} \norm{\mu_{z}}_{\M((0,\infty)^2)} \bigg) \norm{u}_{\L^p(\R^2,S^p)}.
\end{equation*}
It suffices now to identify $g_z \ast u$ with an integral $\int_0^\infty \int_0^\infty R_{t_1}^1 R_{t_2}^2(u) \d\mu_z(t_1,t_2)$ for some suitable measure $\mu_z$. Note that the assumptions imply that $\lim_{y \to \infty} y^\gamma \h_z(y) = 0$ for any $z \in Z$ and any $\gamma \in ]0,1[$ (this is a classical exercise\footnote{
Since the function $h_z$ is decreasing on $\R_+$, if $0 \leq t \leq y$ we have $h_z(y) \leq h_z(t)$. 
Integrating over the interval $[0, y]$, this yields
\[
h_z(y) \leq \frac{1}{y} \int_0^y h_z(t) \d t.
\]
Multiplying both sides by $y^\gamma$, we get:
\[
y^\gamma h_z(y) \leq y^{\gamma - 1} \int_0^y h_z(t)\d t.
\]
Since $h_z$ is integrable, there exists a constant $C$ such that $\int_0^y h_z(t) \d t \leq C$ for any $y > 0$. Thus
\[
y^\gamma h_z(y) \leq C y^{\gamma - 1}.
\]
Since $\gamma - 1 < 0$, we see that $y^{\gamma - 1} \to 0$ as $y \to \infty$. We conclude that
\[
\lim_{y \to \infty} y^\gamma h_z(y) = 0.
\]
}). Using H\"older's inequality, this implies that, for all $x \in \R^2$,
\begin{align*}
\MoveEqLeft
\underset{|(y_1,y_2)| \to \infty}{\lim} h_{z}(y_1)h_{z}(y_2) \int_{-y_1}^{y_1} 
\int_{-y_2}^{y_2} \norm{u(x+w)}_{S^p} \d w_1 \d w_2 \\
&\lesssim \norm{u}_{\L^{p}(\R^2,S^{p})} \underset{y \to \infty}{\lim} \big(y^{\frac{1}{p^*}} h_{z}(y)\big)^{2} 
= 0.         
\end{align*}
Therefore, integrating by parts in the third equality (the previous computation shows that the bracket of the integration by parts
converges to zero), we obtain that 
\begin{align*}
\MoveEqLeft
(g_z \ast u)(x) 
\ov{\eqref{Convolution-formulas}}{=} \int_{\R^2} g_z(y) u(x-y) \d y 
= \int_{\R^2} \h_z(y_1)\h_z(y_2) u(x-y) \d y \\
&= \int_0^\infty \int_0^\infty \h_z(y_1)\h_z(y_2) \sum_{v \in \{\pm 1\}^2} u(x+v \cdot y) \d y_1 \d y_2 \\
&= \int_0^\infty  \int_0^\infty (\h_z)'(y_1)(\h_z)'(y_2) \bigg(\int_0^{y_1} \int_0^{y_2} \sum_{v \in \{\pm 1\}^2} u(x+v \cdot w) \d w_1 \d w_2\bigg) \d y_1 \d y_2 \\
&\ov{\eqref{def-Rt}}{=} \int_0^\infty \int_0^\infty R_{y_1}^1 u(x) R_{y_2}^2 u(x) \times 4y_1 y_2 (\h_z)'(y_1)(\h_z)'(y_2) \d y_1 \d y_2.
\end{align*}
For any $z \in Z$, we are led to set $ 
\mu_z(y_1,y_2) 
\ov{\mathrm{def}}{=} 4y_1y_2 (\h_z)'(y_1)(\h_z)'(y_2) \d y_1 \d y_2$. 
Note that the density of $\mu_z$ maintains a constant sign since the function $\h_z$ is decreasing on $\R_+$. Thus we can easily calculate, again by integration by parts without boundary terms, 
\begin{align*}
\MoveEqLeft
\mu_z((0,\infty)^2)
= 4\int_0^\infty \int_0^\infty y_1 y_2 (\h_z)'(y_1) (\h_z)'(y_2) \d y_1 \d y_2 \\
&= 4 \int_0^\infty \int_0^\infty \h_z(y_1) \h_z(y_2) \d y_1 \d y_2
=4 \int_0^\infty \int_0^\infty g_z(y_1,y_2) \d y_1 \d y_2 \\
&= \norm{g_z}_{\L^1(\R^2)}.
\end{align*}
Therefore, we have 
\begin{equation*}
\label{}
g_z \ast u 
= \int_0^\infty  \int_0^\infty R_{t_1}^1 R_{t_2}^2(u) \d \mu_z(t_1,t_2).
\end{equation*}
The last sentence is a consequence of \eqref{Norm-Lp-Linfty-Omega}.
\end{proof}

\begin{remark} \normalfont
\label{rem-nc-maximal-inequality-convolution}
Note that Lemma \ref{lem-nc-maximal-inequality-convolution} has a direct counterpart for convolutions on $\R^n$ for other dimensions $n$ than $2$.
In the proof of the fourth point of Proposition \ref{prop-LpSp-square-max-cal-J}, we will use the one-dimensional version.
\end{remark}

\section{Background on $R$-boundedness}
\label{sec-R-boundedness-def}

Here, we provide information and background on $R$-boundedness. This notion is a probabilistic generalization of the classical boundedness for sets of operators acting on Hilbert spaces. It has attracted considerable interest in recent years due to its wide range of applications, such as maximal regularity theory for evolution equations, stochastic evolution equations, and vector-valued harmonic analysis.

Suppose that $1 < p < \infty$. Following \cite[Definition 8.1.1, Remark 8.1.2 p.~165]{HvNVW18}, we say that a set $\cal{F}$ of bounded operators on a Banach space $X$ is $R$-bounded provided that there exists a constant $C\geq 0$ such that for any operators $T_1,\ldots, T_n$ in $\cal{F}$ and any vectors $x_1,\ldots,x_n$ in $X$, we have
\begin{equation}
\label{R-boundedness}
\Bgnorm{\sum_{k=1}^{n} \epsi_k \ot T_k (x_k)}_{\L^p(\Omega_0,X)}
\leq C \Bgnorm{\sum_{k=1}^{n} \epsi_k \ot x_k}_{\L^p(\Omega_0,X)},
\end{equation}
where $(\epsi_{k})_{k \geq 1}$ is a sequence of independent Rademacher variables\label{def-Rademacher} on some probability space $\Omega_0$. It is known that this property is independent of $p$. The best constant in \eqref{R-boundedness} is denoted by $\mathscr{R}_p(\mathcal{F})$\label{R-bound} and is referred to as the $R$-bound of the family $\cal{F}$. 
By \cite[Theorem 8.1.3 p.~166]{HvNVW18}, any $R$-bounded set is necessarily bounded. It is worth noting that by \cite[Corollary 8.6.2 p.~235]{HvNVW18} $X$ is isomorphic to a Hilbert space if and only if each bounded subset of $\B(X)$ is $R$-bounded. Finally, a singleton $\{T\}$ is automatically $R$-bounded by \cite[Example 8.1.7 p.~170]{HvNVW18}.

If $X$ is a complex Banach lattice with finite cotype, we will use a classical theorem of Maurey which asserts that we have a uniform equivalence
\begin{equation}
\label{equiv-lattice}
\Bgnorm{\sum_{k=1}^n \epsi_k \ot x_k}_{\L^p(\Omega_0,X)}
\approx \Bgnorm{\bigg(\sum_{k=1}^n |x_k|^{2}\bigg)^{\frac{1}{2}}}_X
\end{equation}
for any vectors $x_1,\ldots,x_n$ of $X$, see e.g.~\cite[Theorem 16.18 p.~338]{DJT95} (or \cite[Definition 8.1.1 p.~165]{HvNVW18} for the real case).

We also will use in Section \ref{subsec-R-sectoriality} the following well-known result \cite[Proposition 8.4.1 p.~211]{HvNVW18}. Recall that by \cite[Theorem 7.4.23 p.~124]{HvNVW18} a Banach space is $K$-convex if and only if it has non-trivial type. So a noncommutative $\L^p$-space is $K$-convex by \cite[Corollary 5.5 p.~1481]{PiX03} for any $1 < p < \infty$.  

\begin{prop}
\label{prop-R-bounded-dual}
Let $X$ be a $K$-convex Banach space. Then a family $\{T_z :  z \in Z \}$ of operators acting on $X$ is $R$-bounded if and only if the family $\{T_z^*:  z \in Z \}$ of adjoints is $R$-bounded over the dual space $X^*$.
\end{prop}

\section{$R$-boundedness and tensor product of operators}
\label{sec-R-boundedness}

In Section \ref{subsec-R-sectoriality}, we need Proposition \ref{prop-2-tensor-R-bdd}, which describe a stability of $R$-boundedness under tensor products. First, we establish the following lemma on $R$-boundedness, which is an immediate consequence of Kahane-Khintchine's inequalities.

\begin{lemma}
\label{lem-tensor-R-bdd}
Let $\{S_z : z \in Z \}$ be an $R$-bounded family of operators acting on some Banach space $X$.
Consider  a $\sigma$-finite measure space $(\Omega,\mu)$ and $1 \leq p < \infty$. Then for any $z \in Z$, the map $\Id_{\L^p(\Omega)} \ot S_z$ extends to a bounded operator on the Bochner space $\L^p(\Omega,X)$. Moreover, the set $\{ \Id_{\L^p(\Omega)} \ot S_z : z \in Z \}$ is $R$-bounded over the space $\L^p(\Omega,X)$.
\end{lemma}

\begin{proof}
The first assertion is \cite[Proposition 2.1.2 p.~69]{HvNVW16}. Consider some elements $z_1,\ldots z_n$ of $Z$ and some functions $f_1,\ldots,f_n \in \L^p(\Omega,X)$. We have
\begin{align*}
\MoveEqLeft
\E \norm{ \sum_{j=1}^n \epsi_j \ot (\Id_{\L^p(\Omega)} \ot S_{z_j})(f_j) }_{\L^p(\Omega,X)}^p \\
&= \E \int_\Omega \norm{ \sum_{j=1}^n \epsi_j \ot (\Id_{\L^p(\Omega)} \ot S_{z_j})(f_j)(\omega)}_X^p \d \mu(\omega) \\
& =  \int_\Omega \E \norm{ \sum_{j=1}^n \epsi_j \ot S_{z_j}(f_j(\omega))}_X^p \d \mu(\omega) \\
&\ov{\eqref{R-boundedness}}{\lesssim} \mathscr{R}_p\left(\left\{ S_z :  z \in Z \right\} \right)^p \int_\Omega \E \norm{ \sum_{j=1}^n \epsi_j \ot f_j(\omega)}_X^p \d \mu(\omega) \\
&\ov{\eqref{norm-Bochner}}{=} \mathscr{R}_p\left(\left\{ S_z :  z \in Z \right\} \right)^p \E \norm{ \sum_{j=1}^n \epsi_j \ot f_j}_{\L^p(\Omega,X)}^p.
\end{align*}
\end{proof}

We deduce the following result on $R$-boundedness of tensor products of operators.

\begin{prop}
\label{prop-2-tensor-R-bdd}
Let $(\Omega_1,\mu_1),\ldots,(\Omega_n,\mu_n)$ be $\sigma$-finite measure spaces.
Consider the product measure space
\begin{equation*}
\label{}
(\Omega,\mu) \ov{\mathrm{def}}{=} (\Omega_1 \times \Omega_2 \times \cdots \times \Omega_n, \mu_1 \ot \mu_2 \ot \ldots \ot \mu_n).
\end{equation*} 
Let $1 \leq p < \infty$ and let $X$ be a Banach space. For $k = 1,\ldots,n$ and $z \in Z$, assume that $S^k_z \co \L^p(\Omega_k) \to \L^p(\Omega_k)$ is a bounded operator such that the family $\{S^k_z \ot \Id_X:  z \in Z \}$ is $R$-bounded over the Bochner space $\L^p(\Omega_k,X)$.
Then for any $z \in Z$ the tensor product 
\begin{equation*}
\label{}
S_z \ov{\mathrm{def}}{=} S^1_z \ot S^2_z \ot \cdots \ot S^n_z \ot \Id_X
\end{equation*} 
extends to a bounded operator on $\L^p(\Omega,X)$ and the family $\{S_z :  z \in Z \}$ is $R$-bounded over the Bochner space $\L^p(\Omega,X)$.
\end{prop}

\begin{proof}
We proceed by induction over $n$. If $n = 1$, the statement follows directly from the assumptions. Now, assume that the lemma holds for $n-1$, that is, $S_z' \ov{\mathrm{def}}{=} S_z^2 \ot \cdots \ot S_z^n \ot \Id_X$ define an $R$-bounded family over the space $\L^p(\Omega',X)$, where $\Omega' \ov{\mathrm{def}}{=} \Omega_2 \times \cdots \times \Omega_n$ is equipped with the measure $\mu' \ov{\mathrm{def}}{=} \mu_2 \ot \cdots \ot \mu_n$.
Then for any $z \in Z$ we have
\begin{equation*}
\label{}
S_z 
= (S_z^1 \ot \Id_{\L^p(\Omega')} \ot \Id_X ) \cdot (\Id_{\L^p(\Omega_1)} \ot S_z').
\end{equation*}
The family $\{ S_z^1 \ot \Id_X :  z \in Z \}$ is $R$-bounded over the space $\L^p(\Omega_1,X)$ by assumption.
Then Lemma \ref{lem-tensor-R-bdd} and Fubini's theorem yield that the set $\{S_z^1 \ot \Id_{\L^p(\Omega')} \ot \Id_X  :  z \in Z \}$ is $R$-bounded over the space $\L^p(\Omega,X)$. Moreover, according to the induction hypothesis, the family $\{ S_z' :  z \in Z \}$ is $R$-bounded over the space $\L^p(\Omega',X)$. Then again Lemma \ref{lem-tensor-R-bdd} yields that the set 
\begin{equation*}
\label{}
\{ \Id_{\L^p(\Omega_1)} \ot S_z' :  z \in Z \}
\end{equation*}
is $R$-bounded over the Bochner space $\L^p(\Omega,X)$. We conclude since the product of $R$-bounded families is again $R$-bounded by \cite[Proposition 8.1.19 p.~178]{HvNVW18}.
\end{proof}

\section{$R$-boundedness and square-max decompositions}
\label{sec-square-max}

As a result of independent interest, we establish in this section a connection between $R$-boundedness and some appropriate variants of square-max decompositions introduced in the paper \cite{GJP17}.

If $1 < p < \infty$, note that a sufficient condition for the $R$-boundedness of a family $(C_k)_{k \in \cal{K} }$ of integral operators $C_k \co \L^p(\R^d, X) \to \L^p(\R^d,X)$, $f \mapsto k*f$ on the Bochner space $\L^p(\R^d, X)$ is well-known if $X$ is a $\UMD$ \textit{Banach function space}. Indeed, if $M$ is the Hardy-Littlewood maximal operator and if
\begin{equation*}
\cal{K} 
\ov{\mathrm{def}}{=} \big\{ k \in \L^1(\R^d) : |k| * |f| \leq Mf \text{ a.e. for all simple} \, f  \co \R^d \to \C \big\}
\end{equation*}
then the family $(C_k)_{k \in \cal{K} }$ of operators is $R$-bounded on the Bochner space $\L^p(\R^d,X)$, see e.g.~\cite[Proposition 3.1 p.~371]{Lor19}
. As pointed in \cite[p.~371]{Lor19}, this class of kernels contains any radially decreasing $k \in \L^1(\R^d)$ with $\norm{k}_{\L^1(\R^d)} \leq 1$. 

Unfortunately for us, the $\UMD$ Banach space $S^p$ is not a Banach function space. Indeed, the proof of the previous result relies on the proof of an estimate of <<$\ell^\infty$-boundedness>> 
\begin{equation*}
\label{}
\norm{\sup_{1 \leq j \leq n} |C_{k_j}f_j|}_{\L^p(\R^d,X)} 
\lesssim \norm{\sup_{1 \leq j \leq n} |f_j|}_{\L^p(\R^d,X)}
\end{equation*}
for any $k_1,\ldots,k_n \in \cal{K}$, which has no meaning in the noncommutative setting. So we need to introduce and to use some kind of substitute of the supremum. This substitute will be an estimate of the kind
\begin{equation*}
\label{}
\norm{ {\sup_{z \in Z}}^+ C_{k_z} f }_{\L^p(\R^d,S^p)} 
\lesssim \norm{f}_{\L^p(\R^d,S^p)}, \quad f \in \L^p(\R^d,S^p),
\end{equation*}
where $(k_z)_{z \in Z}$ is a suitable family of kernels.


\paragraph{Warm-up: the scalar case}

Suppose that $1 < p < \infty$. Let $\Omega$ be a measure space. For a measurable function $k \co \Omega \times \Omega \to \C$ such that for any $f \in \L^p(\Omega)$ the function $y \mapsto  k(x,y)f(y)$ belongs to $\L^1(\Omega)$ for almost all $x \in \Omega$ we define the integral operator
\begin{equation}
\label{integral-operator}
(\TC_kf)(x) 
\ov{\mathrm{def}}{=} \int_\Omega k(x,y)f(y) \d y, \quad \text{for almost all }x \in \Omega.
\end{equation}
We refer to \cite[Chapter 5]{AbA02} for a nice discussion of these operators. Note that the adjoint of a well-defined bounded integral operator $\TC_k \co \L^p(\Omega) \to \L^p(\Omega)$ is again an integral operator, namely
\begin{equation}
\label{equ-adjoint-twisted-convolution-group}
(\TC_k)^* = \TC_{\check{k}} 
\quad\text{with} \quad \check{k}(x,y) 
\ov{\mathrm{def}}{=} k(y,x), \quad x,y \in \Omega.
\end{equation}

Inspired by the $\L^p$-square-max decompositions suitable for family of operators affiliated to group von Neumann algebras, as introduced in \cite[Definition 1 p.~893]{GJP17}, we propose the following variant.

\begin{defi}
\label{defi-square-max-scalar}
Let $\Omega$ be a measure space. Consider an index set $Z$. Suppose that $1 \leq p < \infty$. We say that a family $(k_z)_{z \in Z}$, where $k_z \co \Omega \times \Omega \to \mathbb{C}$, admits an $\L^p$-square-max decomposition if there exists a decomposition $k_z = s_z m_z$ with measurable functions $s_z,m_z \co \Omega \times \Omega \to \C$  
such that
\begin{equation}
\label{estimate-square}
\sup_{z \in Z} \sup_{x \in \Omega} \norm{s_z(x,\cdot)}_{\L^2(\Omega)} 
< \infty, 
\quad \text{and} \quad
\norm{ {\sup_{z \in Z}}^+ \TC_{|\check{m}_z|^2}(f) }_{\L^p(\Omega)} 
\lesssim \norm{f}_{\L^p(\Omega)},
\end{equation}
for any function $f \in \L^p(\Omega)$.
\end{defi}

\begin{example} \normalfont
\label{Ex-Gonzales}
Let $G$ be a unimodular locally compact group equipped with a Haar measure $\mu_G$. Consider an index set $Z$. We consider a family $(g_z)_{z \in Z}$ of functions $g_z \co G \to \mathbb{C}$ with factorization
$$
g_z
=S_zM_z
$$
for some measurable functions $S_z,M_z \co G \to \mathbb{C}$ with 
$$
\sup_{z \in Z} \norm{S_z}_{\L^2(G)} < \infty
$$
and 
$$
\norm{ {\sup_{z \in Z}}^+  |\check{M}_z|^2 * f  }_{\L^p(G)} 
\lesssim \norm{f}_{\L^p(G)}.
$$
These are essentially the assumptions of \cite[Definition 1. p.~893]{GJP17}. For any $s,t \in G$ and any function $g \co G \to \mathbb{C}$, we let $g^\HS(s,t) \ov{\mathrm{def}}{=} g(st^{-1})$. For any $t \in G$, we have
\begin{align*}
\MoveEqLeft
\label{}
(\TC_{g^\HS}f)(s)
\ov{\eqref{integral-operator}}{=} \int_G g^\HS(s,t)f(t) \d\mu_G(t) \\
&=\int_G g(s t^{-1}) f(t) \d\mu_G(t)
\ov{\eqref{Convolution-formulas}}{=} (g*f)(s).         
\end{align*}
For any $z \in Z$, we let $k_z(s,t) \ov{\mathrm{def}}{=} g_z^\HS$, $s_z \ov{\mathrm{def}}{=} S_z^\HS$, $m_z \ov{\mathrm{def}}{=} M_z^\HS$. Clearly, we have $k_z = s_z m_z$. 
Furthermore, we have
\begin{align*}
\MoveEqLeft
\sup_{z \in Z} \sup_{s \in G} \norm{s_z(s,\cdot)}_{\L^2(G)}
=\sup_{z \in Z} \sup_{s \in G} \norm{S_z^\HS(s,\cdot)}_{\L^2(G)} \\
&=\sup_{z \in Z} \sup_{s \in G} \norm{S_z(s\cdot^{-1})}_{\L^2(G)} 
=\sup_{z \in Z}  \norm{S_z}_{\L^2(G)} .        
\end{align*}
Hence the family $(k_z)_{z \in Z}$ admits an $\L^p$-square-max decomposition.
\end{example}

Equipped with this definition, we are now in a position to prove the following result, which establishes a new sufficient condition for $R$-boundedness. If $2 \leq p < \infty$, notice that the conjugate exponent of $\frac{p}{p-2}$ is equal to $(\frac{p}{p-2})^*=\frac{p}{2}$.

\begin{prop}
\label{prop-square-Rad-scalar}
Let $\Omega$ be a measure space. Consider an index set $Z$. Suppose that $2 \leq p < \infty$. For any $z \in Z$, consider a bounded integral operator $\TC_{k_z} \co \L^p(\Omega) \to \L^p(\Omega)$. Suppose that the family $(k_z)_{z \in Z}$ has an $\L^{\frac{p}{p-2}}$-square-max decomposition. Then the family $(C_{k_z})_{z \in Z}$ is $R$-bounded.
\end{prop}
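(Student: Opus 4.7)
The plan is to reduce the claim to a Khintchine-type square function estimate. For $2 \leq p < \infty$, the classical Khintchine inequality identifies $R$-boundedness of a family $(T_z)_{z \in Z}$ of bounded operators on $\L^p(\Omega)$ with the existence of a constant $C$ such that
\begin{equation*}
\Bgnorm{ \Big(\sum_{j=1}^N |T_{z_j} f_j|^2\Big)^{\frac12} }_{\L^p(\Omega)} \leq C \Bgnorm{ \Big(\sum_{j=1}^N |f_j|^2\Big)^{\frac12} }_{\L^p(\Omega)}
\end{equation*}
for every finite sequences $(z_j) \subset Z$ and $(f_j) \subset \L^p(\Omega)$. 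Squaring, the desired $R$-boundedness of $(\TC_{k_z})_{z \in Z}$ is equivalent to
\begin{equation*}
\Bgnorm{ \sum_j |\TC_{k_{z_j}} f_j|^2 }_{\L^{p/2}(\Omega)} \lesssim \Bgnorm{ \sum_j |f_j|^2 }_{\L^{p/2}(\Omega)}.
\end{equation*}

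Next, I would exploit the factorization $k_z = s_z m_z$ through pointwise Cauchy--Schwarz. Setting $M \ov{\mathrm{def}}{=} \sup_{z \in Z, x \in \Omega} \norm{s_z(x,\cdot)}_{\L^2(\Omega)}$, which is finite by the first part of Definition \ref{defi-square-max-scalar}, one has for almost every $x$
\begin{equation*}
|\TC_{k_z} f(x)|^2 \leq \Big(\int_\Omega |s_z(x,\xi)|^2 \d\xi\Big) \Big(\int_\Omega |m_z(x,\xi)|^2 |f(\xi)|^2 \d\xi\Big) \leq M^2\, \TC_{|m_z|^2}(|f|^2)(x).
\end{equation*}
Summing this inequality over $j$ and passing to the $\L^{p/2}$-norm reduces the target estimate to
\begin{equation*}
\Bgnorm{ \sum_j \TC_{|m_{z_j}|^2}(|f_j|^2) }_{\L^{p/2}(\Omega)} \lesssim \Bgnorm{ \sum_j |f_j|^2 }_{\L^{p/2}(\Omega)}.
\end{equation*}

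This last inequality is established by duality. With $q \ov{\mathrm{def}}{=} p/(p-2)$, so that $\L^{p/2}$ pairs with $\L^q$, I would test against a non-negative $g \in \L^q(\Omega)$ with $\norm{g}_{\L^q} \leq 1$ and exchange the order of integration to get
\begin{equation*}
\int_\Omega \Big(\sum_j \TC_{|m_{z_j}|^2}(|f_j|^2)\Big) g\, \d x = \sum_j \int_\Omega |f_j|^2(\xi) \, (\TC_{|m_{z_j}|^2})^*(g)(\xi) \d\xi.
\end{equation*}
The second half of the $\L^q$-square-max decomposition hypothesis (which in the scalar case is a genuine $\L^q$-bound for the pointwise supremum over $z \in Z$) produces a non-negative $h \in \L^q(\Omega)$ with $(\TC_{|m_z|^2})^*(g) \leq h$ a.e.~for every $z \in Z$, and $\norm{h}_{\L^q} \lesssim \norm{g}_{\L^q} \leq 1$. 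H\"older's inequality then gives $\sum_j \int |f_j|^2 (\TC_{|m_{z_j}|^2})^* g \leq \int (\sum_j |f_j|^2) h \leq \bnorm{\sum_j |f_j|^2}_{\L^{p/2}}\norm{h}_{\L^q}$, closing the loop after taking the supremum over $g$. The only mildly delicate point, which explains why the hypothesis is phrased as an $\L^{p/(p-2)}$-decomposition rather than an $\L^p$-decomposition, is this bookkeeping of exponents coming from the Hölder pair $(\L^{p/2}, \L^{p/(p-2)})$ that arises after squaring; the restriction $p \geq 2$ is used precisely to make Khintchine yield a genuine $\L^p$ square function and to keep $\L^{p/2}$ a Banach space.
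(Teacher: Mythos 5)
Your argument is correct and follows essentially the same route as the paper: reduce $R$-boundedness to the $\L^{p/2}$ square function estimate, apply pointwise Cauchy--Schwarz to the factorization $k_z = s_z m_z$ to dominate $|\TC_{k_z}f|^2$ by $M^2\,\TC_{|m_z|^2}(|f|^2)$, and then dualize against $\L^{p/(p-2)}$, using the max part of the decomposition to produce a single majorant $h$ before applying H\"older. The only (cosmetic) difference is that the paper picks the norming positive functional $u$ attaining the $\L^{p/2}$-norm, whereas you take a supremum over all admissible test functions $g$.
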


\begin{proof}
Consider an integer $n \geq 1$. Let $f_1,\ldots,f_n$ be some functions belonging to the space $\L^p(\Omega)$ and let $z_1,\ldots,z_n$ be some elements of $Z$. We have
\begin{equation}
\label{First-equation-Fourier}
\norm{\bigg(\sum_{j=1}^{n} |\TC_{k_{z_j}}(f_j)|^2\bigg)^{\frac{1}{2}}}_{\L^p(\Omega)}
=\norm{\sum_{j=1}^{n} |\TC_{k_{z_j}}(f_j)|^2}_{\L^{\frac{p}{2}}(\Omega)}^{\frac{1}{2}}.
\end{equation} 
By duality, there exists a unique positive element $u \in {\L^{\frac{p}{p-2}}(\Omega)}$ of norm 1 such that
\begin{equation}
\label{Eq-divers-456-Fourier}
\Bgnorm{\sum_{j=1}^{n} |\TC_{k_{z_j}}(f_j)|^2}_{\L^{\frac{p}{2}}(\Omega)}
=\int_\Omega u\sum_{j=1}^{n} |\TC_{k_{z_j}}(f_j)|^2.
\end{equation}
Now, we have to estimate the term inside the sum. 
Using Cauchy-Schwarz inequality
\begin{align}
\MoveEqLeft
\label{div-23467}
|\TC_{k_{z_j}}(f_j)|^2          
\ov{\eqref{integral-operator}}{=} \left|\int_\Omega k_{z_j}(x,y) f_j(y) \d y\right|^2 \\
&=\left|\int_\Omega s_{z_j}(x,y)m_{z_j}(x,y) f_j(y) \d y\right|^2 \nonumber\\
&\leq \int_\Omega \left|s_{z_j}(x,y)\right|^2 \d \xi \int_\Omega \left|m_{z_j}(x,y) f_j(y)\right|^2 \d \xi \nonumber\\
&\ov{\eqref{integral-operator}}{\leq} \sup_{z \in Z} \sup_{x \in \Omega} \norm{s_z(x,\cdot)}_{\L^2(\Omega)}^2 \TC_{|m_{z_j}|^2} (|f_j|^2) \nonumber\\
&\ov{\eqref{estimate-square}}{\lesssim} \TC_{|m_{z_j}|^2} (|f_j|^2).\nonumber
\end{align}
Since $u \in \L^{\frac{p}{p-2}}(\Omega)$ and since $|f_j|^2 \in \L^{\frac{p}{2}}(\Omega)$, we have 
\begin{align*}
\MoveEqLeft
\Bgnorm{\bigg(\sum_{j=1}^n |\TC_{k_{z_j}}(f_j)|^2 \bigg)^{\frac{1}{2}}}_{\L^p(\Omega)}^2
\ov{\eqref{First-equation-Fourier}\eqref{Eq-divers-456-Fourier}}{=} \sum_{j=1}^{n} \int_\Omega u\, |\TC_{k_{z_j}}(f_j)|^2\\
&\ov{\eqref{div-23467}}{\leq} \sum_{j=1}^{n} \int_\Omega u\big( \TC_{|m_{z_j}|^2} (|f_j|^2) \big) \\ 
&=\sum_{j=1}^{n} \int_\Omega \TC_{|\check{m_{z_j}}|^2}(u) |f_j|^2 
\leq  \inf_{\TC_{|\check{m}_z|^2}(u) \leq h} \int_\Omega h\sum_{j=1}^{n} |f_j|^2\\
&\leq  \bigg(\inf_{\TC_{|\check{m}_{z}|^2}(u) \leq h}  \norm{h}_{\L^{\frac{p}{p-2}}(\Omega)} \bigg) \Bgnorm{\sum_{j=1}^{n} |f_j|^2 }_{\L^{\frac{p}{2}}(\Omega)} \\
&\ov{\eqref{Norm-Lp-Linfty-Omega}}{=} \norm{{\sup_{z \in Z}}^+ \TC_{|\check{m}_{z}|^2}(u)}_{\L^{\frac{p}{p-2}}(\Omega)} \Bgnorm{\sum_{j=1}^{n} |f_j|^2}_{\L^{\frac{p}{2}}(\Omega)}
\\
&\ov{\eqref{estimate-square}}{\lesssim} \norm{u}_{\L^{\frac{p}{p-2}}(\Omega)} \Bgnorm{\sum_{j=1}^{n} |f_j|^2}_{\L^{\frac{p}{2}}(\Omega)}
=\Bgnorm{\sum_{j=1}^{n} |f_j|^2}_{\L^{\frac{p}{2}}(\Omega)}.
\end{align*}
%
%
Taking the square roots, we conclude that
\begin{equation*}
\label{}
\norm{\left(\sum_{j=1}^n |\TC_{k_{z_j}}(f_j)|^2 \right)^{\frac12}}_{\L^p(\Omega)} 
\lesssim \norm{\left(\sum_{j=1}^n |f_j|^2\right)^{\frac12}}_{\L^p(\Omega)}.
\end{equation*}
So with \eqref{equiv-lattice}, we conclude that the family $(\TC_{k_{z}})_{z \in Z}$ of operators is $R$-bounded. 
\end{proof}

\paragraph{The vector-valued case}


The following is a variant of Definition \ref{defi-square-max-scalar} for Bochner spaces with values in the Schatten space $S^p$.

\begin{defi}
\label{defi-square-max-Sp}
Suppose that $1 < p < \infty$.
Consider an index set $Z$ and a measure space $\Omega$.
We say that a family $(k_z)_{z \in Z}$, where $k_z \co \Omega \times \Omega \to \C$ admits an $\L^p(S^p)$-square-max decomposition if there exists a decomposition $k_z = s_z m_z$ such that
\begin{equation}
\label{estimate-square-Sp}
\sup_{z \in Z}\sup_{x \in \Omega} \norm{s_z(x,\cdot)}_{\L^2(\Omega)} 
< \infty
\end{equation}
and
\begin{equation}
\label{estimate-max-Sp}
\norm{{\sup_{z \in Z}}^+ (\TC_{|m_z|^2} )^*(u)}_{\L^p(\Omega,S^p)} \ov{\eqref{equ-adjoint-twisted-convolution-group}}{=} \norm{{\sup_{z \in Z}}^+ \TC_{|\check{m}_z|^2}(u)}_{\L^p(\Omega,S^p)} \lesssim \norm{u}_{\L^p(\Omega,S^p)}.
\end{equation}
\end{defi}

By replacing the Cauchy-Schwarz inequality in the proof of Proposition \ref{prop-square-Rad-scalar} by the operator-valued inner product \cite[Proposition 1.1 p.~3]{Lan95}, we can prove the next result Theorem \ref{thm-square-Rad-vectorial}. For the proof, it will be necessary to consider two naturals variants of this notion, introduced in \cite[Chapter 4]{JMX06}. 

Suppose that $1 < p<\infty$. Consider a von Neumann algebra $\cal{M}$ endowed with a normal semifinite faithful trace. A subset $\mathcal{F}$ of the space $\B(\L^p(\cal{M}))$ is Col-bounded (resp. Row-bounded) if there exists a constant $C\geq 0$ such that for any finite families $T_1,\ldots,T_n$ in $\mathcal{F}$, and any vectors $x_1,\ldots,x_n$ in $\L^p(\cal{M})$,
we have
\begin{equation}
\label{colbound}
\Bgnorm{\bigg(\sum_{k=1}^{n}\big|T_k(x_k)\big|^2\bigg)^{\frac{1}{2}}}_{\L^p(\cal{M})}
\leq C\Bgnorm{\bigg(\sum_{k=1}^{n}|x_k|^2\bigg)^{\frac{1}{2}}}_{\L^p(\cal{M})}
\end{equation}

\begin{equation}
\label{rowbound} \Bigg( \text{resp.
}\Bgnorm{\bigg(\sum_{k=1}^{n}\big|T_k(x_k)^*\big|^2\bigg)^{\frac{1}{2}}}_{\L^p(\cal{M})}
\leq C\Bgnorm{\bigg(\sum_{k=1}^{n}|x_k^*|^2\bigg)^{\frac{1}{2}}}_{\L^p(\cal{M})}\Bigg).
\end{equation}
Obviously any Col-bounded set or Row-bounded set is bounded. It is a consequence of noncommutative Khintchine inequalities that if a subset $\mathcal{F}$ of $\B(\L^p(\cal{M}))$ is both Col-bounded and Row-bounded, then it is necessarily $R$-bounded.

Observe that, unlike the case of $R$-boundedness, a singleton set ${T}$ need not necessarily be Col-bounded or Row-bounded, as noted in \cite[Section 4.A]{JMX06}. Additionally, there exist subsets $\mathcal{F}$ that are both $R$-bounded and Col-bounded, yet fail to be Row-bounded. Finally, there also exist subsets that are $R$-bounded and Row-bounded but not Col-bounded, as well as sets that are $R$-bounded but neither Row-bounded nor Col-bounded.


Recall that $(\frac{p}{p-2})^*=\frac{p}{2}$ if $2 \leq p < \infty$.

\begin{thm}
\label{thm-square-Rad-vectorial}
Let $Z$ be an index set. Let $\Omega$ be a measure space. Suppose that $2 \leq p <\infty$. For any $z \in Z$, consider a bounded integral operator $\TC_{k_z} \co \L^p(\Omega) \to \L^p(\Omega)$. Suppose that the family $(k_z)_{z \in Z}$ has an $\L^{\frac{p}{p-2}}(S^{\frac{p}{p-2}})$-square-max decomposition. Then the family $(\TC_{k_z} \ot \Id_{S^p})_{z \in Z}$ is Col-bounded, Row-bounded and $R$-bounded on the Bochner space $\L^p(\Omega,S^p)$.
\end{thm}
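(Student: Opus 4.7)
The plan is to run the same template as in Proposition \ref{prop-square-Rad-scalar}, but to replace the scalar Cauchy--Schwarz inequality that produced the pointwise bound $|\TC_{k_{z_j}}(f_j)|^2 \lesssim \TC_{|m_{z_j}|^2}(|f_j|^2)$ by the operator-valued Cauchy--Schwarz of \cite[Proposition 1.1]{Lan95}. Concretely, writing $k_{z_j}(x,\xi) = s_{z_j}(x,\xi)\, m_{z_j}(x,\xi)$ and applying it with integrand $a(\xi) = s_{z_j}(x,\xi)$ (scalar) and $b(\xi) = m_{z_j}(x,\xi)\, f_j(\xi) \in S^p$, we obtain the column estimate
\begin{equation*}
(\TC_{k_{z_j}} f_j)(x)^*\, (\TC_{k_{z_j}} f_j)(x) \leq \Big(\int_\Omega |s_{z_j}(x,\xi)|^2 \d \xi\Big)\, \TC_{|m_{z_j}|^2}\big(f_j^* f_j\big)(x) \lesssim \TC_{|m_{z_j}|^2}\big(f_j^* f_j\big)(x)
\end{equation*}
by \eqref{estimate-square-Sp}. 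Summing over $j$ and using the scalar character of $m_{z_j}$ (which commutes with the $S^p$-valued factor), and by a symmetric application of the same inequality with the integrand written in the opposite order, we get the twin row estimate $\sum_j (\TC_{k_{z_j}} f_j)(x)(\TC_{k_{z_j}} f_j)(x)^* \lesssim \sum_j \TC_{|m_{z_j}|^2}(f_j f_j^*)(x)$.

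Next, for Col-boundedness, I would use trace duality: there is a positive $u \in \L^{(p/2)^*}(\Omega, S^{(p/2)^*})$ of norm $1$ with
\begin{equation*}
\Big\|\Big(\sum_j (\TC_{k_{z_j}} f_j)^* (\TC_{k_{z_j}} f_j)\Big)^{1/2}\Big\|_{\L^p(\Omega,S^p)}^2 = \int_\Omega \tr\Big(u(x)\, \sum_j (\TC_{k_{z_j}} f_j)(x)^* (\TC_{k_{z_j}} f_j)(x)\Big) \d x.
\end{equation*}
Applying the column estimate above and moving $\TC_{|m_{z_j}|^2}$ to the $u$-side via the integral adjoint identity $\int \tr(u\, \TC_k v) \d x = \int \tr(\TC_{k^*}(u)\, v) \d x$ (cf.\ \eqref{equ-adjoint-twisted-convolution-group}) yields the bound
\begin{equation*}
\lesssim \sum_j \int_\Omega \tr\big(\TC_{(|m_{z_j}|^2)^*}(u)(x)\cdot f_j(x)^* f_j(x)\big) \d x.
\end{equation*}
Now I would invoke the $\L^{(p/2)^*}(S^{(p/2)^*})$-max condition \eqref{estimate-max-Sp} to find a positive dominating element $v \in \L^{(p/2)^*}(\Omega, S^{(p/2)^*})$ with $\TC_{(|m_z|^2)^*}(u) \leq v$ for every $z \in Z$ and $\norm{v}_{(p/2)^*} \lesssim \norm{u}_{(p/2)^*} = 1$. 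Since positivity is preserved, this replaces each summand by $\tr(v \cdot f_j^* f_j)$ and collapses the sum into a single trace, which one estimates by H\"older's inequality in $\L^{p/2}(\Omega,S^{p/2})$ against $\|\sum_j f_j^* f_j\|_{\L^{p/2}(\Omega,S^{p/2})} = \|(\sum_j f_j^* f_j)^{1/2}\|_{\L^p(\Omega,S^p)}^2$. Taking square roots gives the desired Col-bound. Row-boundedness follows by running the symmetric argument with the row estimate in place of the column one.

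Finally, $R$-boundedness on the noncommutative space $\L^p(\Omega,S^p)$ for $p \geq 2$ is a consequence of simultaneous Col- and Row-boundedness: this is the standard reduction via the noncommutative Khintchine inequalities on $\L^p$-spaces recorded in \cite[(4.2), (4.3) and the subsequent discussion]{JMX06}. The main technical obstacle I anticipate is the careful bookkeeping in the operator-valued Cauchy--Schwarz step, in particular verifying that the domination obtained is pointwise (a.e.) as an inequality of positive operators in $S^p$ and that one can legitimately swap the $\sum_j$ with the integral/trace --- however, since the sums under consideration are finite (as required by the definition of $R$-, Col-, Row-boundedness), no convergence issue arises.
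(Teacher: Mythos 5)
Your proposal is correct and follows essentially the same route as the paper: operator-valued Cauchy--Schwarz from \cite[Proposition 1.1]{Lan95} to dominate $|(\TC_{k_z}\ot\Id_{S^p})f|^2$ by $\TC_{|m_z|^2}(|f|^2)$, trace duality against a norm-one positive $u\in \L^{(p/2)^*}(\Omega,S^{(p/2)^*})$, passing to the adjoint kernel and invoking the maximal estimate \eqref{estimate-max-Sp} to get a single dominating element, then H\"older, with $R$-boundedness deduced from Col- plus Row-boundedness via \cite{JMX06}. The only cosmetic deviation is that you obtain Row-boundedness by rerunning the symmetric (row) Cauchy--Schwarz estimate, while the paper deduces it by applying the Col-bound to the conjugate kernels $\overline{k_z}$ (which inherit the square-max decomposition); these are equivalent, and the technical point you flag — making the pointwise operator domination rigorous — is handled in the paper by reducing to finite-dimensional $S^p_m$ and working in the module $\L^2(\Omega,\L^\infty(\Omega,S^\infty_m))$.
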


\begin{proof}
Let $N \in \N$ and $f_1,\ldots,f_N \in \L^p(\Omega,S^p)$.
Consider some $z_1,\ldots,z_N \in Z$ and write in short $\TC_l \ov{\mathrm{def}}{=} \TC_{z_l}$, $k_{l} \ov{\mathrm{def}}{=} k_{z_l}$, $s_{l} \ov{\mathrm{def}}{=} s_{z_l}$, $m_{l} \ov{\mathrm{def}}{=} m_{z_l}$, where we denote the $\L^{(\frac{p}{2})^*}(S^{(\frac{p}{2})^*})$-square-max decomposition $k_z = s_z m_z$.
So in order to prove Col-boundedness, we are left to show that for some uniform constant $C$, we have
\begin{align}
\norm{\bigg(\sum_l |(\TC_l \ot \Id_{S^p})(f_l)|^2 \bigg)^{\frac12}}_{\L^p(\Omega,S^p)}
& \leq C \norm{\bigg(\sum_l |f_l|^2\bigg)^{\frac12}}_{\L^p(\Omega,S^p)}  \label{equ-3-proof-prop-cbdd-Harris-group}
\end{align}

We have
\begin{equation}
\label{Divers-678-group}
\norm{\bigg(\sum_l |(\TC_{l} \ot \Id_{S^p})(f_l)|^2\bigg)^{\frac12}}_{\L^p(\Omega,S^p)}^2 
=\norm{\sum_l |(\TC_{l} \ot \Id_{S^p})(f_l)|^2}_{\L^{\frac{p}{2}}(\Omega,S^{\frac{p}{2}})}. 
\end{equation}
By duality, there exists a unique positive element $u \in \L^{(\frac{p}{2})^*}(\Omega,S^{(\frac{p}{2})^*})$ of norm one such that
\begin{equation}
\label{Avec-u-group}
\norm{\sum_l |(\TC_{l} \ot \Id_{S^p})(f_l)|^2}_{\L^{\frac{p}{2}}(\Omega,S^{\frac{p}{2}})} 
=\bigg(\int_{\Omega} \ot \tr\bigg) \bigg( \sum_l |(\TC_l \ot \Id_{S^p})(f_l)|^2 u \bigg). 
\end{equation}
We shall give an upper bound of $|(\TC_{l} \ot \Id_{S^p})(f_l)|^2$ in the ordered space $\L^{\frac{p}{2}}(\Omega,S^{\frac{p}{2}})$. Note that by a standard approximation argument it suffices to consider the case $f_l \in \L^p(\Omega,S^p_m)$ where $S^p_m$ is a finite-dimensional Schatten space if we obtain a bound in \eqref{equ-3-proof-prop-cbdd-Harris-group} that is independent of the size $m \in \N$.
We obtain
\begin{align*}
\MoveEqLeft
(\TC_{l} \ot \Id_{S^p_m})(f_l) 
= \sum_{i,j=1}^m \TC_l (f_{l,ij}) \ot e_{ij} 
\ov{\eqref{integral-operator}}{=} \sum_{i,j=1}^m \int_{\Omega} k_l(\cdot,y) f_{l,ij}(y) \d y \ot e_{ij} \\
& \ov{\mathrm{Def. }\ref{defi-square-max-Sp}}{=} \sum_{i,j=1}^m \int_{\Omega} s_{l}(\cdot,y)m_{l}(\cdot,y) f_{l,ij}(y) \d y \ot e_{ij}\\
&= \bigg(\int_{\Omega} \ot \Id_{\L^\infty(\Omega)} \ot \Id_{S^\infty_m} \bigg)\\
&\left((s_l(\cdot,y) \ot 1_{S^\infty_m})(m_{l}(\cdot,y) \ot 1_{S^\infty_m})\bigg(\sum_{i,j} f_{l,ij} \ot 1_{\L^\infty(\Omega)} \ot e_{ij} \bigg)\right).
\end{align*}
Now, we can consider the operator-valued inner product 
\begin{equation*}
\label{}
\langle X , Y \rangle \ov{\mathrm{def}}{=}  (\Id_{\L^\infty(\Omega)} \ot \int_{\Omega} \ot \Id_{S^\infty_m})(X^*Y)
\end{equation*}
on $\L^2(\Omega,\L^\infty(\Omega,S^\infty_m))$. It satisfies according to \cite[Proposition 1.1 p.~3]{Lan95} 
\begin{equation}
\label{weak-Cauchy-schwarz-bis-group}
|\langle X , Y \rangle|^2 
\leq \norm{ \langle X , X \rangle}_{\L^\infty(\Omega,S^\infty_m)} \langle Y , Y \rangle.
\end{equation}
So with $X \co y \mapsto \ovl{s_{l}}(\cdot,y) \ot 1_{S^\infty_m}$ and $Y \co y \mapsto (m_{l}(\cdot,y) \ot 1_{S^\infty_m})\big( \sum_{i,j} f_{l,ij} \ot 1_{\L^\infty(\Omega)} \ot e_{ij} \big)$, we obtain
\begin{align*}
\MoveEqLeft
|(\TC_l \ot \Id_{S^p_m})(f_l)|^2 
= | \langle X,Y \rangle|^2 
\ov{\eqref{weak-Cauchy-schwarz-bis-group}}{\leq} \norm{\langle X,X \rangle}_{\L^\infty(\Omega,S^\infty_m)} \langle Y, Y \rangle \\
&= \bigg(\sup_{x \in \Omega} \int_{\Omega} |s_{l}(x,y)|^2 \d y \bigg) \times\bigg(\Id_{\L^\infty(\Omega)} \ot \int_{\Omega} \ot \Id_{S^\infty_m}\bigg)\\
& \qquad \left( \left| (m_{l}(\cdot,y) \ot 1_{S^\infty_m})\bigg( \sum_{i,j}  f_{l,ij} \ot 1_{\L^\infty(\Omega)} \ot e_{ij}\bigg) \right|^2 \right).
\end{align*}
We estimate the first factor by 
\begin{equation}
\label{first-group}
\sup_{x \in \Omega} \int_{\Omega} |s_{l}(x,y)|^2 \d y \ov{\eqref{estimate-square-Sp}}{\leq} C. 
\end{equation}
For the second factor, we see that
\begin{align}
\MoveEqLeft
\label{second-group}
\bigg(\Id_{\L^\infty(\Omega)} \ot \int_{\Omega} \ot \Id_{S^\infty_m}\bigg)\left( \left| (m_{l}(\cdot,y) \ot 1_{S^\infty_m}) \bigg(\sum_{i,j} f_{l,ij} \ot 1_{\L^\infty} \ot e_{ij}\bigg) \right|^2 \right) \\
&= \int_{\Omega} |m_{l}(\cdot,y)|^2 \left|\sum_{i,j} f_{l,ij}(y) \ot e_{ij}\right|^2 \d y \nonumber
= \int_{\Omega} |m_l(\cdot,y)|^2 \, |f_l(y)|^2 \d y \\
&\ov{\eqref{integral-operator}}{=} \TC_{|m_l|^2} \ot \Id_{S^{p/2}_m}( |f_l|^2). \nonumber
\end{align}
Thus we can estimate the expression at the beginning of the proof by
\begin{align*}
\MoveEqLeft
\norm{ \bigg(\sum_{l} |(\TC_l \ot \Id_{S^p_m}) (f_l)|^2 \bigg)^{\frac12} }_{\L^p(\Omega,S^p)}^2 
\ov{\eqref{Divers-678-group}\eqref{Avec-u-group}}{=} \bigg(\int_{\Omega} \ot \tr\bigg) \left( \sum_l |( \TC_l \ot \Id_{S^p_m})(f_l)|^2 u \right) \\
&\ov{\eqref{first-group}\eqref{second-group}}{\leq} C \bigg(\int_{\Omega} \ot \tr\bigg) \bigg( \sum_l \TC_{|m_l|^2 } \ot \Id_{S^{p/2}_m}(|f_l|^2) u \bigg)  \\
& \ov{\eqref{equ-adjoint-twisted-convolution-group}}{=} C \bigg(\int_{\Omega} \ot \tr\bigg) \bigg( \sum_l |f_l|^2 (\TC_{|\check{m_l}|^2} \ot \Id_{S^{(p/2)^*}_m}(u)) \bigg) \\
& \leq C \inf \left\{ \bigg(\int_{\Omega} \ot \tr\bigg) \bigg( \sum_l |f_l|^2 B \bigg) : \: B \geq 0,\right.\\
&\left. \TC_{|\check{m_l}|^2} \ot \Id_{S^{(p/2)^*}_m}(u) \leq B,\: l = 1,\ldots,N \right\}.
\end{align*}
According to \eqref{Norm-Lp-Linfty-Omega} and \eqref{estimate-max-Sp}, we can find such $B \geq 0$ as in the previous infimum with the property $\norm{B}_{(\frac{p}{2})^*} \leq C \norm{u}_{(\frac{p}{2})^*} = C$ (up to $\epsi > 0$). We infer that
\begin{align*}
\MoveEqLeft
\norm{ \left( \sum_l |(\TC_l \ot \Id_{S^p})(f_l)|^2 \right)^{\frac12} }_{\L^p(\Omega,S^p)}^2 
 \lesssim \bigg(\int_{\Omega} \ot \tr\bigg) \bigg( \sum_l |f_l|^2 B \bigg) \\
&\lesssim \norm{ \sum_l |f_l|^2 }_{\frac{p}{2}} = \norm{\bigg( \sum_l |f_l|^2 \bigg)^{\frac12} }_{\L^p(\Omega,S^p)}^2 .
\end{align*}
Thus \eqref{equ-3-proof-prop-cbdd-Harris-group} is proved with constant 
\[
\sup_{z \in Z} \sup_{x \in \Omega} \norm{s_z(x,\cdot)}_{\L^2(\Omega)} \cdot \norm{{\sup_{z \in Z}}^+ \TC_{|\check{m}_z|^2}(\cdot)}_{\L^{(p/2)^*}(\Omega,S^{(p/2)^*}) \to \L^{(p/2)^*}(\Omega,S^{(p/2)^*})}^{\frac12}. 
\]

Now, we deduce Row-boundedness of $(\TC_{k_z} \ot \Id_{S^p})_{z \in Z}$ from Col-boundedness of that family. In order to obtain Row-boundedness, we have to show that 
\begin{equation}
\norm{\bigg(\sum_l |[(\TC_l \ot \Id_{S^p})(f_l)]^*|^2 \bigg)^{\frac12}}_{\L^p(\Omega,S^p)}
\leq C \norm{\bigg(\sum_l |f_l^*|^2\bigg)^{\frac12}}_{\L^p(\Omega,S^p)}  \label{equ-4-proof-prop-cbdd-Harris}.
\end{equation}
Note that $[\TC_l \ot \Id_{S^p}(f_l)]^* = \TC_{\overline{k_l}} \ot \Id_{S^p}(f_l^*)$.
Thus Row-boundedness follows from Col-boundedness once we know that the family $(\overline{k_z})_{z \in Z}$ has also an $\L^{\left(\frac{p}{2}\right)^*}(S^{\left(\frac{p}{2}\right)^*})$-square-max decomposition.
But this is immediate from $\overline{k_z} = \overline{s_z} \overline{m_z}$ and the identites 
\begin{equation*}
\label{}
\norm{\overline{s_z(x,\cdot)}}_{\L^2(\Omega)} = \norm{s_z(x,\cdot)}_{\L^2(\Omega)}
\end{equation*}
and $|\overline{m_z}|^2  = |m_z|^2$, together with \eqref{estimate-square-Sp} and \eqref{estimate-max-Sp}.

Finally, note that Row-boundedness in conjunction with Col-boundedness implies $R$-boundedness according to \cite[(2.21) and p.~31]{JMX06}.
\end{proof}

\chapter{Functional calculus}
\section{Functional calculus of the harmonic oscillator of the universal $\Theta$-Weyl tuple}
\label{subsec-R-sectoriality}

In this section, we apply the maximal estimate from Lemma \ref{lem-nc-maximal-inequality-convolution} of Section \ref{subsec-maximal-estimates} to show the $R$-boundedness of the complex time semigroup $(T_{z})_{z \in \Sigma_{\varphi}}$ (in a controlled way as the angle $\varphi$ increases to $\frac{\pi}{2}$) of the operator $\A - \alpha$ on the Bochner space $\L^p(\R^n,S^p)$. The $R$-boundedness in such a noncommutative $\L^p$-space can be expressed, thanks to (noncommutative) Khintchine inequalities, by square function estimates.


\begin{prop}
\label{prop-existence-square-max}
For any complex number $z$ such that $\Re z > 0$, consider the function $G_z \co \R^2 \times \R^2 \to \R_+$, $(x,y) \mapsto g_z(x-y)$ where $g_z$ is defined in Proposition \ref{prop-kernel-estimate}. Suppose that $1 < p < \infty$.  Then the family 
\begin{equation*}
\label{}
\big(\cos(\arg z) e^{\Re z}G_z\big)_{\Re z >0}
\end{equation*}
admits an $\L^{p}$-square-max decomposition.
\end{prop}

\begin{proof}
If $\Re z>0$ then with the functions $s_z \ov{\mathrm{def}}{=} m_z \ov{\mathrm{def}}{=} \sqrt{\cos(\arg z) e^{\Re z}g_z}$ we can write
 $
\cos(\arg z) e^{\Re z}g_z
=s_z m_z$.
Now, decompose 
\begin{equation*}
\label{}
\cos(\arg z) e^{\Re z}G_z(x,y) 
= S_z(x,y) M_z(x,y),
\end{equation*}
with $S_z(x,y) \ov{\mathrm{def}}{=} s_z(x-y)$ and $M_z(x,y) \ov{\mathrm{def}}{=} m_z(x-y)$.
Now, we check that this decomposition is an $\L^p$-square-max decomposition. We have
\begin{align*}
\MoveEqLeft
\sup_{\Re z >0} \sup_{x \in \R^2} \norm{S_z(x,\cdot)}_{\L^2(\R^2)}
=\sup_{\Re z >0} \sup_{x \in \R^2} \norm{s_z(x-\cdot)}_{\L^2(\R^2)} \\
&=\sup_{\Re z >0}  \norm{s_z}_{\L^2(\R^2)}
= \sup_{\Re z > 0} \norm{s_z^2}_{\L^1(\R^2)}^{\frac12}
= \sup_{\Re z > 0} \norm{\cos(\arg z) e^{\Re z} g_z}_{\L^1(\R^2)}^{\frac12}
\ov{\eqref{estim-sup}}{<} \infty .
\end{align*}
Recall that the function $g_z \co \R^2 \to \R_+$ is even. For any function $u \in \L^p(\R^2)$, Lemma \ref{lem-nc-maximal-inequality-convolution} combined with \eqref{estim-sup} give the estimate 
\begin{align*}
\MoveEqLeft
\norm{  {\sup_{\Re z >0}}^+ (\TC_{|M_z|^2} )^*(u)}_{\L^p(\R^2)}
=\norm{ {\sup}^+_{\Re z >0}( \cos(\arg z) e^{\Re z} g_{z}) \ast u }_{\L^p(\R^2)} \\
&\lesssim \sup_{\Re z > 0} \cos(\arg z) e^{\Re z} \norm{g_z}_{\L^1(\R^2)}\norm{u}_{\L^p(\R^2)} \ov{\eqref{estim-sup}}{\lesssim} \norm{u}_{\L^p(\R^2)}.
\end{align*}
\end{proof}

Now, we can obtain an $R$-bound for the family $(T_z)_{\Re z >0}$ of operators from Proposition \ref{prop-semigroup} using a standard domination trick in the spirit of \cite[Example 2.10 p.~89]{KuW04} and \cite[Proposition 8.1.10 p.~171]{HvNVW18}. Recall that $\cal{J} \ov{\eqref{def-cal-J}}{=} \begin{pmatrix} 0 & 1 \\ -1 & 0 \end{pmatrix}$.

\begin{prop}
\label{prop-R-sectorial-bis}
Suppose that $1 < p<  \infty$. Let $A_\univ^\cal{J}$ be the universal $\cal{J}$-Weyl tuple on the Banach space $\L^p(\R^2)$. The family
\[ 
\left\{ \cos(\arg z) e^{\Re z} T_z \, : \, \Re z >0 \right\} 
\]
is $R$-bounded over the Banach space $\L^p(\R^2)$.
\end{prop}


\begin{proof}
First note that since $R$-boundedness is preserved under taking adjoints by Proposition \ref{prop-R-bounded-dual} and due to self-adjointness of the operator $\A$, it suffices to prove the statement for $p \geq 2$. Moreover, since for $p = 2$, $R$-boundedness is just boundedness, essentially proved in Proposition \ref{prop-kernel-estimate}, we can restrict to the case $p > 2$. Consider an integer $n \geq 1$ and $f_1,\ldots,f_n \in \L^p(\R^2)$. Suppose that the complex numbers $z_1,\ldots,z_n$ satisfy $\Re z_1>0, \ldots,\Re z_n>0$. Using Proposition \ref{prop-square-Rad-scalar} combined with Proposition \ref{prop-existence-square-max} in the last inequality, we obtain that
\begin{align*}
\MoveEqLeft
\norm{\left(\sum_{j=1}^n \big|\cos(\arg z_j) e^{\Re z_j}T_{z_j}(f_j) \big|^2 \right)^{\frac12}}_{\L^p(\R^2)} \\      
& \ov{\eqref{Autre-rep-Tz}}{=} \norm{\left(\sum_{j=1}^n \left|\int_{\R^{2}} \cos(\arg z_j) e^{\Re z_j}k_{z_j}^\cal{J}(\cdot,y)f_j(\cdot-y)\d y\right|^2 \right)^{\frac12}}_{\L^p(\R^2)}    \\
&\leq \norm{\left(\sum_{j=1}^n \bigg(\int_{\R^{2}} \cos(\arg z_j) e^{\Re z_j}\big|k_{z_j}^\cal{J}(\cdot,y)\big| \, |f_j(\cdot-y)|\d y\bigg)^2 \right)^{\frac12}}_{\L^p(\R^2)}\\
&\ov{\eqref{majo-by-gz}}{\leq} \norm{\left(\sum_{j=1}^n \bigg(\int_{\R^{2}} \cos(\arg z_j) e^{\Re z_j} g_{z_j}(y) |f_j(\cdot-y)|\d y\bigg)^2 \right)^{\frac12}}_{\L^p(\R^2)} \\
& \ov{\eqref{integral-operator}}{=} \norm{\left(\sum_{j=1}^n \Big[\TC_{\cos(\arg z) e^{\Re z}G_{z_j}}(|f_j|)\Big]^2 \right)^{\frac12}}_{\L^p(\R^2)} 
\lesssim \norm{\left(\sum_{j=1}^n |f_j|^2 \right)^{\frac12}}_{\L^p(\R^2)}.
\end{align*}
\end{proof}

For an arbitrary matrix $\Theta$, we could obtain from this result an $R$-bound. However, we do not prove it here since we will obtain in Theorem \ref{thm-R-sectorial} a more general vector-valued result.

%

\begin{prop}
\label{prop-LpSp-square-max-cal-J}
Suppose that $1 < p < \infty$.
\begin{enumerate}
\item For any complex number $z$ such that $\Re z > 0$, let $G_z \co \R^2 \times \R^2 \to \R_+$, $(x,y) \mapsto g_z(x-y)$ where $g_z$ is defined in Proposition \ref{prop-kernel-estimate}. 
Then the family 
\begin{equation*}
\label{}
\big(\cos(\arg z) e^{\Re z}G_z\big)_{\Re z >0}
\end{equation*} 
admits an $\L^{p}(S^p)$-square-max decomposition.

\item Let $K_z^{\cal{J}} : \R^2 \times \R^2 \to \R_+$ be the integral kernel such that the semigroup of the universal $\cal{J}$-Weyl tuple $A_{\univ}^{\cal{J}}$ on the Banach space $\L^p(\R^2)$ is given by $\TC_{K_z^{\cal{J}}}$. Then the family 
\begin{equation*}
\label{}
\big(\cos(\arg z) e^{\Re z}K_z^{\cal{J}}\big)_{\Re z >0}
\end{equation*}
admits an $\L^{p}(S^p)$-square-max decomposition.

\item In the same manner, if $\alpha > 0$ and if $K_z^{\alpha \cal{J}}$ is the integral kernel associated with the semigroup of the universal $\alpha \cal{J}$-Weyl tuple $A_{\univ}^{\alpha\cal{J}}$ on the space $\L^p(\R^2)$, then the family 
\begin{equation*}
\label{}
\big(\cos(\arg z) e^{\alpha \Re z}K_z^{\alpha \cal{J}}\big)_{\Re z >0}
\end{equation*} 
admits an $\L^{p}(S^p)$-square-max decomposition.

\item Let $p_z$ be the standard heat kernel on $\R$. Then the family
\begin{equation*}
\label{}
\big((x,y) \mapsto (\cos \arg z)^{\frac12} p_z(x-y)\big)_{\Re z > 0}
\end{equation*} 
admits an $\L^p(S^p)$-square-max decomposition.
\end{enumerate}
\end{prop}

\begin{proof}
1. For any $x,y \in \R^2$, we write 
\begin{equation}
\label{Decompose}
\cos(\arg z) e^{\Re z}G_z(x,y) 
= S_z(x,y) M_z(x,y)
\end{equation}
as in the proof of Proposition \ref{prop-existence-square-max}. In particular 
\begin{equation*}
\label{}
M_z(x,y)
=\sqrt{\cos(\arg z) e^{\Re z}g_z(x-y)}.
\end{equation*}
Then this proof actually does not only show an $\L^p$-square-max decomposition, but even an $\L^p(S^p)$-square-max decomposition using Lemma \ref{lem-nc-maximal-inequality-convolution}.

2. According to \eqref{kzxy} and \eqref{majo-by-gz}, we have for all $x,y \in \R^2$ and $\Re z > 0$, the inequality 
\begin{equation}
\label{mystere}
|K_z^{\cal{J}}(x,y)|
\ov{\eqref{Autre-rep-Tz}}{=} |p_z^{\cal{J}}(x-y)|
\ov{\eqref{majo-by-gz}}{\leq} 
g_z(x-y)
=G_z(x,y).
\end{equation}
Now, we decompose
\begin{equation}
\label{decompo-ultime}
\cos(\arg z)e^{\Re z}K_z^{\cal{J}} 
=  S_z' M_z
\end{equation}
with $M_z$ as in the first part of the proof and $S_z' \ov{\mathrm{def}}{=} \frac{1}{M_z}(\cos \arg z ) e^{\Re z}|K_z^{\mathcal{J}}|$ (note that $g_z > 0$ and hence also $M_z > 0$ everywhere). Then we obtain the estimate
\begin{equation*}
\label{}
|S_z'(x,y)| 
\leq \frac{(\cos \arg z) e^{\Re z} G_z(x,y)}{M_z(x,y)} 
\ov{\eqref{Decompose}}{=} S_z(x,y).
\end{equation*} 
Hence the first part shows that $K_z^{\cal{J}}$ admits an $\L^p(S^p)$-square-max decomposition.

3. 
Since we have the decomposition \eqref{decompo-ultime}, we obtain here (note that $\arg(z) = \arg(\alpha z)$)
\begin{align}
\MoveEqLeft
\label{sans-fin-4}
\cos(\arg z) e^{\alpha \Re z} K_z^{\alpha \cal{J}}(x,y) 
=\cos(\arg z) e^{\alpha \Re z} e^{-\frac12 \i \langle y , \alpha \cal{J} x \rangle} p_z^{\alpha \cal{J}}(x-y)\\
&\ov{\eqref{rescaled-3}}{=} 
 e^{-\frac12 \i \langle y , \alpha \cal{J} x \rangle} \alpha\cos(\arg z) e^{\Re (\alpha z)} p_{\alpha z}^{\cal{J}}(\sqrt{\alpha} (x-y))\nonumber \\
&\ov{\eqref{kzxy}}{=} e^{\i \phi(x,y)} \alpha\cos(\arg z) e^{\Re (\alpha z)} K_{\alpha z}^{\cal{J}}(\sqrt{\alpha} x,\sqrt{\alpha} y) \nonumber \\
&\ov{\eqref{decompo-ultime}}{=} e^{\i \phi(x,y)} \alpha S_{\alpha z}'(\sqrt{\alpha}x,\sqrt{\alpha}y) M_{\alpha z}(\sqrt{\alpha}x,\sqrt{\alpha}y) 
\ov{\mathrm{def}}{=} S_z^\alpha(x,y) M_z^\alpha(x,y) , \nonumber
\end{align}
for some measurable phase function $\phi$ 
with
\[ 
S_z^\alpha(x,y) 
\ov{\mathrm{def}}{=} e^{ \i \phi(x,y)}\sqrt{\alpha} S_{\alpha z}'(\sqrt{\alpha}x,\sqrt{\alpha}y)
\quad \text{and} \quad
M_z^\alpha(x,y) 
\ov{\mathrm{def}}{=} \sqrt{\alpha} M_{\alpha z}(\sqrt{\alpha}x,\sqrt{\alpha}y) . 
\]

Now, we show that this is an $\L^p(S^p)$-square-max decomposition. Indeed, we have 
\[ 
\norm{S_z^\alpha(x,\cdot)}_{\L^2(\R^2)} 
= \norm{\sqrt{\alpha} S_{\alpha z}'(\sqrt{\alpha}x,\sqrt{\alpha}\,\cdot)}_{\L^2(\R^2)} 
\approx \norm{S_{\alpha z}'(\sqrt{\alpha}x,\cdot)}_{\L^2(\R^2)}
 \leq C.
\]
Moreover, it is not difficult to see that we have the factorization 
\begin{equation*}
\label{}
\TC_{|M_z^\alpha|^2} 
= D_{\sqrt{\alpha}} \TC_{|M_{\alpha z}|^2} D_{\sqrt{\alpha}^{-1}},
\end{equation*}
where $D_\beta \co \L^p(\R^2,S^p) \to \L^p(\R^2,S^p)$, $u \mapsto u(\beta\, \cdot)$ is a positive map such that 
\begin{equation*}
\label{}
\|D_\beta u\|_{\L^p(\R^2,S^p)}
= \beta^{-\frac{2}{p}} \norm{u}_{\L^p(\R^2,S^p)}.
\end{equation*}
Suppose that $u$ is a positive element of $\L^p(\R^2,S^p)$.
Then $D_{\sqrt{\alpha}^{-1}}u$ is again a positive element of $\L^p(\R^2,S^p)$.
By the maximal estimate from the first part, there exists some positive element $B$ of $\L^p(\R^2,S^p)$ such that $\TC_{|M_{\alpha z}|^2}D_{\sqrt{\alpha}^{-1}}u \leq B$ for all $\Re z > 0$ and in addition $\norm{B}_p \leq C \bnorm{D_{\sqrt{\alpha}^{-1}}u}_p$.
Thus, we have 
\begin{equation*}
\label{}
\TC_{|M_z^\alpha|^2}u =  D_{\sqrt{\alpha}} \TC_{|M_{\alpha z}|^2} D_{\sqrt{\alpha}^{-1}}u \leq D_{\sqrt{\alpha}}B
\end{equation*}
and
\begin{equation*}
\label{}
\norm{D_{\sqrt{\alpha}}B}_p 
= \alpha^{-\frac1p} \norm{B}_p 
\leq C \alpha^{-\frac1p}\bnorm{D_{\sqrt{\alpha}^{-1}}u}_p 
= C \norm{u}_p.
\end{equation*}
Consequently, we obtain
\[ 
\norm{{\sup_{\Re z > 0}}^+ \TC_{|M_z^\alpha|^2} u}_{\L^p(\R^2,S^p)}
\lesssim \norm{u}_{\L^p(\R^2,S^p)} .
\]

4. Using the formula $\Re(\frac{1}{z})=\frac{\cos \arg z}{|z|}$, we can write for $x,y \in \R$
\begin{align*}
\MoveEqLeft
\cos(\arg z)^{\frac12} p_z(x-y) 
\ov{\eqref{heat-kernel}}{=} e^{\i \phi(x,y,z)} \cos(\arg z)^{\frac12} \frac{1}{\sqrt{4 \pi |z|}} e^{-\cos(\arg z)\frac{(x-y)^2}{4 |z|}} \\
&= s_z(x,y)m_z(x,y),
\end{align*}
with some measurable real-valued phase function $\phi \co \R^2 \times\C_+ \to \R$, the maximal part 
\begin{equation*}
\label{}
m_z(x,y) \ov{\mathrm{def}}{=}  \cos(\arg z)^{\frac14}\frac{1}{\sqrt[4]{4 \pi |z|}} e^{-\cos(\arg z)\frac{(x-y)^2}{8 |z|}},
\end{equation*}
and the square part 
\begin{equation*}
\label{}
s_z(x,y) 
\ov{\mathrm{def}}{=}  e^{\i\phi(x,y,z)} m_z ^0(x,y).
\end{equation*}
Thus
\begin{equation*}
\label{}
|s_z(x,y)|^2 
= |m_z(x,y)|^2 
= \cos(\arg z)^{\frac12} |p_z(x-y)|.
\end{equation*}
Then with the classical estimate of \cite[Remark 3.7.10 p.~153]{ABHN11} (see also \cite[p.~543]{HvNVW18}), we obtain for any $x \in \R$
\begin{equation*}
\label{}
\norm{s_z(x,\cdot)}_{\L^2(\R)}^2 
= \bnorm{\cos(\arg z)^{\frac12} p_z(x-\cdot)}_{\L^1(\R)}
\leq C,
\end{equation*}
which implies the square estimate \eqref{estimate-square-Sp}. Moreover, the 1-dimensional version of Lemma \ref{lem-nc-maximal-inequality-convolution} described in Remark \ref{rem-nc-maximal-inequality-convolution} together with the $\L^1$-norm estimate
\begin{equation*}
\label{}
\norm{|m_z(x,\cdot)|^2}_{\L^1(\R)} 
= \bnorm{\cos(\arg z)^{\frac12} p_z(x-\cdot)}_{\L^1(\R)} 
\leq C, \quad x \in \R
\end{equation*}
implies the maximal estimate \eqref{estimate-max-Sp}.
\end{proof}

\begin{cor}
\label{cor-Jay-univ-R-bounded}
Let $\alpha > 0$. Suppose that $1 < p < \infty$ and let $A_{\univ}^{\alpha \cal{J}}$ be the universal $\alpha \cal{J}$-Weyl tuple on the Banach space $\L^p(\R^2)$. If $(T_z)_{\Re z > 0}$ denotes the complex time semigroup of \eqref{Autre-rep-Tz} generated by $-\A$ associated with $A_{\univ}^{\alpha \cal{J}}$ then the set
\[ 
\left\{ \cos(\arg z) e^{\alpha \Re z} T_z \ot \Id_{S^p}:  \Re z > 0 \right\} 
\]
is $R$-bounded over the Bochner space $\L^p(\R^2,S^p)$.
\end{cor}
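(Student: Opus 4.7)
The plan is to mirror the scalar proof of Proposition \ref{prop-R-sectorial-bis}, upgrading it to the vector-valued setting $\L^p(\R^2,S^p)$ by means of the tools developed in the present section. I would split the argument into three cases according to the value of $p$.

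The decisive case is $p > 2$. Here the complex-time semigroup $T_z$ on $\L^p(\R^2)$ is the integral operator $\TC_{k_z^{\alpha\cal{J}}}$, and Proposition \ref{prop-LpSp-square-max-cal-J}(3) supplies an $\L^q(S^q)$-square-max decomposition of the family $\{\cos(\arg z) e^{\alpha \Re z}k_z^{\alpha\cal{J}}\}_{\Re z > 0}$ for every $1 < q < \infty$. Choosing $q = (p/2)^* \in (1,\infty)$ and feeding this decomposition into Theorem \ref{thm-square-Rad-vectorial} directly yields the $R$-boundedness (in fact, even Col- and Row-boundedness) of $\{\cos(\arg z) e^{\alpha \Re z}T_z \otimes \Id_{S^p}\}_{\Re z > 0}$ on $\L^p(\R^2,S^p)$.

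For $p = 2$, $R$-boundedness on the Hilbert space $\L^2(\R^2,S^2)$ coincides with uniform norm-boundedness. The generator $\A$ is selfadjoint and positive on $\L^2(\R^2)$ (by the Stone-theorem remark following Proposition \ref{prop-HI-calculus-non-shifted}), and its spectrum is contained in $[\alpha,\infty)$ by Corollary \ref{Cor-512}. Functional calculus then gives $\norm{T_z}_{\L^2\to\L^2} \leq e^{-\alpha \Re z}$ for $\Re z \geq 0$, whence $\cos(\arg z) e^{\alpha \Re z}\norm{T_z \otimes \Id_{S^2}}_{\L^2(\R^2,S^2)} \leq 1$. For $1 < p < 2$ I would resort to duality. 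The selfadjointness of $\A$ on $\L^2$, combined with the standard identification of adjoint semigroups on reflexive $\L^p$ spaces, yields $T_z^* = T_{\bar z}$ on $\L^{p'}(\R^2)$. The substitution $w = \bar z$ then identifies the adjoint family $\{[\cos(\arg z) e^{\alpha \Re z}T_z \otimes \Id_{S^p}]^*\}_{\Re z > 0}$ acting on $\L^{p'}(\R^2,S^{p'})$ with $\{\cos(\arg w) e^{\alpha \Re w}T_w \otimes \Id_{S^{p'}}\}_{\Re w > 0}$, which has just been shown to be $R$-bounded since $p' > 2$. Invoking the preservation of $R$-boundedness under adjoints on the $K$-convex space $\L^p(\R^2,S^p)$ transfers this back to the original family.

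The principal technical point I foresee is the clean verification of the adjoint identification $T_z^* = T_{\bar z}$ in the last case; this rests on the selfadjointness of $\A$ on $\L^2$ together with the compatibility of the $\L^p$-generator with $\L^p$-$\L^{p'}$ duality, and should be routine rather than an essential obstruction. Apart from that, the real work is concentrated in the $p > 2$ case, where everything has been prepared so that the result reduces to an immediate application of Theorem \ref{thm-square-Rad-vectorial} to the square-max decomposition from Proposition \ref{prop-LpSp-square-max-cal-J}(3).
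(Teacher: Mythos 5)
Your proposal is correct, and for the core case $p>2$ it is exactly the paper's argument: the $\L^{(p/2)^*}(S^{(p/2)^*})$-square-max decomposition from Proposition \ref{prop-LpSp-square-max-cal-J}(3) is fed into Theorem \ref{thm-square-Rad-vectorial}. You diverge from the paper in two places, both harmlessly or to your advantage. First, you isolate $p=2$ and settle it by noting that $R$-boundedness on the Hilbert space $\L^2(\R^2,S^2)$ is uniform boundedness, which follows from selfadjointness of $\A$ and $\sigma(\A)\subset[\alpha,\infty)$; the paper folds $p=2$ into the case $p\geq 2$, where the exponent $(p/2)^*=\infty$ actually falls outside the range covered by Proposition \ref{prop-LpSp-square-max-cal-J}, so your separate treatment is the cleaner one. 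Second, for $1<p<2$ the paper also dualizes via $K$-convexity, but then identifies the adjoint as the integral operator with transposed kernel $k_z^*$ and re-verifies by hand that $\{\cos(\arg z)e^{\alpha\Re z}k_z^*\}$ admits a square-max decomposition (reflecting $S_z^\alpha$, $M_z^\alpha$ and conjugating by the isometry $f\mapsto f(-\cdot)$). You instead identify the (sesquilinear) adjoint semigroup as $T_{\bar z}$ on $\L^{p'}$ --- which is correct, since the $A_k$ are selfadjoint on $\L^2$, the semigroups are consistent across the $\L^p$ scale, and one checks directly from \eqref{Autre-rep-Tz} and the evenness of $p_z^{\alpha\cal{J}}$ that the conjugate-transposed kernel is $k_{\bar z}^{\alpha\cal{J}}$ --- and then reuse the already-proved case $p'>2$ after the substitution $w=\bar z$ (legitimate because $\cos(\arg\bar z)e^{\alpha\Re\bar z}=\cos(\arg z)e^{\alpha\Re z}$ and conjugation permutes the open right half-plane). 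This spares you redoing the square-max verification; the only point requiring a word is that the preservation of $R$-bounds under adjoints on $K$-convex spaces is insensitive to whether one takes the linear or the conjugate-linear adjoint, which it is, since Rademacher averages involve only real signs and norms.
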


\begin{proof}
Let us first consider the case where $p \geq 2$. According to Proposition \ref{prop-LpSp-square-max-cal-J}, if the integral kernel of $T_z$ is denoted by $K_z^{\alpha \cal{J}}$ (that is, $T_z = \TC_{K_z^{\alpha \cal{J}}}$), then the family 
\begin{equation*}
\label{}
\big(\cos(\arg z) e^{\alpha \Re z}K_z^{\alpha \cal{J}}\big)_{\Re z > 0}
\end{equation*}
admits an $\L^{q}(S^q)$-square-max decomposition for $q = \big(\frac{p}{2}\big)^*$. Then Theorem \ref{thm-square-Rad-vectorial} implies that the family 
\begin{equation*}
\label{}
\big\{ \cos(\arg z) e^{\alpha \Re z} T_z \ot \Id_{S^p} :  \Re z > 0 \big\} 
= \big\{ \cos(\arg z) e^{\alpha \Re z} \TC_{K_z^{\alpha \cal{J}}} \ot \Id_{S^p} :  \Re z > 0 \big\}
\end{equation*} 
is $R$-bounded over the Bochner space $\L^p(\R^2,S^p)$. 

If $1 < p < 2$, then the $R$-boundedness is obtained from a duality argument in the following way. By Proposition \ref{prop-R-bounded-dual}, it suffices to show that the family 
\begin{equation*}
\label{}
\left\{ \cos(\arg z) e^{\alpha \Re z} T_z^* \ot \Id_{S^{p^*}}:  \Re z > 0 \right\}
\end{equation*}
is $R$-bounded over the Banach space $\L^{p^*}(\R^2,S^{p^*})$, where $p^*$ is the conjugate exponent of $p$. According to \eqref{equ-adjoint-twisted-convolution-group}, we have $T_z^* = \TC_{\check{K}_z}$. Now, we will observe that the family $\{ \cos(\arg z) e^{\alpha \Re z} \check{K}_z :\: \Re z > 0\}$ also has an $\L^q(S^q)$-square-max decomposition for any $1 < q < \infty$. Indeed, using the notation introduced in the  proof of Proposition \ref{prop-LpSp-square-max-cal-J}, we see that
\begin{align*}
\MoveEqLeft
\cos(\arg z)e^{\alpha \Re z} \check{K}_z^{\alpha \cal{J}}(x,y) 
\ov{\eqref{equ-adjoint-twisted-convolution-group}}{=} 
\cos(\arg z) e^{\alpha \Re z} K_z^{\alpha \cal{J}}(y,x) \\
&\ov{\eqref{sans-fin-4}}{=} S^\alpha_z(y,x) M^\alpha_z(y,x) 
= S_z''(x,y)M_z''(x,y)
\end{align*}
with $S_z''(x,y) \ov{\mathrm{def}}{=} S^\alpha_z(y,x)$ and $M_z''(x,y) \ov{\mathrm{def}}{=} M_z^\alpha(y,x)$. With  \eqref{mystere} it is easy to see that \eqref{estimate-square-Sp} holds for $S_z''$.
Since $M_z''(x,y) = M_z^\alpha(x,y)$, we can confirm that \eqref{estimate-max-Sp} holds for $M_z''$.
\end{proof}

Arguing in the same manner, but using the fourth point of Proposition \ref{prop-LpSp-square-max-cal-J} instead of the third, we obtain the following result. In \cite[Example 8.3.6 p.~196]{HvNVW18}, the $R$-boundedness of the set $\{ T_z \ot \Id_{X} : |\arg z| \leq \theta \}$ is proved with $R$-bound less than $1+\frac{2}{\cos \theta}$ up to a constant, for any $\UMD$ Banach space $X$.

\begin{cor}
\label{cor-heat-R-bounded}
Suppose that $1 < p < \infty$. Let $(T_z)_{\Re z > 0}$ denote the standard heat semigroup on $\R$. Then the family
\[ 
\left\{ \cos(\arg z)^{\frac12} T_z \ot \Id_{S^p} :  \Re z > 0 \right\} 
\]
is $R$-bounded over the Bochner space $\L^p(\R,S^p)$.
\end{cor}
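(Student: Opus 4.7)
The plan is to mimic the proof of Corollary \ref{cor-Jay-univ-R-bounded} step by step, simply replacing the kernel $k_z^{\alpha\cal{J}}$ of the universal Weyl semigroup by the standard heat kernel $p_z(x-y)$ on $\R$, and invoking part 4 of Proposition \ref{prop-LpSp-square-max-cal-J} instead of part 3. Concretely, observe first that the heat semigroup is an integral operator $T_z = \TC_{k_z}$ on $\L^p(\R)$ with kernel $k_z(x,y) = p_z(x-y)$, so that $\cos(\arg z)^{1/2} T_z = \TC_{\cos(\arg z)^{1/2} k_z}$.

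For $p \geq 2$, the fourth point of Proposition \ref{prop-LpSp-square-max-cal-J} yields that the family $(\cos(\arg z)^{1/2} k_z)_{\Re z > 0}$ admits an $\L^{q}(S^{q})$-square-max decomposition for any $1 < q < \infty$, in particular for $q = \bigl(\tfrac{p^*}{2}\bigr)^{\!*} = (\tfrac{p}{2})^*$. Applying Theorem \ref{thm-square-Rad-vectorial} with $\Omega = \R$ therefore produces the $R$-boundedness (and even Col- and Row-boundedness) of $\{\cos(\arg z)^{1/2} T_z \ot \Id_{S^p} : \Re z > 0\}$ on the Bochner space $\L^p(\R, S^p)$.

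For $1 < p < 2$, I would pass to adjoints, exactly as in Corollary \ref{cor-Jay-univ-R-bounded}. The Bochner space $\L^p(\R,S^p)$ is $K$-convex, so by \cite[Proposition 8.4.1]{HvNVW18} it suffices to show that $\{\cos(\arg z)^{1/2} T_z^* \ot \Id_{S^{p^*}} : \Re z > 0\}$ is $R$-bounded on $\L^{p^*}(\R,S^{p^*})$. By \eqref{equ-adjoint-twisted-convolution-group}, $T_z^* = \TC_{k_z^*}$ with $k_z^*(x,y) = p_z(y-x)$. Since the heat kernel $p_z$ is even, one actually has $k_z^* = k_z$, so the adjoint family is the same family and the case $p^* \geq 2$ already handled applies directly (alternatively, one can repeat verbatim the symmetry argument at the end of the proof of Corollary \ref{cor-Jay-univ-R-bounded}, checking that $k_z^*$ inherits an $\L^{q}(S^{q})$-square-max decomposition from that of $k_z$ via the positive isometry $f \mapsto f(-\cdot)$).

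There is essentially no obstacle here: the content of the corollary is already contained in part 4 of Proposition \ref{prop-LpSp-square-max-cal-J} together with Theorem \ref{thm-square-Rad-vectorial}, and the short duality reduction to $p \geq 2$ is routine. The only thing worth being careful about is that the square/max decomposition of the kernel $p_z(x-y)$ from Proposition \ref{prop-LpSp-square-max-cal-J}(4) is an honest $\L^{q}(S^{q})$-decomposition for \emph{every} $1<q<\infty$, which is exactly what is needed to plug in the correct exponent $q=(p/2)^*$ (resp.\ $(p^*/2)^*$) into Theorem \ref{thm-square-Rad-vectorial}.
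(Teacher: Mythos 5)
Your proposal is correct and follows essentially the same route as the paper, whose proof of this corollary is exactly "argue as in Corollary \ref{cor-Jay-univ-R-bounded}, using the fourth point of Proposition \ref{prop-LpSp-square-max-cal-J} instead of the third": square-max decomposition of the heat kernel plus Theorem \ref{thm-square-Rad-vectorial} for $p\geq 2$, and duality for $1<p<2$. Your extra observation that the evenness of $p_z$ makes the adjoint family coincide with the original one is a harmless simplification of the symmetry argument the paper uses in the $\cal{J}$-case.
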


In the following statement, recall that the value $\alpha  \ov{\mathrm{def}}{=} \sum_{j=1}^k \alpha_j > 0$ is defined in \eqref{def-alpha} and that $A_\univ^\Theta$ is defined in \eqref{A-univ}.

\begin{thm}
\label{thm-R-sectorial}
Let $\Theta \in \M_n(\R)$ be a skew-adjoint matrix and $1 < p < \infty$. Consider the universal $\Theta$-Weyl tuple $A_\univ^\Theta$ on the space $\L^p(\R^n)$ and denote $(T_z)_{\Re z > 0}$ the complex time semigroup of \eqref{Autre-rep-Tz} associated with $A_\univ^\Theta$. Then the set
\[ 
\left\{ \cos(\arg z)^{\frac{n}{2}} e^{\alpha \Re z} T_z \ot \Id_{S^p} :  \Re z > 0 \right\} 
\]
is $R$-bounded over the Bochner space $\L^p(\R^n,S^p)$.
\end{thm}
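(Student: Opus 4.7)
The plan is to reduce to the block-diagonal case via the orthogonal decomposition \eqref{diago-Theta}, split the semigroup into tensor factors by Lemma \ref{lem-kernel-splitting}, handle each $2\times 2$ block with Corollary \ref{cor-Jay-univ-R-bounded}, handle the trivial block (the pure heat semigroup on $\R^{n-2k}$) with Corollary \ref{cor-heat-R-bounded}, and finally reassemble using the tensorization Lemma \ref{lem-2-tensor-R-bdd}.

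First, write $\Theta = O^T J O$ as in \eqref{diago-Theta}. The isometry $\tilde{O} \co \L^p(\R^n) \to \L^p(\R^n)$, $f \mapsto f(O^T \cdot)$ is a positive isometry on $\L^p(\R^n)$ for all $1 \leq p \leq \infty$, hence extends to a positive isometry on the Bochner space $\L^p(\R^n, S^p)$. The computation in the proof of Corollary \ref{Cor-512} shows that $T_z^\Theta = \tilde{O}^{-1} T_z^J \tilde{O}$, and consequently $T_z^\Theta \ot \Id_{S^p} = (\tilde{O}^{-1} \ot \Id_{S^p})(T_z^J \ot \Id_{S^p})(\tilde{O} \ot \Id_{S^p})$. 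Since $R$-boundedness is stable under conjugation by a bounded operator, it suffices to prove the claim with $\Theta$ replaced by the block diagonal $J$.

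Next, by Lemma \ref{lem-kernel-splitting}, the semigroup on $\L^p(\R^n) = \L^p(\R^2) \ot_p \cdots \ot_p \L^p(\R^2) \ot_p \L^p(\R^{n-2k})$ factors as
\[
T_z^J = T_z^{J_1} \ot T_z^{J_2} \ot \cdots \ot T_z^{J_k} \ot T_z^{0_{n-2k}}.
\]
For each $j = 1,\ldots,k$, we have $J_j = \alpha_j \cal{J}$, so Corollary \ref{cor-Jay-univ-R-bounded} yields that $\{\cos(\arg z)\, e^{\alpha_j \Re z}\, T_z^{J_j} \ot \Id_{S^p} : \Re z > 0\}$ is $R$-bounded over $\L^p(\R^2, S^p)$. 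For the trivial block, the heat semigroup on $\R^{n-2k}$ factors as the $(n-2k)$-fold tensor product of the one-dimensional heat semigroup, and Corollary \ref{cor-heat-R-bounded} gives $R$-boundedness of $\{\cos(\arg z)^{\frac{1}{2}}\, T_z^{0_1} \ot \Id_{S^p} : \Re z > 0\}$ over $\L^p(\R, S^p)$. Applying Lemma \ref{lem-2-tensor-R-bdd} to the $n-2k$ one-dimensional factors, we deduce that $\{\cos(\arg z)^{\frac{n-2k}{2}}\, T_z^{0_{n-2k}} \ot \Id_{S^p} : \Re z > 0\}$ is $R$-bounded over $\L^p(\R^{n-2k}, S^p)$.

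Finally, a second application of Lemma \ref{lem-2-tensor-R-bdd} to the $k+1$ factors (the $k$ blocks and the trivial block) shows that the family
\[
\left\{ \cos(\arg z)^{k + \frac{n-2k}{2}} e^{(\alpha_1 + \cdots + \alpha_k)\Re z}\, T_z^J \ot \Id_{S^p} : \Re z > 0 \right\}
\]
is $R$-bounded over $\L^p(\R^n, S^p)$. Since $k + \frac{n-2k}{2} = \frac{n}{2}$ and $\alpha_1 + \cdots + \alpha_k = \alpha$ by \eqref{def-alpha}, this is the desired conclusion. The main technical step is the combination of the maximal-estimate machinery from Section \ref{subsec-maximal-estimates} with the $\L^p(S^p)$-square-max decompositions in Proposition \ref{prop-LpSp-square-max-cal-J}, but this work is already packaged in Corollaries \ref{cor-Jay-univ-R-bounded} and \ref{cor-heat-R-bounded}; here the only real care needed is bookkeeping of the powers of $\cos(\arg z)$ and exponential shifts so that they combine to the correct global bound.
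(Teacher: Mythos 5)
Your proof is correct and follows essentially the same route as the paper: orthogonal reduction $\Theta=O^TJO$ with conjugation by the isometry $\tilde{O}$, the tensor splitting from Lemma \ref{lem-kernel-splitting}, Corollaries \ref{cor-Jay-univ-R-bounded} and \ref{cor-heat-R-bounded} for the $2\times 2$ blocks and the heat factors, and Lemma \ref{lem-2-tensor-R-bdd} to reassemble, with the same bookkeeping $k+\frac{n-2k}{2}=\frac{n}{2}$. The only point you gloss (and which the paper makes explicit) is that the splitting of Lemma \ref{lem-kernel-splitting} and the identity $T_z^\Theta=\tilde{O}^{-1}T_z^J\tilde{O}$ from the proof of Corollary \ref{Cor-512} are established for real $t>0$ and extend to complex $z$ by uniqueness of holomorphic extensions.
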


\begin{proof}
Assume first that $\Theta = J$ has the block diagonal form with matrices $\alpha_j \cal{J}$ on the diagonal, see \eqref{def-de-J}. Then the operators of the semigroup $(T_z)_{\Re z > 0}$ split into tensor products
\[ 
T_z 
= T_z^1 \ot \cdots \ot T_z^k \ot T_z' \ot \cdots \ot T_z', \quad \Re z > 0,
\]
acting on the space 
\begin{equation*}
\label{}
\L^p(\R^2) \ot_p \cdots \ot_p \L^p(\R^2) \ot_p \L^p(\R) \ot_p \cdots \ot_p \L^p(\R),
\end{equation*}
where $T_z^j$ is the semigroup associated with the universal $\alpha_j \cal{J}$-Weyl tuple on $\L^p(\R^2)$, and $T_z'$ is the standard heat semigroup on the space $\L^p(\R)$. This follows from Lemma \ref{lem-kernel-splitting} for real positive $z$, and then by uniqueness of holomorphic extensions for an arbitrary complex number $z$ with $\Re z >0$. Put now $S_z \ov{\mathrm{def}}{=} \cos(\arg z)^{\frac{n}{2}} e^{\alpha \Re z} T_z \ot \Id_{S^p}$, $S_z^j \ov{\mathrm{def}}{=} \cos(\arg z) e^{\alpha_j \Re z} T_z^j$ for any $j = 1,\ldots,k$ and $S_z' \ov{\mathrm{def}}{=} \cos(\arg z)^{\frac{1}{2}} T_z'$. Then
\[ 
S_z 
= S_z^1 \ot S_z^2 \ot \cdots S_z^k \ot S_z' \ot \cdots \ot S_z' \ot \Id_{S^p}, \quad \Re z > 0.
\]
Moreover, the families $\{S^k_z \ot \Id_{S^p} :  \Re  z > 0 \}$ are $R$-bounded over the space $\L^p(\R^2,S^p)$ according to Corollary \ref{cor-Jay-univ-R-bounded}. The family $\{S_z' \ot \Id_{S^p} :  \Re z > 0 \}$ is also $R$-bounded over the space $\L^p(\R,S^p)$ according to Corollary \ref{cor-heat-R-bounded}. Then Lemma \ref{prop-2-tensor-R-bdd} implies that the family $\{ S_z :  \Re z > 0 \}$ is $R$-bounded over the Bochner space $\L^p(\R^n,S^p)$.

Consider the case of general skew-adjoint $\Theta \in \M_n(\R)$. Let $O \in \M_n(\R)$ be the orthogonal matrix such that $\Theta = O^T J O$ (see \eqref{diago-Theta}), where $J$ is a block diagonal matrix as in the first case.
Denote the semigroups associated with $\Theta$ and $J$ by $(T_z^\Theta)_{\Re z>0}$ and $(T_z^J)_{\Re z>0}$. Then according to the proof of Corollary \ref{Cor-512} (first for real $z$, then by uniqueness of holomorphic extensions for complex $z$), $T_z^\Theta = \tilde{O}^{-1} T_z^J \tilde{O}$ for the isometry $\tilde{O} : \L^p(\R^n) \to \L^p(\R^n), \: f \mapsto f(O^T \cdot)$. Since $\tilde{O}$ is an isometry for all $1 \leq p \leq \infty$, it extends to an isometry $\tilde{O} \ot \Id_{S^p}$ on $\L^p(\R^n,S^p)$. Then the set
\begin{align*}
\MoveEqLeft
\left\{ \cos(\arg z)^{\frac{n}{2}} e^{\alpha \Re z} T_z^\Theta \ot \Id_{S^p} :  \Re z > 0 \right\} \\
&= (\tilde{O}^{-1} \ot \Id_{S^p}) \left\{ \cos(\arg z)^{\frac{n}{2}} e^{\alpha z} T_z^J \ot \Id_{S^p} :  \Re z > 0 \right\} (\tilde{O} \ot \Id_{S^p})
\end{align*}
is $R$-bounded over the Bochner space $\L^p(\R^n,S^p)$ by the first case and stability of $R$-boundedness under products \cite[Propsition 8.1.19 p.~178]{HvNVW18}.
\end{proof}

Finally, we establish the principal result of this section, Theorem \ref{thm-514}, by employing the  following result as a foundational basis. Recall that a Banach space $X$ is of type $p$ for some $p \in [1,2]$ if there exists a constant $C \geq 1$ such that
\begin{equation*}
\label{type-inequality}
\Bgnorm{\sum_{k=1}^{n} \epsi_k \ot x_k}_{\L^p(\Omega_0,X)}
\leq C\bigg(\sum\limits_{k=1}^n \norm{x_k}_X^p\bigg)^{\frac{1}{p}}
\end{equation*}
for all integer $n \geq 1$ and any elements $x_1,\ldots, x_n \in X$. We say that $X$ is of cotype $q$ for some $q \in [2,\infty]$ if there exists a constant $C \geq 1$ such that
\begin{equation*}
\label{cotype-inequality}
\left(\sum\limits_{k=1}^n \norm{x_k}_X^q\right)^{\frac{1}{q}}
\leq C\Bgnorm{\sum_{k=1}^{n} \epsi_k \ot x_k}_{\L^q(\Omega_0,X)}
\end{equation*}
for all integer $n \geq 1$ and any elements $x_1,\ldots, x_n \in X$. In the case $q=\infty$, the left-hand side must be replaced by $\max_{1 \leq k \leq n} \norm{x_k}_X$. For any Banach space $X$, we let
\begin{equation*}
\label{cotype-inequalities}
\type X
\ov{\mathrm{def}}{=} \sup\{ p\geq 1 : X \text{ has type }p\}  \text{ and } 
\cotype X \ov{\mathrm{def}}{=} \inf\{q \geq 2: X \text{ has cotype }q \}.
\end{equation*}
See the book \cite{DJT95} for a complete presentation.

The sufficient criterion that we will rely on throughout this work to establish a bounded H\"ormander calculus is given in \cite[Theorem 7.1 (2), p.~424]{KrW18}. For convenience, we recall it here in a form adapted to reflexive Banach spaces and to sectorial operators that are not necessarily injective.

\begin{thm}
\label{prop-KrW-R-bounded}
Let $X$ be a reflexive Banach space and $A$ be a sectorial operator acting on $X$ admitting a bounded $\H^\infty(\Sigma_\sigma)$ functional calculus for some angle $\sigma \in (0,\pi)$.
Let $r \in (1,2]$ such that $\frac1r > \frac{1}{\type X} - \frac{1}{\cotype X}$.
Consider some $\beta \geq 0$.
If for all $t > 0$ and any angle $\theta \in (-\frac{\pi}{2},\frac{\pi}{2})$, the set
\begin{equation*}
\label{}
\left\{ \exp(- e^{\i\theta} t 2^n A) :\: n \in \Z \right\}
\end{equation*}
is $R$-bounded with a control of its $R$-bound by $C \left( \frac{\pi}{2} - |\theta| \right)^{-\beta}$, where the constant $C$ is independent of $t$ and $\theta$, then the operator $A$ admits a bounded $\Hor^s_2(\R^*_+)$ H\"ormander functional calculus for any $s > \beta + \frac1r$.
\end{thm}

\begin{proof}
Note that the assumptions imply that $\omega_{\sec}(A) = 0$.
The result is shown in \cite[Theorem 7.1 (2)]{KrW18} in the case that $A$ also has dense range.
Then if the range of $A$ is not dense, its part $A_1$ on $\ovl{\Ran A}$ is sectorial of type 0, and the $R$-bound of its semigroup has the same control as the semigroup generated by $A$, since $\exp(-zA_1) = \exp(-zA)|_{\ovl{\Ran A}}$ according to \cite[Proposition p.~60]{EnN00}. Moreover, the subspace $\ovl{\Ran A}$ inherits type and cotype from $X$, as easily observed in \cite[p.~219]{DJT95}. Thus, applying \cite[Theorem 7.1 (2)]{KrW18} to $A_1$, we deduce that $f(A_1)$ is a bounded operator on $\ovl{\Ran A}$ for any function $f \in \Hor^s_2(\R^*_+)$. We conclude with the identity $f(A)|_{\ovl{\Ran A}} = f(A_1)$ valid for every function $f \in \H^\infty_0(\Sigma_\theta)$.
\end{proof}

This leads directly to the following result.

\begin{prop}
\label{prop-KrW18}
Suppose that $1 < p < \infty$ and $\beta \geq 0$. Let $-A$ be the generator of a bounded holomorphic semigroup $(T_z)_{\Re z > 0}$ acting on a noncommutative $\L^p$-space $\L^p(\cal{M})$. Suppose that $A$ has a bounded $\HI(\Sigma_\omega)$ functional calculus for some angle $\omega \in (0,\pi)$. Assume in addition that the family
\begin{equation}
\label{family-23}
\left\{ (\cos \arg z)^\beta T_z :  \Re z > 0 \right\} 
\end{equation}
of operators is $R$-bounded over the space $\L^p(\cal{M})$. Then the operator $A$ admits a bounded $\Hor^s_2(\R_+^*)$ H\"ormander calculus of any order $s > \beta + \frac12$.
\end{prop}

\begin{proof}
This is an immediate consequence of Theorem \ref{prop-KrW-R-bounded} (see also \cite[Theorem 7.1 (2) p.~424]{KrW18}). Indeed, note first that 
\begin{equation*}
\label{}
\frac{1}{\type X} - \frac{1}{\cotype X}
=\left|\frac{1}{2}-\frac{1}{p}\right| 
< \frac12,
\end{equation*}
where $X=\L^p(\cal{M})$ by \cite[Corollary 5.5 p.~1481]{PiX03}, so we can take $r=2$ in \cite[Theorem 7.1]{KrW18}. If the family \eqref{family-23} is $R$-bounded, then the subset 
\begin{equation*}
\label{}
\left\{ (\cos\theta)^\beta T_{e^{\i\theta}2^n t} :\: n \in \Z \right\}
\end{equation*} 
is also $R$-bounded, with uniformly controlled $R$-bound in $\theta \in (-\frac{\pi}{2},\frac{\pi}{2})$ and $t > 0$.
Then the $R$-boundedness assumption of Theorem \ref{prop-KrW-R-bounded} (see also \cite[(7.2)]{KrW18}) follows by dividing the aforementioned expression by $(\cos \theta)^\beta$ and observing that this expression is equivalent to $(\frac{\pi}{2} - |\theta|)^\beta$.
\end{proof}

We conclude with the following theorem.

\begin{thm}
\label{thm-514}
Suppose that $1 < p < \infty$. Consider a skew-symmetric matrix $\Theta \in \M_n(\R)$. Let $-\A$ be the infinitesimal generator of the semigroup of operators acting on $\L^p(\R^n)$ associated with the universal $\Theta$-Weyl tuple $A_\univ^\Theta = (A_1,\ldots,A_n)$ from Proposition \ref{prop-universal-Theta-Weyl-tuple}, that is, $\A$ is the closure of the operator $\sum_{j=1}^n A_j^2$. Then the operator $(\A - \alpha \Id) \ot \Id_{S^p}$ admits a bounded $\Hor^s_2(\R_+^*)$ H\"ormander functional calculus of order $s > \frac{n}{2} + \frac12$ on the Bochner space $\L^p(\R^n,S^p)$.
\end{thm}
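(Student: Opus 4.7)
The strategy is to assemble the three principal ingredients developed in Sections~\ref{subsec-shifted-HI-calculus}--\ref{subsec-R-sectoriality}: the bounded $\HI(\Sigma_\omega)$ functional calculus of $\A_\alpha$ (Corollary~\ref{Cor-512}), the $R$-boundedness of the rescaled complex-time semigroup (Theorem~\ref{thm-R-sectorial}), and the abstract sufficient condition of Kriegler--Weis (Proposition~\ref{prop-KrW18}). Since all the substantial analytic work has already been carried out, the theorem follows by combining these results.

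First, I would observe that $S^p$ is a UMD Banach space for every $1 < p < \infty$. Applying Corollary~\ref{Cor-512} with $X = S^p$ therefore yields that the operator $\A_\alpha = (\A - \alpha \Id_{\L^p(\R^n)}) \ot \Id_{S^p}$ admits a bounded $\HI(\Sigma_\omega)$ functional calculus on the Bochner space $\L^p(\R^n, S^p)$ for any $\omega \in (\frac{\pi}{2}, \pi)$. The boundedness of the holomorphic semigroup generated by $-\A_\alpha$, required as input to Proposition~\ref{prop-KrW18}, is provided by the first two parts of Proposition~\ref{prop-Hinfty-shifted} combined with the orthogonal similarity argument in the proof of Corollary~\ref{Cor-512}. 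Moreover, $\L^p(\R^n, S^p)$ is canonically a noncommutative $\L^p$-space, associated with the semifinite von Neumann algebra $\L^\infty(\R^n) \otvn \B(\ell^2)$ equipped with the tensor trace, so the framework of Proposition~\ref{prop-KrW18} applies.

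Next, I would identify the semigroup generated by $-\A_\alpha$: if $(T_z)_{\Re z > 0}$ denotes the bounded holomorphic semigroup from Proposition~\ref{prop-semigroup} associated with $\A$, then by elementary semigroup rescaling (\cite[2.2 p.~60]{EnN00}), the semigroup generated by $-\A_\alpha$ is $(S_z)_{\Re z > 0} \ov{\mathrm{def}}{=} (e^{\alpha z} T_z \ot \Id_{S^p})_{\Re z > 0}$. Theorem~\ref{thm-R-sectorial} guarantees that
\[
\big\{ \cos(\arg z)^{n/2} e^{\alpha \Re z} T_z \ot \Id_{S^p} : \Re z > 0 \big\}
\]
is $R$-bounded on $\L^p(\R^n, S^p)$. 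Since $|e^{\alpha z}| = e^{\alpha \Re z}$, the two families differ only by the unimodular scalar $e^{\i \alpha \Im z}$, and multiplication by uniformly bounded scalars preserves $R$-boundedness. Hence the family $\{\cos(\arg z)^{n/2} S_z : \Re z > 0\}$ is $R$-bounded.

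Finally, I would invoke Proposition~\ref{prop-KrW18} with $A = \A_\alpha$ and $\beta = \frac{n}{2}$ on the noncommutative $\L^p$-space $\L^p(\R^n, S^p)$: the bounded $\HI(\Sigma_\omega)$ functional calculus from the first step, combined with the $R$-boundedness from the second step, yields a bounded $\Hor^s_2(\R_+^*)$ functional calculus of order $s > \frac{n}{2} + \frac{1}{2}$, as desired. There is no substantial obstacle at this stage; the only formal verification is the passage from the factor $e^{\alpha \Re z}$ to $e^{\alpha z}$ via a unimodular rescaling, which is immediate. The real content of the theorem was distributed across the preceding sections -- the Mehler-type formula, the splitting reduction to the cases $\alpha \cal{J}$ and $0_{n-2k}$, the vector-valued maximal estimates of Section~\ref{subsec-maximal-estimates}, and the $\L^p(S^p)$-square-max decomposition leading to Theorem~\ref{thm-R-sectorial}.
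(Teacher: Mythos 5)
Your proposal is correct and follows essentially the same route as the paper's own proof: Corollary \ref{Cor-512} for the $\HI$ calculus, Theorem \ref{thm-R-sectorial} for the $R$-bound, and Proposition \ref{prop-KrW18} with $\beta=\frac{n}{2}$ to conclude. Your remark that the factor $e^{\alpha\Re z}$ in Theorem \ref{thm-R-sectorial} differs from the actual semigroup factor $e^{\alpha z}$ only by a unimodular scalar is a correct and slightly more careful handling of a point the paper passes over silently.
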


\begin{proof}
According to Corollary \ref{Cor-512}, the operator $(\A - \alpha \Id) \ot \Id_{S^p}$ has a bounded $\HI(\Sigma_\omega)$ functional calculus on the Bochner space $\L^p(\R^n, S^p)$ for any angle $\omega \in (\frac{\pi}{2},\pi)$. Moreover, according to Theorem \ref{thm-R-sectorial}, the semigroup $(S_z \ot \Id_{S^p})_{\Re z > 0}$ of operators acting on the space $\L^p(\R^n, S^p)$ of \eqref{def-Sz}, generated by the operator $(\A - \alpha \Id) \ot \Id_{S^p}$, is $R$-bounded in the following sense:
\[ 
R \left( \left\{ \left(\cos \arg z \right)^{\frac{n}{2}} S_z \ot \Id_{S^p}:  \Re z >0 \right\} \right) 
< \infty .
\]
Recall that 
\begin{equation*}
\label{}
\frac{1}{\type X} - \frac{1}{\cotype X}
=\left|\frac{1}{2}-\frac{1}{p}\right| 
< \frac12
\end{equation*}
 for the Banach space $X = \L^p(\R^n,S^p)$. Then according to Proposition \ref{prop-KrW18}, the operator $(\A - \alpha \Id) \ot \Id_{S^p}$ admits a bounded $\Hor^s_2(\R_+^*)$ H\"ormander calculus of order $s > \frac{n}{2} + \frac12$ on the Bochner space $\L^p(\R^n,S^p)$.
\end{proof}

Recall that for any angle $\omega > 0$ the algebra $\HI(\Sigma_\omega)$ injects continuously in the space $\Hor^s_2(\R_+^*)$ by Lemma \ref{lem-norm-comparisons-Hor-HI}. This implies the following result which says, with the terminology of \cite[Definition 3.7 p.~31]{JMX06}, that the operator $\A - \alpha \Id$ admits a \textit{completely} bounded $\H^\infty(\Sigma_\omega)$ functional calculus for any angle $\omega > 0$. 

\begin{cor}
\label{cor-bis}
Suppose that $1 < p < \infty$. Consider a skew-symmetric matrix $\Theta \in \M_n(\R)$. Let $-\A$ be the infinitesimal generator of the semigroup of operators acting on $\L^p(\R^n)$ associated with the universal $\Theta$-Weyl tuple $A_\univ^\Theta = (A_1,\ldots,A_n)$ from Proposition \ref{prop-universal-Theta-Weyl-tuple}, that is, $\A$ is the closure of the operator $\sum_{j=1}^n A_j^2$. Then the operator $(\A - \alpha \Id) \ot \Id_{S^p}$ admits a bounded $\H^\infty(\Sigma_\omega)$ functional calculus for any angle $\omega >0$  on the Bochner space $\L^p(\R^n,S^p)$, i.e.~$\omega_{\H^\infty}(\A - \alpha \Id) \ot \Id_{S^p})=0$.
\end{cor}

\begin{example} \normalfont
\label{example-twisted-Lap-III}
In light of Example \ref{Example-twisted-Laplacian} and Example \ref{twisted-bis}, both results apply to the twisted Laplacian.
\end{example}

\begin{example} \normalfont
\label{example-Lap-new}
By Example \ref{Example-Heat}, Theorem \ref{thm-514} applies to the classical Laplacian $-\Delta$ on the Euclidean space $\R^n$. Note that the order $s > \frac{n}{2}+\frac12$ in our H\"ormander functional calculus on the Bochner space $\L^p(\R^n,S^p)$ appears to be new, even for this fundamental operator.
Previously, it was shown in~\cite[Theorem 1.1 (3)]{Hyt04} that the operator $-\Delta$ admits a Mikhlin calculus of order $N = \lfloor \tfrac{n}{\min(p, p^*)} \rfloor + 1$. This means that spectral multipliers $f(-\Delta)$ are bounded on the Bochner space $\L^p(\R^n,S^p)$ provided that
\begin{equation}
\label{equ-example-Lap-new}
\big| \xi^k f^{(k)}(\xi) \big| 
\leq C, \quad \xi > 0,\: k= 0, 1, \ldots, N.
\end{equation}

Note that a H\"ormander $\Hor^s_2(\R^*_+)$ function $f$ with order $s > N + \frac12$ satisfies \eqref{equ-example-Lap-new}. Indeed, according to the Sobolev embedding theorem \cite[Proposition 2.5, p.~172]{LaM89}, the Sobolev space $\W^{s,2}(\R)$ embeds into the Banach space $\mathrm{C}^N_b(\R)$\label{CbRn} of functions of class $\mathrm{C}^k$ with bounded derivatives up to order $N$, for any $s > N + \frac{1}{2}$. Recall that the norm of a function $u$ in the space $\mathrm{C}^N_b(\R)$ is given by
\begin{equation*}
\label{}
\norm{u}_{\mathrm{C}^N_b(\R)}
\ov{\mathrm{def}}{=} \sup_{x \in \R^n,0 \leq k \leq N} |u^k(x)|.
\end{equation*}

Consider some function $f \in \Hor^s_2(\R^*_+)$.  
The function $g_t \ov{\mathrm{def}}{=} \eta f(t\, \cdot)$ belongs to the space $\W^{s,2}(\R)$ uniformly in $t > 0$. In particular, we have the Sobolev embedding $\W^{s,2}(\R) \hookrightarrow \mathrm{C}_b^k(\R)$ if $s > k + \tfrac{1}{2}$. Thus for any integer $k \in \llbracket 0, N \rrbracket$ the function $g_t^{(k)}$ is continuous and uniformly bounded by a constant $C$. Fix any point $x_0$ with $\eta(x_0) \not=0$ and write $\xi \ov{\mathrm{def}}{=} t x_0$. We aim to estimate $|\xi^k f^{(k)}(\xi)|$ by induction. If $k = 0$ then  
\[
|f(\xi)|
=|f(tx_0)| 
= \left|\frac{g_t(x_0)}{\eta(x_0)} \right|
\leq \frac{C}{\eta(x_0)}.
\]
Assume that for all integer $j \in \llbracket 0,k \rrbracket$, there exists a positive constant $C_j > 0$ such that
\begin{equation}
\label{inter-final}
|f^{(j)}(\xi)| 
\leq C_j |\xi|^{-j}.
\end{equation}
Using Leibniz's rule, we see that
\[
g_t^{(k)}(x) 
= \sum_{j=0}^k \binom{k}{j} \eta^{(k-j)}(x)  t^j f^{(j)}(t x), \quad x>0.
\]
Replacing $x$ by $x_0$ and isolating the leading term, we obtain
\begin{align*}
\MoveEqLeft
|t^k f^{(k)}(\xi)| 
= \left|\frac{g_t^{(k)}(x_0)}{\eta(x_0)} - \sum_{j=0}^{k-1} \binom{k}{j} \frac{\eta^{(k-j)}(x_0)}{\eta(x_0)} t^j f^{(j)}(\xi)\right| \\         
&\leq \frac{\big|g_t^{(k)}(x_0)\big|}{\eta(x_0)} + \sum_{j=0}^{k-1} \binom{k}{j} \frac{\eta^{(k-j)}(x_0)}{\eta(x_0)} t^j \big|f^{(j)}(\xi)\big| \\
&\ov{\eqref{inter-final}}{\leq} \frac{C}{\eta(x_0)}+\sum_{j=0}^{k-1} \binom{k}{j} \frac{\eta^{(k-j)}(x_0)}{\eta(x_0)} \frac{t^j}{|\xi|^{j}} C_j 
=\frac{C}{\eta(x_0)}+\sum_{j=0}^{k-1} \binom{k}{j} \frac{\eta^{(k-j)}(x_0)}{\eta(x_0)} \frac{1}{x_0^{j}} C_j.
\end{align*}
Hence $|\xi|^k |f^{(k)}(\xi)| \leq C_k$ for some constant $C_k$. By induction up to order $N$, we obtain the claim.


The H\"ormander functional calculus of order $s > \frac{n}{2} + \frac{1}{2}$ for the classical Laplacian on the Bochner space $\L^p(\R^n, Y)$, where $Y$ is a UMD lattice, was established in~\cite[Corollary 4.23 (2)]{DKK21}.
\end{example}

\section{Functional calculus for the harmonic oscillator associated to $\Theta$-Weyl tuples}
\label{subsec-functional-calculus-general-Theta-Weyl-tuple}

Consider a skew-symmetric matrix $\Theta \in \M_n(\R)$. Thanks to the transference result from Corollary \ref{cor-transference}, we can generalize the H\"ormander functional calculus of the semigroup associated with the universal $\Theta$-Weyl tuple $A_\univ^\Theta$ to semigroups associated with any $\Theta$-Weyl tuple acting on a noncommutative $\L^p$-space associated with an injective (=approximate finite-dimensional) semifinite von Neumann algebra. We will use the notion of completely bounded maps between operator spaces and we refer to the books \cite{BLM04}, \cite{EfR00} and \cite{Pis98} for background. If $1 \leq p < \infty$, it is known \cite[Lemma 1.7 p.~23]{Pis98} that a linear map $T \co E \to F$ between operator spaces is completely bounded\label{def-completely-bounded} if and only if it induces a bounded operator $\Id_{S^p} \ot T \co S^p(E) \to S^p(F)$ on the vector-valued Schatten space $S^p(E)$ of \cite[p.~18]{Pis98}. In this case, we have
\begin{equation*}
\label{norm-cb-Sp}
\norm{T}_{\cb, E \to F}
=\norm{\Id_{S^p} \ot T}_{S^p(E) \to S^p(F)}.
\end{equation*}
Moreover, for any \textit{injective} von Neumann algebra $\cal{M}$ equipped with a normal semifinite faithful trace, a completely bounded map $T \co E \to F$ between operator spaces induces by \cite[(3.1) p.~39]{Pis98} a completely bounded map $\Id_{\L^p(\cal{M})} \ot T \co \L^p(\cal{M},E) \to \L^p(\cal{M},F)$ with 
\begin{equation}
\label{majo-cb}
\norm{\Id_{\L^p(\cal{M})} \ot T}_{\cb, \L^p(\cal{M},E) \to \L^p(\cal{M},F)} 
\leq \norm{T}_{\cb,E \to F}.
\end{equation}

\begin{prop}
\label{prop-nc-transference}
Let $\cal{M}$ be an injective von Neumann algebra with separable predual equipped with a normal semifinite faithful trace. Suppose that $1 < p < \infty$. Let $A$ be a $\Theta$-Weyl tuple on the noncommutative $\L^p$-space $\L^p(\cal{M})$. Let us introduce the constant 
\begin{equation*}
\label{}
M_A \ov{\mathrm{def}}{=} \sup_{t \in \R^n} \norm{e^{\i t\cdot A}}_{\L^p(\cal{M}) \to \L^p(\cal{M})}.
\end{equation*} 
Let $A_{\univ}^\Theta$ be the universal $\Theta$-Weyl tuple on $\L^p(\R^{n})$ from Proposition \ref{prop-universal-Theta-Weyl-tuple}. For any tempered distribution $a$ with $\hat{a} \in \L^1(\R^n)$, we have
\begin{equation}
\label{trans-134}
\norm{a(A)}_{\L^p(\cal{M}) \to \L^p(\cal{M})} 
\leq M_A^2 \norm{a(A_{\univ}^\Theta)}_{\cb, \L^p(\R^{n}) \to \L^p(\R^{n})}.
\end{equation}
\end{prop}

\begin{proof}
Using the transference result of Corollary \ref{cor-transference} in the first estimate and Fubini's theorem \cite[(3.6')]{Pis98} in the second equality, we obtain
\begin{align*}
\MoveEqLeft
\norm{a(A)}_{\L^p(\cal{M}) \to \L^p(\cal{M})}          
\ov{\eqref{transference-estimate}}{\leq} M_A^2 \norm{C_{\hat{a}}^\Theta}_{\L^p(\R^n,\L^p(\cal{M})) \to \L^p(\R^n,\L^p(\cal{M}))} \\
&\ov{\eqref{aA-as-twisted-conv}}{=} M_A^2 \norm{a(A_{\univ}^\Theta) \ot \Id_{\L^p(\cal{M})}}_{\L^p(\R^n,\L^p(\cal{M})) \to \L^p(\R^n,\L^p(\cal{M}))} \\
&=M_A^2 \norm{\Id_{\L^p(\cal{M})} \ot a(A_{\univ}^\Theta)}_{\L^p(\cal{M},\L^p(\R^n)) \to \L^p(\cal{M},\L^p(\R^n))} \\
&\ov{\eqref{majo-cb}}{\leq} M_A^2 \norm{a(A_{\univ}^\Theta)}_{\cb, \L^p(\R^{n}) \to \L^p(\R^{n})}.
\end{align*}
\end{proof}

\begin{remark} \normalfont
In this paper, we only use the case where the von Neumann algebra $\cal{M}$ is injective. However, we could replace in Proposition \ref{prop-nc-transference} the word <<injective>> by the much more general assumption $\QWEP$\label{QWEP}, which is the Kirhcberg's quotient weak expectation property. Indeed, we can use \cite[(iii) p.~984]{Jun04}, whose proof is unfortunately unpublished, instead of \cite[(3.1)]{Pis98}. The same remark  also applies to Corollary \ref{cor-Hormander-calculus-for-Theta-Weyl-tuple-on-LpM}. Recall that by \cite[Theorem 1.1]{AHW16} a von Neumann algebra $\cal{M}$ acting on a complex Hilbert space $H$ has $\QWEP$ if and only if it belongs to the closure of the subset $\cal{F}_{\mathrm{inj}}$ of injective factors on $H$ in the space of von Neumann algebras acting on $H$, endowed with the Effros-Mar\'echal topology. We refer to \cite{AHW16}, \cite{Gol22} and \cite{Oza04} for more information on $\QWEP$ von Neumann algebras.
\end{remark}

Let $\M(\R_+)$ be the space of all bounded Borel measures on $\R_+$. Then $\M(\R_+)$ is a Banach algebra under convolution of measures. 
The Laplace transform $\scr{L}\mu$ of a bounded Borel measure $\mu$ on $[0,\infty)$ is defined by
\begin{equation*}
\label{Laplace-transform}
(\scr{L}\mu)(z) 
\ov{\mathrm{def}}{=} \int_0^\infty e^{-sz} \d\mu(s),
\quad \Re z > 0.
\end{equation*}
This is a bounded holomorphic function on $\C_+$.
Let 
\begin{equation*}
\label{def-LM}
\scr{L}\M 
\ov{\mathrm{def}}{=}  \{\scr{L}\mu : \mu \in \M(\R_+)\}.
\end{equation*}
The map $\M(\R_+) \to \H^\infty(\C_+)$, $\mu \mapsto \scr{L} \mu$ is an injective algebra homomorphism. Thus $\scr{L}\M$ becomes a Banach algebra for the norm $\norm{\scr{L}\mu}_{\HP} \ov{\mathrm{def}}{=} \norm{\mu}_{\M(\R_+)}$ and the pointwise product.

Consider a strongly continuous bounded semigroup $(T_t)_{t \geq 0}$ of operators acting on a Banach space $X$ with infinitesimal generator $-A$. The Hille-Phillips functional calculus \cite[Chapter XV]{HiP74} \cite[Section 3.3]{Haa06} for the operator $A$ is defined as follows. If $f=\scr{L}\mu$ is the Laplace transform of a bounded Borel measure $\mu$ on $\R_+$, we can define the bounded linear operator $f(A)_{\HP} \co X \to X$ by
\begin{align}
\label{Hille-Phillips}
f(A)_{\HP}x
\ov{\mathrm{def}}{=} \int_{0}^{\infty} T_t x \d \mu(t), \quad x \in X
\end{align} 
where the integral converges in the strong operator topology. In this context, we use the subscript $\HP$ to avoid confusion with the notation \eqref{Weyl-calculus} introduced for the $\Theta$-Weyl functional calculus. The map $\scr{L}\M \to \B(X)$, $f \mapsto f(A)_{\HP}$ is a continuous algebra homomorphism with norm less than $\sup_{t \geq 0} \norm{T_t}$, known as the Hille-Phillips calculus. It is well-known that the function $e_t \co \mathbb{C}_+ \to \mathbb{C}$, $z \mapsto e^{-tz}$ belongs to $\scr{L}\M$ for any $t>0$ and that $e_t(A) = \exp(-tA)$ for any $t>0$.

The Hille-Phillips calculus is compatible with the sectorial functional calculus, i.e.~any $f$ of $\HI_0(\Sigma_\omega)$, where $\omega \in (\frac{\pi}{2},\pi)$ belongs to $\scr{L}\M$ and we have $f(A)_{\HP}=f(A)_{\mathrm{sect}}$ where $f(A)_{\mathrm{sect}}$ is defined by the sectorial functional calculus. We refer to \cite[Remark 3.3.3 p.~74]{Haa06}.

The following lemma addresses the technical difficulty to exploit the transference from Proposition \ref{prop-nc-transference} when we are faced with an operator $f(A)$ provided by the functional calculus.


\begin{lemma}
\label{lem-Hille-Phillips} 
Consider a strongly continuous projective representation $\pi$ of $\R^n$ on a separable Banach space $X$ and a strongly continuous semigroup $(T_t)_{t \geq 0}$ of operators on $X$ such that there exists a family $(p_t)_{t > 0}$ of integrable functions over $\R^n$ such that
\begin{equation}
\label{semi-int-rep-bis}
T_t 
= \int_{\R^n} p_t(s) \pi_s \d s,\quad t > 0 
\quad \text{with} \quad \sup_{t > 0} \norm{p_t}_{\L^1(\R^n)} 
< \infty.
\end{equation}
Let $f \in \scr{L}\M$ be the Laplace transform of some bounded Borel measure $\mu$ on $\R_+$. Then there exists a function $k_f \in \mathrm{C}_0(\R^n)$ such that $\hat{k}_f \in \L^1(\R^n)$ and $f(A)_{\HP}x = \int_{\R^n} \hat{k}_f(s) \pi_s x \d s$ for any $x \in X$. Here $\hat{k}_f$ is the Fourier transform of the tempered distribution $k_f$.
\end{lemma}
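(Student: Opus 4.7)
The plan is to plug the integral kernel representation of the semigroup into the Hille--Phillips formula, exchange the order of integration, and then recover $k_f$ by Fourier inversion.

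Starting from \eqref{Hille-Phillips} and \eqref{semi-int-rep-bis}, for any $x \in X$ I would write
\begin{equation*}
f(A)_{\HP}x
\ov{\eqref{Hille-Phillips}}{=} \int_0^\infty T_t x \d\mu(t)
\ov{\eqref{semi-int-rep-bis}}{=} \int_0^\infty \bigg( \int_{\R^n} p_t(s) \pi(s) x \d s \bigg) \d\mu(t),
\end{equation*}
so the goal is to swap the two integrals and to isolate the $s$-dependent weight
$\phi(s) \ov{\mathrm{def}}{=} \int_0^\infty p_t(s) \d\mu(t)$.
The exchange is justified by Fubini applied to the Bochner integrand $(t,s) \mapsto p_t(s) \pi(s) x$: the projective representation $\pi$ is uniformly bounded by some constant $M < \infty$ (this is automatic in the context where $\pi$ arises from a $\Theta$-Weyl tuple via Lemma \ref{lem-Weyl-product}, and is part of the standing assumption), so
$$
\int_0^\infty \int_{\R^n} |p_t(s)|\, \norm{\pi(s)x}_X \d s \d|\mu|(t)
\leq M \norm{x}_X \cdot \sup_{t > 0}\norm{p_t}_{\L^1(\R^n)} \cdot |\mu|(\R_+)
< \infty.
$$
The same estimate shows that $\phi$ belongs to $\L^1(\R^n)$, with $\norm{\phi}_{\L^1(\R^n)} \leq \sup_{t > 0}\norm{p_t}_{\L^1(\R^n)} \cdot |\mu|(\R_+)$.

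Once this is in place, I would define $k_f$ as the inverse Fourier transform, in the sense of tempered distributions, of the function $\phi$, so that $\hat{k}_f = \phi$ by construction. Since $\phi \in \L^1(\R^n)$, the Riemann--Lebesgue lemma gives that $k_f$ is a continuous function vanishing at infinity, i.e.~$k_f \in \mathrm{C}_0(\R^n)$. The relation $\hat{k}_f \in \L^1(\R^n)$ then holds trivially, and the desired formula
\begin{equation*}
f(A)_{\HP}x
= \int_{\R^n} \phi(s) \pi(s) x \d s
= \int_{\R^n} \hat{k}_f(s) \pi(s) x \d s
\end{equation*}
follows from the previous step.

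The only non-routine point is the justification of Fubini in the Bochner-integral sense, which hinges on the uniform boundedness of the projective representation $\pi$. Everything else (finiteness of $\mu$, uniform $\L^1$-control of $p_t$, Riemann--Lebesgue) is classical; the statement is essentially a bookkeeping lemma that recasts the Hille--Phillips calculus of the generator $A$ into the Weyl-type integral form \eqref{Weyl-calculus} required to later feed into the twisted transference of Corollary \ref{cor-transference}.
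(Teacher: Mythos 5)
Your proposal is correct and follows essentially the same route as the paper's proof: the same Fubini exchange justified by the same absolute-convergence estimate, the same definition of $\hat k_f$ as $s\mapsto\int_0^\infty p_t(s)\,\d\mu(t)$, and the same appeal to Riemann--Lebesgue for $k_f\in\mathrm{C}_0(\R^n)$. Your explicit remark that the uniform boundedness of $\pi$ underlies the estimate $\norm{\pi_s x}_X\lesssim\norm{x}_X$ is a point the paper leaves implicit, but it changes nothing of substance.
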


\begin{proof}
The Hille-Phillips calculus yields 
\begin{equation}
\label{equ-1-proof-lem-Hille-Phillips}
f(A)_{\HP}x 
\ov{\eqref{Hille-Phillips}}{=} \int_0^\infty T_t x \d\mu(t) 
\ov{\eqref{def-Sz}}{=} \int_0^\infty \bigg(\int_{\R^n} p_t(s) \pi_s x \d s \bigg) \d\mu(t).
\end{equation}
According to \eqref{semi-int-rep-bis}, we have
\[
\int_0^\infty \bigg(\int_{\R^n} |p_t(s)| \d s \bigg) \d\mu(t) 
\lesssim \sup_{t > 0} \bnorm{p_t}_{\L^1(\R^n)} \norm{\mu}_{\M(\R^+)} 
< \infty.
\]
This shows that the function $g \co \R^n \to \mathbb{C}$, $s \mapsto \int_0^\infty p_t(s) \d\mu(t)$ belongs to the space $\L^1(\R^n)$. So we can introduce the inverse Fourier transform $k_f \ov{\mathrm{def}}{=} \scr{F}^{-1}g$ which belongs to $\mathrm{C}_0(\R^n)$. Now
\[
\int_0^\infty \bigg(\int_{\R^n} |p_t(s)| \norm{\pi_s x}_X \d s \bigg) \d\mu(t) 
\lesssim \sup_{t > 0} \norm{p_t}_{\L^1(\R^n)} \norm{\mu}_{\M(\R^+)} 
< \infty.
\]
Thus we can change the order of integration in \eqref{equ-1-proof-lem-Hille-Phillips} and we obtain
\[ 
f(A)_{\HP}x 
= \int_{\R^n} \bigg(\int_0^\infty p_t(s) \d\mu(t) \bigg) \pi_s x \d s 
= \int_{\R^n} \hat{k}_f(s) \pi_s x \d s.
\]
\end{proof}

Recall that $\alpha \geq 0$ is defined in \eqref{def-alpha}.

\begin{cor}
\label{cor-Hormander-calculus-for-Theta-Weyl-tuple-on-LpM}
Let $A=(A_1,\ldots,A_n)$ be a $\Theta$-Weyl tuple on a noncommutative $\L^p$-space $\L^p(\cal{M})$  associated with an injective von Neumann algebra $\cal{M}$ with separable predual, where $1 < p < \infty$. Then the closure $\A -\alpha \Id$ of the operator $\sum_{k=1}^n A_k^2 - \alpha \Id$ admits a bounded H\"ormander $\Hor^s_2(\R_+^*)$ functional calculus of order $s > \frac{n}{2} + \frac12$ on the Banach space $\L^p(\cal{M})$.
\end{cor}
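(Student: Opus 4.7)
The plan is to combine the transference principle from Proposition \ref{prop-nc-transference} with the cb-bounded Hörmander functional calculus for the universal $\Theta$-Weyl tuple established in Theorem \ref{thm-514}, and then extend from $\HI_0$ to the full Hörmander algebra via the approximation Lemma \ref{Lemma-approximation-Hormander}.

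First I would fix an angle $\omega \in (\frac{\pi}{2},\pi)$ and take a function $f \in \HI_0(\Sigma_\omega)$. By Proposition \ref{prop-not}, the semigroup $(S_t)_{t \geq 0}$ generated by $\A - \alpha\Id$ is bounded holomorphic of angle $\frac{\pi}{2}$, and admits the integral representation $S_t x = \int_{\R^n} b_t(u) e^{\i u \cdot A} x \d u$ with the key feature, stressed at the end of that proposition, that $b_t=b_t^\Theta$ does not depend on the particular $\Theta$-Weyl tuple $A$. Since $f \in \HI_0(\Sigma_\omega) \subset \scr{L}\M$, the Hille--Phillips calculus applies, and Lemma \ref{lem-Hille-Phillips} produces a function $k_f \in \mathrm{C}_0(\R^n)$ with $\hat{k}_f \in \L^1(\R^n)$ such that, for every $\Theta$-Weyl tuple $A$,
\[
f(\A - \alpha\Id)_{\HP} x = \int_{\R^n} \hat{k}_f(s) e^{\i s \cdot A} x \d s, \quad x \in \L^p(\cal{M}).
\]
Again the crucial point is that $k_f$ is constructed from $b_t^\Theta$ and the Borel measure representing $f$, so it depends only on $f$ and $\Theta$. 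Comparing with formula \eqref{Weyl-calculus}, this identifies $f(\A - \alpha\Id) = a_f(A)$ where $a_f \ov{\mathrm{def}}{=} (2\pi)^{n/2} k_f$ satisfies $\hat{a}_f \in \L^1(\R^n)$, and the same identity holds for the universal $\Theta$-Weyl tuple $A_{\univ}$ on $\L^p(\R^n)$ with the \emph{same} symbol $a_f$.

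Since $\cal{M}$ is QWEP, Proposition \ref{prop-nc-transference} applies to $a_f$ and yields
\[
\norm{f(\A - \alpha\Id)}_{\L^p(\cal{M}) \to \L^p(\cal{M})} = \norm{a_f(A)} \leq M_A^2 \, \norm{a_f(A_{\univ})}_{\cb} = M_A^2 \, \norm{f(\A_{\univ} - \alpha\Id)}_{\cb,\L^p(\R^n)\to \L^p(\R^n)},
\]
with $M_A < \infty$ built into Definition \ref{defi-Weyl-tuple}. The cb-norm on the right is by definition the norm of the tensor extension on $\L^p(\R^n,S^p)$, and Theorem \ref{thm-514} bounds it by $C\norm{f}_{\Hor^s_2(\R_+^*)}$ for any $s > \frac{n}{2} + \frac12$. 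Combining these two inequalities,
\[
\norm{f(\A - \alpha\Id)}_{\L^p(\cal{M}) \to \L^p(\cal{M})} \lesssim \norm{f}_{\Hor^s_2(\R_+^*)}, \quad f \in \HI_0(\Sigma_\omega).
\]
Finally, since the closure $\A - \alpha\Id$ is sectorial of type $\leq \frac{\pi}{2}$ (by the first part of Proposition \ref{prop-semigroup} applied to the rescaled semigroup $(S_t)$), Lemma \ref{Lemma-approximation-Hormander} upgrades this $\HI_0(\Sigma_\omega)$ estimate to a bounded $\Hor^s_2(\R_+^*)$ functional calculus on $\L^p(\cal{M})$.

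The main obstacle, and where I would spend the most care, is the identification step $f(\A - \alpha\Id)_{\HP} = a_f(A)$ with a symbol $a_f$ that is \emph{independent of the tuple}: without that universality, transference from $A$ to $A_{\univ}$ cannot be applied to the Hille--Phillips object $f(\A-\alpha\Id)$. This is where the independence of $b_t^\Theta$ on $A$ from Proposition \ref{prop-not} (and the explicit kernel formulas of Proposition \ref{prop-semigroup} via Robinson's machinery and the Mackey group) is doing the essential work.
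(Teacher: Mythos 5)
Your proposal is correct and follows essentially the same route as the paper's own proof: Hille--Phillips representation of $f(\A-\alpha\Id)$ via Lemma \ref{lem-Hille-Phillips} and the tuple-independence of the kernel from Proposition \ref{prop-not}, transference to the universal tuple via Proposition \ref{prop-nc-transference}, the cb-Hörmander bound of Theorem \ref{thm-514}, and the upgrade from $\HI_0(\Sigma_\omega)$ to the full $\Hor^s_2$ calculus via Lemma \ref{Lemma-approximation-Hormander}. You correctly identify the universality of the symbol $k_f$ as the crux of the argument, which is exactly the role Proposition \ref{prop-not} plays in the paper.
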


\begin{proof}
We claim that for any angle $\omega \in (\frac{\pi}{2},\pi)$ and any function $f$ of the space $\HI_0(\Sigma_\omega)$ we have the estimate
\begin{equation}
\label{equ-1-proof-cor-Hormander-calculus-for-Theta-Weyl-tuple-on-LpM}
\norm{f(\A - \alpha \Id) }_{\L^p(\cal{M}) \to \L^p(\cal{M})} 
\leq C \norm{f}_{\Hor^s_2(\R_+^*)}.
\end{equation}
Indeed, according to \cite[Lemma 3.3.1 p.~72]{Haa06}, \cite[Lemma 2.12]{LM99}, the function $f$ belongs to the space $\scr{L} M$. Using Lemma \ref{lem-Hille-Phillips} and Proposition \ref{prop-not}, we can write 
\begin{equation*}
\label{}
f(\A - \alpha \Id)_{\HP} 
= k_{f}(A)
\end{equation*}
for some function $k_{f} \in \mathrm{C}_0(\R^n)$ with $\hat{k}_{f} \in \L^1(\R^n)$. Thus, by Proposition \ref{prop-nc-transference} and Theorem \ref{thm-514} applied to the universal $\Theta$-Weyl tuple $A_{\univ}^\Theta = (A_{\univ,1},\ldots,A_{\univ,n})$ acting on the space $\L^p(\R^n)$ and the closure $\cal{B}$ of the unbounded operator $\sum_{k=1}^n A_{\univ,k}^2$, we obtain
\begin{align*}
\MoveEqLeft
\norm{f(\A - \alpha \Id)_{\HP}}_{\L^p(\cal{M}) \to \L^p(\cal{M})}  
= \norm{k_{f}(A)}_{\L^p(\cal{M}) \to \L^p(\cal{M})}  \\ 
&\ov{\eqref{trans-134}}{\leq} M_A^2 \norm{k_{f}(A_{\univ})}_{\cb,\L^p(\R^n) \to \L^p(\R^n)} \\
&= M_A^2 \norm{f(\cal{B} - \alpha \Id)_{\HP}}_{\cb,\L^p(\R^n) \to \L^p(\R^n)} 
\lesssim \norm{f}_{\Hor^s_2(\R_+^*)}.         
\end{align*}
Consequently \eqref{equ-1-proof-cor-Hormander-calculus-for-Theta-Weyl-tuple-on-LpM} is proved. 
\end{proof}

Using the fact that $\Hor^s_2(\R_+^*)$ is a Banach algebra into which $\HI(\Sigma_\omega)$ injects continuously according to Lemma \ref{lem-norm-comparisons-Hor-HI}, 
we obtain the following consequence.

\begin{cor}
\label{cor-Hormander-calculus-for-Theta-Weyl-tuple-on-LpM-bis}
Let $A=(A_1,\ldots,A_n)$ be a $\Theta$-Weyl tuple on a noncommutative $\L^p$-space $\L^p(\cal{M})$  associated with an injective von Neumann algebra $\cal{M}$ with separable predual, where $1 < p < \infty$. Then the closure $\A -\alpha \Id$ of the operator $\sum_{k=1}^n A_k^2 - \alpha \Id$ admits a bounded $\H^\infty(\Sigma_\omega)$ functional calculus for any angle $\omega >0$ on the Banach space $\L^p(\cal{M})$, i.e.~$\omega_{\H^\infty}(\A -\alpha \Id)=0$.
\end{cor}

Let us mention that parts of the previous program also work if the $\Theta$-Weyl tuple $A$ acts on a $\UMD$ Banach space $X$ instead of a noncommutative $\L^p$-space.

\begin{prop}
\label{prop-Theta-Weyl-tuple-HI-calculus-UMD-space}
Let $A = (A_1,\ldots,A_n)$ be a $\Theta$-Weyl tuple acting on a separable $\UMD$ Banach space $X$. Then the closure $\A - \alpha \Id$ of the operator $\sum_{k=1}^n A_k^2 - \alpha \Id$ acting on $X$ admits a bounded $\HI(\Sigma_\omega)$ functional calculus for any angle $\omega \in (\frac{\pi}{2},\pi)$, i.e.~$\omega_{\H^\infty}(\A - \alpha \Id) \leq \frac{\pi}{2}$.
\end{prop}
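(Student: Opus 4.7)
The strategy is to mimic the proof of Corollary \ref{cor-Hormander-calculus-for-Theta-Weyl-tuple-on-LpM}, but with two substitutions: replacing the H\"ormander calculus provided by Theorem \ref{thm-514} by the (weaker) $\HI$ calculus provided by Corollary \ref{Cor-512}, and replacing the noncommutative $\cb$-transference of Proposition \ref{prop-nc-transference} by the general twisted transference of Corollary \ref{cor-transference}, which does not require any algebraic structure on the target space.

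First I would observe that, by Proposition \ref{prop-not}, the closure $\A - \alpha \Id$ generates a bounded holomorphic semigroup $(S_t)_{t \geq 0}$ on $X$ which admits an integral representation against kernels $b_t^{\Theta}$ satisfying $\sup_{t > 0} \norm{b_t^\Theta}_{\L^1(\R^n)} < \infty$. In particular, $\A - \alpha \Id$ is a sectorial operator of angle $< \frac{\pi}{2}$, so its sectorial $\HI(\Sigma_\omega)$ calculus is well-defined for any $\omega > \frac{\pi}{2}$. Fix such an angle $\omega$ and pick $f \in \HI_0(\Sigma_\omega)$. Since $f \in \scr{L}\M$ and the Hille-Phillips calculus is compatible with the sectorial one, Lemma \ref{lem-Hille-Phillips} applied to $(S_t)_{t \geq 0}$ produces a function $k_f \in \mathrm{C}_0(\R^n)$ with $\hat{k}_f \in \L^1(\R^n)$ such that $f(\A - \alpha \Id)_{\HP} x = k_f(A) x$ for all $x \in X$, in the sense of the $\Theta$-Weyl calculus of Definition \ref{defi-Weyl-calculus} (see Remark \ref{rem-tempered}). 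Crucially, the function $k_f$ depends only on $f$ and $\Theta$, and not on the particular $\Theta$-Weyl tuple, because Proposition \ref{prop-not} asserts that the kernels $b_z^\Theta$ are the same for every tuple.

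Next I would fix some $1 < p < \infty$ and apply the twisted transference of Corollary \ref{cor-transference} to get
\[
\norm{f(\A - \alpha \Id)_{\HP}}_{X \to X} = \norm{k_f(A)}_{X \to X} \leq M_A^2 \norm{C_{\hat{k}_f}^\Theta}_{\L^p(\R^n,X) \to \L^p(\R^n,X)} .
\]
By Proposition \ref{prop-universal-Theta-Weyl-tuple} and \eqref{aA-as-twisted-conv}, the twisted convolution operator $C_{\hat{k}_f}^\Theta$ acting on $\L^p(\R^n,X)$ coincides with $k_f(A_{\univ}) \ot \Id_X$, where $A_{\univ}$ is the universal $\Theta$-Weyl tuple on $\L^p(\R^n)$; applying Lemma \ref{lem-Hille-Phillips} again (now to the universal tuple, with the same kernel $k_f$ by the independence noted above), this operator equals $f(\A_{\univ,\alpha})$ in the sense of Hille-Phillips, where $\A_{\univ,\alpha}$ denotes the shifted sum of squares for $A_\univ$ tensored with $\Id_X$. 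Since $X$ is UMD, Corollary \ref{Cor-512} bounds this by $\lesssim \norm{f}_{\HI(\Sigma_\omega)}$. Combining, one gets $\norm{f(\A - \alpha \Id)}_{X \to X} \lesssim \norm{f}_{\HI(\Sigma_\omega)}$ for every $f \in \HI_0(\Sigma_\omega)$. The extension to all of $\HI(\Sigma_\omega)$ is then the standard convergence lemma argument already used in the proof of Lemma \ref{Lemma-approximation-Hormander}.

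The main technical point I expect to handle carefully is the transfer of identities between the Hille-Phillips and $\Theta$-Weyl calculi on the two different spaces $X$ and $\L^p(\R^n,X)$: one needs that Lemma \ref{lem-Hille-Phillips} produces literally the same kernel $k_f$ whether one feeds it the semigroup $(S_t)$ on $X$ or the universal semigroup on $\L^p(\R^n,X)$. This is precisely ensured by the final sentence of Proposition \ref{prop-not}, which states that $b_z^\Theta$ is independent of the particular $\Theta$-Weyl tuple.
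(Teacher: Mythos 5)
Your proposal is correct and follows essentially the same route as the paper's own proof: Hille--Phillips representation via Lemma \ref{lem-Hille-Phillips} and Proposition \ref{prop-not} (giving a kernel $k_f$ independent of the tuple), the twisted transference estimate \eqref{transference-estimate} combined with \eqref{aA-as-twisted-conv}, and finally Corollary \ref{Cor-512} on $\L^p(\R^n,X)$. The only difference is your explicit closing appeal to the convergence lemma to pass from $\HI_0(\Sigma_\omega)$ to $\HI(\Sigma_\omega)$, which is harmless but unnecessary since boundedness of the calculus is defined by the estimate on $\HI_0(\Sigma_\omega)$.
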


\begin{proof}
Assume $1 < p < \infty$. Consider the universal $\Theta$-Weyl tuple 
\begin{equation*}
\label{}
A_{\univ}^\Theta 
= (A_{\univ,1},\ldots,A_{\univ,n}),
\end{equation*}
acting on the Banach space $\L^p(\R^n)$ and the closure $\mathcal{B}$ of the operator $\sum_{k=1}^n A_{\univ,k}^2$. By Corollary \ref{Cor-512}, the tensor extension of $\cal{B}_\alpha = (\cal{B} - \alpha \Id_{\L^p(\R^n)}) \ot \Id_{X}$ admits a bounded $\HI(\Sigma_\omega)$ functional calculus on the Bochner space $\L^p(\R^n,X)$ for any angle $\omega \in (\frac{\pi}{2},\pi)$.


If $\omega \in (\frac{\pi}{2},\pi)$ and $f \in \HI_0(\Sigma_\omega)$ then using Lemma \ref{lem-Hille-Phillips} together with Theorem \ref{thm-514}, we can write $f(\A - \alpha \Id)_{\HP} = k_{f}(A)$ for some function $k_{f}$ with $\hat{k}_{f} \in \L^1(\R^n)$. We obtain
\begin{align*}
\MoveEqLeft
\norm{f(\A - \alpha \Id)_{\HP}}_{X \to X}  
= \norm{k_{f}(A)}_{X \to X}   
\ov{\eqref{aA-as-twisted-conv}, \eqref{transference-estimate}}{\leq} M_A^2 \norm{k_{f}(A_{\univ}^\Theta)}_{\L^p(\R^n,X) \to \L^p(\R^n,X)} \\
&= M_A^2 \norm{f(\cal{B} - \alpha \Id)_{\HP}}_{\L^p(\R^n,X) \to \L^p(\R^n,X)} 
\lesssim \norm{f}_{\HI(\Sigma_\omega)}.         
\end{align*}
In other words, the operator $\A - \alpha \Id$ admits a bounded $\HI(\Sigma_\omega)$ functional calculus.
\end{proof}

\begin{remark}\normalfont
\label{rem-interpolation}
In case that $A = (A_1,\ldots,A_n)$ form a $\Theta$-Weyl tuple not only on a single Banach space, but consistently on the whole scale of noncommutative $\L^p$-spaces associated with an injective von Neumann algebra $\cal{M}$ with $1 < p < \infty$, such that on $\L^2(\mathcal{M})$, the closure $\A$ of $\sum_{k=1}^n A_k^2$ is self-adjoint, we can slightly improve on the differentiation order of the H\"ormander functional calculus by complex interpolation in Corollary \ref{cor-Hormander-calculus-for-Theta-Weyl-tuple-on-LpM}.

Let $p \in (1,\infty)$ be fixed.
Choosing $p_0$ close to $1$ (if $p \leq 2$) or close to $\infty$ (if $p \geq 2$), we get by Corollary \ref{cor-Hormander-calculus-for-Theta-Weyl-tuple-on-LpM} that the operator $\A - \alpha \Id$ has a bounded $\Hor^s_2(\R^*_+)$ calculus on $\L^{p_0}$ with $s > \frac{n}{2} + \frac12$.
On $\L^2$, the operator $\A - \alpha \Id$ admits a bounded $\Hor^s_2(\R^*_+)$ calculus with $s  > \frac12$ since then, $\Hor^s_2(\R^*_+)$ is contained in the space of bounded Borel functions (classical self-adjoint functional calculus).
Then by complex interpolation of the functional calculus bilinear mapping 
\begin{equation*}
\label{}
\Phi \co \L^p \times \Hor^s_2(\R^*_+) \to \L^p ,\: (x,m) \mapsto m(\A - \alpha \Id)x
\end{equation*}
between levels $\L^{p_0}$ (with $p_0$ close to $1$ resp. $\infty$) and $\L^2$, we deduce that on $\L^p$ (with $p< 2$ resp. $p > 2$), $\A - \alpha \Id$ has a bounded $\Hor^s_2(\R^*_+)$ functional calculus on $\L^p$ with 
\[ 
s > \begin{cases} 
(\frac{2}{p} - 1)(\frac{n}{2}+\frac12) + \frac1{p'} & : \: p \leq 2 \\ 
(\frac{2}{p'} - 1)(\frac{n}{2} + \frac12) + \frac1p & : \: p \geq 2 . 
\end{cases} 
\]
See e.g.~\cite[Proposition 4.83 (2)]{Kri09}.

Note that under the previous assumptions, it is even possible to lower the differentiation order still a bit by interpolating $\Hor^s_2(\R^*_+)$ calculus on $\L^{1+\epsi}$ or $\L^{p_0}$ with large $p_0$ and $\Hor^\epsilon_\infty(\R^*_+)$ on $\L^2$. Here one defines the space $\Hor^s_q(\R^*_+)$ in the same way as the space $\Hor^s_2(\R^*_+)$, but over the Sobolev space $\W^{s,q}(\R)$. One gets that the operator $\A  - \alpha \Id$ has a $\Hor^s_q(\R^*_+)$ functional calculus on $\L^p$ for $\frac1q > \left| \frac1p - \frac12 \right|$ and $s = \frac{2}{q} \left( \frac{n}{2} + \frac12 \right)$.
\end{remark}

\begin{example}\normalfont
\label{multiplications}
Let $\cal{M}$ be a von Neumann algebra acting on some Hilbert space $H$, equipped with a normal semifinite faithful trace. Suppose that $1 \leq p < \infty$. For any $a \in \cal{M}$, we can define a bounded linear operator 
\begin{equation*}
\label{Left-multiplication-operator}
\cal{L}_a \co \L^p(\cal{M}) \to \L^p(\cal{M}), x\mapsto ax,
\end{equation*}
called left multiplication by $a$ on $\L^p(\cal{M})$. In \cite[Section 8.A]{JMX06}, it is explained how to define a left multiplication operator $\cal{L}_a$ in the more general case where $a \co \dom a \subset H \to H$ is a closed densely defined operator with $\rho(a) \not=\emptyset$ and affiliated with $\cal{M}$. If $z \in \rho(a)$, this closed densely defined operator is defined by
\begin{equation}
\label{7mult0}
\cal{L}_a 
\ov{\mathrm{def}}{=} z - \cal{L}_{R(z,a)}^{-1} \co D \to \L^p(\cal{M})
\end{equation}
where $D$ is the range of the injective operator $\cal{L}_{R(z,a)}$ with dense range. This definition does not depend on $z$. Moreover $\rho(a) \subset \rho(\cal{L}_a)$ and we have
\begin{equation*}
\label{7mult}
R(z,\cal{L}_a) 
= \cal{L}_{R(z,a)},\quad z\in \rho(a).
\end{equation*}
Similarly, we can define the right multiplication operator $\cal{R}_a$\label{def-right-mult} by $a$.

If $a$ and $b$ are two self-adjoint operators affiliated with $\cal{M}$, it is proved in \cite[p.~86]{JMX06} that the intersection $\dom \cal{L}_a \cap \dom \cal{R}_b$ is a dense subspace of the Banach space $\L^p(\cal{M})$ and that the difference operator
\begin{equation*}
\label{}
\cal{L}_a - \cal{R}_b \co \dom \cal{L}_a \cap \dom \cal{R}_b \to \L^p(\cal{M}), x \mapsto \cal{L}_a(x)-\cal{R}_b(x)
\end{equation*}
is closable. Following \cite[p.~87]{JMX06}, we define the closed operator $\Ad_{(a,b)}$ as the closure of $\cal{L}_a-\cal{R}_b$, i.e.~
\begin{equation}
\label{7ad}
\Ad_{(a,b)} 
\ov{\mathrm{def}}{=} \ovl{\cal{L}_a -\cal{R}_b}.
\end{equation}
Note that by \cite[Lemma 8.10 p.~87]{JMX06}, the operator $\i\Ad_{(a,b)}$ generates a strongly continuous group of isometries $(U_t)_{t \in \R}$ acting on the Banach space $\L^p(\cal{M})$, where 
\begin{equation*}
\label{}
U_t \co \L^p(\cal{M}) \to \L^p(\cal{M}), x \mapsto e^{\i t a}x e^{-\i t b}
\end{equation*}
for any $t \in \R$.

Now, consider two finite commuting families $(a_1,\ldots,a_n)$ and $(b_1,\ldots,b_n)$ of self-adjoint operators affiliated with $\cal{M}$, i.e.~$a_ia_j = a_ja_i$ and $b_ib_j=b_jb_i$ for any $1 \leq i,j \leq n$. Suppose that $1 < p < \infty$. For any $1 \leq j \leq n$, we consider the closed operator 
\begin{equation*}
\label{}
A_j \ov{\mathrm{def}}{=} \Ad_{(a_j,b_j)}
\end{equation*}
acting on the Banach space $\L^p(\cal{M})$. As observed in the proof of \cite[Theorem 8.12 p.~87]{JMX06}, the operators $(A_1,\ldots,A_n)$ are pairwise commuting in the resolvent sense, i.e.
\begin{equation*}
\label{}
R(z_1,A_i)R(z_2,A_j)
=R(z_2,A_j)R(z_1,A_i), \quad z_1 \in \rho(A_i) ,z_2 \in \rho(A_j), i,j \in \{1,\ldots n\}.
\end{equation*}
With the results stated in \cite[p.~176]{Dav07}, it is easy to deduce that the strongly continuous groups $(e^{\i t A_1})_{t \in \R},\ldots, (e^{\i t A_n})_{t \in \R}$ commute. So, we have a $0_n$-Weyl tuple in the sense of Definition \ref{defi-Weyl-tuple}. If $\cal{M}$ is \textit{injective}, we deduce by Corollary \ref{cor-Hormander-calculus-for-Theta-Weyl-tuple-on-LpM} that the <<sum of squares>> 
\begin{equation*}
\label{}
\cal{A} 
\ov{\mathrm{def}}{=} A_1^{2}+\cdots +A_n^{2}
\end{equation*}
admits a bounded H\"ormander $\Hor^s_2(\R_+^*)$ functional calculus of order $s > \frac{n}{2} + \frac12$ on the Banach space $\L^p(\cal{M})$, recovering and improving the boundedness of the $\H^\infty(\Sigma_\theta)$ functional calculus for any angle $\theta > 0$ provided by \cite[Theorem 8.12 p.~87]{JMX06}.

We obtain another generalization if we assume that the strongly continuous groups of unitaries $(e^{\i t a_1})_{t \in \R},\ldots,(e^{\i t a_n})_{t \in \R}$ admit a commutation relation
\[ e^{\i t a_j} e^{\i s a_k} = e^{\i t s \Theta^{(1)}} e^{\i s a_k} e^{\i t a_j}\]
and similarly,
\[ e^{\i t b_j} e^{\i s b_k} = e^{\i t s \Theta^{(2)}} e^{\i s b_k} e^{\i t b_j}.\]
In this case, the strongly continuous groups $(e^{\i t A_1})_{t \in \R},\ldots,(e^{\i t A_n})_{t \in \R}$ form a $(\Theta^{(1)} - \Theta^{(2)})$-Weyl tuple over the Banach space $\L^p(\cal{M})$.
\end{example}


\begin{example}\normalfont
\label{rem-Schur-multipliers}
The preceding example applies in particular to Schur multipliers. Namely, we consider the case of the von Neumann algebra $\cal{M} = \B(\L^2(\R))$ of bounded operators on the (complex) Hilbert space $\L^2(\R)$. Recall that the affiliated operators with this von Neumann algebra are precisely the closed densely defined operators acting on the Hilbert space $\L^2(\R)$. If $a,b \co \R \to \R$ are some real-valued measurable functions, then by  the multiplication operators $\Mult_{a} \co \L^2(\R) \to \L^2(\R)$, $f \mapsto af$ and $\Mult_{b} \co \L^2(\R) \to \L^2(\R)$, $f \mapsto bf$ are self-adjoint operators affiliated with the von Neumann algebra $\B(\L^2(\R))$. We obtain that the operator $A_1 \ov{\mathrm{def}}{=} \Ad_{(a,b)}$ is a $0_1$-Weyl tuple over the Schatten space $S^p_{\R} \ov{\mathrm{def}}{=} S^p(\B(\L^2(\R)))$ in the sense of Definition \ref{defi-Weyl-tuple} for any $1 < p < \infty$. Moreover, by an <<unbounded version>> of \cite[Lemma 5.1]{Arh24}, the closed unbounded operator $A_1$ is a Schur multiplier with symbol $a-b \co \R \to \R$. So Corollary \ref{cor-Hormander-calculus-for-Theta-Weyl-tuple-on-LpM} yields a H\"ormander $\Hor^s_2(\R^*_+)$ functional calculus of order $s > 1$ for the Schur multiplier with symbol $(a-b)^2$.

In particular, with the choice $a(x)=x$ and $b(y)=y$, Corollary \ref{cor-Hormander-calculus-for-Theta-Weyl-tuple-on-LpM} implies that if $f \in \Hor^s_2(\R^*_+)$ then the symbol 
\begin{equation*}
\label{}
\psi \co \R \times \R \mapsto \mathbb{C}, (x,y) \mapsto f((x-y)^2)
\end{equation*}
induces a bounded Schur multiplier $M_\psi \co S^p_{\R}\to S^p_{\R}$ on the Schatten class $S^p_{\R}$. Actually, as stated in \cite[Proposition 4.8 (4)]{Kri09}, the mapping $f \mapsto f((\cdot)^2)$ is an isomorphism of $\Hor^s_2(\R^*_+)$ for any $s > \frac12$. So the preceding assertion implies that if $s > 1$ each $f \in \Hor^s_2(\R^*_+)$ induces a bounded Schur multiplier $M_\psi \co S^p_{\R}\to S^p_{\R}$ with $\psi \co \R \times \R \mapsto \mathbb{C}$, $(x,y) \mapsto f(|x-y|)$.
Moreover, if $f : \R \to \C$ is a function such that both $f 1_{\R_+}$ and $f(-\cdot) 1_{\R_+}$ belong to $\Hor^s_2(\R^*_+)$, then 
\begin{equation*}
\label{}
\psi(x,y) = f(x-y) = 1_{x-y \geq 0} f(|x-y|) + 1_{x-y < 0} f(-|x-y|)
\end{equation*}
induces a bounded Schur multiplier $M_\psi \co S^p_{\R} \to S^p_{\R}$.
Indeed, the symbols $1_{x-y \geq 0}$ and $1_{x-y < 0}$ induce $S^p$ (completely) bounded operators by the transference \cite{CdlS15} to the Fourier multiplier of the Hilbert transform which is indeed completely bounded on $\L^p(\R)$ since $S^p$ has the UMD property \cite[Proposition 5.4.2]{HvNVW16}.
Note that this is a particular case of the recent result \cite[Theorem A']{CGPT23} on bounded Schur multipliers on $S^p_{\R^n} \ov{\mathrm{def}}{=} S^p(\B(\L^2(\R^n)))$ associated to non-necessarily Toeplitz symbols as we consider hereabove.




The same method applies to discrete Schur multipliers. If $\cal{M} = \B(\ell^2)$, consider some diagonal matrices $a_1,\ldots,a_n,b_1,\ldots,b_n$ with real entries $a_{j,k}$ and $b_{j,k}$ on the diagonal. Then these matrices are self-adjoint operators affiliated with the von Neumann algebra $\B(\ell^2)$. We obtain that the operators $A_j = \Ad_{(a_j,b_j)}$ form a $0_n$-Weyl tuple over the Schatten space $S^p \ov{\mathrm{def}}{=}  S^p(\ell^2)$ in the sense of Definition \ref{defi-Weyl-tuple} for $1 < p < \infty$.
Moreover, the operator $A_j$ is a Schur multiplier with symbol $(a_{j,k} - b_{j,l})_{kl}$. So Corollary \ref{cor-Hormander-calculus-for-Theta-Weyl-tuple-on-LpM} yields a H\"ormander $\Hor^s_2(\R^*_+)$ functional calculus of order $s > \frac{n}{2} + \frac12$ for the Schur multiplier with symbol
\[ 
\left(\sum_{j = 1}^n (a_{j,k} - b_{j,l})^2 \right)_{kl} .
\]
In particular, with $n=1$ and $a_{1,k} = b_{1,k} = k$, we obtain that a Toeplitz Schur multiplier with symbol $f((k-l)^2)$ induces a bounded operator on $S^p$ provided that $f \in \Hor^s_2(\R^*_+)$ with $s>1$. As in the continuous case, we obtain that a (Toeplitz) Schur multiplier with symbol $f(k-l)$ is bounded on $S^p$ for any function $f \in \Hor^s_2(\R^*_+)$ if $s > 1$.

\end{example}

\section{Bochner-Riesz means}
\label{sec-Bochner-Riesz-means}

A prominent example of H\"ormander spectral multipliers are the Bochner-Riesz means, which we introduce here. We will use the notation $t_+ \ov{\mathrm{def}}{=} \max\{0,t\}$ for any $t \in \R$. For any exponent $\nu \geq 0$ and any parameter $R > 0$, we can define the function $\sigma_R^\nu \co \R^+ \to \R$ by
\begin{equation*}
\label{Bochner-means}
\sigma_R^\nu(\lambda)
\ov{\mathrm{def}}{=} \bigg(1 - \frac{\lambda}{R}\bigg)_+^\nu 
= \begin{cases} (1 - \tfrac{\lambda}{R})^\nu &  \text{ if }0 \leq \lambda \leq R \\ 
0 & \text{ if } \lambda > R 
\end{cases}.
\end{equation*}
See Figure 2.
\begin{figure}[h!]
\centering
\begin{tikzpicture}
  \begin{axis}[
    width=12cm,
    height=7cm,
    domain=0:2,
    samples=200,
    xlabel={$\lambda$},
    ylabel={$\sigma_1^\nu(\lambda)$},
		ylabel style={rotate=-90},
    legend style={at={(0.5,-0.20)}, anchor=north, legend columns=4},
    grid=major,
  ]
    \addplot[blue, thick]    {(x <= 1) * (1 - x)^0.5};
    \addlegendentry{$\nu = 0.5$}
    
    \addplot[red, thick, dashed] {(x <= 1) * (1 - x)^1};
    \addlegendentry{$\nu = 1$}
    
    \addplot[green!60!black, thick, dotted] {(x <= 1) * (1 - x)^2};
    \addlegendentry{$\nu = 2$}
    
    \addplot[purple, thick, dashdotted] {(x <= 1) * (1 - x)^5};
    \addlegendentry{$\nu = 5$}
  \end{axis}
\end{tikzpicture}

Figure 2: $\sigma_R^\nu(\lambda)$ for different values of $\nu$ with $R=1$.
\end{figure}

Then for any positive self-adjoint operator $B$ we can define the operator $\sigma_R^\nu(B)$ using the functional calculus of Hilbert and von Neumann. This operator is called the Riesz or the Bochner-Riesz means of order $\nu$ with respect to $R$ corresponding to the operator $B$. The case $\nu = 0$ corresponds to the spectral projector $1_{[0, R]}(B)$. For any $\nu > 0$, the operator $\sigma_R^\nu(B)$ can be interpreted as a smoothed version of the spectral projector, with larger values of $\nu$ leading to higher degrees of smoothness. If the operator $B$ acts on a (noncommutative) $\L^2$-space $\L^2(\cal{M})$ and if $1 < p < \infty$ with $p \not=2$, a fundamental question in the theory of Bochner–Riesz means is to determine the smallest critical exponent $\nu_{\mathrm{crit}}$ for which the Bochner-Riesz means $\sigma_R^\nu(B)$ induces a bounded operator on the Banach space $\L^p(\cal{M})$ with its norm uniformly bounded with respect to the parameter $R$ for any $\nu > \nu_{\mathrm{crit}}$. A substantial amount of research has been devoted to investigating the Bochner–Riesz means for various operators $B$. For further details, see \cite{COSY16}, \cite{CDHLY21}, \cite{DaC87}, \cite[Chapter 5]{Gra14b}, \cite{GOWWZ21}, \cite{JLR22}, \cite{Lai22}, \cite{LuY13}, \cite{Tha91}, \cite{Xu24} and the references therein.  


In the case of the Laplacian $B=-\Delta$ on the Hilbert space $\L^2(\R^n)$, the Bochner-Riesz means $\sigma_R^\nu(-\Delta)$ of order $\nu$ with respect to the parameter $R > 0$ is the classical Bochner-Riesz means introduced in \cite[Definition 5.2.1 p.~339]{Gra14b} and defined by
\begin{eqnarray}
\label{e1.1}
{S^{\nu}_{R}f}(x)
=\frac{1}{(2\pi)^n}\int_{\R^n} e^{\i x\cdot \xi} \left(1-{|\xi|^2\over R}\right)_+^{\nu} \hat{f}(\xi) \d\xi,  \quad \xi \in \R^n,
\end{eqnarray}
where $\hat{f}$ denotes the Fourier transform of the function $f$. It is worth noting that  the $\L^p$-boundedness of the operator $S^\nu_1$ implies by \cite[Exercise 5.2.1 p.~355]{Gra14b} the convergence $S^\nu_R f \to f$ as $R \to \infty$ in the Banach space $\L^p(\R^n)$ for any $f \in \L^p(\R^n)$.

Suppose that $1 < p < \infty$ with $p \not=2$. The fundamental problem known as the Bochner-Riesz conjecture is that the family $(S^\nu_R)_{R > 0}$ of operators induces a uniformly bounded family of operators on the Banach space $\L^p(\R^n)$ if and only if
\begin{eqnarray} 
\label{condition-Bochner-Riesz}
\nu
> \max\bigg\{ n\left|{1\over 2}-{1\over p}\right|-{1\over 2}, 0\bigg\}.
\end{eqnarray}
Herz \cite{Her54} established that the condition \eqref{condition-Bochner-Riesz} on $\nu$ is a necessary requirement for the $\L^p$-boundedness of $S^\nu$. Carleson and Sj\"olin \cite{CaS72} (see also \cite[Theorem 5.2.4 p.~342]{Gra14b}) later confirmed the conjecture in the case $n = 2$. However, for $n \geq 3$, the problem remains unresolved.

It is important to observe that the function $\sigma_R^\nu$ belongs to the space $\Hor^s_2(\R_+^*)$ for any $\nu > s  - \frac12$. 

\begin{prop}
\label{prop-Bochner-Riesz-is-Hor}
Let $s > \frac12$ and $\nu > s - \frac12$.
Then the function $\sigma_R^\nu$ belongs to the space $\Hor^s_2(\R_+^*)$ and we have
\[ 
\sup_{R > 0} \norm{\sigma_R^\nu}_{\Hor^s_2(\R_+^*)} < \infty.
\]
\end{prop}

\begin{proof}
We recall from \eqref{hor-2} that
\[ 
\norm{f}_{\Hor^s_2(\R_+^*)} = \sup_{t > 0} \norm{\eta f(t\,\cdot)}_{\W^{s,2}(\R)}, 
\]
where $\eta \co ]0,\infty[ \to \R$ is a fixed non-zero function of class $\mathrm{C}^\infty$ with compact support.
Moreover, as observed right after, we have 
\begin{equation*}
\label{}
\norm{\sigma_R^\nu}_{\Hor^s_2(\R_+^*)} 
= \norm{\sigma_R^\nu(R\,\cdot)}_{\Hor^s_2(\R_+^*)} 
= \norm{\sigma_{1}^\nu}_{\Hor^s_2(\R_+^*)}
\end{equation*}
for all $R > 0$, so that it suffices to prove that the function $\sigma_1^\nu$ has finite $\Hor^s_2(\R_+^*)$ norm.
Next, as observed in \cite[Proposition 4.8 (4), Proposition 4.11]{Kri09}, the mapping $f\mapsto f((\cdot)^{\frac12})$ is an isomorphism of the space $\Hor^s_2(\R_+^*)$, so it suffices to show that $\sigma^\nu(\lambda) \ov{\mathrm{def}}{=} (1-\lambda^2)^\nu_+$ belongs to the space $\Hor^s_2(\R_+^*)$.
We choose $\eta$ such that $\supp \eta \subseteq ]\frac12,2[$.
Let us consider the cases $t \leq \frac12,t \geq 2$ and $\frac12 < t < 2$ separately. Note that the support is $[0,\frac{1}{t}]$.

\paragraph{Case $t \leq \frac12$}
Then the function $\sigma^\nu(t\,\cdot)$ is non-smooth only at $\frac1t \geq 2$ and $\sigma^\nu(t\,\cdot)\eta$ belongs to the space $\mathrm{C}^\infty_c(]\frac12,2[)$.
This case is a non-critical case.
Namely, we can control the Sobolev norm, with any integer $k \in \N$ such that $k \geq s$, by
$\norm{\sigma^\nu(t\,\cdot)\eta}_{\W^{s,2}(\R)} \lesssim \norm{\sigma^\nu(t\,\cdot)\eta}_{\W^{k,2}(\R)}$. 
Now, for any integer $l \in \llbracket 0, k\rrbracket$, we have using Leibniz's rule for products
\begin{equation}
\label{inter-7890}
\frac{\d^l}{\d x^l} ( \sigma^\nu(tx) \eta(x) ) 
= \sum_{j=0}^l {l \choose j} t^j (\sigma^\nu)^{(j)}(tx) \eta^{(l-j)}(x).
\end{equation}
Note that the function $x \mapsto (\sigma^\nu)^{(j)}(tx)$ is bounded on $\supp \eta^{(l-j)} \subseteq ]\frac12,2[$ uniformly in $t \leq \frac12$ (since $tx$ remains in the interval $[0,1]$). Thus, we obtain the estimate
\[ 
\norm{x \mapsto \frac{\d^l}{\d x^l} (\sigma^\nu(tx) \eta(x))}_{\L^2(\R)} 
\ov{\eqref{inter-7890}}{\lesssim} \sum_{j=0}^l t^j \bnorm{(\sigma^\nu)^{(j)}(t\,\cdot)}_{\L^\infty(\supp \eta^{(l-j)})} 
\bnorm{\eta^{(l-j)}}_{\L^2(\R)} 
\lesssim 1.
\]
We finally deduce that
\[
\norm{\sigma^\nu(t\, \cdot)\eta}_{\W^{k,2}(\R)} 
\cong \sum_{l=0}^k \norm{\frac{\d^l}{\d x^l}(\sigma^\nu(t\, \cdot)\eta)}_{\L^2(\R)} 
\lesssim 1.
\]

\paragraph{Case $t \geq 2$}
Then $\supp \sigma^\nu(t\,\cdot)$ is a subset of $[0,\frac12]$ and $\sigma^\nu (t\,\cdot) \eta = 0$.

\paragraph{Case $\frac12 < t < 2$}
Note first the classical estimate
\begin{equation}
\label{homo-Sobolev}
\norm{f(t\,\cdot)}_{\W^{s,2}(\R)} 
\lesssim (t^{-\frac12} + t^{s-\frac12}) \norm{f}_{\W^{s,2}(\R)},
\end{equation}
which can be easily seen from the Sobolev norm expressed in terms of the Fourier transform of $f$ together with the formula $\widehat{f(t\,\cdot)}(\xi) = \frac1t \hat{f}(\frac{\xi}{t})$. Indeed, we have
\begin{align*}
\MoveEqLeft
\norm{f(t\, \cdot)}_{\W^{s,2}(\R)}^2 
\ov{\eqref{Sobolev-norm}}{=} \int_{\R} (1 + |\xi|^2)^s \big|\widehat{f(t\, \cdot)}(\xi)\big|^2 \d \xi 
= \int_{\R} (1 + |\xi|^2)^s \left|\frac{1}{t} \hat{f}\left(\frac{\xi}{t}\right)\right|^2 \d\xi \\
&= \frac{1}{t^2} \int_{\R} (1 + |\xi|^2)^s \left|\hat{f}\left(\frac{\xi}{t}\right)\right|^2 \d\xi.
\end{align*}
Using the change of variable $u = \frac{\xi}{t}$, we obtain
\[
\norm{f(t \, \cdot)}_{\W^{s,2}(\R)}^2 
= \frac{1}{t} \int_{\R} (1 + t^2 u^2)^s\, |\hat{f}(u)|^2 \d u.
\]
Using the inequality $
(1 + t^2 u^2)^s 
\leq (1 + u^2)^s  \max\{1, t^{2s}\}$, we get
\begin{align*}
\MoveEqLeft
\norm{f(t \, \cdot)}_{\W^{s,2}(\R)}^2 
\lesssim \frac{1}{t}  \max\{1, t^{2s}\} \int_{\R} (1 + u^2)^s\, |\hat{f}(u)|^2 \d u \\
&\ov{\eqref{Sobolev-norm}}{=} \max\big\{t^{-1}, t^{2s-1}\big\} \norm{f}_{\W^{s,2}(\R)}^2
\leq (t^{-1}+ t^{2s-1}) \norm{f}_{\W^{s,2}(\R)}^2.         
\end{align*}
Taking the square root, we obtain \eqref{homo-Sobolev}.

According to \cite[6.11 (a) p.~162]{Ste70} and \cite[Theorem 1 p.~222, Proposition p.~14]{RuSi96}, $\W^{s,2}(\R)$ is an algebra with respect to  pointwise multiplication, so that
\[ 
\norm{\sigma^\nu(t\,\cdot)\eta}_{\W^{s,2}(\R)} 
\lesssim \norm{\sigma^\nu(t\,\cdot)}_{\W^{s,2}(\R)} \norm{\eta}_{\W^{s,2}(\R)} 
\lesssim \norm{\sigma^\nu(t\,\cdot)}_{\W^{s,2}(\R)}. 
\]
Since $\frac12 < t < 2$ in our case, this further estimates to $\norm{\sigma^\nu}_{\W^{s,2}(\R)}$. It remains to prove that the latter quantity is finite. According to \cite[B.5 p.~578]{Gra14a},
\[ 
\widehat{\sigma^\nu}(\xi) 
= 2^{\nu} \Gamma(\nu+1) \frac{J_{\nu+\frac12}(|\xi|)}{|\xi|^{\nu+\frac12}},
\]
where $\Gamma$ denotes the gamma function and $J_{\nu+\frac12}$ denotes the Bessel function of order $\nu+\frac12$. Since $|J_{\nu+\frac12}(x)| \lesssim x^{\nu+\frac12}$ for any $x \in (0,1]$ by \cite[B.5 p.~579]{Gra14a}, the Fourier transform $\widehat{\sigma^\nu}(\xi)$ is bounded for $|\xi| \leq 1$. Moreover, since $\sup_{x \geq 0} x^{\frac12} |J_{\nu+\frac12}(x)| < \infty$ according to \cite[Exercise 24 p.~238]{AAR99}, we have the estimate $|\widehat{\sigma^\nu}(\xi)| \lesssim \frac1{|\xi|^{\nu+1}}$ for $|\xi| \geq 1$. Consequently, when $\xi \to \infty$ we have
\begin{equation*}
\label{}
|(1 + |\xi|^2)^{\frac{s}{2}}\widehat{\sigma^\nu}(\xi)|^2
\lesssim
\frac{1}{|\xi|^{2\nu-2s+2}}.
\end{equation*}
All in all, the function $(1 + |\xi|^2)^{\frac{s}{2}}\widehat{\sigma^\nu}(\xi)$ belongs to the space $\L^2(\R)$ for any $\nu > s - \frac12$ and the proof is complete.
\end{proof}

Proposition \ref{prop-Bochner-Riesz-is-Hor} provides a motivation for the H\"ormander functional calculus.
In this spirit, we obtain in Proposition \ref{prop-Riesz-means} a consequence of Corollary \ref{cor-Hormander-calculus-for-Theta-Weyl-tuple-on-LpM}, which gives the uniformly boundedness of some Bochner-Riesz means and the strong convergence to the identity as $R \to \infty$. 
The proof relies on the concept of a dyadic partition of unity from \cite[Definition 2.2 p.~3]{KrW16}, which we briefly recall below.

\begin{defi}
\label{defi-dyadic-partition-of-unity}
Let $\dot\varphi$ be a function of class $\mathrm{C}^\infty$ on $(0,\infty)$ whose compact support $\supp \dot\varphi$ is included in $[\frac12,2]$ and $\sum_{n = - \infty}^\infty \dot\varphi(2^{-n} t) = 1$ for all $t > 0$.
Then for any integer $n \in \Z$ we put $\dot\varphi_n \ov{\mathrm{def}}{=} \dot\varphi(2^{-n}\cdot)$ and we say that $(\dot\varphi_n)_{n \in \Z}$ is a dyadic partition of unity.
\end{defi}

We also recall the following result, which will be used in the proof. It is a convergence lemma for the H\"ormander functional calculus and a variant of~\cite[Proposition 3.4, p.~7]{KrW16}.
 Its proof follows from the fact that the Mikhlin norm below dominates the norm of the space $\mathcal{B}^\beta_{\infty,\infty}$, defined in \cite[Definition 4.6 p.~55]{Kri09}, there when $N > \beta$, together with Lemma \ref{lem-Hor-calculus-reflexive}.

\begin{lemma}
\label{lem-convergence-lemma-Hor-calculus}
Let $X$ be a reflexive Banach space. Consider a sectorial operator $A$ acting on $X$ admitting a bounded $\Hor^s_2(\R^*_+)$ functional calculus. Let $N$ be an integer with $N > s$. Consider a sequence $(f_n)$ of complex functions of class $\mathrm{C}^N$ on $[0,\infty)$ such that
\begin{align*}
(a) & \sup_{n} \sup \left\{ \left|\lambda^k\frac{\d^k f_n}{\d \lambda^k}(\lambda) \right| : \lambda > 0,\: k = 0,\ldots,N\right\}, \sup_n |f_n(0)| < \infty,\\
(b) & f_n(\lambda) \to f(\lambda) \text{ when $n \to \infty$ for any $\lambda \geq 0$}.
\end{align*}
Then the function $f|_{(0,\infty)}$ belongs to the space $\Hor^s_2(\R^*_+)$ and for all $x \in X$, we have $f_n(A)x \to f(A)x$ when $n \to \infty$.

In particular, if $(\dot\varphi_n)_{n \in \Z}$ is a dyadic partition of unity, then for all $x \in \overline{\Ran A}$, we have
\[ 
\sum_{n \in \Z} \dot\varphi_n(A)x = x \quad( \text{convergence in }X). 
\]
\end{lemma}

We are now in a position to prove the announced result.

\begin{prop}
\label{prop-Riesz-means}
Suppose that $1 < p < \infty$. Let $A = (A_1,\ldots,A_n)$ be a $\Theta$-Weyl tuple on a noncommutative $\L^p$-space associated with an injective von Neumann algebra $\cal{M}$ with separable predual, where $1 < p < \infty$. Let $\A -\alpha \Id$ be the closure of the operator $\sum_{k=1}^n A_k^2 - \alpha \Id$ as in Corollary \ref{cor-Hormander-calculus-for-Theta-Weyl-tuple-on-LpM}.
For any $\nu > \frac{n}{2}$, the Bochner-Riesz means $\sigma_R^\nu(\A - \alpha\Id)$ are uniformly bounded on $\L^p(\cal{M})$, i.e. 
\[
\sup_{R > 0} \bnorm{\sigma_R^\nu(\A-\alpha \Id)}_{\L^p(\cal{M}) \to \L^p(\cal{M})} 
< \infty. 
\]
Consequently, for any $f \in \L^p(\cal{M})$ we have $\sigma_R^\nu(\A-\alpha \Id)f \to f$ in the Banach space $\L^p(\cal{M})$ as $R \to \infty$.
\end{prop}

\begin{proof}
As we said previously the function $\sigma_R^\nu$ belongs to the space $\Hor^s_2(\R_+^*)$ for any $\nu > s  - \frac12$. Thus according to Corollary \ref{cor-Hormander-calculus-for-Theta-Weyl-tuple-on-LpM}, the operator $\sigma_R^\nu(\A - \alpha \Id)$ is bounded on the Banach space $\L^p(\cal{M})$ for any 
\begin{equation*}
\label{}
\nu > \frac{n}{2} + \frac12 - \frac12 = \frac{n}{2}.
\end{equation*}
Then since the $\Hor^s_2(\R_+^*)$ norm of \eqref{hor-2} is invariant under dilations $f \mapsto f(\cdot/R)$ as shown in \cite[Lemma 3.2 (3)]{KrW18}, we conclude the stated uniform boundedness. We turn to the strong convergence as $R \to \infty$. Let $(\dot\varphi_n)_{n \in \Z}$ be a dyadic partition of unity in the sense of Definition \ref{defi-dyadic-partition-of-unity}. For any integer $m \geq 0$, let 
\begin{equation*}
\label{}
\Phi_m 
\ov{\mathrm{def}}{=} \sum_{n = - m}^m \dot\varphi_n.
\end{equation*}
According to Lemma \ref{lem-convergence-lemma-Hor-calculus}, we have $\Phi_m(\A - \alpha \Id)f \to f$ as $m \to \infty$, for any $f \in \ovl{\Ran(\A-\alpha\Id)} \subseteq \L^p(\cal{M})$. Now, choose for $R \geq 4$ the positive integer $m_R$ such that 
\begin{equation*}
\label{}
\frac14 R \leq 2^{m_R+1} \leq \frac12 R.
\end{equation*}
Note that the support of the function $\Phi_{m_R}$ is contained in $[2^{-m_R-1}, 2^{m_R+1}]$. So the only point $0 < \lambda = R$ where the function $\sigma_R^\nu$ is non-smooth does not belong to this support.
We want to apply Lemma \ref{lem-convergence-lemma-Hor-calculus} to the spectral multipliers $\sigma_R^\nu \Phi_{m_R}$, with limit $1_{(0,\infty)}$ when $R \to \infty$.
Condition (b) of this result, $\sigma_R^\nu(\lambda) \Phi_{m_R}(\lambda) \to 1$ for all $\lambda > 0$ and $\sigma_R^\nu(0) \Phi_{m_R}(0) = 0$, is easily checked.
For condition (a), if $R \geq 4$, we note that for any integer $N > s$ we have
\begin{align*}
\MoveEqLeft
\max_{k=0,\ldots,N} \sup_{\lambda > 0} \left| \lambda^k \frac{\d^k}{\d \lambda^k}(\sigma_R^\nu \Phi_{m_R})(\lambda)\right| \\
& \lesssim \max_{k=0,\ldots,N} \sup_{\lambda \in [2^{-m_R-1},2^{m_R+1}]} \left|\lambda^k \frac{\d^k}{\d\lambda^k}(\sigma_R^\nu \Phi_{m_R})(\lambda)\right| 
\leq C.
\end{align*}
For any $f \in \ovl{\Ran(\A-\alpha\Id)}$, we have $1_{(0,\infty)}(A)f = f$ and consequently
\begin{align*}
\MoveEqLeft
\norm{\sigma_R^\nu(\A - \alpha \Id)f - f}_{\L^p(\cal{M})} \\
&\leq \norm{\sigma_R^\nu(\A-\alpha\Id)f -\sigma_R^\nu(\A - \alpha \Id)\Phi_{m_R}(\A  -\alpha\Id)f }_{\L^p(\cal{M})} \\
& +\norm{\sigma_R^\nu(\A - \alpha \Id)\Phi_{m_R}(\A - \alpha\Id)f - f}_{\L^p(\cal{M})} \\
& \lesssim \norm{f-\Phi_{m_R}(\A  -\alpha\Id)f}_{\L^p(\cal{M})} \\
&+\norm{\sigma_R^\nu(\A - \alpha \Id)\Phi_{m_R}(\A - \alpha\Id)f - 1_{(0,\infty)}(A)f}_{\L^p(\cal{M})}  \xra[R \to \infty]{} 0,
\end{align*}
according to Lemma \ref{lem-convergence-lemma-Hor-calculus} applied to both summands (note that the parameter $R$ belongs to a metrizable space).
Moreover, if $f \in \ker (\A-\alpha \Id)$, then $\sigma_R^\nu(\A - \alpha \Id)f = \sigma_R^\nu(0)f = f$, so that trivially $\sigma_R^\nu(\A - \alpha \Id)f \to f$ as $R \to \infty$.
The proposition now follows from the space decomposition $\L^p(\cal{M}) = \ker(\A-\alpha\Id) \oplus \overline{\Ran(\A-\alpha\Id)}$ provided by \eqref{decompo-reflexive}.
\end{proof}

\begin{remark} \normalfont
The argument applies to any operator that admits a bounded H\"ormander functional calculus of order strictly greater than $\frac{n}{2}+\frac{1}{2}$.
\end{remark}

\section{Functional calculus for the harmonic oscillator on the Moyal plane}
\label{sec-quantum-harmonic-oscillator}


Let $d \geq 1$ be an integer. Consider a skew-symmetric matrix $\Theta \in \M_d(\R)$. Recall that the $d$-dimensional quantum Euclidean space can be defined as the twisted group von Neumann algebra $\VN(\R^d,\sigma_\Theta)$, where $\sigma_\Theta \co \R^d \times \R^d \to \T$, $(s,t) \mapsto \e^{\frac{1}{2}\i\langle s, \Theta t\rangle}$ is the 2-cocycle defined in \eqref{def-cocycle-intro}. This von Neumann algebra, denoted $\R^d_\Theta$\label{def-quatum-Euclidean} or $\L^\infty(\R^d_\Theta)$, is generated by the unitary operators 
\begin{equation*}
\label{}
\lambda_{\Theta,s} \co \L^2(\R^d) \to \L^2(\R^d),
\end{equation*}
where $s \in \R^d$, defined by
\begin{equation}
\label{def-lambda-theta}
(\lambda_{\Theta,s} \xi)(t) 
\ov{\eqref{def-lambda-sigma}}{=} e^{\frac{1}{2}\i\langle s, \Theta t\rangle} \xi(t-s),\quad s,t \in \R^d, \xi \in \L^2(\R^d).
\end{equation}
We have the following relation
\begin{equation}
\label{equ-lem-commutation-relations-lambda-theta}
\lambda_{\Theta,s} \lambda_{\Theta,t}
\ov{\eqref{product-adjoint-twisted}}{=} e^{\frac{1}{2}\i \langle s , \Theta t \rangle} \lambda_{\Theta,s+t}, \quad s,t \in \R^d.
\end{equation}
This twisted group von Neumann algebra is also generated by the operators 
\begin{equation*}
\label{def-lambda-theta-f}
\lambda_\Theta(f) \co \L^2(\R^d) \to \L^2(\R^d),
\end{equation*}
where $f \in \L^1(\R^d)$, defined by 
\begin{equation*}
\label{}
\lambda_\Theta(f) \ov{\mathrm{def}}{=} \int_{\R^d} f(s) \lambda_{\Theta,s} \d s.
\end{equation*}
The Schwartz space $\cal{S}(\R^d_\Theta)$ is defined as the subspace of $\L^\infty(\R^d_\Theta)$ which is the image of the classical Schwartz space $\cal{S}(\R^d)$ under the map $\lambda_\Theta$, that is
\begin{equation*}
\label{Schwartz-Theta}
\cal{S}(\R^d_\Theta) 
\ov{\mathrm{def}}{=} \{ \lambda_\Theta(f) :\: f \in \cal{S}(\R^d) \}.
\end{equation*}
Recall that by  \cite[Proposition 2.4]{GJM22} and \cite[Proposition 1.1]{GJP21} the complex functional $\tau_\Theta \co \cal{S}(\R^d_\Theta) \to \C$, $\lambda_\Theta(f) \mapsto f(0)$ extends uniquely to a normal semifinite faithful trace on the von Neumann algebra $\L^\infty(\R^d_\Theta)$. This trace allows us to define the associated noncommutative $\L^p$-spaces $\L^p(\R^d_\Theta)$.

For any $t \in \R^d$, we consider the multiplication operator $\mathcal{T}_t \co \L^2(\R^d) \to \L^2(\R^d)$, $f \mapsto e^{\i \langle t , (\cdot) \rangle} f(\cdot)$\label{def-mathcalTt}.

\begin{prop}
\label{Prop-representation-quantum-translation}
For any $s,t \in \R^d$, we have
\begin{equation*}
\label{}
\mathcal{T}_t \lambda_{\Theta,s}\mathcal{T}_t^*
=e^{\i \la t,s \ra} \lambda_{\Theta,s}.
\end{equation*}
\end{prop}

\begin{proof}
For any $r,s,t \in \R^d$ and any $\xi \in \L^2(\R^d)$, we have
\begin{align*}
\MoveEqLeft
\big(\mathcal{T}_t \lambda_{\Theta,s}\mathcal{T}_t^*(\xi) \big)(r) 
= e^{\i \langle t, r \rangle} \big(\lambda_{\Theta,s} \mathcal{T}_t^*(\xi)\big)(r) 
\ov{\eqref{def-lambda-theta}}{=} e^{\i \langle t, r \rangle} e^{\frac{1}{2}\i\langle s, \Theta r\rangle} \mathcal{T}_t^*(\xi)(r-s) \\
& = e^{\i \langle t, r \rangle} e^{\frac{1}{2}\i\langle s, \Theta r\rangle} e^{-\i \langle t , r-s \rangle} \xi(r-s) \\
&= e^{\i \langle t, s \rangle} e^{\frac{1}{2}\i\langle s, \Theta r\rangle} \xi(r-s) 
 \ov{\eqref{def-lambda-theta}}{=} e^{\i \langle t, s \rangle} \lambda_{\Theta,s}(\xi)(r).
\end{align*}
\end{proof}

Consequently, we have a spatial $*$-automorphism $\mathscr{T}_t \co \L^\infty(\R^d_\Theta) \to \L^\infty(\R^d_\Theta)$\label{mathscr{T}_t}, $\lambda_{\Theta,s} \mapsto e^{\i \la t,s \ra} \lambda_{\Theta,s}$ of $\L^\infty(\R^d_\Theta)$, hence a weak* continuous group of $*$-automorphisms by \cite[p.~238]{Tak03}. Note that this map is trace preserving since
\begin{align*}
\MoveEqLeft
\tau_\Theta(\mathscr{T}_t(\lambda_\Theta(f)))
=\tau_\Theta\big(\lambda_\Theta(e^{\i \la t,\cdot \ra} f)\big)
=f(0) 
=\tau_\Theta(\lambda_\Theta(f)).
\end{align*}
In particular, we obtain a strongly continuous group $(\mathscr{T}_{t,p})_{t \in \R^d}$ of isometries on each noncommutative $\L^p$-space $\L^p(\R^d_\Theta)$. We denote by $\partial_k$\label{def-partialk} the generator of the strongly continuous group $(\mathscr{T}_{(0,\ldots,t,\ldots,0),p})_{t \in \R}$.
Note that for any $s \in \R^d$ and any $t \in \R$ we have
\begin{equation}
\label{quantum-derivations}
e^{t \partial_k}\lambda_{\Theta,s} 
= \mathscr{T}_{t e_k}\lambda_{\Theta,s} 
= e^{\i \langle t e_k,s \rangle} \lambda_{\Theta,s} 
= e^{\i t s_k} \lambda_{\Theta,s}.
\end{equation}

For any $1 \leq k \leq d$ and any $t \in \R$ we let 
\begin{equation*}
\label{}
u_k(t) 
\ov{\mathrm{def}}{=} \lambda_{\Theta,(0,0,\ldots,t,\ldots,0)}.
\end{equation*}
Then $(u_k(t))_{t \in \R}$ is a one-parameter strongly continuous group of unitaries of $\L^\infty(\R_\Theta^d)$. We denote by $x_k$\label{def-xk} the self-adjoint operator satisfying $u_k(t)=e^{\i tx_k }$ for any $t \in \R$. By \cite[Proposition 8.5 (1) p.~81]{JMX06}, the unbounded operator $x_j$ is affiliated with the von Neumann algebra $\L^\infty(\R_\Theta^d)$. Using \cite[Proposition 8.5 (2) p.~81]{JMX06}, we see that the family of maps $\L^p(\R_\Theta^d) \to \L^p(\R_\Theta^d)$, $y \mapsto e^{\i t x_k} y$ (for $t \in \R)$ defines a strongly continuous bounded group of operators on $\L^p(\R_\Theta^d)$. Since the operators $e^{\i t x_k}$ are unitaries, the group consists in fact of isometries on $\L^p(\R_\Theta^d)$. We denote by $X_k$\label{def-Xk} the unbounded operator acting on $\L^p(\R_\Theta^d)$ such that
\begin{equation}
\label{def-Xk}
e^{\i t X_k}(y)
=e^{\i t x_k} y,\quad y \in \L^p(\R_\Theta^d), t \in \R.
\end{equation}
Using again \cite[Proposition 8.5 (2) p.~81]{JMX06}, we note that the generator of the group $(e^{\i t X_k})_{t \in \R}$ is the left multiplication operator by $\i x_k$ in the sense of \cite[(8.3) p.~80]{JMX06}. Note that the formula 
\eqref{def-Xk} implies that
\begin{equation}
\label{def-Xk-ek}
\e^{\i t X_k}(y)
= \lambda_{\Theta,t e_k}y, \quad y \in \L^p(\R^d_\Theta), t \in \R
\end{equation}
where $e_k$ is the $k^{\textrm{th}}$-vector of the canonical basis of the space $\R^d$.


The groups of operators associated with noncommutative spatial variables and momentum variables satisfy the following commutation relations.

\begin{prop}
\label{prop-quantum-commutation-rules}
For any $s,t \in \R$ and any $j,k \in \{1, \ldots, d \}$, we have
\begin{align}
e^{\i sX_j} e^{\i tX_k} & = e^{\i st \Theta_{jk}} e^{\i tX_k} e^{\i sX_j} \label{equ-quantum-commutation-1}, \\
 e^{s \partial_j} e^{t \partial_k} & = e^{t \partial_k} e^{s \partial_j} \label{equ-quantum-commutation-2}, \\
e^{t \partial_k} e^{\i sX_j} &= e^{\i t s \delta_{j=k}} e^{\i sX_j} e^{t \partial_k}. \label{equ-quantum-commutation-3}
\end{align}
\end{prop}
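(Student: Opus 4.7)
The plan is to verify each of the three commutation relations by evaluating both sides on the generating family $\{\lambda_{\Theta,r} : r \in \R^d\}$ (or its linear span, which is weak*-dense in $\L^\infty(\R^d_\Theta)$ and dense in each $\L^p(\R^d_\Theta)$), and then extending by boundedness of the involved groups. The key ingredients are the explicit formulas \eqref{def-Xk-ek}, \eqref{quantum-derivations} and the twisted multiplication rule \eqref{equ-lem-commutation-relations-lambda-theta}.

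For the first identity \eqref{equ-quantum-commutation-1}, I would apply \eqref{def-Xk-ek} directly: for any $y \in \L^p(\R^d_\Theta)$,
\[
e^{\i sX_j}e^{\i tX_k}y = \lambda_{\Theta,se_j}\lambda_{\Theta,te_k}y \ov{\eqref{equ-lem-commutation-relations-lambda-theta}}{=} e^{\frac{1}{2}\i\,st\,\Theta_{jk}}\lambda_{\Theta,se_j+te_k}y,
\]
and similarly $e^{\i tX_k}e^{\i sX_j}y = e^{\frac{1}{2}\i\,ts\,\Theta_{kj}}\lambda_{\Theta,se_j+te_k}y = e^{-\frac{1}{2}\i\,st\,\Theta_{jk}}\lambda_{\Theta,se_j+te_k}y$. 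Dividing yields \eqref{equ-quantum-commutation-1}.

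For \eqref{equ-quantum-commutation-2}, I would use that by \eqref{quantum-derivations} each group $(e^{\i t\partial_k})_{t\in\R}$ acts on the generator $\lambda_{\Theta,r}$ as multiplication by the scalar $e^{\i t r_k}$. Composing in either order therefore produces the scalar $e^{\i(s r_j + t r_k)}\lambda_{\Theta,r}$, which is manifestly symmetric under exchange of $(s,j)$ and $(t,k)$. Since these scalar actions agree on the dense span of $\{\lambda_{\Theta,r}\}$ and the groups consist of isometries of $\L^p(\R^d_\Theta)$, the equality \eqref{equ-quantum-commutation-2} extends to all of $\L^p(\R^d_\Theta)$.

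For the mixed relation \eqref{equ-quantum-commutation-3}, I would again test on $\lambda_{\Theta,r}$. Using \eqref{def-Xk-ek} and \eqref{equ-lem-commutation-relations-lambda-theta},
\[
e^{\i sX_j}\lambda_{\Theta,r} = e^{\frac{1}{2}\i\,s\,(\Theta r)_j}\lambda_{\Theta,se_j+r}.
\]
Applying then $e^{\i t\partial_k}$ and using \eqref{quantum-derivations} to read off the eigenvalue at the shifted parameter $se_j+r$, we obtain
\[
e^{\i t\partial_k}e^{\i sX_j}\lambda_{\Theta,r} = e^{\frac{1}{2}\i\,s\,(\Theta r)_j}\,e^{\i t(s\delta_{j=k}+r_k)}\lambda_{\Theta,se_j+r}.
\]
On the other hand, $e^{\i sX_j}e^{\i t\partial_k}\lambda_{\Theta,r} = e^{\i t r_k}e^{\frac{1}{2}\i\,s\,(\Theta r)_j}\lambda_{\Theta,se_j+r}$. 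The quotient of these two expressions equals $e^{\i ts\,\delta_{j=k}}$, giving \eqref{equ-quantum-commutation-3} on the dense subspace, and hence on $\L^p(\R^d_\Theta)$ by boundedness. There is no real obstacle here beyond careful bookkeeping of the cocycle phases; the main point is simply to recognize that \eqref{def-Xk-ek} reduces everything to an elementary computation with the twisted regular representation and the diagonal action of the derivations.
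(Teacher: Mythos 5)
Your proposal is correct and follows essentially the same route as the paper's proof: both verify the three relations via the explicit formulas \eqref{def-Xk-ek}, \eqref{quantum-derivations} and the twisted product rule \eqref{equ-lem-commutation-relations-lambda-theta}, testing on the generators $\lambda_{\Theta,r}$ and extending by linearity and density. Your explicit mention of the density/isometry argument is a minor (and welcome) sharpening of what the paper leaves implicit, but the substance is identical.
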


\begin{proof}
We start with \eqref{equ-quantum-commutation-1}. For any $y \in \L^p(\R^d_\Theta)$, we have on the one hand
\begin{align*}
\MoveEqLeft
e^{\i s X_j} e^{\i t X_k}y
\ov{\eqref{def-Xk-ek}}{=} \lambda_{\Theta,se_j} \lambda_{\Theta,t e_k} y 
\overset{\eqref{equ-lem-commutation-relations-lambda-theta}}{=}
e^{\frac{1}{2}\i \langle s e_j, \Theta te_k\rangle} \lambda_{\Theta,s e_j+t e_k} y 
=e^{\frac{1}{2}\i st \Theta_{jk}} \lambda_{\Theta,s e_j+t e_k} y.
\end{align*}
On the other hand, we have
\begin{align*}
\MoveEqLeft
e^{\i st \Theta_{jk}} e^{\i t X_k} e^{\i s X_j} y
\ov{\eqref{def-Xk-ek}}{=} e^{\i st \Theta_{jk}} \lambda_{\Theta,t e_k}\lambda_{\Theta,s e_j} y 
\overset{\eqref{equ-lem-commutation-relations-lambda-theta}}{=} e^{\i st \Theta_{jk}} e^{\frac{1}{2}\i \langle t e_k, \Theta se_j\rangle} \lambda_{\Theta,t e_k+s e_j} y \\
&=e^{\i st \Theta_{jk}} e^{\frac{1}{2}\i st \Theta_{kj}} \lambda_{\Theta,t e_k+s e_j} y
=e^{\frac{1}{2}\i st \Theta_{jk}} \lambda_{\Theta,s e_j+t e_k} y.
\end{align*}
Let us turn to the equality \eqref{equ-quantum-commutation-2}. For any $r \in \R^d$ we have 
\begin{equation*}
\label{}
e^{s \partial_j} e^{t \partial_k} \lambda_{\Theta,r}
\ov{\eqref{quantum-derivations}}{=} e^{\i t r_k} e^{s \partial_j} \lambda_{\Theta,r}
\ov{\eqref{quantum-derivations}}{=} e^{\i t r_k} e^{\i t r_j} \lambda_{\Theta,r}
\ov{\eqref{quantum-derivations}}{=}e^{t \partial_k}e^{s \partial_j} \lambda_{\Theta,r},
\end{equation*}
from which we deduce \eqref{equ-quantum-commutation-2}. Finally, we shall prove \eqref{equ-quantum-commutation-3}. It suffices by density and linearity to check the commutation relations on each $\lambda_{\Theta,r}$ where $r \in \R^d$. Then on the one hand
\begin{align*}
\MoveEqLeft
e^{\i s X_j} e^{t \partial_k} \lambda_{\Theta,r} 
\ov{\eqref{quantum-derivations}}{=} e^{\i t r_k} e^{\i s X_j}  \lambda_{\Theta,r} 
\ov{\eqref{def-Xk-ek}}{=} e^{\i t r_k} \lambda_{\Theta, se_j} \lambda_{\Theta,r} 
\ov{\eqref{equ-lem-commutation-relations-lambda-theta}}{=} e^{\i t r_k} e^{\frac{1}{2}\i \langle se_j, \Theta r\rangle}  \lambda_{\Theta,se_j + r}.
\end{align*}
On the other hand, a last computation gives
\begin{align*}
\MoveEqLeft
e^{t \partial_k} e^{\i s X_j} \lambda_{\Theta,r} 
\ov{\eqref{def-Xk-ek}}{=} e^{t \partial_k} \lambda_{\Theta,s e_j} \lambda_{\Theta,r} \\
&\overset{\eqref{equ-lem-commutation-relations-lambda-theta}}{=} e^{\frac{1}{2}\i \langle s e_j, \Theta r\rangle}e^{t \partial_k}  \lambda_{\Theta,se_j + r} 
\ov{\eqref{quantum-derivations}}{=} e^{\frac{1}{2}\i \langle s e_j, \Theta r\rangle}  e^{\i t (r_k + s\delta_{k=j})} \lambda_{\Theta,s e_j+r}.
\end{align*}
\end{proof}

\begin{remark} \normalfont
\label{rem-non-com-var}
It is obvious that the relation \eqref{equ-quantum-commutation-1} is equivalent to the commutation rule 
\begin{equation*}
\label{}
e^{\i sx_j} e^{\i tx_k} = e^{\i st \Theta_{jk}} e^{\i tx_k} e^{\i sx_j} 
\end{equation*}
for any $s,t \in \R$ and any $j,k \in \{1, \ldots, d \}$. By Proposition \ref{prop-Kato} we deduce that each subspace $\dom(x_j x_k) \cap \dom(x_kx_j)$ is dense in the Banach space $\L^p(\R_\Theta^d)$ and the commutation relation 
\begin{equation*}
\label{non-com-var-bis}
\i[x_j, x_k] 
=\Theta_{jk}, \quad j,k \in \{1,\ldots,d\}. 
\end{equation*} 
of \eqref{non-com-var} is true on the subspace $\dom(x_j x_k) \cap \dom(x_k x_j)$.
\end{remark}

Now, we obtain our main result on the functional calculus of the harmonic oscillator on the Moyal-Groenewold plane.

\begin{thm}
\label{thm-Hormander-calculus-for-quantum-euclidean-space}
Let $d \in \N$. Consider some skew-symmetric matrix $\Theta \in \M_d(\R)$. Then the family $A = (X_1,\ldots,X_d,\frac{1}{\i}\partial_1,\ldots,\frac{1}{\i}\partial_d)$ defines a $\begin{pmatrix} 
\Theta & - \I_d \\ 
\I_d & 0 
\end{pmatrix}$-Weyl tuple on the Banach space $\L^p(\R^d_\Theta)$ for any $1 < p <\infty$. Consequently, the closure $\A - \alpha \Id$ of the operator 
\begin{equation*}
\label{}
-\sum_{k=1}^d \partial_k^2 + \sum_{k=1}^d X_k^2 - \alpha \Id
\end{equation*}
admits a bounded H\"ormander $\Hor^s_2(\R_+^*)$ functional calculus of order $s > d + \frac12$ on the space $\L^p(\R^d_\Theta)$, where $\alpha \geq 0$ is defined in \eqref{def-alpha}. Moreover, it admits a bounded $\H^\infty(\Sigma_\omega)$ functional calculus for any angle $\omega >0$ on the Banach space $\L^p(\R^d_\Theta)$, i.e.~we have $\omega_{\H^\infty}(\A -\alpha \Id)=0$.
\end{thm}

\begin{proof}
With Proposition \ref{prop-quantum-commutation-rules}, the verification of \eqref{Weyl-tuple} is obvious.
Then the H\"ormander functional calculus is a direct consequence of Corollary \ref{cor-Hormander-calculus-for-Theta-Weyl-tuple-on-LpM} and Corollary \ref{cor-Hormander-calculus-for-Theta-Weyl-tuple-on-LpM-bis}, observing that the von Neumann algebra $\R^d_\Theta$ is injective (see e.g.~\cite[Section 6]{LSZ20}).
\end{proof}

\begin{remark}\normalfont
\label{rem-interpolation-Moyal-plane}
Note that the family $A = (X_1,\ldots,X_d,\frac{1}{\i}\partial_1,\ldots,\frac{1}{\i}\partial_d)$ from Theorem \ref{thm-Hormander-calculus-for-quantum-euclidean-space} forms a consistent Weyl tuple on the Banach space $\L^p(\R^d_\Theta)$ for any $1 < p < \infty$ such that on the Hilbert space $\L^2(\R^d_\Theta)$, the operator $\A - \alpha \Id$ is self-adjoint. Thus Remark \ref{rem-interpolation} applies and we can improve the order of the H\"ormander functional calculus by means of complex interpolation. We get for $\A - \alpha\Id$ a bounded H\"ormander $\Hor^s_2(\R^*_+)$ functional calculus on the space $\L^p(\R^d_\Theta)$, where $1 < p < \infty$, with
\[ 
s > \begin{cases} (\frac2p - 1)(d + \frac12) + \frac1{p'} : \: p \leq 2 \\ (\frac2{p'}-1)(d+\frac12) + \frac1p : \: p \geq 2. \end{cases} 
\]
\end{remark}

\paragraph{Magnetic Weyl system}
   
The concept of a gauge-covariant Magnetic Weyl calculus, essential for analysing magnetic systems, was initially introduced in a preliminary form by M\"uller \cite{Mul99}. This framework gained a more solid foundation through the independent efforts of Mantoiu and Purice \cite{MaP04}. This mathematical framework proves to be invaluable in modelling both classical and quantum particles within the classical Euclidean space $\R^d$, when considering the impact of a magnetic field $B$. For a more comprehensive understanding, the survey \cite{MaP06}, along with the papers \cite{MPR05}, \cite{Lei11}, and \cite{LeL22} and their respective references, offer in-depth insights.

%

In this setting, the self-adjoint position operators $X_j$ and momentum operators $\frac{1}{\i}\partial^B_j$\label{def-partialBj} for magnetic Weyl calculus are characterized by the commutation relations 
\begin{align*}
\label{magnetic-Weyl-calculus}
[ X_j , X_k ] = 0 
,\quad \i[ X_j , \tfrac{1}{\i} \partial^B_k] = \delta_{j=k}, \quad 
\i\big[ \tfrac{1}{\i}\partial^B_j ,\tfrac{1}{\i}\partial^B_k\big] = B_{jk}, \quad 1 \leq j,k \leq d,
\end{align*}
see \cite[(2.26)]{MaP04}. Here 
\begin{equation*}
\label{}
B
=\sum_{j,k=1}^d B_{jk} \d x_j \wedge \d x_k
\end{equation*} 
is a \textit{constant} magnetic field on the Euclidean space $\R^d$. The difference to non-magnetic systems is that momenta along different directions no longer commute. Of course, we can state a version of Theorem \ref{thm-Hormander-calculus-for-quantum-euclidean-space} for the harmonic oscillator associated with this system using the matrix $\begin{pmatrix} 
0 & - \I_d \\ 
\I_d &  B
\end{pmatrix}$.

\paragraph{Magnetic Weyl system on the Moyal planes}
More generally, the authors of the paper \cite{GJM22} consider unbounded self-adjoint operators $\tfrac{1}{\i}\partial^B_1,\ldots,\tfrac{1}{\i}\partial^B_d$ acting on the space $\L^p(\R^d_{\Theta})$ satisfying the conditions 
\begin{equation*}
\label{}
\i\big[ \tfrac{1}{\i}\partial^B_j , \tfrac{1}{\i}\partial^B_k\big] = \delta_{j=k}
\end{equation*}
and 
\begin{equation*}
\label{}
\i\big[\tfrac{1}{\i}\partial^B_j , \tfrac{1}{\i}\partial^B_k \big] = B_{jk},
\end{equation*}
where $B$ is some skew-symmetric matrix of $\M_n(\R)$. Consequently, the set 
\begin{equation*}
\label{}
A 
= \big(X_1,\ldots,X_d,\tfrac{1}{\i}\partial^B_1,\ldots,\tfrac{1}{\i}\partial^B_d\big)
\end{equation*}
constitutes a $\begin{pmatrix} 
\Theta & - \I_d \\ 
\I_d & B
\end{pmatrix}$-Weyl tuple.
The clarification of the construction of these operators is beyond the scope of this paper. However, one can clearly state a version of Theorem \ref{thm-Hormander-calculus-for-quantum-euclidean-space} for the harmonic oscillator associated with such a system.

\chapter{Symbols}
\label{sec-symbols}

\begin{tabular}{p{2.75cm}p{7.25cm}p{1cm}}
$\Sigma_{\omega}$  & open sector of angle $2\omega$ & \pageref{equation-2-cocycle} \\
$\sigma$ & 2-cocycle &  \pageref{equation-2-cocycle}\\
$f \star_\Theta g$ & Moyal star product & \pageref{star-product}\\
$\sigma_\Theta$ & 2-cocycle on $\R^d$& \pageref{def-cocycle-intro} \\
$\norm{\cdot}_{\W^{s,2}(\R)}$ & Sobolev norm & \pageref{Sobolev-norm} \\
$\norm{\cdot}_{\Hor^s_2(\R_+^*)}$ & H\"ormander norm & \pageref{hor-2} \\
$S(\phi,m,\lambda,\Omega)$ & Grosse-Wulkenhaar action & \pageref{action-GW} \\
$\R^d_\Theta$& Quantum Euclidean spaces/Moyal-Groenewold planes& \pageref{def-quatum-Euclidean} \\
$\omega_{\sec}(A)$ & angle of sectoriality of the operator $A$& \pageref{equ-sectorial-operator} \\
$\omega_{\H^\infty}(A)$ & $\H^\infty$-angle of the operator $A$& \pageref{angle-Hinfty} \\
$\lambda_{\sigma,s}$ &  left regular $\sigma$-projective representation & \pageref{def-lambda-sigma} \\
$G_\sigma$ & Mackey group & \pageref{def-central-extension} \\
$\g_\sigma$ & Lie algebra of the Lie group $G_\sigma$& \pageref{def-gsigma} \\
$c_f$ &  2-cocycle $c_f$ associated to $f \co \g \to \mathbb{C}$ & \pageref{def-cf} \\
$\Theta^T$ & transpose of the matrix $\Theta$ & \pageref{transpose} \\
$\Theta^\uparrow$, $\Theta^\downarrow$& upper and lower triangle parts of the matrix $\Theta$& \pageref{upper} \\
$e^{\i t \cdot A}$ & some projective representation & \pageref{def-eitA} \\
$a(A)$ & operator provided by the $\Theta$-Weyl functional calculus & \pageref{Weyl-calculus} \\
$f *_\sigma g$ & twisted convolution & \pageref{twisted-convolution} \\
$f *_{\Theta} g$ & twisted convolution & \pageref{twisted-Rn} \\
$\norm{T}_{\reg,\L^p(\Omega) \to \L^p(\Omega')}$ & regular norm of $T \co \L^p(\Omega) \to \L^p(\Omega')$ & \pageref{Norm-reg-c} \\
$C_{f,\sigma}$ & twisted convolution operator & \pageref{def-Cfsigma} \\
$C_a^\Theta$ & twisted convolution operator & \pageref{convolution-bis} \\
$A_{\univ}^\Theta$ & universal $\Theta$-Weyl tuple& \pageref{A-univ} \\
$\mathbb{H}_\Theta$ & Mackey group & \pageref{def-Htheta} \\
$\tilde{\mathbb{H}}_\Theta$ & universal covering of the group $\mathbb{H}_\Theta$& \pageref{universal-covering} \\
$\exp_G$ & exponential map associated to the Lie group $G$ & \pageref{exponential-map} \\
$J_\mu$ & skew-symmetric endomorphism &\pageref{endomorphism}\\
$f_\mu$ & $\mu$-section of the partial Fourier transform of $f$ & \pageref{partial-Fourier} \\
$p_z^\Theta$ & some kernel & \pageref{def-pzTheta} \\
$J$ & some matrix & \pageref{def-de-J-bis}, \pageref{def-de-J} \\
$\omega_\mu$ & skew-symmetric bilinear form & \pageref{def-omega-mu} \\
\end{tabular}

\newpage

\begin{tabular}{p{2.75cm}p{7.25cm}p{1cm}}
$S(z)$ & $S(z)=\frac{z}{\sin(z)}$ & \pageref{functions-S-and-R} \\
$R(z)$& $R(z)=\frac{z}{\tan(z)}$ & \pageref{functions-S-and-R} \\
$K_{z,\mu}$ &some kernel & \pageref{prop-heatkernel-MMformula}\\
$\d\pi$ & infinitesimal generator of the strongly continuous group $(\pi(\exp_G(t a)))_{t \in \R}$ & \pageref{dpi}\\
$\alpha$ & $\alpha=\alpha_1+\cdots+\alpha_k$& \pageref{def-alpha} \\
$\cal{J}$ & matrix $\begin{pmatrix} 
0 & 1 \\ 
-1 & 0 
\end{pmatrix}$ &\pageref{def-cal-J} \\
$\A_\alpha$ & operator $(\A - \alpha \Id_{\L^p(\R^n)}) \ot \Id_X$ & \pageref{equ-A-alpha}\\
$h_z$ & some smooth even function & \pageref{majo-by-gz}\\
$g_z$ & some function & \pageref{majo-by-gz}\\
 $\L^p(\cal{M},\ell^\infty_I)$ & $\ell^\infty$-valued noncommutative $\L^p$-space & \pageref{LpMellI} \\
$\norm{{\sup_{i \in I}}^+ x_i}_p$ & norm of $\L^p(\cal{M},\ell^\infty_I)$ & \pageref{Norm-Lp-Linfty-Omega} \\
$\Phi_t$ & operator-valued Hardy-Littlewood operator & \pageref{def-Hardy} \\
$R_t^j$ & some average operator & \pageref{def-Rt} \\
$\TC_k$& integral operator by the kernel $k$& \pageref{integral-operator} \\
$\L^p(\Omega,X)$ & Bochner space & \pageref{Bochner-space} \\
$\type X$ & type of the Banach space $X$ & \pageref{cotype-inequalities} \\
$\cotype X$ & cotype of the Banach space $X$ & \pageref{cotype-inequalities} \\
$\norm{T}_{\cb, E \to F}$ & completely bounded norm of $T \co E \to F$ & \pageref{def-completely-bounded} \\
$\scr{L}\mu$ & Laplace transform of the bounded measure $\mu$& \pageref{Laplace-transform} \\
$\scr{L}\M$ & space of Laplace transforms of bounded measures & \pageref{def-LM} \\
$f(A)_{\HP}$ & operator provided by the Hille-Phillips functional calculus & \pageref{Hille-Phillips} \\
$\sigma_R^\nu$ & Bochner-Riesz means & \pageref{Bochner-means}\\
$\cal{L}_a$ & left multiplication operator by $a$& \pageref{Left-multiplication-operator} \\
$\cal{R}_b$ & right multiplication operator by $b$ & \pageref{def-right-mult} \\
$\Ad_{(a,b)}$ & closure of the operator $\cal{L}_a-\cal{R}_b$ & \pageref{7ad} \\
$\lambda_{\Theta,s}$ &operator given by the left regular $\sigma$-projective representation & \pageref{def-lambda-theta}\\
$\lambda_\Theta(f)$ &operator given by the left regular $\sigma$-projective representation & \pageref{def-lambda-theta-f} \\
$\cal{S}(\R^d_\Theta)$ & Schwartz space & \pageref{Schwartz-Theta} \\
$\mathcal{T}_t$ & operator $\mathcal{T}_t \co\L^2(\R^d) \to \L^2(\R^d)$, $f \mapsto e^{\i \langle t , (\cdot) \rangle} f(\cdot)$ & \pageref{def-mathcalTt} \\
$\mathscr{T}_t$ &$\mathscr{T}_t \co \L^\infty(\R^d_\Theta) \to \L^\infty(\R^d_\Theta)$\label{mathscr{T}_t}, $\lambda_{\Theta,s} \mapsto e^{\i \la t,s \ra} \lambda_{\Theta,s}$ & \pageref{mathscr{T}_t} \\
$\QWEP$& Kirhcberg's quotient weak expectation property & \pageref{QWEP}\\
$(A_1,\ldots,A_n)$ & $\Theta$-Weyl tuple & \pageref{defi-Weyl-tuple}
\end{tabular}

\newpage

\begin{tabular}{p{2.75cm}p{7.25cm}p{1cm}}
$\T$ & one-dimensional torus $\{z \in \mathbb{C} : |z|=1\}$ & \pageref{torus} \\
$\H^{\infty}(\Sigma_\theta)$ & algebra of all bounded analytic functions $f \co  \Sigma_\theta\to \C$ & \pageref{algebra-Hinfty} \\
$\H^{\infty}_{0}(\Sigma_\theta)$ & some subalgebra of $\H^{\infty}(\Sigma_\theta)$& \pageref{algebra-Hinfty0} \\
$f(A)$ & operator provided by the functional calculus & \pageref{2CauchySec}\\
$R(z,A)$ & Resolvent operator & \pageref{resolvent-operator}\\ 
$\norm{\cdot }_{\H^{\infty}(\Sigma_\theta)}$ & supremum norm on the sector $\Sigma_\theta$ &\pageref{norm-Hinfty} \\ 
$(\epsi_{k})_{k \geq 1}$ & sequence of independent Rademacher variables & \pageref{def-Rademacher} \\
$X_k$ & noncommutative spatial variable & \pageref{def-Xk}\\
$\frac{1}{\i}\partial_k$& momentum variable & \pageref{def-partialk}\\
$\frac{1}{\i}\partial^B_j$ & momentum variable & \pageref{def-partialBj}\\
$x_k$ & some self-adjoint operator & \pageref{def-xk}\\
$\rho$ & some function & \pageref{function-rho}\\
$\hat{f}$ & Fourier transform & \pageref{Fourier-transform} \\
$a \star_{\sigma} b$ & Moyal product & \pageref{def-Moyal-product-33} \\
$\lesssim$ &stands for an inequality up to a constant &\pageref{lesssim} \\
$\B(X)$ & algebra of bounded operators on $X$ & \pageref{bounded-BX}\\
$\W^{s,2}(\R)$ & inhomogeneous Sobolev space & \pageref{Sobolev-space} \\
$\mathcal{W}^{s,2}$ & Mellin-Sovolev space & \pageref{defi-mathcal-W}\\  
$\mathrm{C}^N_b(\R)$ & Banach space of functions $f \co \R \to \mathbb{C}$ of class $\mathrm{C}^N$ with bounded derivatives up to order $N$ & \pageref{CbRn}\\
$\Str_\theta$ & Strip $\{ z \in \C :\: |\Im z |< \theta \}$ & \pageref{strip}\\
$p^*$ & conjugate exponent $\frac{p}{p-1}$ of $p$ &  \\ 
$\Re z$ & real part of the complex number $z$ &  \\
$\Im z$& imaginary part of the complex number $z$  & \\
$\cal{S}(\R^d)$& Schwartz space & \\
$\cal{S}'(\R^d)$ & space of tempered distributions &  \\
$\UMD$& property of unconditionality for martingale differences &\\
$\Theta$ & real skew-symmetric matrix  & \\
$\Ran A$& Range of the operator $A$ & \\
\end{tabular}

\backmatter

%

\end{document}